\newcommand{\de}{\partial}
\newcommand{\db}{\overline{\partial}}
\newcommand{\ddbar}{i\partial\overline\partial}
\newcommand{\ov}[1]{\overline{#1}}
\newcommand{\mn}{i}
\newcommand{\tr}[2]{\mathrm{tr}^{#1}{#2}}
\newcommand{\ti}[1]{\tilde{#1}}
\newcommand{\vp}{\varphi}
\newcommand{\ve}{\varepsilon}
\renewcommand{\leq}{\leqslant}
\renewcommand{\geq}{\geqslant}
\renewcommand{\epsilon}{\varepsilon}
\newcommand{\N}{\mathbb{N}}
\newcommand{\R}{\mathbb{R}}
\newcommand{\C}{\mathbb{C}}
\newtheorem{theorem}{Theorem}[section]
\newtheorem{lemma}[theorem]{Lemma}
\newtheorem{corollary}[theorem]{Corollary}
\newtheorem{proposition}[theorem]{Proposition}
\numberwithin{equation}{section}
\theoremstyle{definition}
\newtheorem{rk}[theorem]{Remark}
\theoremstyle{definition}
\newtheorem{definition}[theorem]{Definition}
\begin{document}

\title{Higher-order estimates for collapsing Calabi-Yau metrics}
\begin{abstract}
We prove a uniform $C^\alpha$ estimate for collapsing Calabi-Yau metrics on the total space of a proper holomorphic submersion over the unit ball in $\C^m$. The usual methods of Calabi, Evans-Krylov, Caffarelli, et al.~do not apply to this setting because the background geometry degenerates. We instead rely on blowup arguments and on linear and nonlinear Liouville theorems on cylinders. In particular, as an intermediate step, we use such arguments to prove sharp new Schauder estimates for the Laplacian on cylinders. If the fibers of the submersion are pairwise biholomorphic, our method yields a uniform $C^\infty$ estimate.
We then apply these local results to the case of collapsing Calabi-Yau metrics on compact Calabi-Yau manifolds. In this global setting, the $C^0$ estimate required as a hypothesis in our new local $C^\alpha$ and $C^\infty$ estimates is known to hold thanks to earlier work of the second-named author.
\end{abstract}
\author{Hans-Joachim Hein}
\address{Mathematisches Institut, WWU M\"unster, Einsteinstrasse 62, 48149 M\"unster, Germany}
\address{Department of Mathematics, Fordham University, Bronx, NY 10458, USA}
\email{hhein@fordham.edu}
\author{Valentino Tosatti}
\address{Department of Mathematics and Statistics, McGill University, Montr\'eal, Qu\'ebec H3A 0B9, Canada}
\address{Department of Mathematics, Northwestern University, Evanston, IL 60208, USA}
\email{tosatti@math.northwestern.edu}
\date{\today}
\thanks{Partially supported by NSF grants DMS-1745517 (H.H.) and DMS-1610278, DMS-1903147 (V.T.) and by a Chaire Poincar\'e (V.T.)}
\maketitle
\thispagestyle{empty}
\markboth{Higher-order estimates for collapsing Calabi-Yau metrics}{Hans-Joachim Hein and Valentino Tosatti}

\section{Introduction}\label{intro}

The main object of study in this paper are Ricci-flat K\"ahler metrics on compact Calabi-Yau manifolds, and we wish to understand their behavior in families when their total volume approaches zero. We will work on a fixed Calabi-Yau manifold which admits the structure of a holomorphic fiber space onto a lower-dimensional space (which, away from the singular fibers and from the singularities of the base, is a proper holomorphic submersion), and degenerate the K\"ahler class to the pullback of a K\"ahler class from the base. The Calabi-Yau theorem \cite{Ya} assures the existence of Ricci-flat K\"ahler metrics on the total space in the corresponding K\"ahler classes, whose volume is approaching zero. This setup has been much studied in recent years, starting from the work of Gross-Wilson \cite{GW} on elliptically fibered $K3$ surfaces, and more recently in general dimensions in \cite{GTZ,GTZ2,HT,Li,To,TWY,TZ,TZ2} and elsewhere. It is a very interesting problem with many different aspects, and we refer the reader to \cite{GTZ,GW,KSo,TWY,TZ2} for further ramifications of this circle of ideas. From these previous works, we know that the Ricci-flat metrics collapse to the pullback of a canonical K\"ahler metric on the base, uniformly on compact sets away from the singular fibers. The strongest topology in which this collapse was known to happen is $C^0$ by \cite{TWY}, and $C^\infty$ when the smooth fibers are tori or finite \'etale quotients of tori by \cite{GTZ,HT}. Our main results in this paper substantially improve on these previous works. In particular, our main technical results are purely local on the base and do not require a compact Calabi-Yau total space.

\subsection{$C^\infty$ estimates if the smooth fibers are pairwise biholomorphic} Our results are strongest and easiest to state if all of the smooth fibers are pairwise biholomorphic. To explain the setup, let $Y$ be an $n$-dimensional compact K\"ahler manifold with $c_1(Y)=0$ in $H^2(Y,\mathbb{R})$, equipped with a Ricci-flat K\"ahler form $\omega_Y$. Let $B$ denote the unit ball in $\mathbb{C}^m$ ($m\geq 1$), equipped with a Euclidean K\"ahler form $\omega_{\mathbb{C}^m}$. For each $t\geq 0$ consider the Ricci-flat product K\"ahler form
\begin{align}\omega_t=\omega_{\mathbb{C}^m}+e^{-t}\omega_Y\end{align}
on the product complex manifold $B\times Y$. Further suppose that $\omega^\bullet_t$ is some Ricci-flat K\"ahler form on $B\times Y$ (with respect to the product complex structure) that satisfies
\begin{align}
\omega^\bullet_t=\omega_t+\ddbar\psi_t
\end{align}
for some smooth function $\psi_t$, as well as the Monge-Amp\`ere equation
\begin{equation}\label{mageneral}
(\omega^\bullet_t)^{m+n}=e^F\omega_t^{m+n}=\binom{m+n}{n}e^{-nt+F}\omega_{\mathbb{C}^m}^m\wedge\omega_Y^n
\end{equation}
for some fixed smooth function $F$ (which must be the pullback under ${\rm pr}_B$ of a pluriharmonic function on $B$). Assume in addition that we have a uniform estimate
\begin{equation}\label{unifequiv}
C^{-1}\omega_t \leq \omega^\bullet_t\leq C\omega_t
\end{equation}
on $B\times Y$ for all $t\geq 0$ and for a constant $C$ independent of $t$.

With these preparations, our first main result is the following.

\begin{theorem}\label{mainlocal}
For all compact sets $K\subset B$ and all $k \in \N$, there exists a constant $C_{K,k}$ independent of $t$ such that for all $t \in [0,\infty)$ we have that
\begin{equation}\label{estimate}
\|\omega^\bullet_t\|_{C^k(K\times Y,\omega_t)}\leq C_{K,k}.
\end{equation}
\end{theorem}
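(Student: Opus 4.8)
\emph{Strategy and reduction.} The proof is by a blowup argument, with Liouville theorems and Schauder-type estimates on cylinders as the key analytic input. For $t$ in a bounded interval $[0,T]$ the background metrics $\omega_t$ are uniformly non-degenerate with $t$-uniformly bounded geometry, so \eqref{mageneral} is a uniformly elliptic complex Monge--Amp\`ere equation, and together with \eqref{unifequiv} the complex Evans--Krylov theorem and the usual Schauder bootstrap give \eqref{estimate} with a constant independent of $t\in[0,T]$. So we only need $t$ large. We reduce \eqref{estimate} to the single uniform estimate $\|\omega^\bullet_t\|_{C^\alpha(K\times Y,\omega_t)}\le C_{K,\alpha}$ for one fixed $\alpha\in(0,1)$ --- the degeneration of the background makes this the genuinely hard step --- and then bootstrap to all orders.

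\emph{Bootstrap from $C^\alpha$.} Granting the $C^\alpha$ bound, in $g_t$-adapted frames the coefficients of $\omega^\bullet_t$ are uniformly $C^\alpha$, so $\Delta_{\omega^\bullet_t}$ is uniformly elliptic with uniformly $C^\alpha$ coefficients. Differentiating \eqref{mageneral} shows that each first derivative of $\psi_t$ in such a frame solves $\Delta_{\omega^\bullet_t}v=f$ with $f$ again uniformly $C^\alpha$ --- and here it is crucial that the fibers are pairwise biholomorphic, indeed that $B\times Y$ is a product, so that $\omega_t$ is a smooth product metric with all covariant derivatives controlled uniformly in $t$, whence $f$ inherits the regularity of $\omega^\bullet_t$ with no loss. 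A sharp interior Schauder estimate on the collapsing cylinder $(B\times Y,\omega_t)$ --- proved separately by a blowup argument together with the linear Liouville theorem on $\mathbb{C}^m\times Y$, and sharp because on the cylinder the fiberwise zero Fourier mode propagates like a harmonic function on $\mathbb{C}^m$ while the higher modes decay exponentially --- then improves the order by one. Iterating yields \eqref{estimate} for every $k$; the absence of any regularity loss at each step is exactly where the argument uses biholomorphicity of the fibers, and where it would fail for varying fibers.

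\emph{The $C^\alpha$ estimate by blowup and nonlinear Liouville.} Suppose the $C^\alpha$ estimate fails: there are $t_j\to\infty$, a compact $K$, and pairs of points realizing a $C^\alpha$-oscillation of $\omega^\bullet_{t_j}$ on $K\times Y$ tending to infinity. A point-selection argument produces blowup centers $x_j$ and scales $\lambda_j\to0$ so that the rescaled backgrounds have bounded geometry on larger and larger balls and the rescaled metrics $\omega^\bullet$, uniformly equivalent to those backgrounds, have normalized $C^\alpha$-oscillation (bounded overall, bounded below at unit distance from the center). The essential constraint is that the sectional curvature of $\omega_{t_j}$ is $O(1)$ along the base but of size $\sim e^{t_j}$ along the fibers, so bounded geometry of the rescalings forces $\lambda_j\lesssim e^{-t_j/2}$; passing to a subsequence, the pointed limit is therefore either all of $\mathbb{C}^{m+n}$ with the flat metric (if $\lambda_j=o(e^{-t_j/2})$, having zoomed past the fiber scale) or the cylinder $\mathbb{C}^m\times Y$ with a product metric $\omega_{\mathbb{C}^m}+c\,\omega_Y$ (if $\lambda_j\sim e^{-t_j/2}$). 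Since the equation is uniformly elliptic and the rescaled backgrounds have uniformly bounded geometry, the complex Evans--Krylov theorem gives $C^{2,\beta}_{\mathrm{loc}}$ bounds, so the rescaled K\"ahler potentials converge to an entire Ricci-flat K\"ahler metric uniformly equivalent to the model metric. On $\mathbb{C}^{m+n}$, writing it as $\ddbar u$, the function $u$ solves a uniformly elliptic Monge--Amp\`ere equation, and rescaled interior Evans--Krylov estimates on balls of radius $R\to\infty$ force the complex Hessian of $u$ to be constant, so the limit metric has constant coefficients. On $\mathbb{C}^m\times Y$ the same conclusion follows from a Liouville theorem proved via the Cheeger--Gromoll splitting theorem --- the limit metric is complete, Ricci-flat, and contains lines, hence splits isometrically as a flat metric on $\mathbb{C}^m$ times a Ricci-flat metric on $Y$, and then Calabi's uniqueness on $Y$ together with classical Liouville on $\mathbb{C}^m$ pins it down. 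In both cases the $C^\alpha$-seminorm of the limit vanishes, contradicting the normalization.

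\emph{Main obstacle.} The heart of the matter is the analysis on the cylinder $\mathbb{C}^m\times Y$: the linear Schauder estimate and the linear and nonlinear Liouville theorems there. The difficulty is the anisotropy --- $Y$ is compact but $\mathbb{C}^m$ is not, so elliptic estimates with scale-invariant constants fail and one cannot simply quote interior estimates over $g_t$-unit balls, which degenerate to the whole cylinder. The way through is to separate the fiberwise Fourier modes, treat the zero mode as a function on $\mathbb{C}^m$ and exploit the exponential decay of the positive modes, and so obtain genuinely new scale-invariant Schauder and Liouville statements precise enough to feed into the blowup.
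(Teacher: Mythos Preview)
Your proposal has a genuine and fatal gap: you have missed the case where the blowup limit is $\C^m$, which the paper identifies as the hardest of the three cases.

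Your claim that ``bounded geometry of the rescalings forces $\lambda_j\lesssim e^{-t_j/2}$'' is wrong. Point-selection normalizes the \emph{solution's} oscillation; it does not let you prescribe the behavior of the \emph{background}. The scale $\lambda_j$ is dictated by where and at what scale the $C^\alpha$ oscillation of $\omega^\bullet_t$ is realized, and there is nothing preventing this from happening at a scale $\lambda_j\gg e^{-t_j/2}$ (much larger than the fiber diameter). When it does, the rescaled fiber collapses, the pointed limit is $\C^m$, and the rescaled background $g_{\C^m}+\delta_t^2 g_Y$ with $\delta_t\to 0$ has curvature of order $\delta_t^{-2}\to\infty$ along the fiber. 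Consequently your Evans--Krylov step, which presupposes bounded background geometry, does not apply.

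In the paper this $\C^m$ case is exactly where the Schauder estimate on cylinders enters --- not as a post-hoc bootstrap, but inside the blowup itself: one linearizes the Monge--Amp\`ere equation at the product metric and uses the cylinder Schauder estimate to upgrade the a priori $C^k$ control to $C^{k,\alpha}$ with respect to the \emph{collapsing} reference (your bootstrap places the Schauder estimate after the $C^\alpha$ step, where it cannot save this case). Even then a further obstacle remains: the improved regularity is measured against a collapsing metric, so no obvious Ascoli--Arzel\`a argument passes the nontrivial oscillation to the $\C^m$ limit. The paper resolves this by exploiting the K\"ahler property of $\omega^\bullet_t$ (the fact that $\hat\omega^\bullet_t-\hat\omega_t$ is $i\partial\bar\partial$-exact), together with Lemma~\ref{l:braindead}, to show that the normalized difference quotient is carried essentially by the base--base components; separately one shows by an integration-by-parts argument that the $\C^m$ limit satisfies $(\hat\omega^\bullet_\infty)^m=c\,\omega_{\C^m}^m$ and is therefore constant by the standard Liouville theorem, giving the contradiction.

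A secondary remark: your Cheeger--Gromoll sketch for the Liouville theorem on $\C^m\times Y$ gives only a Riemannian splitting; promoting it to a holomorphic one with factor biholomorphic to $Y$ is nontrivial and is the content of Theorem~\ref{lioHein}. But this is minor compared with the missing $\C^m$ case above.
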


Observe that these estimates trivially imply uniform $C^k$ bounds on $K\times Y$ with respect to the \emph{fixed} product metric $\omega_{\mathbb{C}^m}+\omega_Y$ or indeed with respect to any other product metric $\omega_{\C^m} + \tilde{\omega}_Y$. However, the \emph{collapsing} $C^k$ norms in \eqref{estimate} change by unbounded factors if we replace $\omega_Y$ by $\tilde\omega_Y$ unless $\nabla^{\omega_Y}\tilde\omega_Y = 0$; thus, in order for \eqref{estimate} to hold it is actually essential that ${\rm Ric}(\omega_Y) = 0$. Also note that Theorem \ref{mainlocal} would be false in general without the assumption that $Y$ is compact without boundary (cf. Remarks \ref{cntrxmpl}, \ref{cntrxmpl2}), and indeed our method of proof is fundamentally global on $Y$ (cf. Remark \ref{ccc}).

\begin{rk}\label{dirichlet}
If for each $t$ we are given a smooth function $\psi_t$ on $\de B\times Y$ with $\omega_t +\ddbar\psi_t>0$ there, then $\psi_t$ uniquely extends to a solution of \eqref{mageneral} on $B\times Y$. Indeed, by \cite[Prop 7.10]{Bo} we can first extend $\psi_t$ to a smooth strictly $\omega_{t}$-psh function on $B\times Y$. Adding a large multiple of a smooth strictly psh function on $B$ that vanishes on $\partial B$, we obtain a subsolution of \eqref{mageneral}. Then a solution exists e.g. by \cite[Thm A]{Bo}. However, this solution may not satisfy \eqref{unifequiv} with $C$ independent of $t$.
\end{rk}

The main application of this ``local on the base'' result is to compact Calabi-Yau manifolds. Let now $f:X\to B$ be a surjective holomorphic map with connected fibers (also called a {\em fiber space}), where $X$ is a compact K\"ahler Calabi-Yau manifold of dimension $m+n$ and $B$ is a compact K\"ahler space of dimension $m$ (which is necessarily irreducible, and we assume is reduced). The set of critical points of $f$ (including by default the preimages of singular points of $B$) will be denoted by $S$,
and we will let $X_b =$ $f^{-1}(b)$ be the smooth fiber over any $b \in B \setminus f(S)$, which is a K\"ahler $n$-manifold with $c_1(X_b)=0$ in $H^2(X_b,\mathbb{R})$ (i.e., also a Calabi-Yau manifold).
Fix K\"ahler metrics $\omega_X,\omega_B$ on $X,B$, with $\omega_X$ Ricci-flat, and put $\omega_\infty =f^*\omega_B$, which is a smooth semipositive definite real $(1,1)$-form on $X$. For all $t \in [0,\infty)$ let $\omega^\bullet_t$ be the unique solution of the
 complex Monge-Amp\`ere equation
\begin{equation}\label{mageneral2}
(\omega^\bullet_t)^{m+n}=(\omega_\infty+e^{-t}\omega_X+\ddbar\psi_t)^{m+n}=c_t e^{-nt}\omega_X^{m+n}, \quad \sup\nolimits_X \psi_t=0,
\end{equation}
whose existence is guaranteed by Yau's theorem \cite{Ya}. In other words, $\omega^\bullet_t$ is the unique Ricci-flat K\"ahler metric on $X$ cohomologous to $\omega_\infty+e^{-t}\omega_X$. Here the constant $c_t$ is defined by integrating \eqref{mageneral2} over $X$, and it has a positive limit as $t\to\infty$. This is exactly the same setup which is studied for example in \cite{GTZ,GTZ2,HT,Li,To,TWY,TZ,TZ2}. A key result, conjectured in \cite{KT} and proved independently in \cite{DP, EGZ}, is that $\sup_X |\psi_t|\leq C$, independent of $t$. In \cite{To} (after earlier results in \cite{ST0} when $n=m=1$) it was proved that $\ddbar\psi_t$ is uniformly bounded on compact sets away from $f^{-1}(f(S))$ (i.e., an analog of \eqref{unifequiv} was proved on any such compact set), and in \cite{TWY} it was proved that in fact $\ddbar\psi_t$ has a well-defined limit in $C^0_{\rm loc}(X\setminus f^{-1}(f(S)))$.
As a corollary of our main Theorem \ref{mainlocal}, we can improve this to uniform $C^\infty$ estimates for $\psi_t$ on compact sets away from $f^{-1}(f(S))$ if the fibers $X_b$ ($b\in B\setminus f(S))$ are pairwise biholomorphic to each other, thus resolving \cite[Question 4.2]{To2} and \cite[Question 5.2]{To3} in this case.

\begin{corollary}\label{main}
Assume that all the fibers  $X_b$ $(b\in B\setminus f(S))$ are biholomorphic to the same Calabi-Yau manifold $Y$. Over any small coordinate ball $U$ compactly contained in $B\setminus f(S)$, use \cite{FG} to trivialize $f$ holomorphically to a product $U\times Y\to U$. As before define Ricci-flat reference K\"ahler forms on $U\times Y$ by $\omega_t=\omega_{\mathbb{C}^m}+e^{-t}\omega_Y$. Then for any $k \in \N$, there exists a constant $C_{U,k}$ such that
\begin{equation}\label{estimate2}
\|\omega^\bullet_t\|_{C^k(U\times Y,\omega_t)}\leq C_{U,k}
\end{equation}
holds uniformly for all $t \in [0,\infty)$. In particular, given any compact set $K\subset X\setminus f^{-1}(f(S))$ and any $k \in \N$, there exists a constant $C_{K,k}$ such that
\begin{equation}\label{estimate3}
\|\omega^\bullet_t\|_{C^k(K,\omega_X)}\leq C_{K,k}
\end{equation}
holds uniformly for all $t \in [0,\infty)$.
\end{corollary}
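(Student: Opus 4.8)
The plan is to deduce Corollary \ref{main} from Theorem \ref{mainlocal} by checking that, on each small coordinate ball $U \Subset B \setminus f(S)$, the global solutions $\omega^\bullet_t$ of \eqref{mageneral2} satisfy all the hypotheses of the local theorem after trivializing $f$ to a product $U \times Y$ via \cite{FG}. First I would record that the fiberwise biholomorphisms furnished by \cite{FG} identify $f^{-1}(U)$ with $U \times Y$ as complex manifolds; under this identification the Ricci-flat form $\omega_X$ becomes some K\"ahler form on $U \times Y$ (not a product form in general), but fiberwise $\omega_X|_{\{b\}\times Y}$ is K\"ahler in a class independent of $b$ since all fibers are the same $Y$. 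Using Yau's theorem fiberwise, choose a Ricci-flat K\"ahler form $\omega_Y$ on $Y$ cohomologous to that class, and set $\omega_t = \omega_{\mathbb{C}^m} + e^{-t}\omega_Y$ on $U \times Y$ as in the statement. The point is that $\omega^\bullet_t$ restricted to $f^{-1}(U)$ is a Ricci-flat K\"ahler form on $U \times Y$ with respect to the product complex structure (it is globally Ricci-flat on $X$), and its cohomology class on the compact fibers agrees with $e^{-t}[\omega_Y]$ up to the contribution of $\omega_\infty = f^*\omega_B$, which is pulled back from the base and hence trivial on fibers; so on a slightly smaller ball we may write $\omega^\bullet_t = \omega_t + \ddbar\psi_t$ for a smooth function $\psi_t$, after absorbing the difference $\omega_\infty + e^{-t}\omega_X - \omega_t$, which is $\ddbar$ of a smooth function on $U \times Y$ by the $\partial\overline\partial$-lemma on a Stein (or contractible) base times a K\"ahler fiber. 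I would note in passing that $F$ here is the pullback of a pluriharmonic — in fact locally one may arrange $F \equiv \mathrm{const}$ — function, matching \eqref{mageneral}.

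Next I would verify \eqref{mageneral} and \eqref{unifequiv}. Equation \eqref{mageneral} with the prescribed right-hand side is just \eqref{mageneral2} rewritten: $(\omega^\bullet_t)^{m+n} = c_t e^{-nt}\omega_X^{m+n}$, and $\omega_X^{m+n}$ is a fixed smooth volume form on $U \times Y$, comparable to $\binom{m+n}{n}\omega_{\mathbb{C}^m}^m \wedge \omega_Y^n$ with smooth positive ratio; writing that ratio as $e^{F_0}$ with $F_0$ smooth and absorbing $\log c_t$ (which is bounded and convergent as $t\to\infty$) gives the required form $e^F \omega_t^{m+n}$ with $F$ bounded in $C^k_{\mathrm{loc}}$ — though $F$ need not literally be pulled back from the base, this causes no harm since the proof of Theorem \ref{mainlocal} only uses boundedness of $F$ in every $C^k$; alternatively one notes that replacing $\omega_X$ on the fibers by $\omega_Y$ and correcting by a further $\ddbar$ makes $F$ genuinely a base function, at the cost of modifying $\psi_t$ by a $t$-independent smooth potential. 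The crucial hypothesis \eqref{unifequiv}, namely $C^{-1}\omega_t \le \omega^\bullet_t \le C\omega_t$ on $K \times Y$ uniformly in $t$, is exactly the content of the main estimate of \cite{To}: there it is proved that $\omega^\bullet_t$ is uniformly equivalent to $\omega_\infty + e^{-t}\omega_X$ on compact subsets of $X \setminus f^{-1}(f(S))$, and $\omega_\infty + e^{-t}\omega_X$ is in turn uniformly equivalent to $\omega_t$ on $K \times Y$ (the horizontal parts are both bounded and bounded below, the vertical parts both scale like $e^{-t}$ times a fixed fiber metric, and the mixed terms are controlled). Hence all hypotheses of Theorem \ref{mainlocal} hold on, say, $\tfrac12 U \times Y$ after relabeling $U$.

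With this in place, Theorem \ref{mainlocal} applied with $B$ replaced by $U$ (rescaled to the unit ball) and $K = \overline{\tfrac12 U}$ yields $\|\omega^\bullet_t\|_{C^k(\tfrac12 U \times Y, \omega_t)} \le C_{U,k}$ for all $t$ and all $k$, which is \eqref{estimate2} after the harmless relabeling of the ball. Finally, \eqref{estimate3} follows by a standard covering and comparison argument: any compact $K \subset X \setminus f^{-1}(f(S))$ is covered by finitely many such trivializing balls $f^{-1}(U_j)$; on each one \eqref{estimate2} holds, and since $\omega_X$ is a fixed smooth K\"ahler metric while $\omega_t = \omega_{\mathbb{C}^m} + e^{-t}\omega_Y \le \omega_{\mathbb{C}^m} + \omega_Y \le C\,\omega_X$ on the overlap (with $C$ independent of $t$ because $e^{-t} \le 1$), a bound on $\|\omega^\bullet_t\|_{C^k(\cdot,\omega_t)}$ implies a bound on $\|\omega^\bullet_t\|_{C^k(\cdot,\omega_X)}$, the covariant derivatives and metric contractions with respect to $\omega_X$ being controlled by those with respect to the larger, $t$-independently-comparable metric with uniformly bounded geometry. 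The only genuine subtlety — and the step I would be most careful about — is the bookkeeping around the potential $\psi_t$ and the function $F$: one must make sure that the local potential arising from $\omega^\bullet_t - \omega_t$ on $U \times Y$ is smooth and that the discrepancy between the fiberwise restriction of $\omega_X$ and the chosen $\omega_Y$, together with the base-pulled-back form $\omega_\infty$, is absorbed into $\ddbar(\text{smooth})$ in a $t$-independent way, so that \eqref{mageneral} and \eqref{unifequiv} hold exactly as stated; everything else is a routine translation of \cite{To, FG} into the hypotheses of Theorem \ref{mainlocal}.
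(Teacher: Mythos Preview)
Your overall strategy is right, but there is a genuine gap: you assert that $\omega_\infty + e^{-t}\omega_X - \omega_t$ is $\ddbar$-exact on $U \times Y$ ``by the $\partial\overline\partial$-lemma on a Stein base times a K\"ahler fiber.'' No such lemma is available in the generality you need. The form $\omega_X - \omega_Y$ is $d$-exact on $U\times Y$ (K\"unneth), but writing $\omega_X - \omega_Y = d\zeta$ and decomposing the Dolbeault class of $\zeta^{0,1}$ via $H^{0,1}(U\times Y)\cong H^{0,1}(Y)\otimes\mathcal{O}(U)$ produces a residue of the shape $2\,\mathrm{Re}\sum d\sigma_j\wedge\theta_j$ with $\sigma_j\in\mathcal{O}(U)$ and $\theta_j$ harmonic $(0,1)$-forms on $Y$ (exactly as in Proposition~\ref{p:closure}). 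When $h^{0,1}(Y)>0$ this residue need not vanish, so $\omega_t^\bullet - \omega_t$ need not be $\ddbar$-exact and the hypothesis of Theorem~\ref{mainlocal} fails. The paper repairs this by first applying a biholomorphism $T(z,y)=(z,y+\sigma(z))$ of $U\times Y$ over $U$, with $\sigma$ holomorphic into the $g_Y$-parallel vector fields, furnished by \cite[Prop.~3.1]{He2}; this kills the residue and yields $T^*\omega_X=\omega_Y+\ddbar u_1$. One then applies Theorem~\ref{mainlocal} to $T^*\omega_t^\bullet$ and must afterward transfer the $C^k(\cdot,\omega_t)$ bound through $T$, which is not tautological: the paper uses the explicit form of $T$ (parallel-vector-field values) to show $|\nabla^{g_t}(T^*g_t)|_{g_t}$ decays. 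If $h^{0,1}(Y)=0$ your shortcut does work and is simpler, but the corollary is stated for arbitrary Calabi--Yau fibers.

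A secondary point: your remark that ``$F$ need not literally be pulled back from the base, this causes no harm'' is incorrect---the proof of Theorem~\ref{mainlocal} uses this property in an essential way (e.g.\ the decay $\partial_z^k\tilde F_\delta=O(\delta^k)$ in \eqref{e:pitchfork}, and $F_t\to F(z_\infty)$ in Claim~3). Fortunately $F$ \emph{is} pulled back from $U$ for a reason you did not give: both $\omega_X^{m+n}$ and $\omega_{\C^m}^m\wedge\omega_Y^n$ are Ricci-flat volume forms on $U\times Y$, so their ratio is the squared modulus of a holomorphic function, and any holomorphic function on $U\times Y$ with $Y$ compact is pulled back from $U$.
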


To see that such fiber spaces exist with $S\neq \emptyset$, let $E$ be an elliptic curve with the involution $\sigma$ induced by $z\mapsto -z$, let $Y$ be a $K3$ surface with a nonsymplectic involution $\tau$, and let $X =(E\times Y)/(\sigma\times\tau)$ be the quotient by the diagonal action with its natural map $f: X\to B = E/\sigma=\mathbb{CP}^1$. If $\tau$ is free (i.e., is the covering involution of an Enriques surface), then $X$ is smooth and $f$ is a fibration of the required kind (with $4$ double fibers that are Enriques surfaces), although in this case $\omega_t^\bullet$ comes from a product metric on $E\times Y$, hence is itself a product metric locally away from the singular fibers. If $\tau$ is \emph{not} free (e.g., is the covering involution of a double sextic), then $X$ is singular, but replacing $X$ by a blowup we again obtain a smooth Calabi-Yau $3$-fold fibered over $\mathbb{CP}^1$ with all smooth fibers biholomorphic (and with $4$ reduced singular fibers that are normal crossing divisors in $X$); cf. \cite{Bor,Vo}. Then $\omega_t^\bullet$ is certainly not a product metric even locally away from the singular fibers. However, it may still be possible to construct $\omega_t^\bullet$ using a gluing method in the spirit of \cite[Problem 1.11]{He}; cf. \cite{CVZ}.

It is worth remarking that in the setting of Corollary \ref{main}, assuming $Y$ is not a torus (or a finite \'etale quotient of a torus), then the Ricci-flat metrics $\omega^\bullet_t$ do not have uniformly bounded sectional curvature as $t\to \infty$ on $f^{-1}(U)$ for any $U\subset B\setminus f(S)$.  Indeed, if the curvature of $\omega^\bullet_t$ does remain bounded on $f^{-1}(U)$, then it follows from \cite[Thm 3.1]{TZ} that $(Y,\omega_Y)$ must be flat. Conversely, if $Y$ is (a finite \'etale quotient of) a torus, then smooth collapsing of the Ricci-flat metrics $\omega_t^\bullet$ with bounded curvature was proved in \cite{GTZ,HT}, and this is the only case where \eqref{estimate3} was known previously. In fact, \cite{GTZ,HT} proved $C^\infty$ estimates in the torus-fibered case without assuming that the fibers are pairwise biholomorphic.

\subsection{A general $C^\alpha$ estimate} If the smooth fibers are not necessarily biholomorphic to each other, we can push our techniques to their limit and obtain the following ``local on the base'' $C^\alpha$ estimate.

Let $f: X \to B$ be a proper surjective holomorphic submersion onto the unit ball $B=B_1(0)\subset\C^m$ such that the fibers of $f$ are $n$-dimensional Calabi-Yau manifolds. Suppose $X$ is equipped with a Ricci-flat K\"ahler metric $\omega_X$. Applying Yau's theorem fiberwise, it is easy to construct a smooth closed real $(1,1)$-form $\omega_F=\omega_X+\ddbar\rho$ on $X$ whose restriction to every fiber $X_z=f^{-1}(z)$ is the Ricci-flat K\"ahler metric on $X_z$ cohomologous to $\omega_X|_{X_z}$ (see Section \ref{nonpr} for details). Letting $\omega_\infty=f^*\omega_{\C^m}$, it is not hard to show that up to shrinking $B$ slightly and taking $t$ sufficiently large, the forms $\omega_\infty+e^{-t}\omega_F$ define K\"ahler metrics on $X$. Suppose $\omega^\bullet_t$ is a Ricci-flat K\"ahler metric on $X$ which satisfies
\begin{align}\omega^\bullet_t=\omega_\infty+e^{-t}\omega_F+\ddbar\psi_t\end{align}
for some smooth function $\psi_t$ together with
\begin{equation}\label{mageneralx}
(\omega^\bullet_t)^{m+n}=c_te^{-nt+G}\omega_\infty^m\wedge\omega_F^n
\end{equation}
for some smooth function $G$ pulled back from $B$. Assume in addition that
\begin{equation}\label{unifequivx}
C^{-1}(\omega_\infty+e^{-t}\omega_F)\leq \omega^\bullet_t\leq C(\omega_\infty+e^{-t}\omega_F)
\end{equation}
holds on $X$ for all $t\geq 0$ and for some constant $C$ independent of $t$.

Up to shrinking $B$ again, there is a $C^\infty$ trivialization $\Phi:B\times Y\to X$, where $Y=f^{-1}(0)$ is viewed as a smooth real $2n$-manifold, such that $\Phi|_{\{0\}\times Y}:\{0\}\times Y\to Y$ is the identity. (Of course $\Phi$ is never unique but our main result holds for every possible choice of $\Phi$.) For $z\in B$ let $g_{Y,z}$ denote the trivial extension to $B\times Y$ of the pullback via $\Phi$ of the Ricci-flat Riemannian metric associated with $\omega_F|_{X_z}$. We can then define a family of collapsing product Riemannian metrics on $B\times Y$ by $$g_{z,t}=g_{\C^m}+e^{-t}g_{Y,z}.$$
Each of these metrics is uniformly equivalent (with a constant independent of $t$) to the metric obtained by pulling back the metric associated with $\omega_\infty+e^{-t}\omega_F$ via $\Phi$. Let further $g^\bullet_t$ denote the Riemannian metric on $B\times Y$ obtained by pulling back the metric associated with $\omega^\bullet_t$ via $\Phi$.

With these preparations, our second main result may now be stated as follows.

\begin{theorem}\label{thm51}
For all $0 < \alpha < 1$, there exists a constant $C_\alpha$ such that
\begin{equation}\label{estimatex}
\sup_{x=(z,y) \in B_{\frac{1}{4}}(0) \times Y} \sup_{x' \in B^{g_{z,t}}(x,\frac{1}{8})}\frac{|{g}_t^\bullet(x) -  \mathbf{P}^{g_{z,t}}_{x'x}({g}_t^\bullet(x'))|_{g_{z,t}(x)}}{d^{g_{z,t}}(x,x')^\alpha} \leq C_\alpha
\end{equation}
holds uniformly for all $t\in [0,\infty)$.
\end{theorem}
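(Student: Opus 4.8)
The plan is to argue by contradiction through a blowup analysis, reducing matters to Liouville-type theorems on the various possible tangent models of the collapsing geometry.

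First I would set up the contradiction. For $t$ ranging over any bounded interval $[0,T]$ the reference metrics $g_{z,t}$ are uniformly non-degenerate and smooth with controlled geometry on $B_{1/2}(0)\times Y$, so by \eqref{unifequivx} the Ricci-flat metric $g^\bullet_t$ is uniformly equivalent to $g_{z,t}$ there, and the interior Evans--Krylov and Schauder estimates applied to \eqref{mageneralx} bound the left side of \eqref{estimatex}. Hence if \eqref{estimatex} fails there must be a sequence $t_i\to\infty$ along which the supremum $Q_i$ in \eqref{estimatex} tends to $+\infty$. A Schoen-type point-selection argument then produces points $x_i=(z_i,y_i)$ with $z_i\in B_{1/2}(0)$ and scales $\lambda_i\to 0$ so that, after dilating the $\mathbb{C}^m$ variable by $\lambda_i^{-1}$ (and, in the Euclidean regime below, also passing to normal coordinates on $Y$ about $y_i$), rescaling the reference metric by $\lambda_i^{-2}$, and recentering at $x_i$, the rescaled Ricci-flat metrics $\tilde g_i$ have rescaled $C^\alpha$ seminorm equal to $1$ near the origin and $\le 2$ on the ball of radius $R$ for every fixed $R$ and all large $i$; in particular the rescaled domains exhaust the model space and the limit is complete.

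Next I would pass to the blowup limit. By \eqref{unifequivx}, $\tilde g_i$ is uniformly equivalent to the rescaled reference metric $g_{\mathbb{C}^m}+\lambda_i^{-2}e^{-t_i}g_{Y,z_i}$, which has uniformly bounded geometry: its fiber factor is a fixed fiberwise Ricci-flat metric scaled by $\lambda_i^{-2}e^{-t_i}$, and the mixed curvature produced by the $z$-dependence of $g_{Y,z}$ carries a positive power of $\lambda_i$ and hence tends to $0$ --- this is exactly where rescaling removes the degeneracy of the background that obstructs Calabi, Evans--Krylov and Caffarelli in the original family. Since $G$ is pulled back from $B$ and the base ball shrinks to a point, the rescaled equation \eqref{mageneralx} converges to the Ricci-flat complex Monge--Amp\`ere equation, and the standard Evans--Krylov and Schauder theory --- now applicable, the backgrounds being non-degenerate --- gives uniform local $C^\infty$ bounds on $\tilde g_i$. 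Along a subsequence $\tilde g_i\to g^\bullet_\infty$ in $C^\infty_{\mathrm{loc}}$, where $g^\bullet_\infty$ is a complete Ricci-flat K\"ahler metric on one of three models, according to whether the rescaled fiber diameter $\lambda_i^{-1}e^{-t_i/2}$ tends to $0$, to a positive constant $L$, or to $+\infty$: respectively $\mathbb{C}^m$ (the fibers collapsing, with the Weil--Petersson contribution of the varying fibers also rescaling away), the cylinder $\mathbb{C}^m\times Y$ with product metric $g_{\mathbb{C}^m}+L^2 g_{Y,z_\infty}$, or the Euclidean $\mathbb{C}^{m+n}$. In each case $g^\bullet_\infty$ is uniformly equivalent to the corresponding flat, respectively product, model, and inherits from the normalization a positive $C^\alpha$ seminorm at the origin, hence is not parallel. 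Finally one applies Liouville theorems: on $\mathbb{C}^{m+n}$ --- and on $\mathbb{C}^m$, where after collapsing the fibers the limit equation is again a Ricci-flat Monge--Amp\`ere equation --- a Ricci-flat K\"ahler metric uniformly equivalent to the Euclidean one is represented by a quadratic potential, so it has constant coefficients and is parallel; on the cylinder one invokes the nonlinear Liouville theorem on cylinders, namely that a Ricci-flat K\"ahler metric on $\mathbb{C}^m\times Y$ uniformly equivalent to $g_{\mathbb{C}^m}+L^2 g_{Y,z_\infty}$ differs from that product metric only by replacing the flat factor with a constant-coefficient flat K\"ahler metric on $\mathbb{C}^m$ (the fiber metric being pinned down by Yau's uniqueness theorem on $Y$ applied to the class $L^2[\omega_F|_{X_{z_\infty}}]$, and the non-product directions being excluded by the cylinder argument), hence is again parallel. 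In all three cases $g^\bullet_\infty$ has vanishing $C^\alpha$ seminorm, contradicting the normalization, and \eqref{estimatex} follows.

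The main obstacle is the nonlinear Liouville theorem on $\mathbb{C}^m\times Y$. Unlike the Euclidean models the cylinder is not dilation-invariant and has nontrivial topology and holonomy in the fiber, so J\"orgens--Calabi--Pogorelov-type rigidity is not available directly. I would obtain it by first proving the sharp linear statement --- that a harmonic function of at most linear growth on $\mathbb{C}^m\times Y$ is affine along $\mathbb{C}^m$ and constant along $Y$, together with the matching Schauder estimate for the Laplacian on the cylinder --- and then bootstrapping from the linearized Monge--Amp\`ere operator to the full equation while controlling the coupling between the Euclidean and fiber directions (for instance by integrating the equation over the fiber and applying the linear theory to the fiber-averaged potential). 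A secondary technical point is the point-selection step and the trichotomy of blowup scales, in particular checking that the mixed curvature of the merely fiberwise Ricci-flat, $z$-dependent reference metrics rescales to zero so that the classical elliptic theory becomes usable on the blowup.
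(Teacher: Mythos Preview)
Your overall three-case blowup scheme matches the paper's, and your treatment of the $\mathbb{C}^{m+n}$ and $\mathbb{C}^m\times Y$ cases is essentially correct (the cylinder Liouville theorem you invoke is exactly the one the paper uses, from \cite{He2,LLZ}). However, you have misidentified the main difficulty: the hard case is not the cylinder but the slowly collapsing case where the limit is $\mathbb{C}^m$, and your argument there has a genuine gap.

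In the $\mathbb{C}^m$ case the rescaled reference metric is $g_{\mathbb{C}^m}+\delta_i^2 g_{Y,z_i}$ with $\delta_i=\lambda_i^{-1}e^{-t_i/2}\to 0$, so the background is \emph{still collapsing} after the blowup. Your assertion that ``the backgrounds being non-degenerate'' Evans--Krylov and Schauder now apply is false here: you do not get uniform local $C^\infty$, or even $C^\beta$ for any $\beta>\alpha$, bounds on $\tilde g_i$ with respect to any fixed metric. All you have is the $C^\alpha$-type bound coming from the contradiction hypothesis, and this is measured with respect to the \emph{collapsing} metric $g_{z_i,t_i}$. Consequently, the convergence $\tilde g_i\to g_\infty^\bullet$ holds only in $C^\beta_{\rm loc}$ for $\beta<\alpha$ with respect to a fixed (non-collapsing) metric, and there is no Ascoli--Arzel\`a argument that lets the normalized $C^\alpha$ seminorm pass to the limit: the difference quotient $=1$ could be entirely carried by base-fiber or fiber-fiber components of $\tilde g_i$, which have $g_{z_i,t_i}$-length of order $1$ but $g_P$-length $O(\delta_i)$ and thus vanish in the $C^\beta_{\rm loc}$ limit. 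This is precisely the obstruction the paper isolates in Section~\ref{ov2} and in Case~3 of Section~\ref{nonpr}. Overcoming it requires two further ingredients you do not have: (i) a nested blowup (the paper's Claim~1, Subcases A--C) using the linearized Monge--Amp\`ere equation and a cylinder Schauder argument to rule out $d^{\hat g_t}(\hat x_t,\hat x_t')\to 0$, and (ii) a delicate $i\partial\bar\partial$-exactness argument (the paper's Proposition~\ref{bd}) showing that the fiber and mixed components of $\eta_t$ actually go to zero in the \emph{collapsing} norm, so that the nontriviality survives in the base-base component. Your fiber-averaging idea is in the right spirit for the linear cylinder estimate, but it does not by itself address this weak-convergence problem in the nonlinear $\mathbb{C}^m$ case.
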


Here $\mathbf{P}^{g_{z,t}}_{x'x}$ denotes the Riemannian parallel transport operator from $x'$ to $x$ associated with $g_{z,t}$, and $d^{g_{z,t}}$ denotes the Riemannian distance associated with $g_{z,t}$. The estimate \eqref{estimatex} is subtly weaker than a $C^\alpha$ bound for $g^\bullet_t$ with respect to $g_{z_0,t}$ for any fixed $z_0 \in B$; in fact, as we will see in Remark \ref{totalfuckage}, a $C^\alpha$ bound of the latter kind cannot hold unless $f$ is a local product or the fibers are flat. However, by Remark \ref{important}, \eqref{estimatex} does imply a uniform $C^\alpha$ bound for $g^\bullet_t$ with respect to any \emph{$t$-independent} product metric on $B \times Y$.  With this in mind, we obtain the following direct application of Theorem \ref{thm51}.

\begin{corollary}\label{main2}
Given a fiber space $f:X\to B$ where $X$ is a compact $(m+n)$-dimensional Calabi-Yau manifold and $B$ is an $m$-dimensional compact K\"ahler space, let $\omega^\bullet_t$ be the Ricci-flat K\"ahler metrics on $X$ defined by \eqref{mageneral2}. Then for any compact set $K\subset X\setminus f^{-1}(f(S))$ and for any $0<\alpha<1$, there exists a constant $C_{K,\alpha}$ such that for all $t \in [0,\infty)$,
\begin{equation}\label{estimate4}
\|\omega^\bullet_t\|_{C^\alpha(K,\omega_X)}\leq C_{K,\alpha}.
\end{equation}
\end{corollary}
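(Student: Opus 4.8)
The plan is to deduce Corollary~\ref{main2} from the purely local Theorem~\ref{thm51} by a patching argument over the base. Since $K$ is a compact subset of $X\setminus f^{-1}(f(S))$, its image $f(K)$ is a compact subset of the smooth locus $B\setminus f(S)$, over which $f$ restricts to a proper holomorphic submersion with $n$-dimensional Calabi-Yau fibers. First I would cover $f(K)$ by finitely many coordinate balls $V_1,\dots,V_N$, each with $\overline{V_j}$ compactly contained in $B\setminus f(S)$, such that the concentric quarter-balls $V_j'$ still cover $f(K)$; after an affine change of coordinates we may assume $V_j=B_1(0)\subset\C^m$ and $V_j'=B_{1/4}(0)$. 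Setting $K_j:=K\cap f^{-1}(V_j')$, which is relatively compact in $f^{-1}(V_j')$ with $\bigcup_j K_j=K$, it then suffices to bound $\|\omega^\bullet_t\|_{C^\alpha(K_j,\omega_X)}$ uniformly in $t$ for each $j$.

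Fixing $j$ and working on the proper holomorphic submersion $f\colon f^{-1}(V_j)\to V_j\cong B_1(0)$, I would check that the hypotheses of Theorem~\ref{thm51} hold there. The Ricci-flat total-space metric is the restriction of $\omega_X$, and $\omega_F=\omega_X+\ddbar\rho$ is the fiberwise Ricci-flat form of Section~\ref{nonpr}. Since $\omega^\bullet_t$ is cohomologous on $X$ to $\omega_\infty+e^{-t}\omega_X$, it is also of the form $\omega_\infty+e^{-t}\omega_F+\ddbar\psi_t$ after modifying $\psi_t$ by $-e^{-t}\rho$, and the volume-form identity in \eqref{mageneral2} rewrites as \eqref{mageneralx} with $e^{G}$ a constant multiple of $\omega_X^{m+n}/(\omega_\infty^m\wedge\omega_F^n)$; as recorded in Section~\ref{nonpr}, this ratio is the pullback of a smooth (hence, on $\overline{V_j}$, bounded) function on the base --- because $\omega_F$ restricts on each fiber $X_z$ to the Ricci-flat volume form up to a $z$-dependent normalization, while $\omega_X^{m+n}$ and $\omega_\infty^m\wedge\omega_F^n$ differ precisely by that normalization and a base factor --- and the constant $c_t$ stays bounded away from $0$ and $\infty$ because the global $c_t$ of \eqref{mageneral2} does. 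The remaining hypothesis \eqref{unifequivx} over $f^{-1}(V_j)$ is exactly the assertion that $\ddbar\psi_t$ is bounded uniformly in $t$ on the compact set $f^{-1}(\overline{V_j})\subset X\setminus f^{-1}(f(S))$, which is the estimate of \cite{To} (after \cite{ST0}) quoted in the introduction. Hence Theorem~\ref{thm51} applies for all sufficiently large $t$, and for $t$ in a bounded interval $[0,T_0]$ the bound $\|\omega^\bullet_t\|_{C^\alpha(K_j,\omega_X)}\le C$ is immediate since $t\mapsto\omega^\bullet_t$ is continuous into $C^1(X,\omega_X)$.

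Applying Theorem~\ref{thm51} over $V_j$ yields the estimate \eqref{estimatex} on $B_{1/4}(0)\times Y_j$, uniformly in $t$, where $Y_j=f^{-1}(p_j)$ with $p_j$ the center of $V_j$, where $\Phi_j\colon B_1(0)\times Y_j\to f^{-1}(V_j)$ is the $C^\infty$ trivialization, and where $g_{z,t}=g_{\C^m}+e^{-t}g_{Y_j,z}$ are the collapsing product metrics. By Remark~\ref{important}, \eqref{estimatex} upgrades to a uniform-in-$t$ $C^\alpha$ bound for $g^\bullet_t$ --- the $\Phi_j$-pullback of the Riemannian metric of $\omega^\bullet_t$ --- with respect to any fixed $t$-independent product metric on $B_{1/4}(0)\times Y_j$. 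Since $\Phi_j$ is a fixed smooth diffeomorphism, $\Phi_j^*\omega_X$ induces a fixed smooth Riemannian metric on $\overline{B_{1/4}(0)}\times Y_j$, comparable there together with all its derivatives to any product metric; transporting the bound back through $\Phi_j$ gives $\|\omega^\bullet_t\|_{C^\alpha(K_j,\omega_X)}\le C_{j,\alpha}$ for all $t$. Taking $C_{K,\alpha}=\max_{1\le j\le N}C_{j,\alpha}$ proves \eqref{estimate4}.

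All of the analytic difficulty lives in Theorem~\ref{thm51} (and, upstream, in the new Schauder estimates and Liouville theorems on cylinders); the corollary itself is bookkeeping. The two points in that bookkeeping that deserve genuine care are (i) that the global Monge-Amp\`ere equation \eqref{mageneral2} localizes to the normalized form \eqref{mageneralx} with $G$ pulled back from $B$, carried out in Section~\ref{nonpr} and resting on the fiberwise Ricci-flatness of $\omega_F$; and (ii) the passage from the parallel-transport H\"older seminorm in \eqref{estimatex} to an honest $C^\alpha$ norm against the fixed background $\omega_X$, which is precisely what Remark~\ref{important} provides, given the smoothness and $t$-independence of the trivializations $\Phi_j$. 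Neither should present a real obstacle; the only mild subtlety I anticipate is checking that every constant produced by the localization --- in particular the one relating $\omega_X$ and $\Phi_j^*\omega_X$, and the bounds on $G$ and on $c_t$ --- is genuinely independent of $t$, which holds because all of this data is fixed.
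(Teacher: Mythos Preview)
Your proposal is correct and follows essentially the same approach as the paper's proof in Section~\ref{compac}: localize to a coordinate ball in $B\setminus f(S)$, rewrite $\omega_t^\bullet$ in the form required by Theorem~\ref{thm51} (the paper also absorbs the potential $u$ of $\omega_B-\omega_{\C^m}$ into $\psi_t$, which you should add to your modification $\psi_t\mapsto\psi_t-e^{-t}\rho$), invoke \cite{To} for \eqref{unifequivx}, apply Theorem~\ref{thm51}, and use Remark~\ref{important} to pass to the fixed metric $\omega_X$. The paper treats a single ball rather than spelling out the finite cover, but the content is identical.
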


As a consequence of the new uniform boundedness results of Corollaries \ref{main} and \ref{main2}, together with the results of \cite{GTZ,HT,To,TZ}, we obtain the following adiabatic limit theorem.

\begin{corollary}\label{coro}
In the setting of Corollary \ref{main2}, let $\omega_{\rm can}$ denote the unique weak solution of
\begin{equation}\label{limit}
\omega_{{\rm can}}^m=(\omega_B+\ddbar v)^m=\frac{\int_B \omega_B^m}{\int_X \omega_X^{m+n}}f_*(\omega_X^{m+n})
\end{equation}
with $v \in L^\infty(B) \cap C^\infty_{\rm loc}(X \setminus f(S))$, where fiber integration, $f_*$, is defined only on $X\setminus f^{-1}(f(S))$ but the right-hand side of \eqref{limit} defines a unique measure on $B$ with $L^p$ density w.r.t. $\omega_B^m$ for some $p > 1$. Then the Ricci-flat metrics $\omega^\bullet_t$ converge to $f^*\omega_{\rm can}$ as $t\to\infty$ in the topology of $C^\alpha_{{\rm loc}}(X\setminus f^{-1}(f(S)))$ for all $0<\alpha<1$. Moreover, the convergence takes place in $C^\infty_{{\rm loc}}(X\setminus f^{-1}(f(S)))$ if the  regular fibers are tori or finite \'etale quotients of tori, or are pairwise biholomorphic to each other.
\end{corollary}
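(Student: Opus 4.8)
The plan is to deduce Corollary \ref{coro} by combining the uniform higher-order estimates of Corollaries \ref{main} and \ref{main2} with the $C^0_{\rm loc}$-convergence of $\omega^\bullet_t$ to $f^*\omega_{\rm can}$ that is already available in the literature; the substantive analytic work is entirely contained in Theorems \ref{mainlocal} and \ref{thm51}, and what remains here is bookkeeping and interpolation. Let me first recall the known ingredients. That the right-hand side of \eqref{limit} defines a measure on $B$ with $L^p$ density with respect to $\omega_B^m$ for some $p>1$, and that \eqref{limit} then admits a unique weak solution $\omega_{\rm can}=\omega_B+\ddbar v$ with $v\in L^\infty(B)\cap C^\infty_{\rm loc}(B\setminus f(S))$, is standard (cf.\ \cite{EGZ, To}); in particular $f^*\omega_{\rm can}$ is a genuine smooth closed real $(1,1)$-form on $X\setminus f^{-1}(f(S))$. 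Moreover, by \cite{To, TWY} one has $\omega^\bullet_t\to f^*\omega_{\rm can}$ in $C^0_{\rm loc}(X\setminus f^{-1}(f(S)))$ as $t\to\infty$, since $\omega^\bullet_t=\omega_\infty+e^{-t}\omega_X+\ddbar\psi_t$ with $e^{-t}\omega_X\to 0$, $\omega_\infty=f^*\omega_B$, and $\ddbar\psi_t$ converging to $\ddbar(f^*v)$ in $C^0_{\rm loc}$ away from the singular fibers.

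For the $C^\alpha_{\rm loc}$ statement, fix a compact set $K\subset X\setminus f^{-1}(f(S))$ and $0<\alpha<1$, choose $\beta\in(\alpha,1)$, and pick a slightly larger compact set $K'$ with $K\subset K'\subset X\setminus f^{-1}(f(S))$. By Corollary \ref{main2} we have $\|\omega^\bullet_t\|_{C^\beta(K',\omega_X)}\le C_{K',\beta}$ uniformly in $t$, and since $f^*\omega_{\rm can}$ is smooth on $K'$ the differences $\omega^\bullet_t-f^*\omega_{\rm can}$ are uniformly bounded in $C^\beta(K',\omega_X)$. The standard interpolation inequality for Hölder norms on the fixed Riemannian manifold $(X\setminus f^{-1}(f(S)),\omega_X)$ then yields
$$\|\omega^\bullet_t-f^*\omega_{\rm can}\|_{C^\alpha(K,\omega_X)}\le C\Big(\|\omega^\bullet_t-f^*\omega_{\rm can}\|_{C^0(K',\omega_X)}+\|\omega^\bullet_t-f^*\omega_{\rm can}\|_{C^0(K',\omega_X)}^{1-\alpha/\beta}\|\omega^\bullet_t-f^*\omega_{\rm can}\|_{C^\beta(K',\omega_X)}^{\alpha/\beta}\Big),$$
and the right-hand side tends to $0$ as $t\to\infty$ by the $C^0_{\rm loc}$-convergence. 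Since $\alpha<1$ and $K$ were arbitrary, this proves convergence in $C^\alpha_{\rm loc}(X\setminus f^{-1}(f(S)))$ for all $0<\alpha<1$.

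For the $C^\infty_{\rm loc}$ statement, first note that when the regular fibers are tori or finite \'etale quotients of tori this is precisely the content of \cite{GTZ, HT}. When instead the regular fibers are pairwise biholomorphic, Corollary \ref{main} provides, for every compact $K\subset X\setminus f^{-1}(f(S))$ and every $k\in\N$, a uniform bound $\|\omega^\bullet_t\|_{C^k(K,\omega_X)}\le C_{K,k}$ (recall that a uniform $C^k(U\times Y,\omega_t)$-bound trivially implies a uniform $C^k(\cdot\,,\omega_X)$-bound, as observed right after Theorem \ref{mainlocal}). Hence $\{\omega^\bullet_t\}_{t\ge 0}$ is precompact in $C^\infty_{\rm loc}(X\setminus f^{-1}(f(S)))$ by Arzel\`a–Ascoli; any subsequential limit must coincide with the $C^0_{\rm loc}$-limit $f^*\omega_{\rm can}$, and since this limit is unique the whole family $\omega^\bullet_t$ converges to $f^*\omega_{\rm can}$ in $C^\infty_{\rm loc}(X\setminus f^{-1}(f(S)))$.

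As for obstacles: there is essentially none beyond what was already overcome in Theorems \ref{mainlocal} and \ref{thm51}. The only points needing care are correctly identifying the $C^0_{\rm loc}$-limit as $f^*\omega_{\rm can}$ and recording that $f^*\omega_{\rm can}$ is smooth off the singular fibers — both of which are supplied by \cite{EGZ, To, TWY} — and matching up the index $\beta\in(\alpha,1)$ in Corollary \ref{main2} with the interpolation step so that the uniform higher-order bound genuinely dominates the convergence.
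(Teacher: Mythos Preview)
Your proof is correct, but it differs from the paper's intended route in one notable respect. You invoke the $C^0_{\rm loc}$-convergence of \cite{TWY} as a black box and then interpolate against the uniform $C^\beta$ (resp.\ $C^k$) bounds of Corollaries \ref{main2} and \ref{main}. The paper, by contrast, explicitly remarks that \cite{TWY} is \emph{not} needed: the uniform $C^\alpha$ bounds of Corollary \ref{main2} already give precompactness in $C^\beta_{\rm loc}$ for every $\beta<\alpha$ via Arzel\`a--Ascoli, and the weaker convergence from \cite{To} (convergence of the potentials $\psi_t\to f^*v$ in $C^{1,\beta}_{\rm loc}$) suffices to identify any subsequential limit as $f^*\omega_{\rm can}$, hence to force convergence of the full family. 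The payoff of the paper's route is that Corollary \ref{coro} then \emph{recovers} the main theorem of \cite{TWY} as a byproduct rather than consuming it as an input; your argument is logically clean but forfeits this independent re-proof. For the $C^\infty$ statement in the pairwise-biholomorphic case your compactness-plus-uniqueness argument is essentially the same as the paper's.
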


The stated properties of $\omega_{\rm can}$ are known thanks to \cite{ST}, and \eqref{limit} implies that the Ricci curvature of $\omega_{\rm can}$ is equal to a certain Weil-Petersson form; see \cite{ST, To} for details. The main result of \cite{TWY} is that $\omega^\bullet_t\to f^*\omega_{{\rm can}}$ in $C^0_{{\rm loc}}(X\setminus f^{-1}(f(S)))$ (with weaker convergence established earlier in \cite{To}). In the case of torus fibers, $C^\infty_{\rm loc}$ convergence was established in \cite{GTZ, HT}. By Corollaries \ref{main} and \ref{main2}, these results are now improved to $C^\infty_{\rm loc}$ if the smooth fibers are pairwise isomorphic, and to $C^\alpha_{\rm loc}$ in general. Since our proof does not rely on \cite{TWY}, this also gives a new proof of the main result \cite[Thm 1.3]{TWY}.

\subsection{Previous work} Partial results in the direction of Corollaries \ref{main} and \ref{main2} had been proved earlier. If $f$ is an elliptic fibration of a $K3$ surface with $24$ singular fibers of Kodaira type $I_1$, Gross-Wilson \cite{GW} obtained a complete asymptotic description of $\omega_t^\bullet$ using a gluing method. In the general setting, a $C^0_{\rm loc}$ estimate was proved in \cite{To}. Certain components of the first derivative were bounded in \cite[Thm 2.3]{To} and \cite[Prop 4.8]{TWY}. An even stronger partial estimate was proved in \cite[Thm 3.1]{TZ}: the restriction of $e^t\omega^\bullet_t$ to $f^{-1}(U)$ (for $U$ a small ball in $B\setminus f(S)$) converges in the pointed $C^\infty$ Cheeger-Gromov topology (i.e., modulo ``stretching'' diffeomorphisms applied to the base directions) to the product of a flat $\mathbb{C}^m$ with a fiber of $f$ equipped with its preferred Ricci-flat metric. However, it is not clear how to use these ideas even to prove a full $C^\alpha_{\rm loc}$ estimate because in this paper we do not allow any reparametrization by diffeomorphisms, and the ellipticity of  \eqref{mageneral2} degenerates as $t \to \infty$, so that the methods of Calabi-Yau \cite{Cal, Ya}, Evans-Krylov \cite{Tr}, Tian \cite{gangsta}, Caffarelli \cite{Ca}, and Wang-Wu \cite{WaWu} cannot be applied directly.

The only known exception to this statement  is the case when $X_b$ is finitely covered by a torus (even without assuming that all smooth fibers are pairwise biholomorphic). As mentioned above, if $X_b$ is a torus, \eqref{estimate3} was proved in \cite{GTZ} if $X$ is projective and in \cite{HT} in general, and the case of finite quotients of tori was pointed out in \cite[p.2942]{TZ}. It turns out that in this case the standard methods can be set to work after all using the following idea: take any ball $U\subset B \setminus f(S)$ and pull back the Ricci-flat metrics $\omega_t^\bullet$ to the universal cover of $f^{-1}(U)$, which is biholomorphic to $U\times\mathbb{C}^n$; then stretch the coordinates on $\mathbb{C}^n$ by a factor of $e^{t/2}$ to make \eqref{mageneral2} uniformly elliptic, and apply the standard theory. To make this rigorous, a construction of semi-flat reference metrics on $f^{-1}(U)$ is required \cite{GSVY,GTZ, GW,He, HT,TZ2}. In fact, in \cite{GTZ,HT}, \eqref{estimate2} was proved with $\omega_t$ replaced by these (collapsing) semi-flat metrics. See also \cite{Gi} for analogous estimates for the K\"ahler-Ricci flow on $B\times T$, where $T$ is a torus and $c_1(B)<0$.

Also, if $S=\emptyset,$ i.e., if $f$ is a submersion, then global $C^\infty$ estimates are implied by the more general work of Fine \cite{Fi0,Fi} on cscK metrics, but the assumption that $S=\emptyset$ is very strong if $X$ is Calabi-Yau because it implies that $f$ is a holomorphic fiber bundle (cf. \cite{TZ3}, \cite[Thm 3.3]{TZ2}). If $S \neq \emptyset$, then the methods of \cite{Fi0,Fi} can still be used to some extent, but they only give us one particular family of solutions of \eqref{mageneral2} with good $C^\infty$ bounds on each tube $f^{-1}(U)$, and because of local non-uniqueness (cf. Remark \ref{dirichlet}) there is then no reason why this good family would agree with $\omega^\bullet_t|_{f^{-1}(U)}$.

In \cite[Question 4.2]{To2} and \cite[Question 5.2]{To3} it is conjectured that \eqref{estimate3} should still hold without the assumption that all smooth fibers are pairwise biholomorphic (i.e., that \eqref{estimate4} can be improved to $C^k$ for all $k$). This is known only if the smooth fibers are flat \cite{GTZ,HT,TZ} or if $f$ is a submersion \cite{Fi0,Fi}. In Remark \ref{discuss} we discuss why our current method is not sufficient to prove this conjecture.

\subsection{Overview of the proofs (part 1)} What allows us to go beyond the known partial estimates is a systematic use of iterated blowup-and-contradiction type arguments. Ultimately the reason why we get a contradiction in the end (``solutions of polynomial growth on $\C^m \times Y$ that are not polynomials on $\C^m$'') is separation of variables. At the linear level, this amounts to a Fourier decomposition along $Y$ of harmonic forms or functions on $\C^m \times Y$. In particular, our results are fundamentally local on the base but global on the fibers. Our purpose in this subsection is to clarify this point.

\begin{rk}\label{cntrxmpl}
The fact that $Y$ has no boundary is crucial for the estimates of Theorem \ref{mainlocal} to hold. In fact, the corresponding local result (where $Y$ would be a ball in $\C^n$) is false even if we shrink $Y$ on the left-hand side of \eqref{estimate}. We are grateful to A. Figalli and O. Savin for the following counterexample in the real setting. Consider the convex function $u_t$ on the unit square $[0,1]\times [0,1]$ given by
$$u_t(x_1,x_2)=\frac{x_1^2+e^{-t}x_2^2}{2}+e^{-t} \delta w\left({x_1}e^{\frac{t}{2}},{x_2}\right),$$
where $\delta > 0$ is small and $w$ is an $x_1$-periodic perturbation by $O(\delta)$ of
$$w'(x_1,x_2)= \sin(2\pi x_1)e^{x_2}.$$
We choose the perturbation $w$ such that $u_0$ solves the Monge-Amp\`ere equation $\det(D^2 u_0)=1$ on the unit square. This is possible for $\delta$ small because $w'$ is harmonic, so taking $w=w'$ solves the linearized Monge-Amp\`ere equation. Then $u_t$ is smooth and convex and solves $\det(D^2 u_t)=e^{-t}$ on $[0,1]\times [0,1]$, and $D^2u_t$ is uniformly equivalent (with constants independent of $t$) to
$$g_t=\left( {\begin{array}{cc}
   1 & 0 \\
   0 & e^{-t} \\
  \end{array} } \right).$$
Thus, $u_t$ satisfies the appropriate analogs of \eqref{mageneral} and \eqref{unifequiv}. Nevertheless, for all $k \geq 3$,
$$|D^k u_t|_{g_t}\sim e^{(k-2)\frac{t}{2}} \to \infty\;\,{\rm as}\;\,t \to \infty$$
a.e. in $[0,1]\times [0,1]$. A similar counterexample for the complex Monge-Amp\`ere equation is given by
$$\ti{u}_t(z_1,z_2)=|z_1|^2+e^{-t}|z_2|^2+e^{-t}\delta w\left({x_1}e^{\frac{t}{2}},{x_2}\right)$$
in the unit polydisc in $\mathbb{C}^2$ with coordinates $z_j=x_j+iy_j$, where $w$ is the same as before.
\end{rk}

\begin{rk}\label{ccc} Let us conversely explain why it is more reasonable to expect higher order estimates if $Y$ has no boundary. First of all, if we modify the example of Remark \ref{cntrxmpl} by replacing $[0,1] \times [0,1]$ by $[0,1]\times S^1$ (so that the fibers $S^1 = \mathbb{R}/\mathbb{Z}$ are now compact without boundary), the harmonic function $\sin(2\pi x_1)e^{x_2}$ used above is ruled out, but $e^{x_1}\sin(2\pi x_2)$ is not. Unlike above, the latter does not remain uniformly bounded if we replace $x_1$ by $x_1 e^{t/2}$, so we now need to pick $\delta$ small relative to \begin{small}$e^{-e^{t/2}}$\end{small} rather than just absolutely small. Then $D^2 u_t \sim g_t$ as before, but for all $k \geq 3$ and $0 < \epsilon < \frac{1}{2}$,
$$
\sup_{[\epsilon,1-\epsilon]\times S^1} |D^k u_t|_{g_t} \leq C_{k,\epsilon} e^{-\epsilon e^{\frac{t}{2}}} e^{(k-2)\frac{t}{2}} = O_{k,\epsilon}(1)\;\,{\rm as}\;\,t\to\infty.
$$
Now even at the linear level the question remains as to how to go about proving that this behavior is in fact universal. For example, why would a solution $v$ to $\Delta_{\R^d} v + e^t \Delta_Y v = 0$ on $B \times Y$ (with $B$ the unit ball in $\R^d$ and $Y$ a compact manifold without boundary) satisfy uniform interior estimates?

If $Y$ is a torus, one can simply pass to the universal cover. Let $\tilde{v}$ denote the lift of $v$ to the universal cover, and let $(z,y)$ denote fixed linear coordinates on the universal cover. Then $\hat{v}(z,y) = \tilde{v}(z,e^{\frac{t}{2}}y)$ is harmonic on $B \times B$ and the resulting standard interior estimates for $\hat{v}$ translate back into precisely the right interior estimates for $v$ on $B \times Y$. In a nutshell, this is the philosophy of \cite{GTZ,HT}, where a $C^\infty$ version of Corollary \ref{main2} was proved by an analogous covering trick if the regular fibers $X_b$ are tori. In fact, these papers establish a close analog of Corollary \ref{main}, where the collapsing product metrics $\omega_t$ get replaced by carefully constructed collapsing \emph{semi-flat} metrics \cite{GSVY,GTZ, GW,He, HT,TZ2}.

If $Y$ is not a torus, this covering trick is no longer available. The philosophy of the present paper is to instead use separation of variables, expanding $(\Delta_{\R^d} + e^t\Delta_Y)$-harmonic functions $v$ on $B \times Y$ according to the eigenfunctions of $\Delta_Y$ on each fiber $\{z\} \times Y$. This suggests that we might expect that
$$\sup_{B_{1-\epsilon} \times Y} |D^k v|_{g_t} \leq C_{k,\epsilon} e^{- \lambda \epsilon e^{\frac{t}{2}}} e^{k\frac{t}{2}}  \sup_{B \times Y} |v|$$
for all $k \in \N$ and $0 < \epsilon < 1$, where $\lambda > 0$ and $\lambda^2$ denotes the first positive eigenvalue of $\Delta_Y$. This idea is the basic source of all the new estimates in this paper, but a great deal of technical work is required to make this idea sufficiently precise even at the linear level (cf. Section \ref{ov2}).
\end{rk}

\begin{rk}\label{cntrxmpl2}
We can also compare Theorem \ref{mainlocal} to a formally identical equation that arises naturally in K\"ahler geometry where $\partial Y \neq \emptyset$, and where \eqref{unifequiv} and higher order estimates fail even though the $C^{1,1}$ norm of the potential remains bounded. Indeed, let $(X^n,\omega)$ be a closed K\"ahler manifold. Let $\Sigma$ be a closed annulus in $\mathbb{C}$, with coordinate $z$. Let $\pi:X\times\Sigma\to X$ be the projection, and let $\vp_0,\vp_1$ be two $\omega$-psh functions on $X$. For $\ve>0$, an $\ve$-geodesic connecting $\vp_0$ and $\vp_1$ is a smooth function $\Phi_\ve$ on $X\times\Sigma$ such that $\pi^*\omega+\ve idz\wedge d\ov{z}+\ddbar\Phi_\ve>0$, $\Phi_\ve$ is equal to $\vp_0$ on one boundary component of $X\times\Sigma$ (where $\vp_0$ is taken to be constant on the $S^1$ factor) and to $\vp_1$ on the other, and it solves
\begin{equation}\label{bubbo1}
(\pi^*\omega+\ve idz\wedge d\ov{z}+\ddbar\Phi_\ve)^{n+1}=\ve \pi^*\omega^n\wedge idz\wedge d\ov{z}.
\end{equation}
These $\ve$-geodesics always exist thanks to \cite{Ch} (see \cite{Bo} for a good exposition), and formally this equation is the same as \eqref{mageneral} where $\ve$ corresponds to $e^{-t}$, the $Y$ factor is replaced by $\Sigma$ and $\mathbb{C}^m$ by $X$. Since the boundary data is $S^1$ invariant, a maximum principle argument shows that so is $\Phi_\ve$, and so in the $\Sigma$ factor it only depends on $r=|z|$.
However, unless $\vp_0- \vp_1 = {\rm const}$, in general only the $C^{1,1}$ norm of $\Phi_\ve$ remains uniformly bounded as $\ve\to 0$, while higher order derivatives blow up, see \cite{CTW} and references therein. But in this situation the analog of \eqref{unifequiv} already fails: we claim that if it is satisfied uniformly in $\ve$, then $\vp_0 - \vp_1 = {\rm const}$. Indeed, it is enough to just assume that
\begin{equation}\label{bubbo2}
\pi^*\omega+\ve idz\wedge d\ov{z}+\ddbar\Phi_\ve\geq C^{-1}\pi^*\omega,
\end{equation}
on $X\times\Sigma$ for some $C$ independent of $\ve$, which is much weaker than the analog of \eqref{unifequiv}. Observe that it follows from \eqref{bubbo2} together with
\eqref{bubbo1} that
$$0\leq (\pi^*\omega+\ve idz\wedge d\ov{z}+\ddbar\Phi_\ve)|_{\{x\}\times\Sigma}\leq C\ve idz\wedge d\ov{z},$$
for all $x\in X$ and $\ve>0$. Thus,
$$\sup\nolimits_X|\ddot{\Phi}_\ve|\leq C\ve,$$
where dots denote derivatives with respect to $r$,
so the $C^{1,1}$ limit $\Phi=\lim_{\ve\to 0}\Phi_\ve$ satisfies $\ddot{\Phi}=0$ a.e. and hence is a trivial geodesic, which implies that $\vp_0$ and $\vp_1$ only differ by a constant.

It is perhaps worth remarking that this failure of \eqref{bubbo2} appears to be a genuine ``boundary'' issue. Indeed, note that in the setting of Corollary \ref{main2}, the analog of \eqref{unifequiv} or \eqref{bubbo2} is \eqref{unifequivx}, and was proved in \cite[Lemma 3.1]{To} (cf. \cite{ST0}) using a Yau Schwarz lemma argument \cite{Ya2}. If we try to imitate the same computation in the $\epsilon$-geodesic setting, aiming to prove \eqref{bubbo2}, we get
\begin{equation}\label{bubbo3}
\Delta^{\omega_\ve}(\log\tr{\omega_\ve}{(\pi^*\omega)}-A\Phi_\ve)\geq \tr{\omega_\ve}{(\pi^*\omega)}-C,
\end{equation}
where $A,C$ are uniform constants ($A$ is sufficiently large) and we have set $\omega_\ve=\pi^*\omega+\ve idz\wedge d\ov{z}+\ddbar\Phi_\ve$. Now suppose that $\vp_0 - \vp_1 \neq {\rm const}$, so that the estimate $\tr{\omega_\ve}{(\pi^*\omega)}\leq C$ (which is equivalent to \eqref{bubbo2}) cannot possibly hold with a uniform $C$, as shown above. Since $\sup_{X\times\Sigma}|\Phi_\ve|\leq C$ by \cite{Ch}, the maximum principle applied to \eqref{bubbo3} tells us that $\sup_{X \times \Sigma} (\log\tr{\omega_\ve}{(\pi^*\omega)}-A\Phi_\ve)$ must eventually be achieved on the boundary (and then $\sup_{X\times\de\Sigma}\tr{\omega_\ve}{(\pi^*\omega)}$ must of course blow up as $\ve\to 0$).
\end{rk}

\subsection{Overview of the proofs (part 2)}\label{ov2} As we already said, Theorems \ref{mainlocal} and \ref{thm51} will be proved by contradiction, and in the previous subsection we explained the source of the contradiction (separation of variables, relying on the fact that the fibers have no boundary). We will now explain the structure of the blowup argument more carefully. If the desired estimates do not hold, we obtain a sequence of solutions where the desired quantity blows up to infinity. We then distinguish three cases according to whether this quantity blows up faster than the ``natural'' parameter $e^{t}$, at the same rate, or slower. Rescaling our setting appropriately, we obtain as blowup limit spaces $\C^{m+n}, \C^m\times Y$ and $\C^m$ in the three cases respectively, and our Ricci-flat metrics converge in a suitable sense to Ricci-flat metrics on these spaces which are not ``trivial'' but are uniformly equivalent to the obvious model metrics in each case. (The fact that the limit metric is Ricci-flat is not obvious in the $\C^m$ case but was already proved in \cite{To}.) This contradicts certain Liouville theorems for Ricci-flat K\"ahler metrics, which are standard on $\C^{m+n}$ and $\C^m$ \cite{RS}, but in the $\C^m \times Y$ case were only proved relatively recently in \cite{He2,LLZ}.

In \cite{CW} the usual Liouville theorem for Ricci-flat K\"ahler metrics on $\C^d$ was used to prove the Evans-Krylov estimate for the complex Monge-Amp\`ere equation on a ball in $\C^d$ by blowup and contradiction. This corresponds to the rapidly forming case with blowup limit $\C^{m+n}$ in our setting, although here we are happy to \emph{assume} Evans-Krylov to simplify matters. The ``natural'' case with blowup limit $\C^m \times Y$ is then reasonably similar to the first case, given the new Liouville theorem on $\C^m \times Y$ from \cite{He2, LLZ}. Thus, for us, almost all of the difficulty is concentrated in the slowly forming case with limit $\C^m$.

More specifically, it is a priori unclear in this case how to prove that the collapsing Ricci-flat metrics pass to the limit in a sufficiently strong topology to ensure that their limit is not flat. We overcome this issue using a combination of two arguments. First of all, we prove a sharp new Schauder estimate for the Laplacian on balls of arbitrary radii in $\C^m \times Y$. This is itself proved by blowup and contradiction in the spirit of \cite{HN, Si}, where again the same three cases arise as in the overall nonlinear argument. (The collapsing case with limit space $\C^m$ is again the hardest case here and suffers from similar ``weak convergence'' issues as the collapsing case in the overall nonlinear argument. However, these issues are less severe in the linear setting, which prevents a logical cycle.) The use of this Schauder estimate is to slightly improve the regularity of the collapsing Ricci-flat metrics. But since this improved regularity is itself measured with respect to a collapsing rather than a fixed reference metric, there is no obvious version of the Ascoli-Arzel\`a theorem for tensors that would imply convergence in a sufficiently strong topology. Our second key argument (after the linear Schauder estimates) overcomes this final issue by exploiting the K\"ahler property of the metrics $\omega_t^\bullet$ in a delicate manner (precisely, the fact that they can be written as the derivative of another tensor after subtracting a suitable reference metric). In essence, this is also what is needed to pass to a contradictory limit on $\C^m$ in the collapsing case of the linear Schauder theory. See Lemma \ref{l:braindead} and Proposition \ref{bd} for this crucial ``exactness'' argument.

This outline covers both Theorems \ref{mainlocal} and \ref{thm51}. However, whereas the proof of Theorem \ref{mainlocal} follows this outline rather closely, the proof of Theorem \ref{thm51} is more involved. Most importantly, if the complex structure is not a product, it turns out that there is no clean way to isolate the required linear Schauder theory as a separate step; instead, the three cases of the linear blowup argument must be carried out as a nested sub-step within the third case of the nonlinear blowup argument.

\begin{rk}
As in \cite{FZ,TWY} (see also \cite[\S 5.14]{To4}), we expect that the methods we introduced in this paper in the elliptic context (including the Schauder estimates of Section \ref{linear}) will adapt to the parabolic context, with the aim of proving analogs of  Corollaries \ref{main} and \ref{main2} for the K\"ahler-Ricci flow on compact K\"ahler manifolds with semiample canonical bundle and intermediate Kodaira dimension.
\end{rk}

\begin{rk}
In this direction, let us also point out that Theorems \ref{mainlocal} and \ref{thm51} do not rely on the Ricci-flatness of $\omega^\bullet_t$ in any deep differential-geometric way. All we use in the proofs is that the fibers of $f$ are Calabi-Yau manifolds and that the K\"ahler metrics $\omega^\bullet_t$ satisfy the Monge-Amp\`ere equation \eqref{mageneral} (resp. \eqref{mageneralx}) with the function $F$ (resp. $G$) pulled back from $B$. This does not in general imply that $\omega^\bullet_t$ is Ricci-flat (it does imply that its Ricci curvature is a pullback from $B$), but in our arguments these properties suffice to conclude that the limit metrics obtained after blowup (on $\C^{m+n}, \C^m\times Y$ or $\C^m$) are in fact Ricci-flat, contradicting the appropriate Liouville theorems for Ricci-flat metrics. Since our main geometric applications concern Calabi-Yau manifolds, we will not belabor this point.
\end{rk}

\subsection{Organization of the paper} Section \ref{partialC1} gathers some local estimates and Liouville theorems for Ricci-flat metrics from the literature and adapts them slightly to fit our needs. The first new technical ingredient, proved in Section \ref{linear}, is a Schauder estimate on balls in Riemannian cylinders $\mathbb{R}^d\times Y$ (here $Y$ is an arbitrary closed manifold), with sharp dependence of the constants on the radius of the ball. For convenience, and to highlight exactly what the ingredients are, we prove this estimate in a general framework in Theorem \ref{t:schauder}, which we then specialize to $\ddbar$-exact real $(1,1)$-forms in Theorem \ref{moron} and to scalar functions in Theorem \ref{scalar}. Theorem \ref{mainlocal} is proved in Section \ref{sekt}, via a blowup argument and using the local estimates and Liouville theorems of Section \ref{partialC1} and the Schauder estimates of Section \ref{linear}. Section \ref{nonpr} contains the proof of Theorem \ref{thm51}, which is similar in spirit to the proof of Theorem \ref{mainlocal} but requires new ideas because of the varying fiberwise complex structures. In particular, instead of using the ready-made Schauder Theorems \ref{moron} or \ref{scalar}, we will go back to the general Schauder Theorem \ref{t:schauder} and use the key steps of its proof as ingredients of the overall proof of Theorem \ref{thm51}. Lastly Corollaries \ref{main} and \ref{main2} are quickly derived from Theorems \ref{mainlocal} and \ref{thm51} respectively in Section \ref{compac}.

\subsection{Acknowledgments} We are grateful to A. Figalli and O. Savin for showing us the example in Remark \ref{cntrxmpl}, which motivated us to develop the methods of this paper. We would also like to thank Y. Zhang for some very helpful conversations regarding Proposition \ref{p:closure}, Y. Rubinstein for pointing out the reference \cite{gangsta}, and F.T.-H. Fong and M.-C. Lee for finding a mistake in Sections \ref{claim3product} and \ref{claim3nonproduct} in a previous version of this paper. This work was completed during the second-named author's visits to the Center for Mathematical Sciences and Applications at Harvard University and to the Institut Henri Poincar\'e in Paris (supported by a Chaire Poincar\'e at IHP funded by the Clay Mathematics Institute), which he would like to thank for the hospitality.

\section{Local estimates and Liouville theorems for Ricci-flat metrics}\label{partialC1}

In this section we gather together some known results and adapt them slightly to our purposes.

\subsection{Local estimates}
To start, we recall the following local $C^\infty$ bounds for Ricci-flat K\"ahler metrics, which go back to \cite{Ya} and appear explicitly e.g. in \cite[Sections 3.2, 3.3]{HT} and \cite[Lemma 2.2]{TZ}. These can be proved using the methods of \cite{Ca, Cal, gangsta, Tr, Ya, WaWu} and of elliptic bootstrapping.

\begin{proposition}\label{localest2}
For all $d,k \in \N_{\geq 1}$ and $A \geq 1$, there exists a constant $C_k = C_k(d,A)$ such that the following holds. Let $B_1(0)$ denote the unit ball in $\mathbb{C}^{d}$ together with the standard Euclidean K\"ahler form $\omega_{\mathbb{C}^d}$.
If  $\omega$ is a Ricci-flat K\"ahler form on $B_1(0)$ such that
\begin{equation}\label{2assum}
A^{-1}\omega_{\mathbb{C}^d}\leq \omega \leq A\, \omega_{\mathbb{C}^d},
\end{equation}
then it holds for all $k \in \N_{\geq 1}$ that
\begin{equation}\label{toprove}
\|\omega\|_{C^k(B_{3/4}(0))}\leq C_k.
\end{equation}
\end{proposition}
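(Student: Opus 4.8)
The plan is to reduce Proposition \ref{localest2} to a statement about a single scalar complex Monge--Amp\`ere equation and then run the classical interior estimates of Calabi and Evans--Krylov (equivalently Trudinger \cite{Tr}), keeping careful track of the fact that all the relevant data are controlled by $d$, $\alpha$ and $A$ alone. The point is that the $C^0$ and $C^2$ estimates of Yau and Calabi--Yau are \emph{given} to us in the hypothesis \eqref{2assum}, so only the higher-order theory is needed.

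First I would pass to a local K\"ahler potential. Since $\omega$ is a $d$-closed real $(1,1)$-form on the ball $B_1(0)$, the local $\partial\db$-lemma gives a smooth strictly plurisubharmonic $u$ on $B_1(0)$ with $\omega=\ddbar u$. Ricci-flatness means $\ddbar\log\det(u_{i\bar j})=-\mathrm{Ric}(\omega)=0$, so $f:=\log\det(u_{i\bar j})$ is pluriharmonic on $B_1(0)$; equivalently $u$ solves $\det(u_{i\bar j})=e^f$. Now \eqref{2assum} says precisely that the Hermitian matrix $(u_{i\bar j})$ lies between $A^{-1}I$ and $AI$, so taking determinants gives $|f|\leq d\log A$ on $B_1(0)$. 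Since $f$ is in particular harmonic, interior estimates for harmonic functions upgrade this to $\|f\|_{C^k(B_{7/8}(0))}\leq C_k(d,A)$ for every $k$. I emphasize that no bound on the \emph{full real} Hessian of $u$ is available, nor is one needed: only the complex Hessian $(u_{i\bar j})$, i.e.\ $\omega$ itself, enters the equation, and that is exactly what \eqref{2assum} controls.

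The heart of the matter is the interior $C^{2,\alpha}$ estimate, i.e.\ a uniform $C^\alpha$ bound for $\omega$ on a slightly smaller ball. Here I would invoke Calabi's third-order estimate for $\det(u_{i\bar j})=e^f$ with the \emph{flat} background $\omega_{\C^d}$: the Calabi quantity $S$ (the squared $\omega$-norm of the difference of the Christoffel symbols of $\omega$ and $\omega_{\C^d}$) satisfies a differential inequality $\Delta_\omega S\geq -C(1+S)$ in which $C$ depends only on $d$, on the $C^2$-bound for $f$ just obtained, and on $A$ (through \eqref{2assum}); a cutoff/maximum-principle argument then bounds $S$, hence $|\nabla^3_{\C^d}u|$, on $B_{13/16}(0)$ in terms of $d$ and $A$, giving $u\in C^{2,1}(B_{13/16}(0))\subset C^{2,\alpha}$ with uniform bounds, i.e.\ $\|\omega\|_{C^\alpha(B_{13/16}(0))}\leq C(d,\alpha,A)$. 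Alternatively one quotes the complex Evans--Krylov theorem \cite{Tr} directly: its interior $C^{2,\alpha}$ estimate needs only the uniform ellipticity \eqref{2assum}, the $L^\infty$-bound on the complex Hessian (again \eqref{2assum}), and the $C^\alpha$-bound on $f$, with no dependence on lower-order norms of $u$. I expect this to be the only genuine content of the proof; everything after it is routine bootstrapping.

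Finally I would bootstrap with the standard linear Schauder machine. Differentiating $\log\det(u_{i\bar j})=f$ in a coordinate direction $\de_k$ (rigorously, first differencing) gives $u^{i\bar j}(\de_k u)_{i\bar j}=\de_k f$, a linear second-order equation for $\de_k u$ whose leading coefficients $u^{i\bar j}$ are uniformly elliptic and of class $C^\alpha$, with smooth, uniformly bounded right-hand side. Interior Schauder estimates give $\de_k u\in C^{2,\alpha}$, i.e.\ $u\in C^{3,\alpha}$, on a slightly smaller ball; now the coefficients are $C^{1,\alpha}$, and iterating produces uniform $C^{k,\alpha}$ bounds for $u$, hence $\|\omega\|_{C^k}\leq C_k$, on a chain of shrinking balls whose radii can be kept above $3/4$. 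By construction every constant produced depends only on $d$, $\alpha$ and $A$, which is the assertion \eqref{toprove}.
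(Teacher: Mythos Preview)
Your proposal is correct and is precisely the standard argument the paper has in mind when it writes that the proposition ``can be proved using the usual methods of \cite{Ca, Cal, Tr, Ya} and of elliptic bootstrapping'' without giving further details. One small point to tighten: the interior Schauder estimate you apply to $\partial_k u$ needs an $L^\infty$ bound on $\partial_k u$, which does not come for free since $u$ is only determined up to a pluriharmonic function; you can fix this either by choosing $u$ via the explicit Poincar\'e/$\bar\partial$-Poincar\'e formulas (so $\|u\|_{C^0}\leq C\|\omega\|_{C^0}$) and then using $\Delta u=\mathrm{tr}_{\omega_{\C^d}}\omega\in C^\alpha$ with ordinary Schauder for the flat Laplacian to get $\|u\|_{C^{2,\alpha}}\leq C$, or by running the bootstrap directly on the bounded components $\omega_{k\bar l}$ themselves.
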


We also need a uniform version of these estimates for mildly varying families of complex structures. For this and also for some later purposes, a version of the Newlander-Nirenberg theorem is required. We follow the approach of \cite[\S 5.7]{Hor}, which in turn originated from \cite[\S 12]{Koh}.

\begin{proposition}\label{p:nn}
For all $d,k \in \N_{\geq 1}$ and $0 < \alpha < 1$, there exist $\kappa_0 = \kappa_0(d,\alpha) > 0$ and $C_k = C_k(d,\alpha)$ such that the following holds. Let $J$ be a complex structure on the unit ball $B_1(0) \subset \C^d$ with
\begin{equation}\|J-J_{\C^d}\|_{C^{1,\alpha}(B_1(0))} \leq \kappa\end{equation} for some $\kappa \in (0,\kappa_0)$. Then there exist $J$-holomorphic coordinates $\hat{z}^1, \ldots, \hat{z}^d$ on $B_{3/4}(0)$ such that \begin{equation}
\|\hat{z}^j - z^j\|_{C^{2,\alpha}(B_{3/4}(0))} \leq C_1 \kappa\;\,{\rm\textit{for\;all}}\;\,j \in \{1,\ldots, d\},
\end{equation}
where $z^1, \ldots, z^d$ are the standard coordinates on $\C^d$. Moreover, if we assume in addition that
\begin{align}\label{seinfeld}
\|J-J_{\C^d}\|_{C^{k,\alpha}(B_1(0))} \leq A
\end{align} for some $k \geq 2$ and some constant $A$, then these coordinates may be chosen to also satisfy
\begin{align}
\|\hat{z}^j - z^j\|_{C^{k+1,\alpha}(B_{3/4}(0))} \leq C_kA\;\;{\rm\textit{for\;all}}\;\,j \in \{1,\ldots, d\}.
\end{align}
\end{proposition}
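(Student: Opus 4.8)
The plan is to follow the Kohn--Hörmander approach to Newlander--Nirenberg closely, but to keep careful track of how the estimates depend on $\kappa$ and on the higher-order bound $A$. First I would encode the integrable complex structure $J$ by the associated $\db$-operator: writing $\db_J$ for the $(0,1)$-part of the exterior derivative in the $J$-structure, one has $\db_J = \db + \mathcal{A}$, where $\mathcal{A}$ is a first-order operator whose coefficients are controlled linearly by $J - J_{\C^d}$, so $\|\mathcal{A}\|_{C^{k,\alpha}} \lesssim \|J - J_{\C^d}\|_{C^{k,\alpha}}$. Integrability of $J$ is equivalent to $\db_J^2 = 0$. To produce $J$-holomorphic coordinates it suffices to solve, for each $j$, the equation $\db_J \hat z^j = 0$ with $\hat z^j$ close to $z^j$; that is, $\db u^j = -\mathcal{A} z^j - \mathcal{A} u^j$ where $\hat z^j = z^j + u^j$, a fixed-point problem for $u^j$.

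The key technical device is a right inverse for $\db$ on balls with good estimates: on $B_{7/8}(0)$ (say), pick a bounded linear operator $T$ with $\db T \eta = \eta$ for every $\db$-closed $(0,1)$-form $\eta$, satisfying $\|T\eta\|_{C^{k+1,\alpha}(B_{3/4})} \le C_k \|\eta\|_{C^{k,\alpha}(B_{7/8})}$ for all $k \ge 0$. Such $T$ (e.g. the Bochner--Martinelli--Koppelman homotopy operator, or a solution operator for the $\db$-Neumann/Cauchy--Riemann system à la Hörmander) gains one derivative in Hölder spaces. One then sets up the contraction $u \mapsto -T(\mathcal{A}z + \mathcal{A}u)$ on a small ball in $C^{2,\alpha}(B_{3/4})$: the source term has $C^{1,\alpha}$ norm $\lesssim \kappa$, and the nonlinear term $\mathcal{A}u$ is $C^{1,\alpha}$-small and Lipschitz in $u$ with small constant once $\kappa < \kappa_0$. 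Here I must also verify that the forms being inverted are genuinely $\db$-closed so that $T$ applies — this is where integrability $\db_J^2=0$ enters: it forces $\db(\mathcal{A}z + \mathcal{A}u) = $ (quadratic error terms involving $u$), so one actually iterates on the system rather than component-by-component, or equivalently runs a Nash--Moser-free fixed point using that $\db_J u = 0$ is equivalent to $u = -T \mathcal{A}_J u + h$ with $h$ holomorphic, absorbing the holomorphic part into the coordinate. The contraction mapping theorem then yields $u$ with $\|u\|_{C^{2,\alpha}(B_{3/4})} \le C_1\kappa$, hence the $\hat z^j$ are $J$-holomorphic and the Jacobian is close to the identity, so they form a coordinate system on $B_{3/4}$; this gives the first assertion.

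For the higher regularity statement, assuming in addition $\|J - J_{\C^d}\|_{C^{k,\alpha}} \le A$, I would bootstrap: the fixed-point equation $u = -T(\mathcal{A}z + \mathcal{A}u)$ together with the gain-of-one-derivative estimate for $T$ and the algebra/product estimates for $C^{j,\alpha}$ Hölder spaces gives, inductively on $j$ from $2$ up to $k$, a bound $\|u\|_{C^{j+1,\alpha}(B_{3/4})} \le C_j(\|\mathcal{A}\|_{C^{j,\alpha}} + \|\mathcal{A}\|_{C^{j,\alpha}}\|u\|_{C^{j,\alpha}} + \ldots) \le C_j A$, where the lower-order norms of $u$ are already controlled (ultimately by $\kappa \le \kappa_0$) and the product terms with the top-order piece of $\mathcal{A}$ are linear in that piece. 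Shrinking the ball slightly at each step (from $7/8$ down toward $3/4$) avoids any loss; alternatively one fixes $B_{3/4}$ and $B_{7/8}$ throughout and uses interior Schauder-type estimates for $T$. I expect the main obstacle to be bookkeeping the precise form of the nonlinearity so that the top-order term enters linearly (giving the clean bound $C_k A$ rather than something exponential in $A$), and making sure the $\db$-closedness needed to apply $T$ is maintained at every stage of the iteration using integrability of $J$ — this is exactly the point where the Kohn--Hörmander formalism does the work, and I would cite \cite[\S 5.7]{Hor} for the analytic core and supply only the estimate-tracking.
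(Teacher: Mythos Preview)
Your approach is genuinely different from the paper's, and the point you yourself flag as ``the main obstacle'' is where it is incomplete. You want to run a contraction $u \mapsto -T(\mathcal{A}z + \mathcal{A}u)$ with $T$ a Hölder-space right inverse for the \emph{standard} $\db$, but such a $T$ (say Bochner--Martinelli--Koppelman) satisfies only the homotopy identity $\db T f = f - T\db f$, so it is a right inverse only on $\db$-closed forms. Now integrability gives $\db(\mathcal{A}(z+u)) = -\mathcal{A}\,\db u - \mathcal{A}^2(z+u)$, which vanishes \emph{at} a solution (where $\db u = -\mathcal{A}(z+u)$) but not along the iteration; the fixes you sketch (``iterate on the system'', or ``$u = -T\mathcal{A}_J u + h$'') do not literally work as stated, and sorting this out is exactly the content of the Nijenhuis--Woolf/Malgrange proofs, not a bookkeeping detail. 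Your appeal to \cite[\S 5.7]{Hor} is misplaced here: Hörmander's method in that section is the weighted $L^2$ method for $\db_J$, not the integral-operator scheme you describe.

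The paper follows Hörmander's route and thereby avoids the closedness problem entirely. Since $\|J-J_{\C^d}\|_{C^1}$ is small, the function $|z|^2$ is still strictly $J$-psh (via the identity $i\partial^J\ov\partial^J u = (D^2u)^J + J\circledast DJ \circledast Du$), so Hörmander's $L^2$ estimates apply to $\db_J$ itself and yield $u^j \in W^{1,2}_{\rm loc}$ with $\db_J(z^j+u^j)=0$ and $\|u^j\|_{W^{1,2}} \lesssim \|\db_J z^j\|_{L^2} \lesssim \kappa$. Regularity is then a scalar second-order problem: with $L(u) = \tr{\omega_{\C^d}}(i\partial^J\ov\partial^J u)$ one has $L(u^j) = -L(z^j)$, where $L$ has coefficients $C^{1,\alpha}$-close to the Euclidean Laplacian and $\|L(z^j)\|_{C^\alpha} \lesssim \kappa$; De Giorgi--Nash--Moser controls $\|u^j\|_{L^\infty}$ by the $L^2$ norm, and Schauder gives $\|u^j\|_{C^{2,\alpha}} \lesssim \kappa$. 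The higher-order bound is then just Schauder for $L$ with $\|L(z^j)\|_{C^{k-1,\alpha}} \lesssim A$. This route trades your first-order system with a compatibility constraint for a single scalar elliptic equation, which is why the estimate-tracking becomes straightforward.
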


\begin{proof}
First of all, a simple local calculation \cite[p.443]{TWWY} shows that for any $C^2$ function $u$ we have
\begin{equation}\label{calculate}
i\partial^J\ov{\partial}{}^J u=(D^2u)^J + J\circledast DJ\circledast Du,
\end{equation}
where $(D^2u)^J$ is the $J$-invariant part of the coordinate Hessian of $u$ and $\circledast$ is a tensorial contraction. It follows that if $J$ and $J_{\C^d}$ are sufficiently $C^1$ close (depending at worst on $d$), then the function $|z|^2$ is still strictly $J$-psh on $B_1(0)$. Thus, by using H\"ormander's $L^2$ estimates (cf. \cite[proof of Thm 5.7.4]{Hor}, and in particular \cite[Lemma 5.7.1]{Hor}), we obtain functions $u^j \in W^{1,2}_{\rm loc}(B_1(0))$ such that
$$\|u^j\|_{W^{1,2}(B_{9/10}(0))} \leq C\kappa$$ and $\ov{\partial}{}^J(z^j + u^j) = 0$ holds in the weak sense. Now consider the operator
$$L(u) = \tr{\omega_{\C^d}}{(i\partial^J\ov{\partial}{}^J u)}.$$ This is elliptic because of the $C^0$ closeness of $J$ and $J_{\C^d}$. In fact, the second-order coefficients of $L$ are close to the identity in $C^{1,\alpha}$, and the first-order coefficients of $L$ are small in $C^\alpha$, by \eqref{calculate}. Moreover, by construction, the distribution $L(u^j) = - L(z^j)$ lies in $C^{\alpha}$ with $$\|L(u^j)\|_{C^\alpha(B_1(0))}\leq C\kappa.$$ The second-order part of $L$ can be written in divergence form without loss because its coefficients are close to the identity in $C^{1,\alpha}$. By \cite[Thm 5.5.3(b), p.153]{Mor} with $q = 2$ (see \cite[p.151]{Mor} for definitions), $u^j$ $\in$ $C$\begin{small}$^{1,\mu}_{\rm loc}$\end{small}$(B_1(0))$ for any $\mu \in (0,1)$, allowing us to absorb the first-order terms of $L(u^j)$ into the right-
hand side. By \cite[Thm 5.20]{GM2ndEd}, $u^j$ $\in$ $C$\begin{small}$^{2,\alpha}_{\rm loc}$\end{small}$(B_1(0))$. Thus, by any version of Schauder theory,
\[\begin{split}
\|u^j\|_{C^{2,\alpha}(B_{3/4}(0))}&\leq C (\|L(u^j)\|_{C^\alpha(B_{7/8}(0))}+\|u^j\|_{L^\infty(B_{7/8}(0))})\\
&\leq C\kappa + C\|u^j\|_{L^\infty(B_{7/8}(0))}.
\end{split}\]
Finally, by \cite[Thm 8.17]{GT} (which is implicit in the above references to \cite{GM2ndEd, Mor}),
$$\|u^j\|_{L^\infty(B_{7/8}(0))} \leq C\|u^j\|_{L^2(B_{9/10}(0))} \leq C\kappa.$$
In particular, the functions $\hat{z}^j = z^j + u^j$ are indeed coordinates because their gradients are pointwise linearly independent. This proves the first part of the statement. If we assume in addition that \eqref{seinfeld} holds, it follows that the same functions $u^j$ as above satisfy $\|L(u^j)\|_{C^{k-1,\alpha}(B_1(0))}\leq CA$ and that the coefficients of $L$ are bounded in $C$\begin{small}$^{k,\alpha}$\end{small}, so the claim again follows from Schauder theory.
\end{proof}

With these preparations, we can now easily prove the required uniform local estimate for Ricci-flat K\"ahler metrics with respect to a mildly varying family of complex structures.

\begin{proposition}\label{localest3}
For all $d,k \in \mathbb{N}_{\geq 1}$, $0 < \alpha < 1$, and $A \geq 1$, there exist constants $\kappa_0 = \kappa_0(d,\alpha) > 0$ and $C_k = C_k(d,\alpha,A)$ such that the following holds. Let $B_1(0)$ denote the unit ball in $\mathbb{C}^{d}$ together with the standard Euclidean metric $g_{\mathbb{C}^d}$. Let $J$ be a complex structure on $B_1(0)$ such that
\begin{align}\label{hmpf}
\|J-J_{\C^d}\|_{C^{1,\alpha}(B_1(0))} < \kappa_0\;\,{\rm \textit{and}}\;\,\|J-J_{\C^d}\|_{C^{k,\alpha}(B_1(0))} \leq A.
\end{align}
If $g$ is a Ricci-flat $J$-K\"ahler metric on $B_1(0)$ that satisfies
\begin{equation}\label{2assum2666}
A^{-1}g_{\mathbb{C}^d}\leq g\leq A\, g_{\mathbb{C}^d},
\end{equation}
then it holds for the same $k$ as in \eqref{hmpf} that
\begin{equation}\label{toprove2667}
\|g\|_{C^{k,\alpha}(B_{1/2}(0))}\leq C_k.
\end{equation}
\end{proposition}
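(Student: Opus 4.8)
The plan is to reduce the varying–complex–structure case to the fixed case (Proposition~\ref{localest2}) by passing to the $J$-holomorphic coordinates produced by Proposition~\ref{p:nn}. First I would apply the hypothesis \eqref{hmpf}: since $\|J-J_{\C^d}\|_{C^{1,\alpha}(B_1(0))}<\kappa_0$, Proposition~\ref{p:nn} furnishes $J$-holomorphic coordinates $\hat z^1,\dots,\hat z^d$ on $B_{3/4}(0)$ with $\|\hat z^j-z^j\|_{C^{2,\alpha}(B_{3/4}(0))}\leq C_1\kappa_0$, and — because we also have the higher bound $\|J-J_{\C^d}\|_{C^{k,\alpha}(B_1(0))}\leq A$ — the \emph{same} coordinates satisfy $\|\hat z^j-z^j\|_{C^{k+1,\alpha}(B_{3/4}(0))}\leq C_kA$. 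Let $\Psi=(\hat z^1,\dots,\hat z^d)\colon B_{3/4}(0)\to\C^d$ denote this holomorphic chart. By shrinking $\kappa_0$ (depending only on $d,\alpha$) we may assume $\Psi$ is a diffeomorphism onto its image, that its image contains a fixed Euclidean ball $B_r(\Psi(0))$ for some $r=r(d,\alpha)>0$, and that $\|D\Psi-\mathrm{Id}\|_{C^0}$ and $\|D\Psi^{-1}-\mathrm{Id}\|_{C^0}$ are as small as we like; after a harmless affine rescaling we may take $r=\tfrac12$, say, so that $\Psi^{-1}$ is defined on the \emph{standard} unit ball and maps it into $B_{3/4}(0)$.

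Next I would transport the metric: set $\tilde g=(\Psi^{-1})^*g$ on $B_1(0)\subset\C^d$. Since $\Psi$ is $(J,J_{\C^d})$-biholomorphic, $\tilde g$ is a genuine K\"ahler metric for the \emph{standard} complex structure $J_{\C^d}$, and it is Ricci-flat because the Ricci form is a biholomorphic invariant. The uniform equivalence \eqref{2assum2667} together with the $C^1$ bounds on $\Psi$ and $\Psi^{-1}$ (controlled by $C_1\kappa_0$ and hence by a constant depending only on $d,\alpha$) gives $\tilde A^{-1}g_{\C^d}\leq\tilde g\leq\tilde A\,g_{\C^d}$ on $B_1(0)$ for some $\tilde A=\tilde A(d,\alpha,A)$. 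Now Proposition~\ref{localest2} applies to $\tilde g$ and yields $\|\tilde g\|_{C^k(B_{3/4}(0))}\leq C_k(d,\alpha,\tilde A)$; combined with elliptic bootstrapping (or simply quoting the $C^{k,\alpha}$ version that the same blowup/Schauder methods give — indeed $\tilde g$ solves a Monge--Amp\`ere equation with Ricci-flat, hence smooth, data) we also get the $C^{k,\alpha}$ bound $\|\tilde g\|_{C^{k,\alpha}(B_{3/4}(0))}\leq C_k$. Pulling back, $g=\Psi^*\tilde g$ on $B_{1/2}(0)\subset B_{3/4}(0)$, and since $\|\Psi\|_{C^{k+1,\alpha}(B_{3/4}(0))}\leq C_kA$ while $\Psi^{-1}$ enjoys the matching bound on the relevant balls, the composition/chain rule estimate gives $\|g\|_{C^{k,\alpha}(B_{1/2}(0))}\leq C_k(d,\alpha,A)$, which is exactly \eqref{toprove2667}.

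A few technical points deserve care. To get the $C^{k,\alpha}$ (rather than merely $C^k$) conclusion for $\tilde g$ from Proposition~\ref{localest2} as stated, I would note that $\tilde g$ is Ricci-flat K\"ahler with $\tilde A^{-1}g_{\C^d}\leq\tilde g\leq\tilde A g_{\C^d}$, so by the standard Calabi--Yau/Evans--Krylov/Schauder bootstrap (the same references \cite{Ca,Cal,Tr,Ya} cited after Proposition~\ref{localest2}) its potential is bounded in $C^{k+2,\alpha}$ on slightly smaller balls for \emph{every} $k$; this is completely classical and requires no collapsing. The interior radius loss is absorbed once and for all: $B_{1/2}(0)\Subset\Psi^{-1}(B_{3/4}(0))\Subset B_{3/4}(0)$ with all inclusions uniform in $\kappa_0$. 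Finally, the constant $\kappa_0=\kappa_0(d,\alpha)$ is fixed first (small enough that Proposition~\ref{p:nn} applies, that $\Psi$ is a diffeomorphism with the stated containments, and that $|z|^2$-type computations go through), and only then is $C_k=C_k(d,\alpha,A)$ chosen.

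I do not expect a serious obstacle here: the statement is essentially a change-of-coordinates wrapper around Proposition~\ref{localest2}, and all the genuinely hard analysis sits inside Propositions~\ref{localest2} and~\ref{p:nn}, which we may assume. The only mildly delicate point is bookkeeping the domains and the $\kappa_0$-versus-$A$ dependence of the constants so that the chart $\Psi$ has \emph{uniform} size and regularity — i.e. making sure the composition estimate at the end does not secretly need a bound on $\Psi$ that depends on more than $d,\alpha,A$. This is handled exactly as indicated: the low-order control of $\Psi$ (which governs the domains and the metric equivalence) depends only on $d,\alpha$ through $\kappa_0$, while the high-order control (which governs the $C^{k,\alpha}$-norm of the pulled-back metric) depends on $d,\alpha,A$ through the second estimate in Proposition~\ref{p:nn}, matching the desired dependence of $C_k$.
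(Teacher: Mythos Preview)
Your proposal is correct and follows essentially the same approach as the paper: use Proposition~\ref{p:nn} to obtain $J$-holomorphic coordinates close to the standard ones in $C^{2,\alpha}$ and bounded in $C^{k+1,\alpha}$, apply Proposition~\ref{localest2} in these coordinates, and translate the resulting estimates back. The paper's proof is a two-sentence sketch of exactly this; your additional care with the domain bookkeeping and the $\kappa_0$-versus-$A$ dependence of constants is appropriate and does not diverge from the intended argument.
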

\begin{proof}
Proposition \ref{p:nn} yields $J$-holomorphic coordinates on $B_{3/4}(0)$ close to the standard ones in $C^{2,\alpha}$ (as close as we like if we are willing to decrease $\kappa_0$), and differing from them by a bounded amount in $C^{k+1,\alpha}$. We can then simply apply Proposition \ref{localest2} in these new coordinates to get $C^\ell$ bounds for $g$ for all $\ell\geq 1$ and translate these bounds back to the standard coordinates to get \eqref{toprove2667}.
\end{proof}
\subsection{Liouville theorems}\label{liouville}

Recall the following well-known Liouville theorem (cf. \cite[Thm 2]{RS}).

\begin{theorem}\label{lio1}
Let $\omega$ be a Ricci-flat K\"ahler form on $\mathbb{C}^m$ such that
\begin{equation}\label{bounds}
C^{-1}\omega_{\mathbb{C}^m}\leq\omega\leq C\omega_{\mathbb{C}^m}
\end{equation}
for some constant $C \geq 1$, where $\omega_{\C^m}$ is the standard K\"ahler form on $\C^m$. Then $\omega$ is constant.
\end{theorem}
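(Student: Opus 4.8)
The plan is to reduce the statement to a scalar Liouville theorem for the Monge--Amp\`ere potential. Since $\omega$ is a Ricci-flat K\"ahler form on $\mathbb{C}^m$ satisfying \eqref{bounds}, its cohomology class in $H^{1,1}(\mathbb{C}^m)$ is trivial (as $\mathbb{C}^m$ is Stein and contractible), so the $\partial\overline\partial$-lemma gives a smooth function $\varphi$ on $\mathbb{C}^m$ with $\omega = \ddbar\varphi$; here $\varphi$ is automatically strictly psh and, after subtracting the Euclidean potential, the bound \eqref{bounds} says precisely that $C^{-1}(\delta_{j\bar k}) \leq (\varphi_{j\bar k}) \leq C(\delta_{j\bar k})$, i.e. $\varphi$ has bounded (in fact comparable to Euclidean) complex Hessian. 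Ricci-flatness of $\omega$ means $\det(\varphi_{j\bar k})$ is constant, which after rescaling $\varphi$ we may take to equal $1$, so that $\varphi$ solves the homogeneous complex Monge--Amp\`ere equation $\det(\varphi_{j\bar k}) = 1$ on all of $\mathbb{C}^m$ with uniformly elliptic linearization.

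From here I would invoke the interior Evans--Krylov estimate together with a scaling argument. Concretely: for any $R > 0$, the rescaled function $\varphi_R(z) = R^{-2}\varphi(Rz)$ again solves $\det((\varphi_R)_{j\bar k}) = 1$ on $B_1(0)$ with the same ellipticity bounds, so the interior Evans--Krylov estimate (combined with Schauder bootstrapping, exactly as cited in Proposition \ref{localest2} via \cite{Ca,Cal,Tr,Ya}) yields $\|D^2\varphi_R\|_{C^{k,\alpha}(B_{1/2}(0))} \leq C_k$ with $C_k$ independent of $R$. Translating back, $\|D^{2+j}\varphi\|_{L^\infty(B_{R/2}(0))} \leq C_j R^{-j}$ for every $j \geq 1$. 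Letting $R \to \infty$ forces $D^3\varphi \equiv 0$, so the complex Hessian $(\varphi_{j\bar k})$ is a constant positive-definite Hermitian matrix, i.e. $\omega = \ddbar\varphi$ is a constant form. This is exactly the conclusion. (Alternatively, one could cite \cite[Thm 2]{RS} directly, but spelling out the Evans--Krylov-plus-scaling argument is cleaner and self-contained given what is already available in Section \ref{partialC1}.)

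The main technical point to be careful about is the passage from the differential-geometric hypothesis (Ricci-flat K\"ahler form with two-sided bound against $\omega_{\C^m}$) to the analytic normal form (global potential $\varphi$ with $\det(\varphi_{j\bar k})$ constant): one needs that a $d$-closed real $(1,1)$-form which is moreover $\ddbar$ of something has a \emph{global} potential on $\mathbb{C}^m$, which is standard since $H^1(\mathbb{C}^m,\mathcal{O}) = 0$, and that Ricci-flatness is equivalent to constancy of $\det(\varphi_{j\bar k})$, which is immediate from $\mathrm{Ric}(\omega) = -\ddbar\log\det(\varphi_{j\bar k})$. The only mild subtlety is that Evans--Krylov a priori gives an interior $C^{2,\alpha}$ bound; to iterate to all higher derivatives one differentiates the equation and applies linear Schauder theory, but this is exactly the elliptic bootstrapping already packaged in Proposition \ref{localest2}, so there is essentially nothing new to prove. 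I expect no serious obstacle here; this is a classical result recorded for use in the blowup arguments of later sections, and the scaling structure of the Monge--Amp\`ere equation does all the work.
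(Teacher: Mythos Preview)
Your proof is correct and takes a genuinely different route from the paper's. The paper argues directly via the Calabi--Yau $C^2$ and $C^3$ computations: with $S = |\nabla^{\mathbb{C}^m} g|^2_g$, one has $\Delta^g(\tr{g_{\mathbb{C}^m}}{g}) \geq C^{-1}S$ (Yau) and, after multiplying by a cutoff $\rho^2$ supported in $B_{2R}$, $\Delta^g(\rho^2 S) \geq -CR^{-2}S$ (Calabi); summing and applying the maximum principle gives $\sup_{B_R} S \leq CR^{-2}$, hence $S \equiv 0$. Your argument instead passes to a global potential $\varphi$, normalizes $\det(\varphi_{j\bar k}) = 1$, and invokes the scale-invariant interior estimates of Proposition~\ref{localest2} on the rescalings $\varphi_R$ to force $D^3\varphi \equiv 0$.

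The paper in fact anticipates your approach: immediately after its own proof it remarks that Theorem~\ref{lio1} can equally be obtained from Evans--Krylov or Caffarelli in place of the Calabi--Yau computations. Your version has the virtue of recycling Proposition~\ref{localest2} as a black box and is arguably cleaner for that reason; the paper's version is more self-contained and makes explicit which differential inequalities are doing the work, which is relevant later when the authors explain (after Theorem~\ref{lioHein}) why the same computation \emph{fails} on $\mathbb{C}^m \times Y$.

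One small elision worth tightening: Ricci-flatness gives only that $\log\det(\varphi_{j\bar k})$ is pluriharmonic, not directly that it is constant. You need to observe that \eqref{bounds} bounds $\det(\varphi_{j\bar k})$ between $C^{-m}$ and $C^m$, so $\log\det(\varphi_{j\bar k})$ is a bounded pluriharmonic (hence harmonic) function on $\mathbb{C}^m$, and then Liouville for harmonic functions gives constancy. This is routine, but since you flagged the passage to the ``analytic normal form'' as the main technical point, it is worth stating.
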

\begin{proof}
For convenience, here is a simple proof. Let $S=|\nabla^{\mathbb{C}^m} g|^2_g$, where $\nabla^{\mathbb{C}^m}$ is the covariant derivative of the Euclidean metric $\omega_{\mathbb{C}^m}$. Choose a cutoff function $\rho$ which is supported in $B_{2R}$, is identically $1$ on $B_R$, and has $|\nabla^{\mathbb{C}^m} \rho|^2_{g_{\mathbb{C}^m}}\leq C/R^2$ and $\Delta^{g_{\mathbb{C}^m}}(\rho^2)\geq -C/R^2$. Thanks to \eqref{bounds}, similar bounds hold if $g_{\mathbb{C}^m}$ is replaced by $g$. A well-known calculation using  Calabi's $C^3$ estimate (see e.g. \cite{HT}) gives
$$\Delta^g (\rho^2 S)\geq S\Delta^g(\rho^2)-8S|\nabla^g \rho|^2_g\geq -\frac{C}{R^2}S.$$
On the other hand, using Yau's $C^2$ estimate calculation and again \eqref{bounds},
$$\Delta^g(\tr{g_{\mathbb{C}^m}}{g})\geq C^{-1}S.$$
It follows from this that
$$\Delta^g \left(\rho^2 S+\frac{C}{R^2}\tr{g_{\mathbb{C}^m}}{g}\right)\geq 0,$$
so $\sup_{B_R}S\leq C/R^2$ by the maximum principle, and hence $S\equiv 0$ by letting $R \to \infty$.
\end{proof}
Instead of the Calabi-Yau $C^2$ and $C^3$ computations \cite{Cal,Ya}, one can also prove Theorem \ref{lio1} by using the theories of Evans-Krylov \cite{Tr} or Caffarelli \cite{Ca}. An elegant new proof of Theorem \ref{lio1} that does not rely on any of these methods was very recently given in \cite{LLZ}. In fact, combining this new approach with the blowup argument of \cite{CW} leads to a new way of proving the Evans-Krylov estimate for the complex Monge-Amp\`ere equation which is completely independent of \cite{Ca,Cal,Tr,Ya}.

In \cite[p.2937]{TZ} the following straightforward generalization of Theorem \ref{lio1} was proved.
\begin{theorem}\label{lio}
Let $(Y,\omega_Y)$ be a compact Ricci-flat K\"ahler manifold without boundary. Let $\omega_{\C^m}$ be the standard K\"ahler form on $\C^m$. Let $\omega=\omega_{\mathbb{C}^m}+\omega_Y + \ddbar u$ for some smooth function $u$ be a Ricci-flat K\"ahler form on $\C^m \times Y$ such that for some $C \geq 1$,
\begin{equation}\label{bounds2}
C^{-1}(\omega_{\C^m}+\omega_{Y})\leq\omega\leq C(\omega_{\C^m}+\omega_{Y}).
\end{equation}
If $\omega|_{\{z\}\times Y} = \omega_Y$ for all $z\in\mathbb{C}^m$, then $\omega$ is the product of a constant K\"ahler form on $\mathbb{C}^m$ with $\omega_Y$.
\end{theorem}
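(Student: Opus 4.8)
The plan is to reduce to the Euclidean Liouville theorem, Theorem \ref{lio1}, by showing that the potential $u$ is pulled back from $\C^m$, so that $\omega$ splits as a product of a form on $\C^m$ with $\omega_Y$.

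First I would exploit the fiberwise hypothesis. Fix $z\in\C^m$ and restrict $\omega=\omega_{\C^m}+\omega_Y+\ddbar u$ to the complex submanifold $\{z\}\times Y$. Since $\omega_{\C^m}$ restricts to $0$ there, $\omega_Y$ restricts to $\omega_Y$, and $\ddbar$ commutes with restriction to a complex submanifold, the assumption $\omega|_{\{z\}\times Y}=\omega_Y$ gives $\ddbar(u|_{\{z\}\times Y})=0$ on $Y$. Taking the trace with respect to $\omega_Y$ shows that $u|_{\{z\}\times Y}$ is harmonic on the compact manifold $Y$ without boundary, hence constant. This is exactly where the hypothesis $\partial Y=\emptyset$ is used. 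Consequently $u(z,y)=v(z)$ for the smooth function $v(z):=u(z,y_0)$ on $\C^m$ (any fixed $y_0\in Y$), and since ${\rm pr}_{\C^m}$ is holomorphic, $\ddbar u={\rm pr}_{\C^m}^*(\ddbar v)$. Setting $\omega_1:=\omega_{\C^m}+\ddbar v$, we obtain the product decomposition $\omega={\rm pr}_{\C^m}^*\omega_1+{\rm pr}_Y^*\omega_Y$ on $\C^m\times Y$.

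Next I would verify that $\omega_1$ is a Ricci-flat K\"ahler form on $\C^m$ satisfying \eqref{bounds}. Restricting $\omega$ and the inequality \eqref{bounds2} to a slice $\C^m\times\{y_0\}$, on which ${\rm pr}_Y^*\omega_Y$ vanishes, identifies $\omega|_{\C^m\times\{y_0\}}$ with $\omega_1$ and yields $C^{-1}\omega_{\C^m}\le\omega_1\le C\omega_{\C^m}$; in particular $\omega_1>0$. For Ricci-flatness, note that the Riemannian metric $g$ of $\omega$ is the product of the metric $g_1$ of $\omega_1$ with the metric $g_Y$ of $\omega_Y$, so $\log\det g=\log\det g_1+\log\det g_Y$, where the two terms are pulled back from the respective factors; hence ${\rm Ric}(\omega)={\rm pr}_{\C^m}^*{\rm Ric}(\omega_1)+{\rm pr}_Y^*{\rm Ric}(\omega_Y)$. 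Since ${\rm Ric}(\omega)=0$ and ${\rm Ric}(\omega_Y)=0$, and since ${\rm pr}_{\C^m}^*$ is injective on forms, we conclude ${\rm Ric}(\omega_1)=0$.

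Finally, Theorem \ref{lio1} applied to $\omega_1$ shows that $\omega_1$ has constant coefficients, and therefore $\omega={\rm pr}_{\C^m}^*\omega_1+{\rm pr}_Y^*\omega_Y$ is the product of a constant K\"ahler form on $\C^m$ with $\omega_Y$, as claimed. There is no serious obstacle here beyond bookkeeping: the one conceptual point is the fiberwise maximum principle forcing $u$ to descend to the base, which relies essentially on the fibers being compact and boundaryless, after which everything follows from the standard product structure of K\"ahler--Ricci data together with the already-established Euclidean Liouville theorem.
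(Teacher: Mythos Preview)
Your proof is correct and follows essentially the same route as the paper's: use the fiberwise hypothesis to force $u$ to be pulled back from $\C^m$, write $\omega$ as a product, and then apply Theorem~\ref{lio1} to the base factor. You have merely spelled out in more detail what the paper states tersely (e.g., why $u|_{\{z\}\times Y}$ is constant, and why the base factor is Ricci-flat and satisfies the required bounds).
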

\begin{proof}
By assumption, $(\ddbar u)|_{\{z\}\times Y}=0$ for all $z\in \mathbb{C}^m$, so $u|_{\{z\}\times Y}$ is a constant (depending on $z$), so $u$ is the pullback of some smooth function on $\mathbb{C}^m$. Then $\hat{\omega}=\omega_{\mathbb{C}^m}+\ddbar u$ is a K\"ahler form on $\mathbb{C}^m$ and $\omega=\hat{\omega}+\omega_Y$ is a product K\"ahler form. Clearly $\hat{\omega}$ is Ricci-flat, and it satisfies
$C^{-1}\omega_{\mathbb{C}^m}\leq\hat{\omega}\leq C\omega_{\mathbb{C}^m}$ by \eqref{bounds2}. Thus,  $\hat\omega$ is constant by Theorem \ref{lio1}.
\end{proof}

More recently, the first-named author proved the following stronger result \cite{He2}. A simpler proof was given slightly later in \cite{LLZ}, using the same elegant idea that led to a new proof of Theorem \ref{lio1}.

\begin{theorem}\label{lioHein}
Let $(Y,\omega_Y)$ be a compact Ricci-flat K\"ahler manifold without boundary. Let $\omega_{\C^m}$ be the standard K\"ahler form on $\C^m$. Let $\omega$ be a Ricci-flat K\"ahler form on $\mathbb{C}^m\times Y$ that satisfies
\begin{equation}
C^{-1}(\omega_Y+\omega_{\mathbb{C}^m})\leq\omega\leq C(\omega_Y+\omega_{\mathbb{C}^m})
\end{equation}
for some $C\geq 1$. Choosing $\omega_Y$ suitably, we may assume that $\omega$ is $d$-cohomologous to $\omega_{\mathbb{C}^m} + \omega_Y$. Then, after changing $\omega$ by a biholomorphism, $\omega$ is the product of $\omega_Y$ and a constant K\"ahler form on $\mathbb{C}^m$. The biholomorphism is the identity if and only if $\omega$ is $i\partial\ov{\partial}$-cohomologous to $\omega_{\C^m} + \omega_Y$, and $\omega$ is parallel with respect to $\omega_{\C^m} + \omega_Y$ even before applying the biholomorphism.
\end{theorem}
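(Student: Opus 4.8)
The plan is to upgrade Theorem \ref{lio} (the case $\omega|_{\{z\}\times Y} = \omega_Y$) to the general case by first producing, via a biholomorphism of $\C^m \times Y$, a new Ricci-flat form whose fiber restrictions are all equal to $\omega_Y$, and then invoking Theorem \ref{lio}. The starting point is the observation that on each fiber $\{z\} \times Y$ the restricted form $\omega|_{\{z\}\times Y}$ is Ricci-flat (since $\mathrm{Ric}(\omega) = 0$ and restriction to a complex submanifold interacts with the Ricci form in the expected way only up to the second fundamental form — so this needs a small argument, or alternatively one uses that $\omega|_{\{z\}\times Y}$ is K\"ahler and cohomologous to a fixed class on the Calabi-Yau fiber $Y$, hence its Ricci potential is controlled) and uniformly equivalent to $\omega_Y$ by the hypothesis. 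By Yau's theorem there is, for each $z$, a unique Ricci-flat K\"ahler form $\eta_z$ on $Y$ cohomologous to $[\omega_Y]$; the content of the argument is that $\omega|_{\{z\}\times Y}$ equals $\eta_z$ composed with a fiberwise biholomorphism (an element of $\mathrm{Aut}(Y)$, or of the identity component thereof since things vary continuously in $z$), and that this biholomorphism depends holomorphically on $z$. This is where the main work lies.

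First I would use the elliptic estimates of Section \ref{partialC1} (Proposition \ref{localest2}, applied fiberwise, using \eqref{bounds2}) to get uniform $C^\infty_{\mathrm{loc}}$ bounds on $\omega$, hence on the fiberwise Ricci potentials, hence — by the openness/implicit-function setup for the fiberwise Calabi-Yau equation — a smooth family of maps. The key structural fact is that the space of Ricci-flat K\"ahler metrics on $Y$ in a fixed K\"ahler class, modulo $\mathrm{Diff}_0$, is a point (uniqueness in Calabi-Yau), so the ambiguity in writing $\omega|_{\{z\}\times Y}$ in terms of $\omega_Y$ is exactly an element of $\mathrm{Aut}_0(Y)$ composed with a K\"ahler potential. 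The holomorphic dependence on $z$ of the resulting automorphism comes from the fact that $\omega$ is a global K\"ahler form on the product complex manifold: the graph of the fiberwise biholomorphism is cut out by a holomorphic condition. Once one has a holomorphic family $z \mapsto \Psi_z \in \mathrm{Aut}_0(Y)$, the map $\Phi(z,y) = (z, \Psi_z(y))$ is a biholomorphism of $\C^m \times Y$ (this uses that $\mathrm{Aut}_0(Y)$ is a complex Lie group and a Liouville-type argument on $\C^m$ to control $\Psi_z$ — a bounded holomorphic map from $\C^m$ into the automorphism group; note that if $Y$ has no holomorphic vector fields, i.e. $\mathrm{Aut}_0(Y)$ is trivial, this step is vacuous and the biholomorphism is already the identity, consistent with the last sentence of the statement).

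After pulling back by $\Phi$, the new form $\Phi^*\omega$ is still Ricci-flat, still satisfies the two-sided bound \eqref{bounds2} (with a possibly worse $C$, controlled because $\Psi_z$ and its inverse are uniformly bounded in $C^\infty_{\mathrm{loc}}$ by the above), and now has all fiber restrictions equal to $\eta_z$; a further fiberwise normalization — replacing $\eta_z$ by $\omega_Y$ using that they are cohomologous and both Ricci-flat, hence equal up to an automorphism which we have already absorbed, so in fact $\eta_z = \omega_Y$ — gives $(\Phi^*\omega)|_{\{z\}\times Y} = \omega_Y$ for all $z$. At this point Theorem \ref{lio} applies verbatim and yields that $\Phi^*\omega$ is the product of $\omega_Y$ with a constant K\"ahler form on $\C^m$. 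Tracing back: $\omega$ differs from such a product form only by the biholomorphism $\Phi$, and $\Phi = \mathrm{id}$ precisely when no fiberwise automorphism was needed, which happens exactly when $\omega$ is $i\partial\bar\partial$-cohomologous to $\omega_{\C^m} + \omega_Y$ (the $d$-cohomologous normalization only pins down the constant K\"ahler form on $\C^m$, whereas $i\partial\bar\partial$-cohomologous additionally forbids the automorphism); and since the product form is parallel, so is $\omega$ after the biholomorphism — parallelism before the biholomorphism follows because $\Phi$ is a holomorphic isometry from $(\C^m \times Y, \Phi^*\omega)$ to the product, and a flat product structure pulled back by a biholomorphism is still parallel provided the biholomorphism is affine in the base, which the Liouville argument on $\C^m$ guarantees. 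The main obstacle is the holomorphic dependence of the fiberwise automorphisms on the base parameter $z$ together with the Liouville-type rigidity forcing $\Psi_z$ to be essentially constant in $z$; everything else is a combination of Yau's uniqueness theorem, the fiberwise elliptic estimates already in Section \ref{partialC1}, and the already-proven Theorem \ref{lio}.
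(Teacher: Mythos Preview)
The paper does not give its own proof of Theorem \ref{lioHein}; it is quoted from \cite{He2} (with a later simplification in \cite{LLZ}), and the surrounding discussion explains precisely why the elementary methods that prove Theorems \ref{lio1} and \ref{lio} do \emph{not} extend. So there is no ``paper's proof'' to compare against, but your proposal still needs to stand on its own.

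It does not. The entire strategy rests on the claim that $\omega|_{\{z\}\times Y}$ is Ricci-flat for each $z$, and this is false in general. If $N$ is a complex submanifold of a Ricci-flat K\"ahler manifold $(M,\omega)$, the Ricci form of the induced metric satisfies
\[
\rho_{\omega|_N} = \rho_\omega|_N + i\partial\bar\partial\log\det h = i\partial\bar\partial\log\det h,
\]
where $h$ is the Hermitian metric induced by $\omega$ on the normal bundle of $N$. There is no reason for this curvature term to vanish: it encodes exactly the second fundamental form contribution you flag and then dismiss as ``a small argument''. Your alternative (``the Ricci potential is controlled'') gives a bound, not vanishing, so Yau's uniqueness theorem does not apply and you cannot identify $\omega|_{\{z\}\times Y}$ with $\omega_Y$ up to an automorphism. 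Once this step fails, nothing downstream survives: there is no fiberwise automorphism to promote to a holomorphic family, and the reduction to Theorem \ref{lio} never gets off the ground.

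This is not a technical oversight but the heart of the problem. The discussion immediately after Theorem \ref{lioHein} in the paper is making exactly this point: the curvature of $Y$ obstructs the naive transplantation of the flat-case arguments, and the proofs in \cite{He2,LLZ} proceed by genuinely different PDE methods (Schauder-type estimates on cylinders in \cite{He2}, a mean-value formula in \cite{LLZ}) rather than by any fiberwise reduction to Theorem \ref{lio}.
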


It is instructive to see why the proof of Theorem \ref{lio1} breaks down in the situation of Theorem \ref{lioHein}. The fundamental reason is that in the Calabi-type calculation for the Laplacian of $|\nabla^{g_{\C^m}+g_{Y}}g|^2_g$,  there are some new terms coming from the Riemann curvature tensor of $g_Y$ if $Y$ is not flat that destroy the maximum principle argument above. As observed in \cite[Thm 1.1]{TZ} (cf. \cite[Prop 4.8]{TWY}), \emph{partial} bounds on $\nabla^{g_{\C^m} + g_Y}g$ can be obtained by stretching out the base directions. In particular, this method controls the ``all fibers'' component of this tensor, but is unable to prove a uniform bound for the ``all base'' component. Now one might suspect that there are ways of improving the Calabi $C^3$ quantity by taking the holomorphic product structure of $\C^m \times Y$ into account, but some bad terms remain. Specifically, if we let $\mathcal{P}$ denote the projection operator onto the base tangent directions, and let $\omega_P=\omega_{\C^m}+\omega_Y$, then we might for instance consider the quantity $S=|\Psi|^2_{g},$ where $\Psi$ is the tensor obtained by composing $\nabla^{g_P}g$ with $\mathcal{P}$ in all three arguments. Bounding $S$ would indeed bound the ``all base'' component of $\nabla^{g_P}g$. However, $\Delta^gS$ still contains some bad terms due to the fact that $\nabla^{g}\mathcal{P}$ need not vanish.

Nevertheless, in the setting of compact Calabi-Yau manifolds of Corollaries \ref{main} and \ref{main2}, it follows from the main theorem of \cite{TWY} that after applying a base stretching diffeomorphism, the metrics $e^t\omega^\bullet_t$ converge smoothly to a Ricci-flat metric on $\C^m\times Y$ that {does} satisfy the hypotheses of Theorem \ref{lio}, and therefore must split as a product (this was observed in \cite{TZ}). In the local setting of Theorems \ref{mainlocal} and \ref{thm51}, the global techniques of \cite{TWY} do not apply, so Theorem \ref{lioHein} must be used instead to recover the conclusions of \cite{TZ}. The basic idea of the present paper is that by pushing this approach to its limit, \emph{full} higher-order estimates for collapsing Calabi-Yau metrics can be proved without using any Calabi- type calculations whatsoever (except for the standard local ones, or their counterparts in \cite{Ca, Tr}, that lead to Proposition \ref{localest2}, although again even these can be avoided  by using \cite{CW, LLZ}), hence in particular without using any of the results of \cite{TWY} (which only apply in the compact setting anyway).

\section{Schauder estimates on cylinders}\label{linear}

\subsection{Technical preliminaries}

\begin{definition}\label{d:holder} Let $(X,g)$ be a complete Riemannian manifold. Let $E \to X$ be a vector bundle on $X$ with a fiber metric $h$ and an $h$-preserving connection $\nabla$. If $x,x' \in X$ and if there is a unique minimal $g$-geodesic $\gamma$ joining $x$ to $x'$, then we let $\mathbf{P}^{g}_{xx'}$ denote $\nabla$-parallel transport on $E$ along $\gamma$. If there is no unique minimal $g$-geodesic $\gamma$ from $x$ to $x'$, then $\mathbf{P}^g_{xx'}$ is undefined. Let $B^g(p,R)$ be the $g$-geodesic ball of radius $R > 0$ centered at $p \in X$. Then we define
\begin{align}\label{e:holderdef}
[\sigma]_{C^{\alpha}(B^g(p,R))} = \sup \left\{\frac{|\sigma(x) -  \mathbf{P}^g_{x'x}(\sigma(x'))|_{h(x)}}{d^g(x,x')^\alpha} : x, x' \in B^g(p,R), \;x\neq x',\;\mathbf{P}^g_{xx'}\;\text{is defined}\right\}
\end{align}
for all sections $\sigma \in C^{\alpha}_{\rm loc}(B^g(x,2R),E)$. Here we implicitly used the simple fact that if $\gamma$ is a minimal $g$-geodesic connecting two points $x,x'\in B^g(p,R)$, then $\gamma$ is contained in $B^g(p,2R)$.
\end{definition}

Our notation \eqref{e:holderdef} suppresses $h$ and $\nabla$, but in practice $(E,h,\nabla)$ will be derived from $(TX,g,\nabla^g)$ in some natural way, so that the $C^{\alpha}$ seminorm \eqref{e:holderdef} is actually completely determined by $g$. In a small number of special cases we will slightly modify \eqref{e:holderdef} by replacing $B^g(p,R)$ by some open set $U$ which is not a $g$-ball but is \emph{$g$-geodesically convex}; in these cases, it is enough for $\sigma$ to be defined and $C^\alpha_{\rm loc}$ on $U$, and we will write $[\sigma]_{C^\alpha(U,g)}$ to indicate the dependence of the seminorm on $g$.

\begin{rk}
We will use several times the simple observation that if $[\sigma]_{C^{\alpha}(B)}=0$ for some geodesic ball $B \subset X$, then $\sigma$ is parallel on $B$, so in particular  $\sigma$ is smooth on $B$ and
$\nabla \sigma=0$ on $B$.
\end{rk}

The following lemma is one of the cornerstones of the whole paper.

\begin{lemma}\label{l:braindead}
Let $Y$ be a compact Riemannian manifold without boundary. Let $E$ be a metric vector bundle over $Y$ with a metric connection $\nabla$.
Then for all $k \in \N_{\geq 1}$, $\alpha \in (0,1)$ there exists a constant $C_k = C_k(Y,E,\alpha)$ such that for all $\sigma\in C^{k,\alpha}(Y,E)$,
\begin{align}\label{e:idioticestimate}
\|\nabla\sigma\|_{L^\infty(Y)} \leq C_k[\nabla^k\sigma]_{C^{\alpha}(Y)}.
\end{align}
\end{lemma}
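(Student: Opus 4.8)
The inequality \eqref{e:idioticestimate} is scale-invariant in the trivial sense that both sides are homogeneous of degree one in $\sigma$, so it is really a statement that on the fixed compact manifold $Y$ one cannot have $\nabla\sigma \not\equiv 0$ while $\nabla^k\sigma$ is parallel (i.e.\ has vanishing $C^\alpha$ seminorm) -- and, quantitatively, a compactness statement. The plan is to argue by contradiction and compactness. Suppose \eqref{e:idioticestimate} fails for some fixed $k$ and $\alpha$; then there is a sequence $\sigma_j \in C^{k,\alpha}(Y,E)$ with $\|\nabla\sigma_j\|_{L^\infty(Y)} = 1$ but $[\nabla^k\sigma_j]_{C^\alpha(Y)} \to 0$. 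After subtracting a parallel section (which lies in a finite-dimensional space and does not affect $\nabla\sigma_j$, so does not change the normalization) we may also normalize, say, $\int_Y \sigma_j = 0$ in a suitable sense, or more robustly just work with $\nabla\sigma_j$ directly.

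First I would record the interpolation-type consequence of the hypothesis: since $[\nabla^k\sigma_j]_{C^\alpha(Y)} \le \varepsilon_j \to 0$, the sections $\nabla^k\sigma_j$ are uniformly bounded in $C^\alpha$ \emph{modulo} the space of parallel sections of the appropriate tensor bundle; equivalently, writing $\nabla^k\sigma_j = \tau_j + \rho_j$ with $\rho_j$ parallel and $\tau_j$ orthogonal to the parallel sections, one gets $\|\tau_j\|_{C^\alpha(Y)} \le C\varepsilon_j$ (here one uses that on a compact manifold a $C^\alpha$ bound on the seminorm plus an orthogonality normalization controls the full $C^\alpha$ norm, by the usual argument that the seminorm is a norm on the quotient by constants/parallel sections and all norms on that are equivalent after passing to a ball cover). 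Meanwhile, from $\|\nabla\sigma_j\|_{L^\infty} = 1$ and elliptic (or just ODE-along-geodesics) estimates on the fixed compact $Y$, one bootstraps: $\nabla^2\sigma_j, \ldots, \nabla^{k-1}\sigma_j$ are bounded in $C^0$ — indeed controlled by $\|\nabla\sigma_j\|_{C^0}$ together with $\|\nabla^k\sigma_j\|_{C^0} \le \|\rho_j\| + C\varepsilon_j$ and interpolation on the compact manifold — and then $\nabla^k\sigma_j$ is bounded in $C^\alpha$. Hence, after passing to a subsequence, $\nabla\sigma_j \to \eta$ in $C^0(Y)$ for some continuous section $\eta$ of $T^*Y\otimes E$ with $\|\eta\|_{L^\infty(Y)} = 1$, and $\nabla^k\sigma_j \to \rho_\infty$ where $\rho_\infty = \nabla^{k-1}\eta$ is parallel (the limit of the $\rho_j$, which converge because they live in a finite-dimensional space and are bounded). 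But a parallel section is smooth, so $\eta$ is smooth and $\nabla^{k-1}\eta$ is parallel, i.e.\ $\nabla^k\eta = 0$; equivalently there is $\sigma_\infty$ (obtained as a $C^{k-1}$-limit of $\sigma_j$ after fixing the normalization $\int_Y\sigma_j = 0$) with $\nabla^k\sigma_\infty = 0$ and $\|\nabla\sigma_\infty\|_{L^\infty(Y)} = 1$.

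The contradiction is then extracted from the fact that \textbf{on a compact manifold without boundary a section with $\nabla^k\sigma_\infty = 0$ must have $\nabla\sigma_\infty = 0$}. This is the heart of the matter and the step I expect to be the main obstacle to state cleanly, though it is elementary: set $\beta = \nabla^{k-1}\sigma_\infty$, a parallel section of $(T^*Y)^{\otimes(k-1)}\otimes E$, so $|\beta|$ is constant on $Y$. One now integrates by parts down the tower: for any parallel section $P$ of the same bundle as $\nabla^{k-1}\sigma_\infty$, $\int_Y \langle \nabla^{k-1}\sigma_\infty, P\rangle$ can be integrated by parts $k-1$ times with no boundary terms (since $\partial Y = \emptyset$), and since $\nabla P = 0$ all the intermediate terms vanish, forcing $\int_Y\langle\nabla^{k-1}\sigma_\infty,P\rangle = 0$ when $k\ge 2$; but $\nabla^{k-1}\sigma_\infty$ is itself parallel, so taking $P = \nabla^{k-1}\sigma_\infty$ gives $\int_Y|\nabla^{k-1}\sigma_\infty|^2 = 0$, hence $\nabla^{k-1}\sigma_\infty \equiv 0$. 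Iterating (or inducting downward on $k$) yields $\nabla\sigma_\infty \equiv 0$, contradicting $\|\nabla\sigma_\infty\|_{L^\infty(Y)} = 1$. (For $k=1$ the statement $\nabla\sigma = 0 \Rightarrow \nabla\sigma = 0$ is trivial, which is why the estimate is vacuous there; for $k\ge 2$ this integration-by-parts is exactly where compactness-without-boundary of $Y$ enters, and it is the only place.) This completes the proof by contradiction.

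One technical point I would be careful about: the $C^\alpha$ seminorm in \eqref{e:holderdef} is defined via parallel transport along minimal geodesics, and on a general compact $Y$ there may be cut-locus issues, so to run the compactness argument and the "seminorm controls norm modulo parallel sections" step I would first reduce to a finite cover of $Y$ by small geodesically convex balls and use that on each such ball the parallel-transport Hölder seminorm is comparable to the coordinate one; this is routine but worth a sentence. Everything else is standard Ascoli–Arzelà plus integration by parts on a closed manifold.
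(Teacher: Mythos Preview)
Your approach is essentially the paper's: contradiction plus compactness, with the decisive step being the integration-by-parts identity on a closed manifold that forces a section whose top covariant derivative is parallel to actually satisfy $\nabla\sigma = 0$. The paper organizes this more cleanly by first reducing to $k=1$ (apply the $k=1$ estimate to $\nabla^{j-1}\sigma$ and iterate via $[\nabla^j\sigma]_{C^\alpha}\leq C\|\nabla^{j+1}\sigma\|_{L^\infty}$), which spares your interpolation step and the $\tau_j+\rho_j$ bookkeeping; note also a harmless off-by-one in your limit (you obtain $\nabla^{k+1}\sigma_\infty=0$, not $\nabla^k\sigma_\infty=0$, since $\eta=\nabla\sigma_\infty$), and that the $k=1$ case of the lemma is \emph{not} vacuous---it is precisely the base case the paper proves directly.
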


\begin{proof} It is enough to prove this for $k = 1$. Indeed, if this is known for $k = 1$, then for all $k \geq 2$ and $j \in \{1,\ldots,k\}$, the $k = 1$ case applied to the section  $\nabla^{j-1}\sigma$ of $(T^*Y)^{\otimes(j-1)}\otimes E$ tells us that
$$\|\nabla^j\sigma\|_{L^\infty(Y)} \leq C_j [\nabla^j\sigma]_{C^{\alpha}(Y)}.$$
Moreover, for all $j \in \{1,\ldots,k-1\}$ it is easy to see that
$$[\nabla^j\sigma]_{C^{\alpha}(Y)}\leq C_j \|\nabla^{j+1}\sigma\|_{L^\infty(Y)} .$$
The claim then follows by iteration, and it remains to prove the base case $k = 1$.

To this end, define $\mathcal{P} = \{\sigma \in C^{1,\alpha}(Y,E): \nabla\sigma = 0\}$. Then $\dim\, \mathcal{P} < \infty$ and $\mathcal{P} \subset C^\infty(Y,E)$. Let $\pi$ be the $L^2$-orthogonal projection onto $\mathcal{P}$. Suppose the lemma fails for $k = 1$. Then there exists a sequence $\sigma_i \in C^{1,\alpha}(Y,E)$ with $[\nabla\sigma_i]_{C^{\alpha}(Y)} < \frac{1}{i}\|\nabla\sigma_i\|_{L^\infty(Y)}$. Replacing $\sigma_i$ by $\sigma_i - \pi(\sigma_i)$, we may assume that $\sigma_i \in {\rm ker}\,\pi$. Dividing $\sigma_i$ by $\|\nabla \sigma_i\|_{L^\infty(Y)} > 0$, we may further assume that $\|\nabla \sigma_i\|_{L^\infty(Y)} = 1$.\medskip\

\noindent \emph{Claim.} There exists a constant $C$ such that $\|\sigma_i\|_{L^\infty(Y)} \leq C$ for all $i$.   \medskip\

\noindent \emph{Proof of the Claim.} Suppose that this is false. Then we may assume that $\|\sigma_i\|_{L^\infty(Y)} > i$. Dividing $\sigma_i$ by $\|\sigma_i\|_{L^\infty(Y)}$, we may further assume that $\|\sigma_i\|_{L^\infty(Y)} = 1$, $\|\nabla\sigma_i\|_{L^\infty(Y)} < \frac{1}{i}$, and $[\nabla\sigma_i]_{C^{\alpha}(Y)} < \frac{1}{i^2}$. By passing to a subsequence, we may then also assume that $\sigma_i$ converges to some $\sigma \in C^{1,\alpha}(Y,E)$ in the $C^{1,\beta}$ sense for all $\beta < \alpha$. By construction, this limit satisfies $\|\sigma\|_{L^\infty(Y)} = 1$, $\nabla \sigma = 0$, and $\sigma \in {\rm ker}\,\pi$. By the second and third of these properties, $\sigma\in \mathcal{P} \cap {\rm ker}\,\pi = \{0\}$, which contradicts the first. \hfill $\Box$     \medskip\

Given the claim, and passing to a subsequence if needed, we may now assume that $\sigma_i$ converges to some $\sigma \in C^{1,\alpha}(Y,E)$ in the $C^{1,\beta}$ topology for every $\beta < \alpha$. By construction, this limit $\sigma$ satisfies the following properties: $[\nabla \sigma]_{C^{\alpha}(Y)} = 0$, $\|\nabla\sigma\|_{L^\infty(Y)} = 1$, and $\sigma \in {\rm ker}\,\pi$. The first property implies that $\sigma$ is smooth with $\nabla\nabla\sigma = 0$. Thus, relying crucially on the fact that $\partial Y = \emptyset$,
$$\int_Y |\nabla\sigma|^2 = \int_Y \langle \sigma,\nabla^*\nabla\sigma\rangle  = -\int_Y \langle\sigma,{\rm tr}(\nabla\nabla\sigma)\rangle  = 0.$$
This implies that $\nabla\sigma = 0$, which contradicts the second property of $\sigma$.\end{proof}

Next, we have the following iteration lemma, which will also be used many times over.

\begin{lemma}\label{l:iterate}
For all $0 \leq \epsilon < 1$, $\beta_1 < \ldots < \beta_k$, and $\gamma_1 < \ldots < \gamma_m$, there exists a constant $C$ such that the following holds. Let $f_1, \ldots, f_k: [0,T] \to \R$ be bounded nonnegative functions such that
\begin{align}\label{e:iterate_hyp}
\sum_{j=1}^k (R-\rho)^{\beta_j}f_j(\rho) \leq \epsilon \sum_{j=1}^k (R-\rho)^{\beta_j}f_j(R) + \sum_{\ell = 1}^{m} A_\ell (R-\rho)^{\gamma_{\ell}}
\end{align}
for some $A_1, \ldots, A_m\geq 0$ and for all $0 \leq \rho < R \leq T$. Then for all $0 \leq \rho < R \leq T$,
\begin{align}\label{e:iterate_concl}
\sum_{j=1}^k(R-\rho)^{\beta_j}f_j(\rho) \leq C \sum_{\ell=1}^m A_\ell (R-\rho)^{\gamma_\ell}.
\end{align}
\end{lemma}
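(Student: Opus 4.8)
The plan is to use the standard "hole-filling" iteration trick of Giaquinta–Widman, adapted to the fact that several functions $f_j$ with different weights appear simultaneously. First I would record the elementary fact that it suffices to prove the conclusion in the special case $\rho=0$, $R=T$, after rescaling: for fixed $0\le\rho<R\le T$, apply the hypothesis only to pairs $\rho\le\rho'<R'\le R$, introduce $s=R-\rho$, and reduce to showing $\sum_j f_j(\rho)\le C\sum_\ell A_\ell$ under the rescaled hypothesis on $[0,s]$ — but actually it is cleaner not to rescale and instead to run the iteration directly on a fixed interval $[\rho_0,R_0]$. So concretely: fix $0\le\rho_0<R_0\le T$, set $d=R_0-\rho_0$, and for $\theta\in(0,1)$ to be chosen, consider the geometric sequence of radii $r_n=R_0-(1-\theta^n)d$, so $r_0=R_0$, $r_n\downarrow\rho_0$, and $r_n-r_{n+1}=\theta^n(1-\theta)d$ while $R_0-r_{n+1}=\theta^{n+1}d$.

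The key step is the iteration itself. Define $\Phi(\rho)=\sum_{j=1}^k (R_0-\rho)^{\beta_j}f_j(\rho)$ for $\rho\in[\rho_0,R_0)$; note $\Phi$ is bounded on this interval because the $f_j$ are bounded and $\beta_j$ can be negative but $R_0-\rho$ stays away from $0$ as long as $\rho<R_0$ — this boundedness is what makes the absorption legitimate and is the only place the hypothesis "$f_j$ bounded" is used. Applying \eqref{e:iterate_hyp} with $\rho=r_{n+1}$, $R=r_n$, and using $r_n-r_{n+1}=\theta^n(1-\theta)d$, one gets after multiplying through by appropriate powers and comparing $(r_n-r_{n+1})^{\beta_j}$ with $(R_0-r_{n+1})^{\beta_j}=\theta^{(n+1)\beta_j}d^{\beta_j}$: an inequality of the form $\Phi(r_{n+1})\le \epsilon\,c(\theta)\,\Phi(r_n)+ C(\theta)\sum_\ell A_\ell d^{\gamma_\ell}$, where $c(\theta)$ involves only ratios $\theta^{\beta_j-\beta_i}$ and $C(\theta)$ involves the $\gamma_\ell$. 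The point is that since $\epsilon<1$ strictly, one can choose $\theta$ close to $1$ so that $q:=\epsilon\,c(\theta)<1$. Then iterating $\Phi(r_{N})\le q^N\Phi(r_0)+C(\theta)\sum_\ell A_\ell d^{\gamma_\ell}\sum_{n<N}q^n$ and letting $N\to\infty$ (so $r_N\to\rho_0$), using boundedness of $\Phi$ and lower semicontinuity / direct evaluation of $\Phi$ at $\rho_0$, yields $\sum_j d^{\beta_j}f_j(\rho_0)=\Phi(\rho_0)\le \frac{C(\theta)}{1-q}\sum_\ell A_\ell d^{\gamma_\ell}$, which is exactly \eqref{e:iterate_concl} with $R=R_0$, $\rho=\rho_0$.

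The main obstacle — and the only genuinely delicate point — is bookkeeping the relative sizes of the weight factors when $k>1$: comparing $(r_n-r_{n+1})^{\beta_j}$ to $(R_0-r_{n+1})^{\beta_j}$ introduces a factor $(1-\theta)^{\beta_j}\theta^{-\beta_j}$ that depends on $j$, and one must check that the "bad" direction (smallest $\beta_j$, i.e. largest power of the small quantity $r_n-r_{n+1}$ when $\beta_j<0$) does not prevent $c(\theta)$ from staying bounded as $\theta\to 1$. In fact $(1-\theta)^{\beta_j}$ blows up as $\theta\to1$ precisely when $\beta_j<0$; the fix is to not pull out $(1-\theta)$ but rather to absorb it: rewrite $(r_n-r_{n+1})^{\beta_j}f_j(r_{n+1})$ directly in terms of $\Phi$ by noting $(r_n-r_{n+1})^{\beta_j}=(1-\theta)^{\beta_j}\theta^{-\beta_j}(R_0-r_{n+1})^{\beta_j}$ on the \emph{left} side of \eqref{e:iterate_hyp} as well, so the $(1-\theta)^{\beta_j}$ cancels between the two sides up to the factor $\epsilon$, and only $\theta$-powers remain; one then chooses $\theta$ so that $\epsilon\max_{i,j}\theta^{\beta_i-\beta_j}<1$, which is possible since this maximum tends to $1$ as $\theta\to1$ and $\epsilon<1$. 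I would also handle the edge case $\epsilon=0$ trivially (take $\theta=\frac12$, no absorption needed), and note that $A_\ell\ge0$ and nonnegativity of the $f_j$ are used only to keep all terms in the iteration nonnegative so that the geometric series converges cleanly.
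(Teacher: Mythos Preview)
Your overall strategy---geometric sequence of intermediate radii, absorb the $\epsilon$-term by choosing the ratio close to $1$---is exactly the paper's. But two steps do not go through as written.

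First, your direction of iteration is backwards. With $r_0=R_0$ and $r_n\downarrow\rho_0$, the recursion $\Phi(r_{n+1})\le q\,\Phi(r_n)+C$ yields a bound on $\Phi(r_N)$, and you then need $\Phi(r_N)\to\Phi(\rho_0)$. But the $f_j$ are only assumed bounded, not continuous or semicontinuous, so this limit is unjustified (and $\Phi(r_0)=\sum_j 0^{\beta_j}f_j(R_0)$ is not even defined when some $\beta_j\le 0$). The paper iterates the other way: $\rho_0=\rho$, $\rho_{i+1}=\rho_i+\tau^i(1-\tau)(R-\rho)$, so $\rho_i\uparrow R$ and $f_j(\rho)$ sits at $i=0$; only boundedness of $f_j$ is needed to kill $\delta^iP_i$ as $i\to\infty$.

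Second, your quantity $\Phi(\rho)=\sum_j(R_0-\rho)^{\beta_j}f_j(\rho)$ does not iterate cleanly, and the claimed cancellation of $(1-\theta)^{\beta_j}$ fails. (There is also a slip: with your $r_n$ one has $R_0-r_{n+1}=(1-\theta^{n+1})d$, not $\theta^{n+1}d$.) Even granting your identity, the hypothesis gives $\sum_j(1-\theta)^{\beta_j}\theta^{-\beta_j}\Phi(r_{n+1})_j\le\epsilon\sum_j(1-\theta)^{\beta_j}\Phi(r_n)_j+\cdots$; the factors $(1-\theta)^{\beta_j}$ are $j$-dependent and do \emph{not} cancel after summing---bounding the two sums above and below forces a contraction factor containing $(1-\theta)^{\beta_1-\beta_k}$, which blows up as $\theta\to 1$. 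The paper sidesteps this entirely by (i) first dividing through by $(R-\rho)^{\beta_1}$ to reduce to $\beta_1=0$, and (ii) iterating not $\Phi$ but $P_i=\sum_j(\rho_{i+1}-\rho_i)^{\beta_j}f_j(\rho_i)$, built from the \emph{increment}. Then the hypothesis literally reads $P_i\le\epsilon\sum_j\tau^{-\beta_j}(P_{i+1})_j+Q_i\le\epsilon\tau^{-\beta_k}P_{i+1}+Q_i$, with no $(1-\tau)$ factors to juggle; choosing $\tau$ close to $1$ makes $\epsilon\tau^{-\beta_k}<1$, and $\beta_j\ge 0$ ensures $P_i$ stays bounded.
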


\begin{proof}
This is a minor extension of \cite[Lemma 8.18]{GM2ndEd}. Observe that by multiplying \eqref{e:iterate_hyp} and \eqref{e:iterate_concl} by $(R-\rho)^{-\beta_1}$, we may assume without loss of generality that $\beta_1 = 0$.

Given $\tau \in (0,1)$, define $\rho_0 = \rho$ and $\rho_{i+1} = \rho_i + \tau^i (1-\tau)(R - \rho)$ for all $i \in \N$. Also define
\begin{align*}
P_i = \sum_{j=1}^k [\tau^i(1-\tau)(R-\rho)]^{\beta_j} f_j(\rho_i), \;\, Q_i = \sum_{\ell = 1}^m A_\ell [\tau^i(1-\tau)(R-\rho)]^{\gamma_\ell}.
\end{align*}
Applying \eqref{e:iterate_hyp} with $\rho_i, \rho_{i+1}$ in place of $\rho, R$, we get
\begin{align*}
P_i \leq \epsilon\tau^{-\beta_k} P_{i+1} + Q_i \leq \delta P_{i+1} + Q_i
\end{align*}
for any fixed $\delta \in (\epsilon,1)$, provided that $\tau \geq (\epsilon/\delta)^{1/\beta_k}$ if $\beta_k>0$. Thus, by iteration,
\begin{align*}(1-\tau)^{\beta_k}\sum_{j=1}^k (R-\rho)^{\beta_j} f_j(\rho) \leq P_0 \leq \liminf_{i\to\infty} (\delta^{i+1} P_{i+1} + Q_0 + \delta Q_1 + \cdots + \delta^{i}Q_{i}).\end{align*}
The sequence $P_{i+1}$ is bounded because each $f_j$ is bounded and $\beta_j \geq 0$. On the other hand,
$$\delta^i Q_i  \leq (\delta \tau^{\gamma_1})^i (1-\tau)^{\gamma_1}\sum_{\ell = 1}^m A_\ell (R-\rho)^{\gamma_\ell},$$
which is summable provided that $\tau >\delta^{1/|\gamma_1|}$ if $\gamma_1 < 0$.\end{proof}

The following lemma provides a precise interpolation inequality on Riemannian cylinders.

\begin{lemma}\label{l:higher-interpol}
Let $(Y,g_Y)$ be a compact Riemannian manifold without boundary. Let $E \to Y$ be a metric vector bundle with a metric connection $\nabla$. Extend $E$ trivially to $\R^d \times Y$ and extend $\nabla$ by adding $\nabla^{\R^d}$. Let $g_P = g_{\R^d} + g_Y$ on $\R^d \times Y$. Then for all $k \in \N_{\geq 1}$, $\alpha \in (0,1)$ there is a $C_k = C_k(\alpha)$ such that
\begin{align}\label{hohoho}
\sum_{j=1}^k (R-\rho)^j \|\nabla^j\sigma\|_{L^\infty(B^{g_P}(p,\rho))} \leq C_k ((R-\rho)^{k+\alpha}[\nabla^k\sigma]_{C^{\alpha}(B^{g_P}(p,R))} + \|\sigma\|_{L^\infty(B^{g_P}(p,R))})
\end{align}
for all $p \in \R^d \times Y$, $0 < \rho < R$, and $\sigma \in C^{k,\alpha}_{\rm loc}(B^{g_P}(p,2R),E)$.
\end{lemma}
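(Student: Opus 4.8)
\textbf{Proof plan for Lemma \ref{l:higher-interpol}.} The plan is to reduce the statement to two ingredients: a scale-invariant interpolation inequality on a \emph{fixed} reference ball, and the standard rescaling behavior of the cylinder metric $g_P$ under dilations of the $\R^d$-factor. First I would observe that the statement is scale-dependent in a very specific way: the cylinder $\R^d \times Y$ has no dilation symmetry (only the $\R^d$ factor scales), so the two regimes $R \leq 1$ and $R \geq 1$ must be handled separately, and it is really the large-$R$ regime that carries the content. For $R - \rho \leq 1$ (say $R \leq 2$), the ball $B^{g_P}(p,R)$ is geometrically bounded with bounded geometry uniform in $p$ (by homogeneity in the $\R^d$ direction and compactness of $Y$), so \eqref{hohoho} is just the classical interpolation inequality for tensor fields on a ball of bounded geometry, with the powers of $(R-\rho)$ inserted by the usual scaling argument on Euclidean-type balls; this is standard and I would only sketch it.

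For the large-scale regime $R - \rho \geq 1$, I would use an \emph{iterated} application of the base-case inequality combined with Lemma \ref{l:iterate}. The key elementary estimate is the following: for any $0 < \rho < R$ with $R - \rho \geq 1$ and any $j \in \{1,\dots,k\}$, one covers $B^{g_P}(p,\rho)$ by unit balls $B^{g_P}(q,1)$ with $q \in B^{g_P}(p,\rho)$, each contained in $B^{g_P}(p,\rho+1) \subset B^{g_P}(p,R)$, and applies the \emph{fixed-scale} interpolation inequality on each such unit ball. This yields, for a universal constant,
\begin{equation}\label{e:unitscale}
\|\nabla^j\sigma\|_{L^\infty(B^{g_P}(p,\rho))} \leq C\bigl([\nabla^k\sigma]_{C^\alpha(B^{g_P}(p,\rho+1))} + \|\nabla^{j-1}\sigma\|_{L^\infty(B^{g_P}(p,\rho+1))}\bigr) \quad (1 \leq j \leq k).
\end{equation}
Here the term with $[\nabla^k\sigma]_{C^\alpha}$ appears because, by interpolation on a fixed ball, each intermediate derivative is controlled by the top H\"older seminorm plus the zeroth-order term \emph{on that ball}; I would actually run the fixed-scale interpolation so that the only ``error'' term is $\|\sigma\|_{L^\infty}$ and the only ``leading'' term is $[\nabla^k\sigma]_{C^\alpha}$, i.e. the genuinely scale-invariant estimate $\sum_j \|\nabla^j\sigma\|_{L^\infty(B(p,\rho))} \leq C([\nabla^k\sigma]_{C^\alpha(B(p,R'))} + \|\sigma\|_{L^\infty(B(p,R'))})$ whenever $R' - \rho \geq 1$, valid since $B(p,\rho)$ and $B(p,R')$ then differ by at least a unit collar.

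The final step is to upgrade this ``collar'' estimate, which has no control over how the constant or the appended powers of $(R-\rho)$ behave when $R - \rho$ is large, into the sharp power-weighted inequality \eqref{hohoho} via the standard hole-filling iteration. Concretely, set $f_j(\rho) = \|\nabla^j\sigma\|_{L^\infty(B^{g_P}(p,\rho))}$ and, for $0 < \rho < R$, apply the collar estimate on intermediate radii $\rho < \rho' < R$ with $\rho' - \rho$ a fixed fraction of $R - \rho$; after inserting Young's inequality to absorb lower-order terms with small coefficient, one arrives at a hypothesis of the form \eqref{e:iterate_hyp} with $\beta_j = j$, $\gamma_1 = 0$ (the term $\|\sigma\|_{L^\infty(B(p,R))}$) and $\gamma_2 = k+\alpha$ (the term $(R-\rho)^{k+\alpha}[\nabla^k\sigma]_{C^\alpha}$), and $\epsilon$ as small as we like. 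Lemma \ref{l:iterate} then delivers exactly \eqref{hohoho}. The main obstacle, and the only place requiring genuine care, is bookkeeping the powers of $(R-\rho)$ when passing from unit-scale estimates to arbitrary scales: one must verify that applying the fixed-scale inequality on balls of radius comparable to $R-\rho$ (not radius $1$) produces precisely the weights $(R-\rho)^j$ on the left and $(R-\rho)^{k+\alpha}$ on the H\"older term — this is where the lack of full dilation invariance of $g_P$ could in principle spoil the scaling, but since every ball of radius $\leq$ some fixed $R_0$ still has uniformly bounded geometry (the $Y$-directions never shrink), the fixed-scale inequality applies on all such balls with a uniform constant, and the rescaled version on balls of radius between $R_0$ and $R$ is obtained by the chain-of-unit-balls argument above, which is exactly what Lemma \ref{l:iterate} is designed to glue together.
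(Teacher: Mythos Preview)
There is a genuine gap in the large-scale regime. Your ``collar estimate'' from unit-ball interpolation gives, for $R-\rho\geq 1$,
\[
f_j(\rho)\leq C\bigl([\nabla^k\sigma]_{C^\alpha(B(p,R))}+\|\sigma\|_{L^\infty(B(p,R))}\bigr),
\]
which after multiplying by $(R-\rho)^j$ yields only
\[
(R-\rho)^j f_j(\rho)\leq C(R-\rho)^{k+\alpha}[\nabla^k\sigma]_{C^\alpha}+C(R-\rho)^j\|\sigma\|_{L^\infty}.
\]
This is strictly weaker than \eqref{hohoho}, where the $\|\sigma\|_{L^\infty}$ term carries no power of $(R-\rho)$. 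Your proposed upgrade via Lemma~\ref{l:iterate} cannot close this: the hypothesis \eqref{e:iterate_hyp} requires an $\epsilon\sum_j(R-\rho)^j f_j(R)$ term on the right, but your collar estimate has no $f_j$ with $j\geq 1$ on the right-hand side at all, so there is nothing to iterate. If instead you use the recursive form \eqref{e:unitscale}, the coefficient in front of $(R-\rho)^{j-1}f_{j-1}$ is of order $(R-\rho)$, not $\epsilon$, so the iteration again fails.

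To see that something is genuinely missing, take $\sigma$ pulled back from $Y$ (constant in $\R^d$). Then \eqref{hohoho} with $R-\rho\to\infty$ forces $\|\nabla^j\sigma\|_{L^\infty(Y)}\leq C[\nabla^k\sigma]_{C^\alpha(Y)}$ with no $\|\sigma\|$ term --- this is exactly Lemma~\ref{l:braindead}, which relies on integration by parts over the closed manifold $Y$ and is nowhere implied by local unit-ball interpolation. The paper's proof handles the regime $R-\rho>10\,\mathrm{diam}(Y)$ by splitting $\nabla^j\sigma$ into its base and fiber parts: the base part is controlled by the one-step geodesic estimate along horizontal lines (which remain unique minimizers at all scales), while the fiber part is controlled directly by Lemma~\ref{l:braindead} applied on each slice $\{z\}\times Y$. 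This fiber estimate, which has no $\|\sigma\|$ contribution, is precisely what keeps the zeroth-order term in \eqref{hohoho} unweighted; your argument never invokes it.
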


\begin{proof} Aiming to apply Lemma \ref{l:iterate}, for $j \in \{1,\ldots,k\}$ define $\beta_j = j$ and
$f_j(\rho) =\|\nabla^j\sigma\|_{L^\infty(B^{g_P}(p,\rho))}$.
In order to prove an inequality of the form \eqref{e:iterate_hyp}, consider the following three cases.\medskip\

\noindent \emph{Case 1}: $R - \rho < {\rm inj}(Y)$. Fix any $j \in \{1,\ldots,k\}$ and write $\tau = \nabla^{j-1}\sigma$. Fix any $x \in B^{g_P}(p,\rho)$. Let $v$ be a unit tangent vector at $x$ maximizing the quantity $|(\nabla_w\tau)(x)|$ among all unit tangent vectors $w$ at $x$. Let $\gamma(t) = \exp^{g_P}_{x}(tv)$. This curve is the unique length minimizer between any two of its points as long as $|t| \leq R-\rho$. Let $x' = \gamma(\ell)$ for $\ell = \epsilon(R-\rho)$ and $\epsilon \in (0,1)$. Then
\begin{align*}
\tau(x) - \mathbf{P}_{x'x}^{g_P}(\tau(x')) = \int_0^\ell \frac{d}{dt} \mathbf{P}^{g_P}_{\gamma(\ell-t)x}(\tau(\gamma(\ell-t))) \, dt
= -\int_0^\ell \mathbf{P}^{g_P}_{\gamma(\ell-t)x}[(\nabla_{\dot{\gamma}(\ell-t)}\tau)(\gamma(\ell-t))]\,dt.
\end{align*}
We can rewrite the last integrand as $(\nabla_v \tau)(x) + \psi(t)$, where for all $t \in [0,\ell]$,
$$|\psi(t)| \leq
\begin{cases}
[\nabla\tau]_{C^{\alpha}(B^{g_P}(x,\ell-t))} (\ell-t)^\alpha &{\rm for\; all}\;j,\\
\|\nabla^2\tau\|_{L^\infty(B^{g_P}(x,\ell-t))}(\ell-t) &{\rm for \;all}\;j < k.
\end{cases}$$
Here we have used the definition of the $C^{\alpha}$ seminorm and the fact that $\nabla_{\dot\gamma}\dot\gamma = 0$. This leads to
$$\ell|\nabla_v\tau(x)| \leq |\tau(x)|+|\mathbf{P}_{x'x}^{g_P}(\tau(x'))|+
\begin{cases}
C\ell^{1+\alpha} [\nabla^{j}\sigma]_{C^{\alpha}(B^{g_P}(p,\rho+\ell))} & {\rm for \; all}\;j,\\
C\ell^2\|\nabla^2\tau\|_{L^\infty(B^{g_P}(p,\rho+\ell))}&{\rm for \;all}\;j < k.
\end{cases}$$
Taking the sup over all $x \in B^{g_P}(p,\rho)$, we deduce that
\begin{align}\label{e:gazillionth_random_inequality}
\ell f_j(\rho) &\leq Cf_{j-1}(\rho + \ell) + \begin{cases}
C\ell^{1+\alpha} [\nabla^{j}\sigma]_{C^{\alpha}(B^{g_P}(p,\rho+\ell))} & {\rm for \; all}\;j,\\
C\ell^2 f_{j+1}(\rho+\ell) &{\rm for\;all}\;j < k.
\end{cases}
\end{align}
Thus, working backwards from $j = k$ to $j = 1$, decreasing and renaming $\epsilon$  in each step,
\begin{align}\label{e:gazillion-and-one}
\sum_{j=1}^k (R-\rho)^{j }f_j(\rho) \leq  \epsilon \sum_{j=1}^k (R-\rho)^j f_j(R)  + \epsilon (R-\rho)^{k+\alpha}[\nabla^k\sigma]_{C^{\alpha}(B^{g_P}(p,R))} + C_\epsilon \|\sigma\|_{L^\infty(B^{g_P}(p,R))}.
\end{align}
This is the desired inequality of type \eqref{e:iterate_hyp}. Here $\epsilon \in (0,1)$ is arbitrary.
\medskip\

\noindent \emph{Case 2}: $R-\rho \in [{\rm inj}(Y), 10\,{\rm diam}(Y)]$. This case can be  reduced to Case 1. Let $R' = \rho + \frac{1}{2}{\rm inj}(Y)$ and apply Case 1 to the pair of radii $(\rho, R')$ instead of $(\rho,R)$. In \eqref{hohoho} with $R$ replaced with $R'$, notice that trivially $B^{g_P}(x,R') \subset B^{g_P}(x,R)$ and $(R'-\rho)^{k+\alpha} \leq (R-\rho)^{k+\alpha}$, so in order to obtain \eqref{hohoho} as written we only need to observe that $(R'-\rho)^j \geq ({\rm inj}(Y)/20\,{\rm diam}(Y))^j (R - \rho)^j$ for $j = 1, \ldots,k$.     \medskip\

\noindent \emph{Case 3}: $R-\rho > 10 \, {\rm diam}(Y)$. Using the same idea as in Case 1, we can prove that \eqref{e:gazillionth_random_inequality} still holds with $\ell f_j(\rho)$ replaced by $\ell \|\nabla_{\mathbf{b}}\nabla^{j-1}\sigma\|_{L^\infty(B(p,\rho))}$ on the left-hand side. (Here and below, a subscript ${\mathbf{b}}$ and $\mathbf{f}$ denotes covariant derivatives in the base and fiber directions, respectively.) This is because we can take $\gamma$ to be a horizontal line in this case, which is then the unique length minimizer between any two of its points. On the other hand, by Lemma \ref{l:braindead}, for all $x \in B^{g_P}(p,\rho)$,
$$|(\nabla_{\mathbf{f}}\nabla^{j-1}\sigma)(x)| \leq C[\nabla_{\mathbf{f}}^{k-j+1}\nabla^{j-1}\sigma]_{C^{\alpha}(Y_x,g_Y)} \leq C[\nabla^k \sigma]_{C^{\alpha}(B^{g_P}(p,R))},$$
where $Y_x$ denotes the fiber through $x$, and $Y_x \subset B^{g_P}(p,R)$ because $R - \rho > {\rm diam}(Y)$. Proceeding as in Case 1 (working backwards from $j = k$ and making use of the safety factor $10$), we get
\begin{align*}\
\sum_{j=1}^k (R-\rho)^{j }f_j(\rho) \leq  \epsilon \sum_{j=1}^k (R-\rho)^j f_j(R)  + C_\epsilon (R-\rho)^{k+\alpha}[\nabla^k\sigma]_{C^{\alpha}(B^{g_P}(p,R))} + C_\epsilon \|\sigma\|_{L^\infty(B^{g_P}(p,R))}.
\end{align*}
Notice that unlike in \eqref{e:gazillion-and-one}, the constant in front of the $[\nabla^k\sigma]_{C^{\alpha}}$ term is $C_\epsilon$ rather than $\epsilon$. \medskip\

Lemma \ref{l:higher-interpol} now follows from Lemma \ref{l:iterate}.
\end{proof}

Our final lemma allows us to compare H\"older norms with respect to different metrics. One key point here is that $C^\alpha$ {seminorms} are rarely ever uniformly comparable, but the full $C^\alpha$ norms often are.

\begin{lemma}\label{tiammazzo1}
For all $d,q \in \N$, $A \geq 1$, $\alpha \in (0,1)$ there exists a constant $C = C(d,q,A,\alpha) \geq 1$ such that the following holds. Let $g$ be a Riemannian metric on the unit ball $B^{g_{\R^d}}(0,1) \subset \R^d$ such that
\begin{align}
A^{-1}g_{\R^d} \leq g \leq Ag_{\R^d},\label{totschlag1}\\
|\nabla^{g_{\R^d}} g|_{g_{\R^d}} + |\nabla^{2,g_{\R^d}} g|_{g_{\R^d}} \leq A.\label{totschlag2}
\end{align}
Then for all tensors $T$ of rank $q$ defined on $B^{g_{\R^d}}(0,1)$ and for all $x,x' \in B^g(0,C^{-1})$,
\begin{align}\label{totschlag3}
\frac{|T(x) - \mathbf{P}^g_{x'x}(T(x'))|_{g(x)}}{d^g(x,x')^\alpha} \leq C\frac{|T(x) - \mathbf{P}^{g_{\R^d}}_{x'x}(T(x'))|_{g_{\R^d}(x)}}{d^{g_{\R^d}}(x,x')^\alpha} + C\|T\|_{L^\infty(B^{g_{\R^d}}(0,1))}.
\end{align}
Moreover, for all $x,x' \in B^{g_{\R^d}}(0,C^{-2})$, a similar estimate holds with $g$ and $g_{\R^d}$ interchanged and with the $L^\infty$ norm of $T$ over $B^g(0,C^{-1})$ on the right-hand side.  In particular, it follows that
\begin{align}\label{totschlag4}
\|T\|_{C^{\alpha}(B^{g_{\R^d}}(0,C^{-2}))} \leq C\|T\|_{C^\alpha(B^{g}(0,C^{-1}))}\leq C^2\|T\|_{C^\alpha(B^{g_{\R^d}}(0,1))}.
\end{align}
\end{lemma}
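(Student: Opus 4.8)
The plan is to prove the key inequality \eqref{totschlag3} and then derive the second (interchanged) estimate by symmetry, and finally assemble \eqref{totschlag4}. First I would fix the constant $C$ large enough (depending on $d,q,A,\alpha$) so that $B^g(0,C^{-1})$ is compactly contained in, say, $B^{g_{\R^d}}(0,\tfrac12)$, using \eqref{totschlag1}; this guarantees that all the geodesics, parallel transports, and norms below make sense on a region where we control everything. The core of the argument is a comparison of the two parallel transport operators $\mathbf{P}^g_{x'x}$ and $\mathbf{P}^{g_{\R^d}}_{x'x}$. The difference $\mathbf{P}^g_{x'x} - \mathbf{P}^{g_{\R^d}}_{x'x}$, acting on a tensor of rank $q$, satisfies a pointwise bound of the form $C\,d^{g_{\R^d}}(x,x')$: this follows because parallel transport along a path solves a linear ODE whose coefficient is built from the Christoffel symbols of the respective connection, and by \eqref{totschlag1}--\eqref{totschlag2} the Christoffel symbols of $g$ are bounded in $L^\infty$ on $B^{g_{\R^d}}(0,\tfrac12)$ (they involve one derivative of $g$, hence are controlled by $A$), while those of $g_{\R^d}$ vanish; moreover the two geodesics from $x'$ to $x$, one for each metric, are uniformly $C^1$-close (again by boundedness of the Christoffel symbols and of $g$), with $C^1$-distance $\lesssim d^{g_{\R^d}}(x,x')$. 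Grönwall applied to the ODE for the composition $(\mathbf{P}^{g_{\R^d}})^{-1}\circ\mathbf{P}^g$ then yields $|\mathbf{P}^g_{x'x}(T(x')) - \mathbf{P}^{g_{\R^d}}_{x'x}(T(x'))|_{g_{\R^d}(x)} \leq C\,d^{g_{\R^d}}(x,x')\,\|T\|_{L^\infty(B^{g_{\R^d}}(0,\tfrac12))}$.

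With this comparison in hand, the estimate \eqref{totschlag3} is essentially bookkeeping. Write
\[
T(x) - \mathbf{P}^g_{x'x}(T(x')) = \bigl(T(x) - \mathbf{P}^{g_{\R^d}}_{x'x}(T(x'))\bigr) + \bigl(\mathbf{P}^{g_{\R^d}}_{x'x}(T(x')) - \mathbf{P}^g_{x'x}(T(x'))\bigr).
\]
The first bracket is bounded by $[\,\cdot\,]_{C^\alpha(B^{g_{\R^d}})}\, d^{g_{\R^d}}(x,x')^\alpha$, and changing the pointwise norm from $g_{\R^d}(x)$ to $g(x)$ costs only the uniform factor $A$ from \eqref{totschlag1}. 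The second bracket is bounded by $C\,d^{g_{\R^d}}(x,x')\,\|T\|_{L^\infty}$ by the previous paragraph. Finally, dividing by $d^g(x,x')^\alpha$ and using $C^{-1}d^{g_{\R^d}}(x,x') \leq d^g(x,x') \leq C\,d^{g_{\R^d}}(x,x')$ (a consequence of \eqref{totschlag1}, valid on the relevant region), together with $d^{g_{\R^d}}(x,x')^{1-\alpha} \leq C$ since the region has bounded diameter, gives exactly \eqref{totschlag3}. The interchanged estimate is proved the same way, simply reversing the roles of $g$ and $g_{\R^d}$: the Christoffel symbols of $g_{\R^d}$ in $g$-normal coordinates are no longer zero, but they are still bounded by $A$ via \eqref{totschlag2}, so the Grönwall step goes through verbatim, with the appropriate restriction to $B^{g_{\R^d}}(0,C^{-2})$ so that the $g$-geodesic balls involved stay inside $B^g(0,C^{-1})$.

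For \eqref{totschlag4}: the $C^\alpha$ norm is the $L^\infty$ norm plus the seminorm. The $L^\infty$ norms over the three nested balls are trivially comparable by inclusion. For the seminorms, \eqref{totschlag3} bounds $[T]_{C^\alpha(B^g(0,C^{-1}))}$ by $C[T]_{C^\alpha(B^{g_{\R^d}}(0,1))} + C\|T\|_{L^\infty(B^{g_{\R^d}}(0,1))}$, which is $\leq C\|T\|_{C^\alpha(B^{g_{\R^d}}(0,1))}$; the interchanged estimate bounds $[T]_{C^\alpha(B^{g_{\R^d}}(0,C^{-2}))}$ by $C\|T\|_{C^\alpha(B^{g}(0,C^{-1}))}$; chaining these (and enlarging $C$ once more to absorb constants) gives \eqref{totschlag4}. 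The main obstacle is the Grönwall comparison of the two parallel transports in the first paragraph: one must be careful that everything — the two geodesics, their $C^1$-closeness, the ODE coefficients — is controlled purely in terms of $A$ via the $C^0$ bound \eqref{totschlag1} and the $C^1$ (and $C^2$, needed to keep geodesics from focusing too fast and to bound $d^2$-derivatives cleanly) bound \eqref{totschlag2}, with no hidden dependence on higher derivatives of $g$. The rest is routine.
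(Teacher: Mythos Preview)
Your proof is correct and follows essentially the same approach as the paper: choose $C$ so that $B^g(0,C^{-1})$ is $g$-geodesically convex (the paper cites \cite{ChEb,ChGrTa} for this, which is where the $C^2$ bound \eqref{totschlag2} enters), then bound the difference of the two parallel transports via the ODE with coefficients $\Gamma^g$ controlled by \eqref{totschlag1}--\eqref{totschlag2}. One simplification the paper makes that you can adopt: since $g_{\R^d}$ is flat, its parallel transport is path-independent, so you may compare $\mathbf{P}^g$ and $\mathbf{P}^{g_{\R^d}}$ \emph{along the same $g$-geodesic} $\gamma$ rather than along two different curves---this removes the need to argue that the $g$-geodesic and the Euclidean segment are $C^1$-close, and makes the Gr\"onwall step a one-liner (integrate $\frac{d}{dt}\mathbf{P}^{g_{\R^d}}_{\gamma(t)x}\bigl(\mathbf{P}^g_{x'\gamma(t)}T(x')\bigr)$ and use $|\Gamma^g|\leq C$).
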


\begin{proof}
Choose $C$ so large that $B^{g_{\R^d}}(0,C^{-2}) \subset B^g(0,C^{-1}) \subset B^{g_{\R^d}}(0,1)$ and $B^{g}(0, C^{-1})$ is convex with respect to $g$. The latter is possible thanks to \cite[Thm 5.14]{ChEb} combined with \cite[Thm 4.3]{ChGrTa}, noting that all the relevant quantities of \cite{ChEb, ChGrTa} are suitably bounded thanks to \eqref{totschlag1}, \eqref{totschlag2}. Then pick any two distinct points $x,x' \in B^g(0,C^{-1})$. By our choice of $C$ there exists a unique minimal $g$-geodesic $\gamma$ from $x'$ to $x$, and $\gamma$ is contained in $B^g(0,C^{-1})$. In order to compare the $g$-H\"older difference quotient of $T$ at $x,x'$ to the standard Euclidean one, it suffices to estimate the quantity
\begin{align}\label{totschlag5}
\frac{|\mathbf{P}^g_{\gamma}(T(x')) - \mathbf{P}^{g_{\R^d}}_{\gamma}(T(x'))|_{g_{\R^d}(x)}}{d^{g_{\R^d}}(x,x')^\alpha}.
\end{align}
Expressing $T$ in terms of the standard coordinates on $\R^d$, and writing $\Gamma$ for the Christoffel symbols of $g$ with respect to these coordinates, we can obviously bound the numerator of \eqref{totschlag5}  by
$$\biggl|\int_0^{d^g(x,x')} \frac{d}{dt} \mathbf{P}^{g_{\R^d}}_{\gamma(t)x}(\mathbf{P}^g_{x'\gamma(t)}(T(x')))\, dt \biggr|_{g_{\R^d}(x)} \leq C\int_0^{d^g(x,x')} |\Gamma(\gamma(t))| |\mathbf{P}^{g}_{x'\gamma(t)}(T(x'))|_{g_{\R^d}(\gamma(t))}\, dt.$$
Using \eqref{totschlag1}, \eqref{totschlag2}, and the fact that $\mathbf{P}^g$ is a $g$-isometry, it readily follows that \eqref{totschlag5} is bounded by $C$ times the $L^\infty$ norm of $T$ over $B^{g_{\R^d}}(0,1)$. This proves \eqref{totschlag3}. The proof of the analogous inequality with $g$ and $g_{\R^d}$ interchanged is verbatim the same (note that the restriction that $x,x' \in B^{g_{\R^d}}(0,C^{-2})$ is artificial and simply serves to make the statement of \eqref{totschlag4} more symmetric).
 \end{proof}

\begin{rk}\label{tiammazzo2}
We will also often use the following remark, which is related to Lemma \ref{tiammazzo1} but is easier and slightly more standard. If a sequence of Riemannian metrics $g_i$ converges to a limiting metric $g_\infty$ in $C^1$, if $T_i$ are tensors converging uniformly to $T_\infty$, if $x_i,x'_i$ are points converging to $x_\infty,x'_\infty$, and if $x_i,x'_i$ can be joined by a unique minimal $g_i$-geodesic $\gamma_i$, then $\gamma_i$ converges to a minimal $g_\infty$-geodesic $\gamma_\infty$ (this is clear) and \begin{small}$\mathbf{P}^{g_i}_{x'_ix_i}$\end{small}$T_i$ converges to \begin{small}$\mathbf{P}^{g_\infty}_{\gamma_\infty}$\end{small}$T_\infty$ (this is true because we have a sequence of ODEs on a convergent sequence of time intervals, with convergent initial values and with coefficient functions that converge uniformly). If in addition $x_\infty, x_\infty'$ can be joined by a \emph{unique} minimal geodesic with respect to $g_\infty$, then of course $\gamma_\infty$ is that geodesic and $\mathbf{P}^{g_\infty}_{\gamma_\infty}T_\infty =  \mathbf{P}^{g_\infty}_{x'_\infty x_\infty}T_\infty$.
\end{rk}

\subsection{An abstract Schauder estimate on cylinders}\label{s:abstract}

Throughout this section, $(Y,g_Y)$ will denote a compact Riemannian manifold of dimension $e$ without boundary. We consider the cylinder $\R^d \times Y$ endowed with the product metric $g_P = g_{\R^d} + g_Y$. (Formally $d,e = 0$ are possible.) As usual, $r$ denotes a Euclidean radius function on $\R^d$ or $\R^{d+e}$. Given a metric $g$, we define $L^g = d + \delta^g$ acting on $q$-forms of some fixed degree $q$. We will use the intrinsic Definition \ref{d:holder} of the H\"older seminorms.

With these conventions understood, our main result may be stated as follows.

\begin{theorem}\label{t:schauder}
Let $k\in\N_{\geq 1}$ and $0<\alpha<1$ be given. Let $\mathcal{S}$ be a presheaf of vector spaces of $q$-forms of class $C$\begin{small}$^{k,\alpha}_{\rm loc}$\end{small} on $\R^d \times Y$ such that the following two properties hold.

{\rm (1)} If $U_i$ is an exhaustion of $\mathbb{R}^d \times Y$ by open sets and if $\eta_i \in \mathcal{S}(U_i)$ converge to $\eta_\infty \in C^\infty_{\rm loc}(\mathbb{R}^d \times Y)$ in the $C$\begin{small}$^{k,\beta}_{\rm loc}$\end{small} topology for some $\beta<\alpha$, then $\eta_\infty \in \mathcal{S}(\R^d\times Y)$.

{\rm (2)} If $\eta\in\mathcal{S}(\mathbb{R}^d \times Y)$, $|\nabla^{k,g_P}\eta|_{g_P} = O(r^\alpha)$, and $\nabla^{k-1,g_P} L^{g_P}\eta$ is $g_P$-parallel, then $\nabla^{k+1,g_P}\eta = 0$.

\noindent Then there exists a $C>0$ such that for all $p \in \R^d \times Y$, $0 < \rho < R$, and $\eta \in \mathcal{S}(B^{g_P}(p,2R))$,
\begin{align}
[\nabla^{k,g_P}\eta]_{C^{\alpha}(B^{g_P}(p,\rho))} \leq C([\nabla^{k-1,g_P}L^{g_P}\eta]_{C^{\alpha}(B^{g_P}(p,R))} +(R-\rho)^{-k-\alpha}\|\eta\|_{L^\infty(B^{g_P}(p,R))} ).
\end{align}
\end{theorem}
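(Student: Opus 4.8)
The plan is a blowup-and-contradiction argument in the spirit of Simon's approach to Schauder estimates, carried out directly on the cylinder $\R^d\times Y$ rather than on Euclidean space. Suppose the estimate fails. Then for some fixed $k,\alpha$ there exist presheaf elements $\eta_i\in\mathcal{S}(B^{g_P}(p_i,2R_i))$, points $p_i$, and radii $0<\rho_i<R_i$ witnessing the failure. By translating in the $\R^d$ factor (the cylinder is homogeneous in the base) we may assume $p_i=p_\infty$ is fixed. The first step is to define the correct excess quantity: set
\[
Q_i(\rho)=[\nabla^{k,g_P}\eta_i]_{C^\alpha(B^{g_P}(p_\infty,\rho))},\qquad
M_i=\sup_{0<\rho<R_i}(R_i-\rho)^{k+\alpha}Q_i(\rho),
\]
and by the failure of the estimate, after normalizing, $M_i\to\infty$ relative to the right-hand side terms. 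A standard point-selection argument (choose $\rho_i$ nearly realizing the sup, then a concentration ball of radius $r_i$ comparable to $R_i-\rho_i$ around a near-extremal point) produces a rescaling. Let $\hat g_i = r_i^{-2}g_P$ be the rescaled metric and $\hat\eta_i$ the rescaled, normalized forms, arranged so that $[\nabla^{k,\hat g_i}\hat\eta_i]_{C^\alpha(B^{\hat g_i}(\hat p_i,1))}=1$ while $[\nabla^{k-1,\hat g_i}L^{\hat g_i}\hat\eta_i]_{C^\alpha}\to 0$ and the rescaled $L^\infty$-term $\to 0$. Crucially, because the ``fiber'' $Y$ has a fixed diameter, rescaling by $r_i^{-1}$ stretches $Y$; so one must distinguish three regimes according to whether $r_i\to 0$, $r_i\to r_\infty\in(0,\infty)$, or $r_i\to\infty$, giving rescaled limit spaces $\R^{d+e}$, $\R^d\times (r_\infty^{-1}Y)$, or $\R^d$ respectively.

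The second step is passage to the limit. In each regime, the interpolation inequality Lemma \ref{l:higher-interpol} applied at the rescaled scale gives uniform local bounds on $\|\nabla^{j,\hat g_i}\hat\eta_i\|_{L^\infty}$ for $j\le k$ on every fixed ball, after we first normalize out the additive $L^\infty$-part of $\hat\eta_i$ (replace $\hat\eta_i$ by $\hat\eta_i$ minus an appropriate $g_P$-parallel Taylor-type polynomial correction of degree $\le k$ — this is exactly where property of $\mathcal{S}$ being a presheaf closed under such modifications, together with the structure of $L^{g_P}$, is used; subtracting parallel sections does not change $\nabla^k$). Together with the vanishing $C^\alpha$-seminorm of $\nabla^{k-1}L\hat\eta_i$ this yields, by Arzelà–Ascoli, convergence $\hat\eta_i\to\hat\eta_\infty$ in $C^{k,\beta}_{\rm loc}$ for every $\beta<\alpha$. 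Property (1) of the presheaf ensures $\hat\eta_\infty\in\mathcal{S}$ of the limit space (in the collapsed regime $\R^d$ this needs the observation that fiberwise the forms become parallel, which is the content of Lemma \ref{l:braindead} applied at scale $r_i^{-1}\to\infty$ — this is the step preventing any logical cycle with the nonlinear argument). In the limit we obtain $\hat\eta_\infty$ with $|\nabla^{k,g}\hat\eta_\infty|_g=O(r^\alpha)$ (from $M_i$ being the sup, the rescaled excess at radius $\sigma$ is $\lesssim \sigma^{-k-\alpha}$ which after rearrangement gives the $O(r^\alpha)$ growth) and $\nabla^{k-1}L\hat\eta_\infty$ $g$-parallel (from $[\nabla^{k-1}L\hat\eta_i]_{C^\alpha}\to 0$).

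The third step is the contradiction. By hypothesis (2) of the theorem — the nonlinear/linear Liouville input — we conclude $\nabla^{k+1,g}\hat\eta_\infty=0$, i.e. $\nabla^k\hat\eta_\infty$ is parallel, hence $[\nabla^k\hat\eta_\infty]_{C^\alpha}=0$ on every ball. But the normalization was arranged so that $[\nabla^{k,\hat g_i}\hat\eta_i]_{C^\alpha(B(\hat p_i,1))}=1$, and moreover the near-extremal choice of the concentration point forces the difference quotient to be $\ge 1/2$ at a specific pair of points $x_i,x_i'$ at unit distance in $B^{\hat g_i}(\hat p_i,1)$; by Remark \ref{tiammazzo2} the parallel transports and difference quotients converge, so $[\nabla^k\hat\eta_\infty]_{C^\alpha}\ge 1/2$, a contradiction. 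Finally, to reduce the three cases of the $L^\infty$-term normalization and the geodesic-convexity issues in the collapsed regime to something manageable, one invokes Lemma \ref{tiammazzo1} to pass between the rescaled $g_P$-Hölder norms and coordinate Euclidean ones.

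\textbf{Main obstacle.} The hardest point is the collapsing regime $r_i\to\infty$, where the limit space is merely $\R^d$: here one must show the $\hat\eta_i$ actually converge in a topology strong enough that the limit inherits both the growth bound and the parallel-$L\hat\eta_\infty$ property, despite the reference metrics collapsing. Controlling the loss of the fiber directions requires combining the interpolation Lemma \ref{l:higher-interpol} (whose Case 3 already builds in the $\nabla_{\mathbf f}$-estimate from Lemma \ref{l:braindead}) with a careful accounting that the additive polynomial corrections used to tame the $L^\infty$-term do not themselves blow up; this is the technical heart, and is also the reason the statement is phrased in terms of the abstract presheaf $\mathcal{S}$ rather than a fixed PDE, so that the same argument can later be reused inside the nonlinear proof of Theorem \ref{thm51}.
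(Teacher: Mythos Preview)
Your overall strategy---blowup on the cylinder, three regimes according to the scale relative to ${\rm diam}(Y)$, Liouville-type contradiction---is exactly right and matches the paper. But there are two structural points where your sketch diverges from the paper and where, as written, it has gaps.

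First, the paper does \emph{not} prove the theorem by a direct point-selection on the weighted sup $M_i=\sup_\rho (R_i-\rho)^{k+\alpha}Q_i(\rho)$. Instead it first proves an intermediate $\epsilon$-improvement estimate (Proposition~\ref{p:schauder}): for every $\epsilon>0$ there is a $\delta_0$ so that
\[
[\nabla^k\eta]_{C^\alpha(B(p,\delta R))}\le \epsilon[\nabla^k\eta]_{C^\alpha(B(p,R))}+C[\nabla^{k-1}L\eta]_{C^\alpha(B(p,R))}+\sum_{j=0}^k CR^{-k+j-\alpha}\|\nabla^j\eta\|_{L^\infty(B(p,\delta R))}
\]
for $\delta\le\delta_0$, and then derives the theorem by combining this with Lemma~\ref{l:higher-interpol} and the iteration Lemma~\ref{l:iterate}. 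The virtue of this route is that the $\epsilon$-term on the right-hand side, once one assumes failure and normalizes the left-hand side to $1$, immediately gives a uniform $C^\alpha$ bound on $\nabla^k\eta_i$ on the \emph{large} ball $B(p_i,R_i)$, which after rescaling by $\lambda_i=d(x_i,x_i')^{-1}$ becomes a ball of radius $\lambda_iR_i\ge i/2$; this is what produces the $O(r^\alpha)$ growth in the limit without any further work. Your point-selection would need to supply this step separately.

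Second, two points about the subtractions and the use of hypothesis~(2). The correction you subtract is not a ``parallel Taylor-type polynomial'' but, crucially, an element of $\mathcal{L}=\{\eta\in\mathcal{S}(\R^d\times Y):\nabla^{k+1}\eta=0\}$: one projects the $k$-jet of $\eta_i$ at $x_i$ onto $J_i^k(\mathcal{L})$ and subtracts the corresponding element of $\mathcal{L}$ (this is the ``partial $k$-jet'' idea of \cite{HN}). This keeps $\eta_i'$ in $\mathcal{S}$ and leaves both $[\nabla^k\eta_i]_{C^\alpha}$ and $[\nabla^{k-1}L\eta_i]_{C^\alpha}$ unchanged; the bound $\sum_j|\nabla^j\eta_i'(x_i)|\le C$ (Claim~1) then requires its own short contradiction argument. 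Moreover, in the regime $\lambda_i\to\infty$ (blowup $\R^{d+e}$) this first subtraction is not enough---the lower-order jets may still be unbounded after rescaling---and the paper performs a \emph{second} subtraction of a genuine coordinate Taylor polynomial in normal coordinates (Claim~2), with errors controlled by the curvature decay $O(\lambda_i^{-2})$. Finally, hypothesis~(2) as stated is a statement about $\mathcal{S}(\R^d\times Y)$, so it is applied only in the regime $\lambda_i\to\lambda_\infty\in(0,\infty)$ (Case~2). In Cases~1 and~3 the limit space is $\R^{d+e}$ or $\R^d$, the presheaf $\mathcal{S}$ plays no role, and the contradiction comes from the elementary Liouville theorem for harmonic functions on Euclidean space (after observing that $\nabla^{k-1}L\eta_\infty$ parallel forces $\Delta\eta_\infty$ to be polynomial). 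Your ``Main obstacle'' paragraph correctly identifies Case~3 as the delicate one; the paper's key move there (Claim~3) is to decompose $\eta_i'=\sum_t(\eta_i')^t$ by fiber-form degree, rescale each piece by $\lambda_i^{-t}$, and use Lemma~\ref{l:braindead} to show that all fiber derivatives decay like $\lambda_i^{k-j+\alpha}$, so that the nontrivial difference quotient survives in a single base-type component on $\R^d$.
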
\smallskip

For clarity we isolate the main step of the proof of Theorem \ref{t:schauder} as a separate proposition.

\begin{proposition}\label{p:schauder}
Under the assumptions of Theorem \ref{t:schauder}, for all $\epsilon>0$ there exist $\delta_0,C>0$ such that for all $p \in \R^d \times Y$, $R > 0$, $\eta \in \mathcal{S}(B^{g_P}(p,2R))$, and $0 < \delta \leq \delta_0$,
\begin{equation}
\begin{split}
[\nabla^{k,g_P}\eta]_{C^{\alpha}(B^{g_P}(p,\delta R))} \leq \;&{\epsilon[\nabla^{k,g_P}\eta]_{C^{\alpha}(B^{g_P}(p,R))}} + C [\nabla^{k-1,g_P}L^{g_P} \eta]_{C^{\alpha}(B^{g_P}(p,R))}\\ &{+}\sum_{j=0}^k CR^{-k+j-\alpha}\|\nabla^{j,g_P}\eta\|_{L^\infty(B^{g_P}(p,\delta R))}.
\end{split}\end{equation}
\end{proposition}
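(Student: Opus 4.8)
The plan is to argue by contradiction and blowup, in the spirit of Simon's and Han--Lin's approach to Schauder estimates. Suppose the conclusion fails for some fixed $\epsilon > 0$. Then for every $j \in \N$ there exist $p_j \in \R^d\times Y$, $R_j > 0$, $\delta_j \searrow 0$, and $\eta_j \in \mathcal{S}(B^{g_P}(p_j, 2R_j))$ violating the inequality with $\delta = \delta_j$. Since the product metric $g_P$ is homogeneous in the $\R^d$-directions, after translating in $\R^d$ we may assume each $p_j$ lies in a fixed compact ``slice'' $\{0\}\times Y$; moreover the estimate is scale-covariant in the combination $R^{k+\alpha}$ versus $[\nabla^k\eta]_{C^\alpha}$, so we may normalize. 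The delicate point is that $g_P$ is \emph{not} scale-invariant (rescaling $\R^d\times Y$ by a large factor turns $Y$ into something with large diameter, and by a small factor shrinks it), so the blowup must be set up carefully: we rescale the metric by $R_j^{-2}$ (equivalently dilate coordinates), which replaces $Y$ by $(Y, R_j^{-2}g_Y)$, and we must separate three regimes according to whether $R_j \to 0$, $R_j \to R_\infty \in (0,\infty)$, or $R_j \to \infty$. This is exactly the trichotomy flagged in the overview: the blowup limit space is $\R^d$ (collapsed fiber), $\R^d\times Y$ (fiber of fixed size), or $\R^{d+e}$ (fiber blown up to Euclidean space), respectively.

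Next I would set up the normalization. Using Lemma \ref{l:higher-interpol} to control all the lower-order terms $\|\nabla^{j}\eta\|_{L^\infty}$ by $[\nabla^k\eta]_{C^\alpha}$ plus $\|\eta\|_{L^\infty}$, the failure of the inequality forces, after rescaling $\eta_j$ so that $[\nabla^{k,g_P}\eta_j]_{C^\alpha(B(p_j,R_j))} = 1$ (with $R_j$ normalized to one of $0, 1, \infty$ in the three cases via the metric rescaling), that $[\nabla^{k-1}L^{g_P}\eta_j]_{C^\alpha} \to 0$ and $\|\eta_j\|_{L^\infty} \to 0$ on the unit-scale ball, while a definite fraction of the $C^\alpha$-seminorm of $\nabla^k\eta_j$ is achieved on a shrinking ball $B(p_j,\delta_j)$. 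Centering at the near-maximizing pair of points and performing a second, Schauder-type rescaling that blows up the tiny ball $B(p_j,\delta_j)$ to unit size, I would extract a limit $\eta_\infty$ on one of the three model spaces. Standard interior estimates (for $L^{g_P}$, which is elliptic of Hodge--Laplace type) give local uniform $C^{k,\beta}$ bounds after the rescaling, so Arzel\`a--Ascoli yields $C^{k,\beta}_{\rm loc}$ convergence for every $\beta < \alpha$; property (1) of $\mathcal{S}$ then guarantees $\eta_\infty \in \mathcal{S}$ of the full limit space. By construction $\eta_\infty$ satisfies $|\nabla^{k}\eta_\infty|_{g_P} = O(r^\alpha)$ (polynomial growth of the top covariant derivative with exponent $\alpha$), $[\nabla^k\eta_\infty]_{C^\alpha} \neq 0$ (normalization survives because the blowup was centered near a maximizer, using Lemma \ref{tiammazzo1} and Remark \ref{tiammazzo2} to compare seminorms under the converging metrics), and $\nabla^{k-1}L^{g_P}\eta_\infty$ is $g_P$-parallel (its $C^\alpha$-seminorm was forced to zero). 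Property (2) of $\mathcal{S}$ then says $\nabla^{k+1}\eta_\infty = 0$, hence $\nabla^k\eta_\infty$ is parallel, so $[\nabla^k\eta_\infty]_{C^\alpha} = 0$ --- a contradiction.

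In the collapsing case $R_j \to 0$, where the limit space is $\R^d$ rather than $\R^d\times Y$, the argument needs an extra ingredient: the convergence of the rescaled $\eta_j$ may a priori only be weak, since the $C^\alpha$ seminorm is measured with respect to a collapsing reference metric and there is no direct Arzel\`a--Ascoli. This is the step I expect to be the main obstacle. The resolution is to use the fiberwise Liouville-type Lemma \ref{l:braindead}: on each fiber $\{z\}\times Y$ (whose diameter shrinks to zero), Lemma \ref{l:braindead} bounds $\|\nabla_{\mathbf f}\nabla^{k-1}\eta_j\|_{L^\infty}$ by $[\nabla^k\eta_j]_{C^\alpha}$ times a power of the fiber diameter, so in the limit all fiber derivatives of $\nabla^{k-1}\eta_j$ die and $\eta_\infty$ becomes (the pullback of) a form on $\R^d$; this is precisely how the collapsing cases of Lemma \ref{l:higher-interpol} (Case 3) already worked, and it is what makes property (2), the nonlinear/linear Liouville input, applicable. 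The remaining bookkeeping --- verifying the growth estimate $|\nabla^k\eta_\infty| = O(r^\alpha)$ passes to the limit, checking that the normalization is not lost (one must rule out the seminorm concentrating ``at infinity'' by the usual trick of translating to a point realizing, say, half the supremum and using that $\delta_j \to 0$), and handling the change of reference metric via Lemma \ref{tiammazzo1} --- is routine and parallels the proof of Lemma \ref{l:higher-interpol}. Finally, Theorem \ref{t:schauder} itself follows from Proposition \ref{p:schauder} by the standard iteration/absorption argument (choose $\epsilon$ small, sum a geometric series, and fold in Lemma \ref{l:higher-interpol} to eliminate the lower-order terms), using Lemma \ref{l:iterate}.
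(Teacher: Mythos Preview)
Your overall architecture is right --- contradiction, blowup, trichotomy, Liouville --- but two concrete pieces are missing, and without them the argument does not close.

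\medskip

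\textbf{Wrong blowup parameter.} The three regimes are not indexed by $R_j$. In the paper the rescaling factor is $\lambda_i = d^{g_P}(x_i,x_i')^{-1}$, where $x_i,x_i'$ realize the supremum in $[\nabla^k\eta_i]_{C^\alpha(B(p_i,\delta_iR_i))}$, and the cases are $\lambda_i\to\infty$ (limit $\R^{d+e}$), $\lambda_i\to\lambda_\infty\in(0,\infty)$ (limit $\R^d\times Y$), $\lambda_i\to 0$ (limit $\R^d$). The value of $R_j$ is irrelevant to which limit space appears: $R_j$ can go to infinity while $d(x_i,x_i')$ stays bounded, or vice versa. Your ``first rescale by $R_j$, then by $\delta_j$'' does not land on the correct scale either, since $d(x_i,x_i')$ may be much smaller than $\delta_jR_j$.

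\medskip

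\textbf{Missing the jet subtraction.} The contradiction hypothesis gives $[\nabla^k\eta_i]_{C^\alpha(B(p_i,R_i))}<\epsilon^{-1}$ and $R_i^{j-k-\alpha}\|\nabla^j\eta_i\|_{L^\infty}<i^{-1}$. When $R_i\to\infty$ the latter does \emph{not} bound $|\nabla^j\eta_i(x_i)|$, so you cannot extract a $C^{k,\beta}_{\rm loc}$ limit from the seminorm bound alone, and your appeal to ``standard interior estimates for $L^{g_P}$'' does not help ($L$ is first order and you only control $\nabla^{k-1}L\eta$, not $L\eta$ in any norm that bootstraps). The paper's fix is to subtract from $\eta_i$ the unique $\eta_i^\sharp\in\mathcal{L}=\{\eta\in\mathcal{S}(\R^d\times Y):\nabla^{k+1}\eta=0\}$ whose $k$-jet at $x_i$ equals the orthogonal projection of $J^k_{x_i}\eta_i$ onto $J^k_{x_i}(\mathcal{L})$. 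A separate compactness argument (Claim 1) shows the resulting $\eta_i'=\eta_i-\eta_i^\sharp$ has $\sum_{j=0}^k|\nabla^j\eta_i'(x_i)|\le C$; since $\eta_i^\sharp\in\mathcal{L}$, the seminorms of $\nabla^k$ and $\nabla^{k-1}L$ are unchanged. This ``partial $k$-jet'' device (from Hein--Naber) is the step that makes the limit exist and remain in $\mathcal{S}$. In Case 1 even this is not enough after rescaling by $\lambda_i$, and a further Taylor-polynomial subtraction in normal coordinates (Claim 2) is required, with an error analysis showing the subtracted polynomial perturbs the relevant seminorms by $O(\lambda_i^{\alpha-1})$.

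\medskip

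Your treatment of the collapsed case via Lemma \ref{l:braindead} is correct in spirit and matches the paper's Claim 3, but it only kicks in once the above normalizations are in place.
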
\smallskip

Theorem \ref{t:schauder} follows quickly from this.

\begin{proof}[Proof of Theorem \ref{t:schauder}]
Fix an $\epsilon \in (0, \frac{1}{10}]$ and obtain $\delta = \min\{\frac{1}{10}, \delta_0\}$ and $C$ from Proposition \ref{p:schauder}. Let $x,x' \in B^{g_P}(p,\rho)$ have  a unique $g_P$-minimal geodesic connecting them. If $d^{g_P}(x,x')$ $<$ $\delta(R-\rho)$, then Proposition \ref{p:schauder} (applied to $B^{g_P}(x,R-\rho)$ instead of $B^{g_P}(p,R)$) allows us to estimate the $C^{\alpha}$ difference quotient of $\nabla^{k,g_P}\eta$ at $x,x'$. If $d^{g_P}(x,x') \geq \delta(R-\rho)$, we estimate this difference quotient trivially, at the expense of an additional term $2(\delta(R-\rho))^{-\alpha}\|\nabla^{k,g_P}\eta\|_{L^\infty(B^{g_P}(p,\rho))}$ on the right-hand side. Thus, for all $0 < \epsilon \leq \frac{1}{10}$ there exist $\delta,C > 0$ with $\delta \leq \frac{1}{10}$ such that
\[\begin{split}
[\nabla^{k,g_P}\eta]_{C^{\alpha}(B^{g_P}(p,\rho))} \leq \;&\epsilon[\nabla^{k,g_P}\eta]_{C^{\alpha}(B^{g_P}(p,R))}+C[\nabla^{k-1,g_P}L^{g_P}\eta]_{C^{\alpha}(B^{g_P}(p,R))} \\
&{+}\sum_{j=0}^k C(R-\rho)^{-k+j-\alpha}\|\nabla^{j,g_P} \eta\|_{L^\infty(B^{g_P}(p,\rho+\delta(R-\rho)))}
\end{split}\]
for all $p\in\R^d\times Y$, $0 < \rho < R$, and $\eta \in\mathcal{S}(B^{g_P}(p,2R))$. Lemma \ref{l:higher-interpol} (with $\rho,R$ replaced by $\rho + \delta(R-\rho)$, $\rho + (\delta + \delta')(R-\rho)$ for some $\delta' \in (0,\frac{1}{10}]$ small enough so that $C(\delta')^\alpha \leq \epsilon$) lets us remove the $\|\nabla^{j,g_P}\eta\|_{L^\infty}$ terms with $j > 0$ from the right-hand side, and then Theorem \ref{t:schauder} follows from Lemma \ref{l:iterate}.
\end{proof}

We will now prove Proposition \ref{p:schauder}, thereby completing the proof of Theorem \ref{t:schauder}. We write $\Lambda^qX$ for the bundle of $q$-forms on a manifold $X$ and $\mathcal{A}^qX$ for the space of $C^{k,\alpha}_{\rm loc}$ sections of $\Lambda^q X$.

\begin{proof}[Proof of Proposition \ref{p:schauder}] Suppose that the statement is false. Then there exists an $\epsilon>0$ such that there exist sequences $p_i \in \R^d \times Y$, $R_i>0$, $\eta_i \in \mathcal{S}(B^{g_{P}}(p_i, 2R_i))$, and $0 < \delta_i \leq \frac{1}{i}$ such that
\begin{align}
\label{e:start1} 1 = [\nabla^{k,g_P}\eta_i]_{C^{\alpha}(B^{g_{P}}(p_i,\delta_iR_i))} >\;&\epsilon [\nabla^{k,g_P}\eta_i]_{C^{\alpha}(B^{g_{P}}(p_i,R_i))} + i[\nabla^{k-1,g_P}L^{g_P} \eta_i]_{C^{\alpha}(B^{g_{P}}(p_i,R_i))}\\
\label{e:start2}
&{+}\sum_{j=1}^k i R_i^{-k+j-\alpha} \|\nabla^{j,g_P}\eta_i\|_{L^\infty(B^{g_P}(p_i, \delta_iR_i))}.
\end{align}
(We can always make the left-hand side equal to $1$ by dividing $\eta_i$ by $[\nabla^{k,g_P}\eta_i]_{C^{\alpha}(B^{g_{P}}(p_i,\delta_i R_i))} > 0$ if necessary.) Select $x_i, x_i'$ in the closure of $B^{g_{P}}(p_i,\delta_iR_i)$ such that the supremum in the definition of the seminorm on the left-hand side of \eqref{e:start1} is attained at $x_i, x_i'$.

It turns out to be very useful for the sake of deriving a contradiction to pass from $\eta_i$ to a certain modified sequence $\eta_i'$, which we will now define. Let $\mathcal{L} = \{\eta\in\mathcal{S}(\R^d\times Y): \nabla^{k+1,g_P}\eta = 0\}$. Given a form $\eta$ on a neighborhood of ${x}_i$, define its $k$-jet at $x_i$ by $J^k_i\eta = (\eta(x_i), (\nabla^{g_P}\eta)(x_i), \ldots, (\nabla^{k,g_P}\eta)(x_i))$, and define its \emph{partial} $k$-jet $\mathcal{L}J^k_i\eta$ to be the $g_P(x_i)$-orthogonal projection of $J^k_i\eta$ onto the space $J^k_i(\mathcal{L})$. As $J^k_i$ is injective on $\mathcal{L}$, there exists a unique $\eta_i^\sharp \in \mathcal{L}$ with $J^k_i\eta_i^\sharp = \mathcal{L}J^k_i\eta_i^\sharp = \mathcal{L}J^k_i\eta_i$, and we can use this to define $\eta_i' = \eta_i - \eta_i^\sharp \in \mathcal{S}(B^{g_P}(p,2R_i))$. The idea of defining and using this partial $k$-jet comes from \cite{HN}. The following claim states the key properties of $\eta_i'$. \medskip\

\noindent \emph{Claim 1}. There exists a constant $C$ such that after passing to a subsequence,
\begin{align}
1 = [\nabla^{k,g_P}\eta_i']_{C^{\alpha}(B^{g_P}(p_i,\delta_i R_i))} > \epsilon [\nabla^{k,g_P}{\eta}_i']_{C^{\alpha}(B^{g_P}({p}_i,R_i))} + i[\nabla^{k-1,g_P}L^{{g}_{P}} {\eta}_i']_{C^{\alpha}(B^{{g}_{P}}({p}_i,R_i))},\label{e:fucking_bounds_1_prime}\\
\text{the supremum on the LHS of \eqref{e:fucking_bounds_1_prime} is attained at}\;\,{x}_i, {x}_i'\;\,\text{as above},\label{e:fucking_bounds_2_prime}\\
\sum_{j=0}^k |(\nabla^{j,g_P}\eta_i')({x}_i)|_{{g}_P(x_i)}\leq C.\label{e:fucking_bounds_4_prime}
\end{align}
(In all of the following arguments, it will be necessary to pass to subsequences or diagonal sequences many times, but we will often not mention this again explicitly.)\medskip\

\noindent \emph{Proof of Claim 1}. Notice that \eqref{e:fucking_bounds_1_prime}, \eqref{e:fucking_bounds_2_prime} are trivial from \eqref{e:start1} and the definition of $x_i, x_i'$ because $\eta_i' = \eta_i - \eta_i^\sharp$, where $\nabla^{k+1,g_P}\eta_i^\sharp = 0$ and hence $\nabla^{k-1,g_P}(L^{{g}_{P}}\eta_i^\sharp)=0$. Also, \eqref{e:fucking_bounds_4_prime} is trivial from \eqref{e:start2} as long as $R_i \leq C$ because the orthogonal projection map onto $J_i^k(\mathcal{L})^\perp$ is norm nonincreasing. Thus, passing to a subsequence, it suffices to prove \eqref{e:fucking_bounds_4_prime} under the assumption that $R_i \to \infty$.

Then let us assume that $R_i \to \infty$, and assume for a contradiction that
\begin{align}
\label{e:wtf101}\mu_i = \sum_{j=0}^k |(\nabla^{j,g_P}\eta_i')({x}_i)|_{g_P(x_i)} \to \infty.
\end{align}
Now \eqref{e:fucking_bounds_1_prime} implies in particular that
\begin{align}
\label{e:wtf100} [\nabla^{k,g_P}\eta_i']_{C^{\alpha}(B^{g_P}(x_i,(1-\delta_i)R_i))}
\leq [\nabla^{k,g_P}\eta_i']_{C^{\alpha}(B^{g_P}(p_i, R_i))} < 1/\epsilon.
\end{align}
Thus, after translating the $\R^d$-factor if necessary, we may assume that $x_i \to x_\infty \in \R^d \times Y$ and that $\hat\eta_i' = \mu_i^{-1}\eta_i'$ converges to some $q$-form $\hat{\eta}_\infty' \in C^{k,\alpha}_{\rm loc}(\R^d \times Y)$ in the $C^{k,\beta}_{\rm loc}$ sense for all $\beta < \alpha$. Then

$\bullet$ $\nabla^{k,g_P}\hat\eta_\infty'$ is $g_P$-parallel by \eqref{e:wtf100} (so in particular, $\hat\eta_\infty'$ is actually smooth),

$\bullet$ $\hat\eta_\infty' \in \mathcal{S}(\R^d\times Y)$ by Assumption (1) of Theorem \ref{t:schauder}, and

$\bullet$ $\sum_{j=0}^k |(\nabla^{j,g_P}\hat\eta_\infty')(x_\infty)|_{g_P(x_\infty)} = 1$ by \eqref{e:wtf101}.

\noindent In particular, it follows that $\hat\eta'_\infty \in \mathcal{L}$, and that $\mathcal{L} J^k_\infty\hat\eta'_\infty = J^k_\infty\hat\eta'_\infty \neq 0$ in the obvious sense. However, $\mathcal{L}J^k_i\eta_i' = 0$ by construction, hence $\mathcal{L}J^k_i\hat\eta_i' = 0$ and $\mathcal{L}J^k_\infty\hat\eta_\infty' = 0$. This is a contradiction.  \hfill $\Box$\medskip\

Let $\lambda_i = d^{g_{P}}(x_i, x_i')^{-1} \geq (2\delta_i R_i)^{-1}$, rescale $g_{P} = \lambda_i^{-2}\tilde{g}_{i}$, $\eta_i' = \lambda_i^{-q-k-\alpha} \tilde\eta_i'$, and write $\tilde{p}_i, \tilde{x}_i, \tilde{x}_i'$ instead of $p_i, x_i, x_i'$ to emphasize that these should be viewed as points in rescaled spaces. Then
\begin{align}
1 = [\nabla^{k,\tilde{g}_i}\tilde{\eta}_i']_{C^{\alpha}(B^{\tilde{g}_{i}}(\tilde{p}_i,\lambda_i \delta_i R_i))} > \epsilon [\nabla^{k,\tilde{g}_i}\tilde{\eta}_i']_{C^{\alpha}(B^{\tilde{g}_{i}}(\tilde{p}_i,\lambda_iR_i))} + i[\nabla^{k-1,\tilde{g}_i}L^{\tilde{g}_{i}} \tilde{\eta}_i']_{C^{\alpha}(B^{\tilde{g}_{i}}(\tilde{p}_i,\lambda_iR_i))},\label{e:fucking_bounds_1}\\
\text{the supremum on the LHS of \eqref{e:fucking_bounds_1} is attained at}\;\,\tilde{x}_i, \tilde{x}_i'\;\,\text{with}\;\,d^{\tilde{g}_i}(\tilde{x}_i,\tilde{x}_i') = 1,\label{e:fucking_bounds_2}\\
\sum_{j=0}^k \lambda_i^{-k+j-\alpha}|(\nabla^{j,\tilde{g}_i} \tilde\eta_i')(\tilde{x}_i)|_{\tilde{g}_i(\tilde{x}_i)}
\leq C.\label{e:fucking_bounds_4}
\end{align}

We would now like to take a pointed limit of the pointed spaces $X_i = (\R^d \times Y, \tilde{g}_{i}, \tilde{x}_i)$. Up to passing to a subsequence, one of the following three cases must occur.\medskip

\noindent {\bf Case 1:} $\lambda_i \to \infty$. {\bf $X_i$ converges to $(\R^{d+e}, g_{\R^{d+e}}, 0)$ in the $C^\infty$ Cheeger-Gromov sense.}\medskip\

\noindent \emph{Deriving a contradiction in Case 1}. We aim to use \eqref{e:fucking_bounds_1} to get a limit $\tilde{\eta}_\infty' \in C$\begin{small}$^{k,\alpha}_{\rm loc}$\end{small}$(\R^{d+e})$ with $\tilde\eta_i' \to \tilde\eta_\infty'$ in $C$\begin{small}$^{k,\beta}_{\rm loc}$\end{small} for all $\beta < \alpha$. If this is possible, then
$[\nabla^k\tilde\eta_\infty']$\begin{small}$_{C^{\alpha}(\R^{d+e})}$\end{small} $\leq$ $C$ thanks to \eqref{e:fucking_bounds_1} and Remark \ref{tiammazzo2}, so that $|\nabla^k \tilde{\eta}_\infty'|= O(r^{\alpha})$ at infinity,  but also $[\nabla^{k-1}L\tilde\eta_\infty']$\begin{small}$_{C^{\alpha}(\R^{d+e})}$\end{small} $=$ $0$ by \eqref{e:fucking_bounds_1} and Remark  \ref{tiammazzo2}, where $L$ now denotes the Euclidean $d + \delta$. In particular, the tensor $\nabla^{k-1}L\tilde\eta_\infty'$ is constant. Applying $L$ to this equation, commuting $L$ and $\nabla^{k-1}$, and using the fact that $L^2 = \Delta$, it follows that $\Delta\tilde\eta_\infty' = 0$ if $k = 1$, and that $\Delta\tilde\eta_\infty'$ is a polynomial of degree at most $k-2$ if $k \geq 2$. Thus, after subtracting a polynomial of degree $\leq k$ if $k \geq 2$, all coefficients of $\tilde\eta_\infty'$ are entire $O(r^{k+\alpha})$ harmonic functions on $\R^{d+e}$, hence are themselves polynomials of degree $\leq k$ by Liouville. This implies $[\nabla^k\tilde\eta_\infty']_{C^{\alpha}(\R^{d+e})} = 0$, in contradiction to the fact that $[\nabla^k\tilde\eta_\infty']_{C^\alpha(\R^{d+e})} = 1$ which follows from \eqref{e:fucking_bounds_2} by using Remark \ref{tiammazzo2}.

The problem with this argument is that the $\epsilon$-term on the right-hand side of \eqref{e:fucking_bounds_1} controls only the $C^{\alpha}$-seminorm of $\nabla^{k,\tilde{g}_i} \tilde\eta_i'$ (on $B^{\tilde{g}_i}(\tilde{p}_i, \lambda_iR_i) \supset B^{\tilde{g}_i}(\tilde{x}_i,  (1 - \delta_i)\lambda_iR_i) \supset B^{\tilde{g}_i}(\tilde{x}_i, \frac{i}{3})$ for $i \geq 3$) rather than the full $C^{k,\alpha}$-norm of $\tilde\eta_i'$. Thus, we lack uniform bounds for $|(\nabla^{k,\tilde{g}_i}\tilde\eta_i')(\tilde{x}_i)|_{\tilde{g}_i(\tilde{x}_i)}, \ldots, |\tilde\eta_i'(\tilde{x}_i)|_{\tilde{g}_i(\tilde{x}_i)}$, and the partial bounds of \eqref{e:fucking_bounds_4} are not enough for this. To fix this, we will use \eqref{e:fucking_bounds_4} to construct a new sequence $\tilde\eta_i''$ with the same good properties as $\tilde\eta_i'$, but with $(\nabla^{k,\tilde{g}_i}\tilde\eta_i'')(\tilde{x}_i) = 0, \ldots, \tilde\eta_i''(\tilde{x}_i) = 0$. Then it will be clear (thanks to Lemma \ref{tiammazzo1}, which allows us to compare H\"older norms with respect to a fixed and a mildly varying metric) that the above argument applies to $\tilde\eta_i''$ in place of $\tilde\eta_i'$.

To construct $\tilde\eta_i''$, let $\mathbf{x}^1, \ldots, \mathbf{x}^{d+e}$ be normal coordinates for $g_P$ centered at $x_i$ such that
\begin{align*}
\left|\frac{\partial^j}{\partial\mathbf{x}^j}(g_P(\mathbf{x})_{ab}-\delta_{ab})\right| \leq \frac{1}{100}|\mathbf{x}|^{2-\min\{2,j\}}\,\;{\rm for}\;\, |\mathbf{x}| \leq 2\;\,{\rm and}\;\,j \in \{0,1,\ldots,k+1\}.
\end{align*}
(Thanks to the compactness of $Y$, this can be achieved by rescaling $g_Y$ by a fixed constant if needed.)
Define $\tilde{\mathbf{x}}^j = \lambda_i \mathbf{x}^j$, so that $\tilde{\mathbf{x}}^1, \ldots, \tilde{\mathbf{x}}^{d+e}$ are normal coordinates for $\tilde{g}_i$ centered at $\tilde{x}_i$ with
\begin{align}\label{e:normal_coords}
\left|\frac{\partial^j}{\partial\tilde{\mathbf{x}}^j}(g_P(\tilde{\mathbf{x}})_{ab} -\delta_{ab})\right| \leq \frac{\lambda_i^{-{\max\{2,j\}}}}{100}|\tilde{\mathbf{x}}|^{2-\min\{2,j\}}\,\;{\rm for}\;\, |\tilde{\mathbf{x}}| \leq 2\lambda_i\;\,{\rm and}\;\,j \in \{0,1,\ldots,k+1\}.
\end{align}
Then let $(\tilde\eta_i')^{\sharp} \in \mathcal{A}^q(B^{\tilde{g}_i}(\tilde{x}_i,\lambda_i))$ denote the $k$-th order Taylor polynomial of $\tilde\eta_i'$ at $\tilde{x}_i$ with respect to the coordinate system $\tilde{\mathbf{x}}^1, \ldots, \tilde{\mathbf{x}}^{d+e}$, and define $\tilde\eta_i'' = \tilde\eta_i' - (\tilde\eta_i')^{\sharp} \in \mathcal{A}^q(B^{\tilde{g}_i}(\tilde{x}_i,\lambda_i))$.\medskip\

\noindent \emph{Claim 2}. There is a $C$ such that for all $R >0$ there is an $i_R \in \N$ such that for all $i \geq i_R$,
\begin{align}
[\nabla^{k,\tilde{g}_i}\tilde{\eta}_i'']_{C^{\alpha}(B^{\tilde{g}_{i}}(\tilde{x}_i,R))} < (1/\epsilon)+ C(\lambda_i/R)^{\alpha-1},\label{e:fucking_bounds_1a_prime_prime}\\
[\nabla^{k-1,\tilde{g}_i}L^{\tilde{g}_{i}} \tilde{\eta}_i'']_{C^{\alpha}(B^{\tilde{g}_{i}}(\tilde{x}_i,R))}< (1/i) +  C(\lambda_i/R)^{\alpha-1},\label{e:fucking_bounds_1b_prime_prime}\\
||(\nabla^{k,\tilde{g}_i}\tilde{\eta}_i'')(\tilde{x}_i) - \mathbf{P}_{\tilde{x}_i'\tilde{x}_i}^{\tilde{g}_i}[(\nabla^{k,\tilde{g}_i}\tilde{\eta}_i'')(\tilde{x}_i')] |_{\tilde{g}_i(\tilde{x}_i)} - 1 | \leq C\lambda_i^{\alpha-1},\label{e:fucking_bounds_2_prime_prime}\\
(\nabla^{j,\tilde{g}_i}\tilde\eta_i'')(\tilde{x}_i) = 0 \;\,{\rm for\;all}\;\,j \in \{0,1,\ldots, k\}.\label{e:fucking_bounds_4_prime_prime}
\end{align}

\noindent \emph{Proof of Claim 2}. \eqref{e:fucking_bounds_4_prime_prime} is clear from the definition of $\tilde\eta''_i$. We will now derive \eqref{e:fucking_bounds_1a_prime_prime}, \eqref{e:fucking_bounds_1b_prime_prime}, \eqref{e:fucking_bounds_2_prime_prime} for $i \geq i_R$ from \eqref{e:fucking_bounds_1}, \eqref{e:fucking_bounds_2} by using the auxiliary estimates \eqref{e:fucking_bounds_4} and \eqref{e:normal_coords}.

The seminorms in \eqref{e:fucking_bounds_1} give control over $B^{\tilde{g}_i}(\tilde{p}_i, \lambda_i R_i) \supset B^{\tilde{g}_i}(\tilde{x}_i, (1-\delta_i)\lambda_iR_i) \supset B^{\tilde{g}_i}(\tilde{x}_i,\frac{i}{3})$ for $i \geq 3$. Thus, as long as $i \geq 3R$, it makes sense to try to use \eqref{e:fucking_bounds_1} to prove \eqref{e:fucking_bounds_1a_prime_prime}, \eqref{e:fucking_bounds_1b_prime_prime} and \eqref{e:fucking_bounds_2_prime_prime}. Since we are going to use \eqref{e:normal_coords}, we also need to choose $i_R$ so large that $i \geq i_R$ implies $\lambda_i \geq \max\{2,2R\}$.

Since $\tilde\eta_i'' = \tilde\eta_i' - (\tilde\eta_i')^{\sharp}$, \eqref{e:fucking_bounds_1a_prime_prime}, \eqref{e:fucking_bounds_1b_prime_prime}, \eqref{e:fucking_bounds_2_prime_prime} would follow from \eqref{e:fucking_bounds_1}, \eqref{e:fucking_bounds_2} if we knew that
$$[\nabla^{k,\tilde{g}_i} (\tilde\eta'_i)^{\sharp}]_{C^{\alpha}(B^{\tilde{g}_{i}}(\tilde{x}_i,\rho))} \leq C(\lambda_i/\rho)^{\alpha-1}$$ for all $\rho \leq \lambda_i/2$. But this is fairly straightforward to prove by noting that
\begin{align*}
[\nabla^{k,\tilde{g}_i}(\tilde\eta_i')^{\sharp}]_{C^{\alpha}(B^{\tilde{g}_{i}}(\tilde{x}_i,\rho))} &\leq (2\rho)^{1-\alpha}\biggl\|\left
(\frac{\partial}{\partial\tilde{\mathbf{x}}} + \Gamma^{\tilde{g}_i}(\tilde{\mathbf{x}})\right)^{k+1}
\sum_{\substack{\beta\in\N^{d+e}\\ |\beta|\leq k}} \frac{1}{\beta!} \frac{\partial^{|\beta|}\tilde\eta_i'}{\partial\tilde{\mathbf{x}}^\beta}(\tilde{\mathbf{x}}_i)(\tilde{\mathbf{x}} - \tilde{\mathbf{x}}_i)^\beta
\biggr\|_{L^\infty(B^{\tilde{g}_i}(\tilde{x}_i,2\rho))}
\end{align*}
and estimating the big $L^\infty$ norm by $C\lambda_i^{\alpha-1}$, as follows.

(1) Schematically $\nabla^a\Gamma = (\partial + \Gamma)^a\Gamma = \sum \partial^{a_1}\Gamma \cdots \partial^{a_\ell}\Gamma$, where $a_1 + \cdots + a_\ell + \ell = a+1$ by counting the total number of $\partial$s and $\Gamma$s in each term of a complete expansion of the left-hand side. Now $ \partial^b \Gamma$ $=$ $O( \lambda_i^{-b-1})$ by \eqref{e:normal_coords} (we can do better for $b = 0$ but this is not useful), so $\nabla^a\Gamma = O(\lambda_i^{-a-1})$. \hfill

(2) Writing $\tilde\eta_i' = \eta$, we have $\partial^b\eta= (\nabla-\Gamma)^b\eta = \sum \nabla^{b_1}\Gamma \cdots \nabla^{b_\ell}\Gamma \cdot \nabla^c\eta$ with $b_1 + \cdots + b_\ell + \ell + c = b$. Evaluating at $\tilde{\mathbf{x}}_i$ and using Step (1) above and \eqref{e:fucking_bounds_4}, we get $(\partial^b\eta)(\tilde{\mathbf{x}}_i) = O(\lambda_i^{k+\alpha-b})$.

(3) The expression we care about can be expanded to
\begin{align*}
(\partial + \Gamma)^{k+1} ((\partial^{|\beta|}\eta)(\tilde{\mathbf{x}}_i) (\tilde{\mathbf{x}}-\tilde{\mathbf{x}}_i)^\beta) = (\partial^{|\beta|}\eta)(\tilde{\mathbf{x}}_i) \sum \partial^{a_1}\Gamma \cdots \partial^{a_\ell}\Gamma \cdot \partial^b (\tilde{\mathbf{x}}-\tilde{\mathbf{x}}_i)^\beta,
\end{align*}
where $a_1 + \cdots + a_\ell + \ell + b = k+1$ again by counting the number of $\partial$s and $\Gamma$s. The desired estimate now follows using that $(\partial^{|\beta|}\eta)(\tilde{\mathbf{x}}_i) = O(\lambda_i^{k+\alpha-|\beta|})$ by Step (2) and $\partial^a\Gamma = O(\lambda_i^{-a-1})$ by \eqref{e:normal_coords}. \hfill $\Box$\medskip\

\begin{rk}
In the proofs of Schauder estimates by contradiction in \cite{HN, Si}, it was important that subtracting a possibly unbounded Taylor polynomial changes neither $[\eta]$ nor $[L\eta]$. For example in \cite{Si} this meant that the blowup argument could be applied only to constant coefficient operators. The idea of Claim 2 (that subtracting unbounded Taylor polynomials does change $[\eta]$ and $[L\eta]$ in general, but the errors may actually be manageable in good cases) is taken from the proof of \cite[Thm 2.8]{Sz}.
\end{rk}

\noindent {\bf Case 2:} $\lambda_i \to \lambda_\infty \in \R^+$. {\bf $X_i$ converges to $(\R^d \times Y, \lambda_\infty^2 g_P, (0,{y}_\infty))$ in the standard $C^\infty$ sense.}\medskip\

\noindent \emph{Deriving a contradiction in Case 2}. In Case 2, the quantities $|(\nabla^{k,\tilde{g}_i}\tilde\eta_i')(\tilde{x}_i)|_{\tilde{g}_i(\tilde{x}_i)}, \ldots, |\tilde\eta_i'(\tilde{x}_i)|_{\tilde{g}_i(\tilde{x}_i)}$ are uniformly bounded already by  \eqref{e:fucking_bounds_4}, so we require no additional modifications of the $\tilde\eta_i'$. Thus, using \eqref{e:fucking_bounds_1}, \eqref{e:fucking_bounds_2}, and $\lambda_i R_i \geq i/2$, we can pass to a limit $\tilde\eta_\infty'$ $\in$ \begin{small}$C^{k,\alpha}_{\rm loc}$\end{small}$(\R^d \times Y)$ with $\tilde\eta_i'\to\tilde\eta_\infty'$ in \begin{small}$C^{k,\beta}_{\rm loc}$\end{small} for all $\beta<\alpha$, $[\nabla^{k,g_P}\tilde\eta_\infty']_{C^{\alpha}(\R^d\times Y,g_P)} \neq 0$, $|\nabla^{k,g_P}\tilde{\eta}_\infty'|_{g_P} = O(r^{\alpha})$, and  $[\nabla^{k-1,g_P}L^{g_P}\tilde\eta_\infty']_{C^{\alpha}(\R^d\times Y, g_P)} = 0$ (here we also need to again apply Lemma \ref{tiammazzo1} and Remark \ref{tiammazzo2} the same way we did in Case 1). In particular, $\tilde\eta_\infty'$ is in fact smooth by elliptic regularity, so that $\tilde\eta_\infty' \in \mathcal{S}(\R^d \times Y)$ by Assumption (1) of Theorem \ref{t:schauder}. Taken together, these properties obviously contradict Assumption (2).\medskip\

\noindent {\bf Case 3:} $\lambda_i \to 0$. {\bf $X_i$ converges to $(\R^d, g_{\R^d}, 0)$ in the Gromov-Hausdorff sense.}\medskip\

\noindent \emph{Deriving a contradiction in Case 3}. We begin by replacing $\tilde{g}_i, \tilde\eta_i', \tilde{p}_i, \tilde{x}_i,\tilde{x}_i'$ by  their pullbacks under the diffeomorphism $(z,y) \mapsto (\lambda_i^{-1}z,y)$. Then \eqref{e:fucking_bounds_1}, \eqref{e:fucking_bounds_2}, \eqref{e:fucking_bounds_4} remain unchanged, but we now have the useful property that $\tilde{g}_i = g_{\R^d} + \lambda_i^2 g_Y \to g_{\R^d}$ smoothly as tensors on $\R^d \times Y$. Also, we can assume as usual that $\tilde{x}_i \to \tilde{x}_\infty \in \R^d \times Y$ by translating the $\R^d$-factor if necessary.

Let us write $\tilde\eta_i' = \sum_{t=0}^e (\tilde\eta_i')^t$ according to the decomposition $\Lambda^q(\R^d \times Y) = \bigoplus_{s+t=q} \Lambda^s\R^d \otimes \Lambda^t Y$. Let  $(\hat\eta_i')^t = \lambda_i^{-t}(\tilde\eta_i')^t$. We would now like to translate \eqref{e:fucking_bounds_1}, \eqref{e:fucking_bounds_2}, \eqref{e:fucking_bounds_4} into analogous statements with respect to the \emph{fixed} reference metric $g_P = g_{\R^d} + g_Y$ for each rescaled component $(\hat\eta_i')^t$.

The decomposition of $\tilde\eta_i'$ is $\tilde{g}_i$-orthogonal at each point, and is invariant under $\tilde{g}_i$-parallel transport. Moreover, $\nabla^{\tilde{g}_i} = \nabla^{g_P}$ (because the Levi-Civita connection of a Riemannian product metric is invariant under scaling the factors), and $\tilde{g}_i \leq C g_P$. Thus, it follows directly from \eqref{e:fucking_bounds_1}, \eqref{e:fucking_bounds_4} that
\begin{align}
[\nabla^{k,g_P}(\hat\eta_i')^t ]_{C^{\alpha}(B^{g_P}(\tilde{x}_i, \frac{i}{4}))} \leq C, \label{e:wtf10^7}\\
\sum_{j=0}^k \lambda_i^{-k+j-\alpha}|(\nabla^{j,g_P}(\hat\eta_i')^t)(\tilde{x}_i)|_{g_P(\tilde{x}_i)} \leq C,\label{e:wtf10^8}
\end{align}
for all $t \in \{0,\ldots,e\}$ and for all large enough $i$. Notice carefully that all norms here are understood to be measured with respect to $g_P$. To prove \eqref{e:wtf10^7}, \eqref{e:wtf10^8}, one also needs to decompose $\nabla = \nabla_{\mathbf{b}} + \nabla_{\mathbf{f}}$, where $\nabla =  \nabla^{\tilde{g}_i} = \nabla^{g_P}$ and the subscripts $\mathbf{b}$ and $\mathbf{f}$ denote base and fiber directions, respectively. The lengths of  $\nabla_{\mathbf{b}}(\tilde\eta_i')^t$ and $\nabla_{\mathbf{f}}(\tilde\eta_i')^t$ scale differently when passing from $\tilde{g}_i$ to $g_P$, but in \eqref{e:wtf10^7}, \eqref{e:wtf10^8} this is actually helpful because we only care about upper bounds.

The following claim is needed to deal with \eqref{e:fucking_bounds_2} and with the $L^{\tilde{g}_i}$-part of \eqref{e:fucking_bounds_1}.\medskip\

\noindent \emph{Claim 3}. There exists a $C$ such that $|\nabla^{\tilde{g}_i}_{\mathbf{f}}\nabla^{j-1,\tilde{g}_i}\tilde{\eta}_i'|_{\tilde{g}_i} \leq C\lambda_i^{k-j+\alpha}$ on $B^{\tilde{g}_i}(\tilde{x}_i, \frac{i}{4})$ for all $j \in \{1,\ldots, k\}$ and for all large $i$.  In particular, since ${\mathbf{b}}$ and ${\mathbf{f}}$ covariant derivatives commute, every $j$-fold covariant derivative of $\tilde\eta_i'$ with at least one subscript $\mathbf{f}$ is locally $O(\lambda_i^{k-j+\alpha})$ with respect to $\tilde{g}_i$.   \medskip\

\noindent \emph{Proof of Claim 3}. If $i$ is large enough, then $\pi^{-1}(\pi(B^{\tilde{g}_i}(\tilde{x}_i, \frac{i}{4}))) \subset B^{\tilde{g}_i}(\tilde{x}_i, \frac{i}{3})$, where $\pi: \R^d \times Y \to \R^d$ is the  projection. Then for all $z \in \pi(B^{\tilde{g}_i}(\tilde{x}_i, \frac{i}{4}))$, viewing $\nabla^{j-1,\tilde{g}_i}\tilde\eta_i'$ as a section of the restriction to $\{z\} \times Y$ of an appropriate vector bundle over $\R^d \times Y$,
\begin{align*}
\|\nabla^{\lambda_i^2 g_Y}_{\mathbf{f}}\nabla^{j-1,\tilde{g}_i}\tilde\eta_i'\|_{L^\infty(\{{z}\}\times Y, \lambda_i^2g_Y)}
&\leq C\lambda_i^{k-j+\alpha}[\nabla^{k-j+1,\lambda_i^2g_Y}_{\mathbf{f}}\nabla^{j-1,\tilde{g}_i}\tilde\eta_i']_{C^{\alpha}(\{{z}\}\times Y, \lambda_i^2 g_Y)}\\
&\leq C\lambda_i^{k-j+\alpha}[\nabla^{k,\tilde{g}_i} \tilde\eta_i']_{C^{\alpha}(B^{\tilde{g}_i}(\tilde{x}_i, \frac{i}{3}))} \leq C\lambda_i^{k-j+\alpha}.
\end{align*}
To see this, apply Lemma \ref{l:braindead} on $(\{z\}\times Y, g_Y)$, rescale the metric by $\lambda_i^2$, and use \eqref{e:fucking_bounds_1}. \hfill$\Box$\medskip\

Given Claim 3, we are now able to rewrite \eqref{e:fucking_bounds_2} in terms of $g_P$ and the individual $(\hat\eta_i')^t$s. Indeed, it follows easily from \eqref{e:fucking_bounds_2}, \eqref{e:fucking_bounds_4}, and Claim 3 that there exists some index $t_* \in \{0,\ldots,e\}$ (which we may assume is independent of $i$) such that for all $i$,
\begin{align}
|(\nabla^{k,g_P}_{\mathbf{b}}(\hat\eta_i')^{t_*})(\tilde{x}_i')|_{g_P(\tilde{x}_i')} \geq \frac{1}{e+1} - C\lambda_i^\alpha,\label{e:wtf10^9}\\
C^{-1}\leq d^{g_P}(\tilde{x}_i, \tilde{x}_i') \leq C.\label{e:wtf10^10}
\end{align}
(Without Claim 3, due to the different scaling behaviors of $\nabla_{\mathbf{b}}$ and $\nabla_{\mathbf{f}}$, we would not be able to rule out that $|(\nabla^{k,g_P}(\hat\eta_i')^t)(\tilde{x}_i')|_{g_P(\tilde{x}_i')} \leq C\lambda_i^\alpha$ for all $t$, and then there would be no contradiction as $i \to \infty$. Even with Claim 3, the same problem arises if we do not consider each $(\tilde\eta_i')^t$ separately.)

It remains to deal with the $L^{\tilde{g}_i}$-part of \eqref{e:fucking_bounds_1}. For this it turns out to be convenient to first pass to a limit. Thanks to \eqref{e:wtf10^7}, \eqref{e:wtf10^8}, we can assume that $(\hat\eta_i')^{t_*}$ converges to a $C$\begin{small}$^{k,\alpha}_{\rm loc}$\end{small} section $(\hat\eta_\infty')^{t_*}$ of the bundle $\Lambda^{q-t_*}\R^d \otimes \Lambda^{t_*}Y$ on $\R^d \times Y$, with convergence taking place in the $C$\begin{small}$^{k,\beta}_{\rm loc}$\end{small} topology for all $\beta<\alpha$. Since $|\nabla^{g_P}_{\mathbf{f}}(\hat{\eta}_i')^{t_*}|_{g_P} = O(\lambda_i^{k+\alpha})$ locally uniformly by Claim 3, it follows that $\nabla^{g_P}_{\mathbf{f}}(\hat\eta_\infty')^{t_*} = 0$. This lets us view $(\hat\eta_\infty')^{t_*}$ as a section (still denoted by the same symbol) of the bundle $\Lambda^{q-t_*}\R^d \otimes \mathcal{P}^{t_*}$ over $\R^d$, where $\mathcal{P}^t$ denotes the space of all $g_Y$-parallel $t$-forms on $Y$. By \eqref{e:wtf10^7}--\eqref{e:wtf10^10},
$$
0 < [\nabla^{k,\R^d}(\hat\eta_\infty')^{t_*}]_{C^{\alpha}(\R^d)}\leq C.
$$
The preceding equation will contradict Liouville's theorem once we deduce from \eqref{e:fucking_bounds_1} that
$$[\nabla^{k-1,\R^d}L^{\R^d}(\hat\eta_\infty')^{t_*}]_{C^{\alpha}(\R^d)} = 0,$$
proving that each component of $(\hat\eta_\infty')^{t_*}$ is the sum of a harmonic function and a polynomial of degree $\leq k$ with values in $\mathcal{P}^{t_*}$. To this end, fix $z \neq z'$ in $\R^d$. Fix $y \in Y$ and let $x = (z,y)$ and $x' = (z',y)$. Then for all large enough $i$ it follows from \eqref{e:fucking_bounds_1} and Claim 3 that
\begin{align}\label{e:almostthere2}
\left|\sum_{t=0}^e \left((\nabla^{k-1,\tilde{g}_i}_{\mathbf{b}}L_{\mathbf{b}}^{\tilde{g}_i}(\tilde\eta_i')^{t})(x) -  \mathbf{P}^{\tilde{g}_i}_{x'x}[(\nabla^{k-1,\tilde{g}_i}_{\mathbf{b}}L_{\mathbf{b}}^{\tilde{g}_i}(\tilde\eta_i')^{t})(x')] \right)\right|_{\tilde{g}_i(x)} < \frac{1}{i}|z-z'|_{\R^d}^\alpha + C\lambda_i^\alpha,
\end{align}
where $L^{\tilde{g}_i}_{\mathbf{b}}$ denotes the part of $L^{\tilde{g}_i}$ that only involves $\tilde{g}_i$-covariant derivatives in the base directions.
Now $L^{\tilde{g}_i}_{\mathbf{b}}$ (unlike $L^{\tilde{g}_i}$) sends sections of $\Lambda^\bullet\R^d \otimes \Lambda^t Y$ to sections of $\Lambda^\bullet\R^d \otimes \Lambda^t Y$ for every $t$, so that the terms of the sum on the left-hand side of \eqref{e:almostthere2} are $\tilde{g}_i(x)$-orthogonal. Thus,
\begin{align*}
|(\nabla^{k-1,\R^d}&L^{\R^d}(\hat\eta_\infty')^{t_*})(z) - (\nabla^{k-1,\R^d}L^{\R^d}(\hat\eta_\infty')^{t_*})(z')|_{\R^d} \\
&= \lim_{i\to\infty} |(\nabla^{k-1,g_P}_{\mathbf{b}} L^{g_P}_{\mathbf{b}}(\hat\eta_i')^{t_*})(x) - \mathbf{P}^{g_P}_{x'x}[(\nabla_{\mathbf{b}}^{k-1,g_P}L^{g_P}_{\mathbf{b}}(\hat\eta_i')^{t_*})(x')] |_{g_P(x)}\\
&=\lim_{i\to\infty} |(\nabla^{k-1,\tilde{g}_i}_{\mathbf{b}}L_{\mathbf{b}}^{\tilde{g}_i}(\tilde\eta_i')^{t_*})(x) - \mathbf{P}^{\tilde{g}_i}_{x'x}[(\nabla^{k-1,\tilde{g}_i}_{\mathbf{b}}L_{\mathbf{b}}^{\tilde{g}_i}(\tilde\eta_i')^{t_*})(x')] |_{\tilde{g}_i(x)} \\
&\leq \liminf_{i \to \infty} \; ((1/i)|z-z'|_{\R^d}^\alpha + C\lambda_i^\alpha) = 0,
\end{align*}
as desired. In the second step we have used once again that $\nabla^{g_P} = \nabla^{\tilde{g}_i}$.
\end{proof}

\subsection{A Schauder estimate for $i\partial\ov{\partial}$-exact $2$-forms}
Let $Y$ be a compact K\"ahler manifold without boundary, let $d = 2m$, identify $\R^d = \C^m$, and let $\mathcal{S}$ be the presheaf of $i\partial\ov{\partial}$-exact real $(1,1)$-forms of class $C^{k,\alpha}_{\rm loc}$ on $\C^m\times Y$ with respect to the product complex structure, where $k \in \mathbb{N}_{\geq 1}$, $\alpha \in (0,1)$. The following two propositions show that Assumptions (1)--(2) of Theorem \ref{t:schauder} hold in this setting.

\begin{proposition}\label{p:closure}
Let $U_i$ be an exhaustion of $\mathbb{C}^m \times Y$ by open sets. Let $\eta_i \in C$\begin{small}$^{k,\alpha}_{\rm loc}$\end{small}$(U_i)$ be a sequence of $i\partial\ov{\partial}$-exact $(1,1)$-forms. If $\eta_i$ converges to some $2$-form $\eta_\infty \in C^\infty_{\rm loc}(\mathbb{C}^m \times Y)$ in the $C$\begin{small}$^{k,\beta}_{\rm loc}$\end{small} topology for some $\beta<\alpha$, then $\eta_\infty$ is again $i\partial\ov{\partial}$-exact.
\end{proposition}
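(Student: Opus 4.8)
The plan is to construct a global potential for $\eta_\infty$ on $\C^m\times Y$ by extracting a limit from suitably renormalized potentials of the $\eta_i$. The main subtlety is that a potential of an $i\partial\overline\partial$-exact form on $B_\rho(0)\times Y$ is determined only up to a pluriharmonic function, i.e.\ up to the pullback of the real part of an arbitrary holomorphic function on $B_\rho(0)$ -- an infinite-dimensional ambiguity that grows with $\rho$, so the potentials cannot be bounded directly. A purely cohomological approach is also insufficient: it would only show that $[\eta_\infty]$ vanishes in de Rham cohomology, which is far from enough, since e.g.\ on $\C\times E$ with $E$ an elliptic curve the real $(1,1)$-form $dz\wedge d\overline w-dw\wedge d\overline z$ is $d$-exact but is not $i\partial\overline\partial$-exact on any open set containing a whole fiber. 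The resolution is that only the fiberwise-mean-zero part of the potential, which carries no ambiguity, needs to be controlled, while the remaining ambiguity lives on the base $\C^m$, where the $\partial\overline\partial$-lemma holds trivially.

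First I would shrink the exhaustion: choose $\rho_i\uparrow\infty$ with $\overline{B_{\rho_i}(0)}\times Y\subset U_i$ and restrict to the products $B_{\rho_i}(0)\times Y$, on which $\eta_i=i\partial\overline\partial\phi_i$ for some $\phi_i\in C^{k,\alpha}_{\rm loc}$. Write $\phi_i=\overline\phi_i+\tilde\phi_i$, where $\overline\phi_i(z)=\int_Y\phi_i(z,\cdot)\,d\mu_Y$ is the fiberwise average (with $\mu_Y$ the normalized volume form of $\omega_Y$) and $\int_Y\tilde\phi_i(z,\cdot)\,d\mu_Y=0$. Restricting $i\partial\overline\partial\phi_i=\eta_i$ to a fiber kills $\overline\phi_i$ and shows that $\tilde\phi_i|_{\{z\}\times Y}$ is the unique mean-zero solution on the compact K\"ahler manifold $Y$ of $i\partial\overline\partial_Y u=\eta_i|_{\{z\}\times Y}$. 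Standard elliptic (Schauder) estimates on $Y$, applied also to the $z$-derivatives of this equation (which commute with $\partial_Y,\overline\partial_Y$), then bound $\tilde\phi_i$ in $C^{k+1,\beta}$ on compact subsets of $\C^m\times Y$ uniformly in $i$; after passing to a subsequence and a diagonal argument, $\tilde\phi_i\to U$ in $C^{k+1,\beta'}_{\rm loc}$ for every $\beta'<\beta$, with $i\partial_F\overline\partial_F U=\eta_\infty$ along every fiber. In particular the pure-fiber component of $\eta':=\eta_\infty-i\partial\overline\partial U$ vanishes.

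Next I would pass to the limit componentwise in the decomposition of $(1,1)$-forms into base, mixed, and fiber bidegrees relative to the product. Since $\overline\phi_i$ is pulled back from $\C^m$, its $i\partial\overline\partial$ is of pure base type; hence the mixed component of $\eta_i$ equals that of $i\partial\overline\partial\tilde\phi_i$, which converges to the mixed component of $i\partial\overline\partial U$, so the mixed component of $\eta'$ vanishes as well. Thus $\eta'$ is closed and satisfies $\iota_V\eta'=0$ for every vector field $V$ tangent to the fibers; by Cartan's formula $L_V\eta'=\iota_V\,d\eta'+d\,\iota_V\eta'=0$, so $\eta'=p^*\zeta_\infty$ for a closed real $(1,1)$-form $\zeta_\infty$ on $\C^m$ (explicitly $\zeta_\infty=\int_Y(\eta_\infty)^{\mathrm{base}}(\cdot,y)\,d\mu_Y(y)$, which is also the $C^0_{\rm loc}$-limit of the $i\partial\overline\partial$-exact forms $i\partial\overline\partial\overline\phi_i$ on $\C^m$, hence closed).

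Finally I would invoke the $\partial\overline\partial$-lemma on $\C^m$: every closed real $(1,1)$-form on $\C^m$ equals $i\partial\overline\partial$ of a smooth real function. This follows by writing $\zeta_\infty=d\beta$ via the Poincar\'e lemma, taking the $(1,0)$-part $\beta^{1,0}$ (so $\partial\beta^{1,0}=0$), subtracting a holomorphic $1$-form to reduce to $\beta^{1,0}=\partial\phi$ for a smooth $\phi$ -- using that $H^{0,1}_{\overline\partial}(\C^m)=0$ since $\C^m$ is Stein, equivalently $H^{1,0}_{\partial}(\C^m)=0$ -- and then $\zeta_\infty=\overline\partial\partial\phi+\partial\overline\partial\overline\phi=i\partial\overline\partial(-\operatorname{Im}\phi)$ after checking reality. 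Hence $\zeta_\infty=i\partial\overline\partial h_\infty$ with $h_\infty\in C^\infty(\C^m)$, and therefore
\[
\eta_\infty=\eta'+i\partial\overline\partial U=i\partial\overline\partial\bigl(h_\infty\circ p+U\bigr),
\]
with $h_\infty\circ p+U\in C^\infty(\C^m\times Y)\subset C^{k,\alpha}_{\rm loc}$, so $\eta_\infty\in\mathcal{S}(\C^m\times Y)$. I expect the step requiring the most care to be the second one: promoting the fiberwise elliptic estimates to genuine uniform $C^{k+1}$ control of $\tilde\phi_i$ on compact subsets of the total space, so that the limit $U$ exists, is smooth enough to commute with $i\partial\overline\partial$, and furnishes the correct fiberwise potential. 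The remaining steps are bookkeeping plus the standard $\partial\overline\partial$-lemma on $\C^m$.
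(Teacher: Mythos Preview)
Your proof is correct and takes a genuinely different route from the paper's. The paper first shows $\eta_\infty$ is $d$-exact via K\"unneth, then uses the Leray spectral sequence to compute $H^{0,1}(\C^m\times Y)\cong H^{0,1}(Y)\otimes H^0(\C^m,\mathcal{O})$ and write $\eta_\infty = 2\,\mathrm{Re}\sum d\sigma_j\wedge\theta_j + 2i\partial\overline\partial\,\mathrm{Im}\,h$ with $\theta_j$ harmonic $(0,1)$-forms on $Y$ and $\sigma_j$ holomorphic on $\C^m$; a fiberwise-average argument on the potentials then forces $d\sigma_j=0$. You bypass this cohomological detour entirely: you extract the limit $U$ of the mean-zero parts $\tilde\phi_i$ directly via fiberwise Schauder estimates, show that $\eta_\infty-i\partial\overline\partial U$ is basic, and finish with the local $\partial\overline\partial$-lemma on $\C^m$. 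Your route is more elementary (no spectral sequence) and more constructive; the paper's route makes the cohomological obstruction coming from $H^{0,1}(Y)$ explicit, which is conceptually useful and foreshadows the non-product setting of Section~\ref{nonpr}, where this obstruction genuinely enters (cf.\ Subclaims A and B in the proof of Proposition~\ref{bd}).

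One minor caveat on your regularity bookkeeping: the claimed uniform $C^{k+1,\beta}$ bound on $\tilde\phi_i$ is slightly too strong at top pure-base order (for $k=1$ one cannot control $\partial_z^2\tilde\phi_i$ from $\Delta_Y\tilde\phi_i=f_i$ with $f_i$ merely $C^{1,\beta}$). This is harmless, however, because your argument never uses the base--base second derivatives of $\tilde\phi_i$: you only need convergence of the fiber and mixed second derivatives, which do follow from fiberwise Schauder applied to $\partial_z^a(\tilde\phi_i-U)$ for $a\le 1$, together with smoothness of $U$ itself, which follows from the smoothness of $\eta_\infty$ via the Green's function of $\Delta_Y$.
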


\begin{proof}The proof is similar to the arguments in \cite[p.2936--2937]{TZ}.
The limit $\eta_\infty$ is closed $(1,1)$ because the convergence takes place in $C^{k,\beta}_{\rm loc}$. We would like to show that $\eta_\infty$ is in fact $\ddbar$-exact. For all $z\in \mathbb{C}^m$ the restriction $\eta_{\infty}|_{\{z\}\times Y}$ is $d$-exact because $Y$ is compact without boundary and this form integrates to zero against every $d$-closed form. By the K\"unneth formula, $\eta_\infty$ is $d$-exact on $\mathbb{C}^m\times Y$. Thus, there exists a smooth real $1$-form $\zeta$ on $\C^m \times Y$ such that
$$\eta_\infty=d\zeta=\de\zeta^{0,1}+\ov{\de\zeta^{0,1}},\;\, \db\zeta^{0,1}=0.$$
The Leray spectral sequence computing the Dolbeault cohomology of $\mathbb{C}^m\times Y$ via projection onto the $\C^m$ factor degenerates at the first page, giving
$$H^{0,1}(\mathbb{C}^m\times Y)\cong H^{0,1}(Y)\otimes H^{0}(\mathbb{C}^m,\mathcal{O}_{\mathbb{C}^m}),$$
where we use the fact that $H^{0,1}(\mathbb{C}^m)=0$ by the $\db$-Poincar\'e lemma. (The tensor factor $H^0(\C^m, \mathcal{O}_{\C^m})$ is missing in \cite{TZ}. Here we correct the arguments of \cite{TZ} to account for this term. We thank Y. Zhang for some very helpful discussions regarding this point.) Choosing $\db$-closed $(0,1)$-forms $\theta_j$ on $Y$ whose cohomology classes are a basis of $H^{0,1}(Y)$, we obtain
$$\zeta^{0,1}=\sum \sigma_j\theta_j+\db h$$
for some holomorphic functions $\sigma_j$ on $\mathbb{C}^m$ and a smooth $\mathbb{C}$-valued function $h$ on $\C^m\times Y$. We can pick the $\theta_j$ to be harmonic, so in particular also $\de$-closed, which gives
$$\eta_\infty=2\mathrm{Re}\sum d\sigma_j\wedge\theta_j+2\ddbar\mathrm{Im}\,h.$$
On $U_i$, write $\eta_i=\ddbar v_i$, and let $u_i=v_i-2\mathrm{Im}\,h$, so that
$\ddbar u_i$ converges to $2\mathrm{Re}\sum d\sigma_j\wedge\theta_j$ in $C$\begin{small}$^{k,\beta}_{\rm loc}$\end{small} as $i\to\infty$.
In particular, restricting this to any fiber $\{z\}\times Y$ ($z\in\mathbb{C}^m$), we see that $\ddbar u_i|$\begin{small}$_{\{z\}\times Y}\to 0$\end{small}. Let $\underline{u_i}$ be the $C$\begin{small}$^{k+2,\alpha}_{\rm loc}$\end{small} function on $\mathbb{C}^m$ obtained by averaging $u_i$ over these fibers. Then $\ddbar \underline{u_i}$ has uniform local $C$\begin{small}$^{k,\beta}$\end{small} bounds. The functions $u_i-\underline{u_i}$ thus have fiberwise average zero, and the forms $\ddbar(u_i-\underline{u_i})$ have uniform local $C$\begin{small}$^{k,\beta}$\end{small} bounds and their fiberwise restrictions go to zero in $C$\begin{small}$^{k,\beta}$\end{small}. Thus, $u_i-\underline{u_i}$ converges to zero locally uniformly on $\mathbb{C}^m\times Y$, hence locally in $C$\begin{small}$^{k+2,\gamma}$\end{small} for all $\gamma<\beta$. It follows that $\ddbar(u_i-\underline{u_i})\to 0$ locally in $C$\begin{small}$^{k,\gamma}$\end{small}, so the form $2{\rm Re} \sum d\sigma_{j}\wedge \theta_j,$ which is the limit of the $\ddbar u_i$, is also the limit of the $\ddbar \underline{u_i}$. But the $\ddbar \underline{u_i}$ are forms on $\mathbb{C}^m$, so $2{\rm Re} \sum d\sigma_{j}\wedge \theta_j$ is also the pullback of a form on $\mathbb{C}^m$. This is only possible if $d\sigma_j=0$ for all $j$, and so $\eta_\infty=2\ddbar\mathrm{Im}\,h$, as required.
\end{proof}

\begin{proposition}\label{check}
Let $\eta$ be an $i\partial\ov{\partial}$-exact $(1,1)$-form in $C^{k,\alpha}_{\rm loc}(\mathbb{C}^m \times Y)$ such that $|\nabla^{k,g_P}\eta|_{g_P} = O(r^\alpha)$ and $\nabla^{k-1,g_P}\delta^{g_P}\eta$ is $g_P$-parallel. Then $\eta = i\partial\ov{\partial} p$ for some real polynomial $p$ of degree $\leq k+2$ on $\mathbb{C}^m$.
\end{proposition}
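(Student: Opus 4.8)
The plan is to separate variables along the fibers, reduce to a statement about closed real $(1,1)$-forms on $\C^m$ with controlled growth, and finish with classical Liouville theorems. Since $d\eta=0$ and $\nabla^{k-1,g_P}\delta^{g_P}\eta$ is $g_P$-parallel, one first notes that $\delta^{g_P}\eta$, hence $\Delta_d\eta=d\delta^{g_P}\eta$, hence $\eta$, is smooth (by elliptic regularity), so we may write $\eta=\ddbar u$ with $u$ a smooth real function (existence is part of the hypothesis). Expand $u=\sum_{j\ge 0}u_j(z)\phi_j(y)$, where $\{\phi_j\}$ is an $L^2(Y)$-orthonormal basis of eigenfunctions of the Laplacian on $Y$ with eigenvalues $-\mu_j^2$, $0=\mu_0<\mu_1\le\mu_2\le\cdots$, and $\phi_0$ constant (assume $Y$ connected); each $u_j$ is smooth on $\C^m$. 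The computation driving everything is the identity $\db^*_{g_P}(\ddbar u)=\tfrac{i}{2}\,\partial(\Delta_{g_P}u)$, valid on any K\"ahler manifold: it follows from the K\"ahler identity $\db^*\partial u=0$ on functions, the commutation $[\Delta_{\db},\partial]=0$, and $\Delta_{\db}u=-\tfrac12\Delta_{g_P}u$ on functions. As $\eta$ is closed, $\delta^{g_P}\eta=2\,\mathrm{Re}\,\db^*_{g_P}\eta$, so the hypothesis says that $\nabla^{k-1,g_P}\big(\partial(\Delta_{g_P}u)\big)$ is $g_P$-parallel.

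First I would show $u_j\equiv 0$ for all $j\ge 1$, which forces $u$, and hence $\eta$, to be pulled back from $\C^m$. The $\Lambda^{1,0}Y$-component of $\partial(\Delta_{g_P}u)$ is $\partial_y(\Delta_{g_P}u)=\sum_{j\ge 1}(\Delta_{\C^m}u_j-\mu_j^2u_j)\,\partial_y\phi_j$, and since the splitting $\Lambda^{1,0}(\C^m\times Y)=\pi^*\Lambda^{1,0}\C^m\oplus\Lambda^{1,0}Y$ is $g_P$-parallel, $\nabla^{k-1,g_P}$ of this component is again parallel; restricting to a fiber $\{z\}\times Y$ and invoking Lemma \ref{l:braindead} (or, for $k=1$, the fact that the component is then literally parallel) shows that $\partial_y(\Delta_{g_P}u)(z,\cdot)$ is a $g_Y$-parallel $(1,0)$-form on $Y$ for each $z$. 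A parallel form is harmonic, hence co-closed, so $0=\partial^*_Y\big(\partial_y(\Delta_{g_P}u)(z,\cdot)\big)=\Delta_{\db,Y}\big((\Delta_{g_P}u)(z,\cdot)\big)$, forcing $(\Delta_{g_P}u)(z,\cdot)$ to be constant on $Y$; in Fourier modes this says exactly $\Delta_{\C^m}u_j=\mu_j^2u_j$ for all $j\ge 1$. On the other hand, integrating the growth hypothesis gives $|\eta|_{g_P}=O(r^{k+\alpha})$; the $\Lambda^{1,1}\C^m$-part of $\eta$ is $i\partial_z\db_z u$, with $j$-th mode $i\partial_z\db_z u_j$, so $\Delta_{\C^m}u_j=O(r^{k+\alpha})$, whence $u_j=\mu_j^{-2}\Delta_{\C^m}u_j=O(r^{k+\alpha})$. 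Thus $u_j$ is a tempered solution of $(\Delta_{\C^m}-\mu_j^2)u_j=0$ with $\mu_j>0$; Fourier transforming gives $(|\xi|^2+\mu_j^2)\widehat{u_j}=0$, and since $|\xi|^2+\mu_j^2$ never vanishes, $\widehat{u_j}=0$, so $u_j\equiv 0$. (Alternatively, a maximum principle against the barrier $\prod_a\cosh(\mu_j x^a/\sqrt{2m})$ on large balls works.) Hence $u=u_0(z)\phi_0$ depends only on $z$, and $\eta=i\partial_z\db_z u$ is pulled back from $\C^m$.

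It remains to treat $\eta$ as a closed real $(1,1)$-form on $\C^m$ with $\eta=\ddbar u$, $|\nabla^k\eta|=O(r^\alpha)$, and --- from the hypothesis together with $\db^*\eta=\tfrac{i}{2}\partial(\Delta_{\C^m}u)$ --- with $\partial(\Delta_{\C^m}u)$ a polynomial $(1,0)$-form of degree $\le k-1$. Since $\Delta_{\C^m}u$ is real, all of its first-order derivatives are polynomials of degree $\le k-1$, so $\Delta_{\C^m}u$ is a polynomial of degree $\le k$; then each coefficient $u_{a\ov b}=\partial_a\partial_{\ov b}u$ of $\eta$ satisfies $\Delta(u_{a\ov b})=\partial_a\partial_{\ov b}(\Delta_{\C^m}u)$, a polynomial of degree $\le k-2$, so $u_{a\ov b}$ differs from a polynomial of degree $\le k$ by a harmonic function whose $k$-th derivatives are $O(r^\alpha)$; by the classical Liouville theorem that harmonic function is a polynomial of degree $\le k$. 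Therefore $\eta$ has polynomial coefficients of degree $\le k$, and being a closed real $(1,1)$-form on $\C^m$ it equals $\ddbar p$ for a real polynomial $p$ of degree $\le k+2$, by the polynomial $\partial\db$-lemma (the usual homotopy operators for $\partial$ and $\db$ preserve polynomials and raise the degree by one).

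The main obstacle is the first part, the reduction to $\C^m$. Extracting the modal identities $\Delta_{\C^m}u_j=\mu_j^2u_j$ hinges on the clean formula for $\db^*\eta$ and on the observation --- via Lemma \ref{l:braindead} and Hodge theory on the compact K\"ahler fibers --- that a $\partial$-exact $(1,0)$-form which is fiberwise parallel must vanish; this is precisely where $\partial Y=\emptyset$ and the K\"ahler condition enter. Killing the modes then relies on the ``modified Helmholtz'' Liouville statement that a polynomially bounded solution of $(\Delta-\mu^2)v=0$ with $\mu>0$ vanishes identically. Everything after the reduction to $\C^m$ is routine.
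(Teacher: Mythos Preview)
Your argument is correct. The overall architecture matches the paper's: write $\eta=\ddbar u$, use the K\"ahler identity to see that $\nabla^{k-1}\delta^{g_P}\eta$ parallel forces $\Delta_{g_P}u$ to be a polynomial pulled back from $\C^m$, kill the fiber-varying part of $u$, and then finish on $\C^m$ with the classical Liouville theorem and the polynomial $\partial\bar\partial$-lemma (the paper's Claim~1).

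Where you genuinely diverge from the paper is in the step that kills the fiber-varying part. The paper sets $w=h-\underline{h}$ (the zero-fiber-mean remainder of the harmonic part), proves $|w|=O(r^{k+\alpha})$, and runs an energy monotonicity argument: $Q(r)=\int_{B_r\times Y}|\nabla w|^2$ satisfies $Q(r)\le \lambda^{-1}Q'(r)$ by the Poincar\'e inequality on $Y$, so $e^{-\lambda r}Q(r)$ is nondecreasing and $O(r^M e^{-\lambda r})$, hence identically zero. You instead Fourier-decompose $u=\sum_j u_j\phi_j$ along $Y$, extract the modal equations $\Delta_{\C^m}u_j=\mu_j^2 u_j$ for $j\ge 1$, bound $u_j$ polynomially via $u_j=\mu_j^{-2}\Delta_{\C^m}u_j$, and invoke the tempered Liouville theorem for $\Delta-\mu_j^2$. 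Both routes encode the same phenomenon---the spectral gap on $Y$ forces exponential behavior incompatible with polynomial growth---but your modal argument is slightly more transparent mode-by-mode, while the paper's integral inequality treats all nonzero modes at once and avoids distribution theory. One minor point: your derivation that $(\Delta_{g_P}u)|_{\{z\}\times Y}$ is constant, via showing $\partial_y(\Delta_{g_P}u)$ is fiberwise parallel and then $\partial^*$-closed, is a somewhat roundabout path to what the paper gets in one line from ``$\nabla^k(\Delta_{g_P}u)$ parallel on $\C^m\times Y$ $\Rightarrow$ $\Delta_{g_P}u$ is a polynomial on $\C^m$'' (the fiber constancy being implicit in the integration-by-parts idea behind Lemma~\ref{l:braindead}).
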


\begin{proof}
In this proof we will omit all sub- and superscripts $g_P$ for simplicity.

By assumption, $\eta = dd^c u$ for some real function $u$ on $\C^m\times Y$, which is necessarily $C^{k+2,\alpha}_{\rm loc}$. Then $\delta\eta$ $=$ $\delta dd^c u = - \delta d^cd u = d^c \delta du = -d^c \Delta u$ by the K\"ahler identities. Thus, $\nabla^{k} \Delta u$ is parallel, so $u$ is smooth with $\Delta u = \ell$ for some real polynomial $\ell$ of degree $\leq k$ on $\mathbb{C}^m$, so $u = h + \ell'$, where $h$ is harmonic on $\mathbb{C}^m \times Y$  and $\ell'$ is a real polynomial of degree $\leq k+2$ on $\C^m$. In particular, $|\nabla^k(i\partial\ov{\partial} h)| = O(r^\alpha)$.

Decompose $h =  \underline{h}+ (h - \underline{h})$, where $\underline{h}$ denotes the smooth function on $\C^m$ obtained by averaging $h$ over every fiber. It is easy to see that $\underline{h}$ is harmonic with $|\nabla^k(i\partial\ov{\partial} \underline{h})| = O(r^\alpha)$. Since every coefficient function of the tensor $\nabla^k(i\partial\ov{\partial}\underline{h})$ is harmonic, it follows from
Liouville's theorem that the coefficients of the closed $(1,1)$-form $\varpi = i\partial\ov{\partial}\underline{h}$ are (harmonic) real polynomials of degree $\leq k$.\medskip\

\noindent \emph{Claim 1}. There exists a real polynomial $\underline{h}'$ of degree $\leq k+2$ such that $i\partial\ov{\partial}\underline{h}' = \varpi$. \medskip\

\noindent \emph{Proof of Claim 1}. This is proved by induction on $k$. For $k = 0$, notice that every constant $(1,1)$-form $\varpi$ on $\C^m$ can obviously be written as $i\partial\ov{\partial}$ of a quadratic polynomial. For the inductive step, let $\varpi$ be a closed $(1,1)$-form on $\C^m$ whose coefficients are real polynomials of degree $\leq k+1$. The degree $\leq k$ part  of $\varpi$ is still closed $(1,1)$, so by the inductive hypothesis we can assume that $\varpi$ is $(k+1)$-homogeneous. Then the usual proof of the $d$-Poincar\'e lemma produces a real $1$-form $\zeta$ with $d\zeta = \varpi$ whose coefficients are $(k+2)$-homogeneous polynomials. Then $\ov{\partial}\zeta^{0,1} = 0$ as usual, so we only need to find a homogeneous polynomial $\phi$ of degree $k+3$ with $\ov{\partial}\phi = \zeta^{0,1}$ and put $\underline{h}' = 2{\rm Im}\,\phi$. To this end, write
$$\zeta^{0,1} = \sum_{\ell=1}^m \sum_{\substack{\beta,\gamma\in\N^m\\|\beta|+|\gamma|=k+2}} A^{\ov{\ell}}_{\beta\ov{\gamma}} z^\beta\ov{z}^\gamma d\ov{z}^\ell,$$
where $A_{\beta\ov{\gamma}}^{\ov{\ell}}\in \C$. The condition $\ov{\partial}\zeta^{0,1} = 0$ translates into
$$ \sum_{\substack{\gamma\in\N^m\\|\beta| + |\gamma| = k+2}} A^{\ov{\ell}}_{\beta\ov{\gamma}} \gamma_i \ov{z}^{\gamma - e_i} =  \sum_{\substack{\gamma\in\N^m\\|\beta| + |\gamma| = k+2}} A^{\ov{\imath}}_{\beta\ov{\gamma}}\gamma_\ell \ov{z}^{\gamma - e_\ell}$$
for all $\beta\in\N^m$ with $|\beta| \leq k+2$ and all $i,\ell \in \{1,\ldots, m\}$, where $e_i \in \N^m$ denotes the $i$-th unit vector. Using this, a straightforward computation shows that $\ov{\partial}\phi = \zeta^{0,1}$ as desired, where\

\begin{minipage}{160mm}
\begin{align*}\phi = \sum_{\ell=1}^m \sum_{\substack{\beta,\gamma\in\N^m\\|\beta|+|\gamma|=k+2}} \frac{1}{|\gamma|+1} A^{\ov{\ell}}_{\beta\ov{\gamma}} z^\beta \ov{z}^\gamma \ov{z}^\ell.
\end{align*}
\end{minipage}\hfill $\Box$\vskip4mm

\noindent \emph{Claim 2}. The function $w = h - \underline{h}$ is identically zero. \medskip\

\noindent \emph{Proof of Claim 2}. By the above, $w$ is harmonic on $\C^m \times Y$ with $|\nabla^k(i\partial\ov{\partial} w)| = O(r^\alpha)$. The latter implies that $|\ddbar w|=O(r^{k+\alpha})$, so the fiberwise Laplacian of $w$ is also $O(r^{k+\alpha})$. Since $w$ has fiberwise average zero, fiberwise Moser iteration or the fiberwise Green's formula give $|w| = O(r^{k
+\alpha})$, and hence $|\nabla^i w| = O(r^{k+\alpha})$ for all $i \in \N$ by standard local estimates for harmonic functions on a manifold of $C^\infty$ bounded geometry. Now define  $Q(r) = \int_{B_r \times Y} |\nabla w|^2$. Since $\Delta w = 0$, it follows that
$$Q(r) = \int_{\partial B_r \times Y} w\frac{\partial w}{\partial r} \leq \int_{\partial B_r} \biggl(\int_Y w^2\biggr)^{1/2}\biggl(\int_Y |\nabla w|^2\biggr)^{1/2} \leq \frac{1}{\lambda}\int_{\partial B_r \times Y} |\nabla w|^2 = \frac{1}{\lambda} \frac{dQ(r)}{dr},$$
where $\lambda > 0$ and $\lambda^2$ is the first eigenvalue of $Y$. Here we have used once again that $w$ is orthogonal to the constants on each fiber. Thus, the quantity $e^{-\lambda r}Q(r)$ is nondecreasing, but is also $O(r^{M}e^{-\lambda r})$ as $r \to \infty$ with $M = 2k+2\alpha+2m$ by what we said before, so that $Q(r) = 0$ for all $r$. \hfill $\Box$\medskip\

The desired statement now follows with $p = 2\underline{h}' + 2\ell'$.
\end{proof}

We conclude by stating the application of Theorem \ref{t:schauder} to this setting as a separate theorem.

\begin{theorem}\label{moron}
Let $(Y,g_Y, J_Y)$ be a compact K\"ahler manifold without boundary. Given $m \in \N$, equip $\C^m \times Y$ with the product K\"ahler structure $g_P = g_{\C^m} + g_Y$ and $J_P = J_{\C^m} + J_Y$. Then for all $k \in \N_{\geq 1}$ and $\alpha\in (0,1)$ there exists a constant $C_k = C_k(\alpha)$ such that for all $p \in \C^m \times Y$ and $0 <\rho<R$,
\begin{align}\label{e:moron}
[\nabla^{k,g_P}\eta]_{C^{\alpha}(B^{g_P}(p,\rho))} \leq C_k([\nabla^{k-1,g_P}\delta^{g_P}\eta]_{C^{\alpha}(B^{g_P}(p,R))} +(R-\rho)^{-k-\alpha}\|\eta\|_{L^\infty(B^{g_P}(p,R))} )
\end{align}
for all real $2$-forms $\eta \in C^{k,\alpha}_{\rm loc}(B^{g_P}(p,2R))$ that are $i\partial\ov{\partial}$-exact with respect to $J_P$. Here $\delta^{g_P}$ denotes the formal adjoint of $d$ with respect to $g_P$, and the H\"older seminorms are the ones of Definition \ref{d:holder}.
\end{theorem}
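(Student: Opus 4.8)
The plan is to deduce Theorem \ref{moron} as an immediate corollary of the abstract Schauder estimate, Theorem \ref{t:schauder}, applied to a well-chosen presheaf. First I would set $d = 2m$, identify $\R^d = \C^m$, and let $\mathcal{S}$ be the presheaf assigning to each open $U \subset \C^m \times Y$ the vector space of real $(1,1)$-forms of class $C^{k,\alpha}_{\rm loc}(U)$ that are $i\partial\ov{\partial}$-exact with respect to the product complex structure $J_P$. This is visibly a presheaf of vector spaces of $q$-forms with $q = 2$. The operator $L^{g_P} = d + \delta^{g_P}$ acting on an $i\partial\ov{\partial}$-exact $(1,1)$-form $\eta$ reduces to $\delta^{g_P}\eta$, since such $\eta$ is $d$-closed; this is why the $[\nabla^{k-1,g_P}\delta^{g_P}\eta]_{C^\alpha}$ term appears in \eqref{e:moron} in place of the $[\nabla^{k-1,g_P}L^{g_P}\eta]_{C^\alpha}$ term of Theorem \ref{t:schauder}.

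The substance of the proof is the verification of the two hypotheses of Theorem \ref{t:schauder}, and these are precisely Propositions \ref{p:closure} and \ref{check}. Hypothesis (1) — that the presheaf is closed under $C^{k,\beta}_{\rm loc}$-limits over an exhaustion — is exactly the content of Proposition \ref{p:closure}: a $C^{k,\beta}_{\rm loc}$ limit of $i\partial\ov{\partial}$-exact $(1,1)$-forms is again $i\partial\ov{\partial}$-exact (note the limit is assumed smooth, as in the statement of Theorem \ref{t:schauder}(1), so no regularity subtlety arises). Hypothesis (2) — that an $\eta \in \mathcal{S}(\C^m\times Y)$ with $|\nabla^{k,g_P}\eta|_{g_P} = O(r^\alpha)$ and $\nabla^{k-1,g_P}\delta^{g_P}\eta$ $g_P$-parallel must satisfy $\nabla^{k+1,g_P}\eta = 0$ — follows from Proposition \ref{check}: that proposition gives $\eta = i\partial\ov{\partial}p$ for a real polynomial $p$ of degree $\leq k+2$ on $\C^m$, and any such $\eta$ has coordinate coefficients that are polynomials of degree $\leq k$, hence $\nabla^{k+1,g_P}\eta = 0$ (here one uses that $g_P$ is a product of flat $\C^m$ with the fixed $g_Y$, so covariant differentiation in the base is just coordinate differentiation, and $\eta$ has no fiber components).

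With both hypotheses checked, Theorem \ref{t:schauder} applies verbatim and yields \eqref{e:moron}, with $C_k = C_k(\alpha)$ depending only on $\alpha$ (and on $(Y, g_Y, J_Y)$ and $m$, which are fixed). I would remark that the constant does not depend on $k$ in any essential way beyond what is already built into Theorem \ref{t:schauder}. I do not expect any genuine obstacle here: all the hard work has been front-loaded into the abstract Theorem \ref{t:schauder} and into Propositions \ref{p:closure} and \ref{check}, so the only thing to be careful about is the bookkeeping identification $L^{g_P}\eta = \delta^{g_P}\eta$ for closed forms and the observation that $\nabla^{k+1,g_P}\eta = 0$ is equivalent to (in fact slightly weaker than) the conclusion $\eta = i\partial\ov{\partial}p$ of Proposition \ref{check}. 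The proof is therefore a two-line deduction once the scaffolding is in place.
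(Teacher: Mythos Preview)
Your proposal is correct and matches the paper's approach exactly: the paper introduces precisely this presheaf $\mathcal{S}$ of $i\partial\ov{\partial}$-exact real $(1,1)$-forms, verifies Assumptions (1) and (2) of Theorem \ref{t:schauder} via Propositions \ref{p:closure} and \ref{check} respectively, and states Theorem \ref{moron} as the resulting specialization. Your additional remarks that $L^{g_P}\eta = \delta^{g_P}\eta$ for closed $\eta$ and that $\eta = i\partial\ov{\partial}p$ with $\deg p \leq k+2$ implies $\nabla^{k+1,g_P}\eta = 0$ are the right bookkeeping observations.
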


\subsection{A Schauder estimate for scalar functions}
We now use the abstract Schauder Theorem \ref{t:schauder} to derive a Schauder estimate for scalar functions on cylinders.

\begin{theorem}\label{scalar}
Let $(Y,g_Y)$ be a compact Riemannian manifold without boundary. Given any $d \in \N$, equip $\R^d \times Y$ with the product Riemannian metric $g_P = g_{\R^d} + g_Y$. Then for all $k \in \N_{\geq 2}$ and $\alpha \in (0,1)$ there exists a constant $C_k = C_k(\alpha)$ such that for all $p \in \R^d \times Y$ and $0 <\rho<R$,
\begin{align}\label{fruitless}
[\nabla^{k,g_P} f]_{C^{\alpha}(B^{g_P}(p,\rho))} \leq C_k([\nabla^{k-2,g_P}\Delta^{g_P}f]_{C^{\alpha}(B^{g_P}(p,R))} +(R-\rho)^{-k-\alpha}\|f\|_{L^\infty(B^{g_P}(p,R))} )
\end{align}
for all scalar functions $f \in C^{k,\alpha}_{\rm loc}(B^{g_P}(p,2R))$.
\end{theorem}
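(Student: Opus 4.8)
The plan is to deduce Theorem \ref{scalar} from the abstract Schauder estimate of Theorem \ref{t:schauder}, applied not to functions directly (for which $L^{g_P}$ would just be $d$ and the resulting inequality circular) but to the presheaf $\mathcal{S}$ of $d$-\emph{exact} real $1$-forms of class $C^{k-1,\alpha}_{\rm loc}$ on $\R^d\times Y$, with the integer ``$k$'' of Theorem \ref{t:schauder} replaced by $k-1$; this substitution is the reason the hypothesis here is $k\geq 2$. For a form $\eta=df$ in $\mathcal{S}$ one has $L^{g_P}\eta=\delta^{g_P}df=\Delta^{g_P}f$ (the Hodge Laplacian on functions), $\nabla^{k-1,g_P}\eta=\nabla^{k,g_P}f$, and $\|\eta\|_{L^\infty}=\|\nabla^{g_P}f\|_{L^\infty}$, so Theorem \ref{t:schauder} would immediately give, for every $f\in C^{k,\alpha}_{\rm loc}(B^{g_P}(p,2R))$ and $0<\rho<R$,
\begin{equation*}
[\nabla^{k,g_P}f]_{C^\alpha(B^{g_P}(p,\rho))}\leq C\big([\nabla^{k-2,g_P}\Delta^{g_P}f]_{C^\alpha(B^{g_P}(p,R))}+(R-\rho)^{-(k-1)-\alpha}\|\nabla^{g_P}f\|_{L^\infty(B^{g_P}(p,R))}\big).
\end{equation*}
To pass from this to \eqref{fruitless} I would run one interpolation step: Lemma \ref{l:higher-interpol}, applied on a pair of concentric $g_P$-balls separated by a small gap $\delta'(R-\rho)$, bounds $\|\nabla^{g_P}f\|_{L^\infty}$ on a slightly smaller ball by $C_k(\delta')^{k-1+\alpha}(R-\rho)^{k-1+\alpha}[\nabla^{k,g_P}f]_{C^\alpha}+C(\delta')^{-1}(R-\rho)^{-1}\|f\|_{L^\infty}$; since $k-1+\alpha>0$, choosing $\delta'$ small makes the $[\nabla^{k,g_P}f]_{C^\alpha}$ coefficient arbitrarily small, and Lemma \ref{l:iterate} then absorbs it. This last step is verbatim the one already used to derive Theorem \ref{t:schauder} from Proposition \ref{p:schauder}, so it is routine.

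The substance of the proof is therefore to check that $\mathcal{S}$ satisfies Assumptions (1) and (2) of Theorem \ref{t:schauder} (with $k$ replaced by $k-1$). Assumption (1) is easy: if $\eta_i\in\mathcal{S}(U_i)$ converge in $C^{k-1,\beta}_{\rm loc}$ to a smooth $\eta_\infty$, then $\eta_\infty$ is closed, and its periods over a fixed basis of $H_1(Y;\mathbb{Z})\cong H_1(\R^d\times Y;\mathbb{Z})$—represented by loops in a single fibre $\{z_0\}\times Y$, which lies in $U_i$ for $i$ large by compactness—vanish, being limits of the periods of the $\eta_i=df_i$; hence $[\eta_\infty]=0$ in de Rham cohomology and $\eta_\infty$ is $d$-exact on all of $\R^d\times Y$, so $\eta_\infty\in\mathcal{S}(\R^d\times Y)$. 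Assumption (2) is the heart of the matter, and I expect it to be the main obstacle, although it is only a scalar variant of Proposition \ref{check}: suppose $\eta=df\in\mathcal{S}(\R^d\times Y)$ with $|\nabla^{k-1,g_P}\eta|_{g_P}=|\nabla^{k,g_P}f|_{g_P}=O(r^\alpha)$ and $\nabla^{k-2,g_P}\Delta^{g_P}f$ parallel, i.e.\ $\nabla^{k-1,g_P}\Delta^{g_P}f=0$. Because $Y$ is compact without boundary, a function on $\R^d\times Y$ with vanishing $(k-1)$-st covariant derivative is forced to be constant on every fibre (a polynomial in arclength along a complete geodesic in $Y$ is bounded, hence constant) and to be a polynomial of degree $\leq k-2$ in the base directions; thus $\Delta^{g_P}f$ is the pullback of such a polynomial on $\R^d$. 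Subtracting from $f$ a polynomial of degree $\leq k$ on $\R^d$ that solves the same Poisson equation, I may assume $f$ is harmonic on $\R^d\times Y$, and integrating the (still polynomial-growth) bound on $\nabla^{k,g_P}f$ gives $|f|=O(r^{k+\alpha})$.

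It then remains to show a harmonic function of polynomial growth on $\R^d\times Y$ is the pullback of a harmonic polynomial on $\R^d$, which is exactly the scalar analogue of Claim 2 in the proof of Proposition \ref{check}: writing $f=\underline{f}+w$ with $\underline{f}$ the fibrewise average, $\underline{f}$ is harmonic of polynomial growth on $\R^d$ hence a polynomial of degree $\leq k$ by Liouville, while $w$ has fibrewise average zero and the frequency-type monotonicity of $Q(r)=\int_{B_r\times Y}|\nabla^{g_P}w|^2$—which uses $\partial Y=\emptyset$ and the spectral gap of $\Delta_Y$ exactly as in Proposition \ref{check}—forces $w\equiv 0$. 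Hence $f$ is a polynomial of degree $\leq k$ pulled back from $\R^d$, so $\nabla^{k,g_P}\eta=\nabla^{k+1,g_P}f=0$, which is Assumption (2). With Assumptions (1) and (2) verified, Theorem \ref{scalar} follows formally from Theorem \ref{t:schauder} together with the interpolation step described above; no new ideas beyond those already present in the proofs of Propositions \ref{check} and \ref{p:closure} are needed.
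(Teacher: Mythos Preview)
Your proposal is correct and follows essentially the same approach as the paper: apply Theorem \ref{t:schauder} to the presheaf of $d$-exact $1$-forms with $k$ replaced by $k-1$, verify Assumptions (1)--(2), and then use Lemma \ref{l:higher-interpol} together with Lemma \ref{l:iterate} to trade the $\|df\|_{L^\infty}$ term for $\|f\|_{L^\infty}$. The only noteworthy difference is your verification of Assumption (1): you argue via periods over a basis of $H_1(Y;\mathbb{Z})$, whereas the paper normalizes $f_i(0,p)=0$, uses the fundamental theorem of calculus to get uniform $C^{k,\beta}_{\rm loc}$ bounds on the $f_i$, and extracts a convergent subsequence with limit $f_\infty$ satisfying $\eta_\infty = df_\infty$; both arguments are short and valid.
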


In fact, thanks to \cite[Prop 3.2]{He2}, Theorem \ref{scalar} implies Theorem \ref{moron}  (even a version  for $k = 0$, with $\nabla^{-1}\delta = \rm{tr}$). Nevertheless, we find it valuable to have the abstract Theorem \ref{t:schauder} and derive Theorem \ref{moron} directly from it, since this is the right direction for what we will need to do in Section \ref{nonpr}. There, it will not help to work at the level of K\"ahler potentials because the complex structure is not a product, and part of the proof of Theorem \ref{t:fornow} will closely parallel the proof of Theorem \ref{t:schauder}.

\begin{proof}[Proof of Theorem \ref{scalar}]
This follows from Theorem \ref{t:schauder} applied to the presheaf $\mathcal{S}$ of $d$-exact real $1$-forms of class $C^{k-1,\alpha}_{\rm loc}$ on $\mathbb{R}^d\times Y$ once we verify that $\mathcal{S}$ satisfies Assumptions (1)--(2). Indeed, we can apply Theorem \ref{t:schauder} to $\eta = df$ with $\rho, R$ replaced by $\rho, \ti{R}=\rho+\frac{1}{2}(R-\rho)$ to obtain
$$[\nabla^{k,g_P} f]_{C^{\alpha}(B^{g_P}(p,\rho))} \leq C_k([\nabla^{k-2,g_P}\Delta^{g_P}f]_{C^{\alpha}(B^{g_P}(p,\ti{R}))} +(R-\rho)^{-k+1-\alpha}\|df\|_{L^\infty(B^{g_P}(p,\ti{R}))} ),$$
and then use Lemma \ref{l:higher-interpol} with $\rho,R$ replaced by $\ti{R}, \ti{R}+\kappa(R-\rho)$ ($\kappa \in (0,\frac{1}{2})$) to estimate
$$\kappa(R-\rho)\|df\|_{L^\infty(B^{g_P}(p,\ti{R}))}\leq C_k(\kappa^{k+\alpha}(R-\rho)^{k+\alpha}[\nabla^{k,g_P} f]_{C^{\alpha}(B^{g_P}(p,R))}+\|f\|_{L^\infty(B^{g_P}(p,R))}).$$
Combining these two estimates yields
\[\begin{split}
[\nabla^{k,g_P} f]_{C^{\alpha}(B^{g_P}(p,\rho))} &\leq C_k\kappa^{k-1+\alpha}[\nabla^{k,g_P} f]_{C^{\alpha}(B^{g_P}(p,R))}\\
&+ C_k([\nabla^{k-2,g_P}\Delta^{g_P}f]_{C^{\alpha}(B^{g_P}(p,R))} +\kappa^{-1}(R-\rho)^{-k-\alpha}\|f\|_{L^\infty(B^{g_P}(p,R))} ).
\end{split}\]
If we fix $\kappa \in (0,\frac{1}{2})$ such that $C_k\kappa^{k-1+\alpha}\leq\frac{1}{2}$, then an application of Lemma \ref{l:iterate} gives \eqref{fruitless}.

It remains to verify that $\mathcal{S}$ satisfies Assumptions (1)--(2) of Theorem \ref{t:schauder}.

For (1), let $U_i$ be an exhaustion of $\mathbb{R}^d \times Y$ by open sets. Let $\eta_i \in \mathcal{S}(U_i)$ converge to $\eta_\infty \in C^\infty_{\rm loc}(\mathbb{R}^d \times Y)$ in the $C$\begin{small}$^{k-1,\beta}_{\rm loc}$\end{small} topology for some $\beta<\alpha$. Write $\eta_i=df_i$ with $f_i(0,p)=0$ for some fixed $p$ (we may assume that $(0,p)\in U_i$ for all $i$). Using the fundamental theorem of calculus, we see that the functions $f_i$ are uniformly bounded in $C^{k,\beta}_{\rm loc}$, so passing to a subsequence they converge to $f_\infty\in C^{k,\beta}_{\rm loc}(\R^d\times Y)$ and $\eta_\infty=d f_\infty$. It follows that $f_\infty$ is in fact smooth, and so $\eta_\infty\in\mathcal{S}(\R^d\times Y)$.

For (2), let $\eta\in \mathcal{S}(\R^d\times Y)$ with $|\nabla^{k-1,g_P}\eta|_{g_P}=O(r^\alpha)$ and $\nabla^{k-2,g_P}L^{g_P}\eta$ parallel. Then $\eta=df$ and $L^{g_P}\eta=\delta^{g_P}df=\Delta^{g_P}f,$ so $\Delta^{g_P}f=\ell$, where $\ell$ is a polynomial of degree $\leq k-2$ on $\R^d$. In particular, $f$ is smooth and $f=h+\ell'$, where $h$ is harmonic on $\R^d\times Y$ and $\ell'$ is a polynomial of degree $\leq k$ on $\R^d$. This gives $|\nabla^{k-1,g_P} dh|_{g_P}=O(r^\alpha)$. Decomposing $h=\underline{h}+(h-\underline{h})$, where $\underline{h}$ denotes the smooth function on $\R^d$ obtaining by averaging $h$ on each fiber, we see that $\underline{h}$ is harmonic and $|\nabla^{k-1} d\underline{h}|=O(r^\alpha)$. By the Liouville theorem in $\R^d$ we obtain that $\underline{h}$ is a polynomial of degree $\leq k$.

A similar argument as in Claim 2 in the proof of Proposition \ref{check} then shows that $w=h-\underline{h}=0$. Indeed, $w$ is harmonic on $\R^d\times Y$ with $|\nabla^{k-1,g_P} dw|_{g_P}=O(r^{\alpha})$, so  $|w|=O(r^{k+\alpha})$, and $|\nabla^{i,g_P} w|_{g_P}=O(r^{k+\alpha})$ for all $i\in\N$ by local estimates for harmonic functions, so we may conclude as before that $Q(r)=\int_{B_r\times Y}|\nabla^{g_P}w|^2_{g_P}$ is identically zero. We thus obtain that $\nabla^{k,g_P} df=0$ as desired.
\end{proof}

\begin{rk}
It is an interesting problem for future study to find other geometrically meaningful presheaves $\mathcal{S}$ of forms to which Theorem \ref{t:schauder} applies.
\end{rk}

\section{Higher order estimates in the product case}\label{sekt}
In this section we prove Theorem \ref{mainlocal}.

From now on, let $\omega_t=\omega_{\mathbb{C}^m}+e^{-t}\omega_Y$. This is a product K\"ahler form on $B\times Y$ uniformly equivalent to $\omega^\bullet_t$ (independent of $t$). Its Chern connection is independent of $t$ and equals the Chern connection of $\omega_P = \omega_0=\omega_{\mathbb{C}^m}+\omega_Y$. Given any $k\geq 0$, we aim to show that
\begin{align}\label{strongk}
\sup_{ B \times Y} \mu_{k,t} \leq C_k
\end{align}
independent of $t$, where for all $x \in B \times Y$ we define
\begin{align}
\mu_{k,t}(x) = d^{g_t} (x,\partial(B \times Y))^k |(\nabla^{k, g_t}g_t^\bullet)(x)|_{g_t(x)}.
\end{align}
To prove \eqref{strongk} we proceed by induction on $k$, the case $k=0$ being our assumed $C^0$ bound  \eqref{unifequiv} on the collapsing metric.
By induction we may assume that $k \geq 1$ and that
\begin{equation}\label{indukt}
\sum_{j=1}^{k-1}\mu_{j,t}\leq C_k.
\end{equation}
If  \eqref{strongk} does not hold, then
$\limsup_{t\to\infty} \sup_{B \times Y} \mu_{k,t}=\infty$. For simplicity of notation, we will not pass to subsequences, and will instead assume that $\lim_{t\to\infty} \sup_{B \times Y} \mu_{k,t}=\infty$. Choose points $x_t\in B\times Y$ such that the supremum of $\mu_{k,t}$ is achieved at $x_t$, and define real numbers $\lambda_t$ by
$$\lambda_t^k=|(\nabla^{k,g_t}g^\bullet_t)(x_t)|_{g_t(x_t)}.$$
Then $\lambda_t \to \infty$ as $t\to\infty$ because otherwise $\mu_{k,t}$ would be uniformly bounded (since the $g_t$-diameter of $B\times Y$ is uniformly bounded).
Consider then the biholomorphisms
$$\Psi_t:B_{\lambda_t}\times Y\to B\times Y, \;\, \Psi_t(z,y)=(\lambda_t^{-1}z,y),$$
and let
$$\hat{g}^\bullet_t=\lambda_t^{2}\Psi_t^*g^\bullet_t,\;\, \hat{g}_t=\lambda_t^{2}\Psi_t^*g_t=g_{\mathbb{C}^m}+\lambda_t^{2}e^{-t}g_Y,\;\, \hat{x}_t=\Psi_t^{-1}(x_t).$$ We know from \eqref{unifequiv} that on $B_{\lambda_t}\times Y$ we have
\begin{equation}\label{caz}
C^{-1}\hat{g}_t\leq \hat{g}^\bullet_t\leq C\hat{g}_t.
\end{equation}
Note also that $\nabla^{\hat{g}_t}=\nabla^{g_P}$ and
$$\lambda_t^k=|(\nabla^{k,g_t} g^\bullet_t)(x_t)|_{g_t(x_t)}=\lambda_t^{-2}|(\nabla^{k,\hat{g}_t} \hat{g}^\bullet_t)(\hat{x}_t)|_{\Psi_t^*g_t(\hat{x}_t)}=\lambda_t^k
|(\nabla^{k,\hat{g}_t} \hat{g}^\bullet_t)(\hat{x}_t)|_{\hat{g}_t(\hat{x}_t)}.$$
We have therefore showed that
\begin{equation}\label{blow2}
|(\nabla^{k,\hat{g}_t} \hat{g}^\bullet_t)(\hat{x}_t)|_{\hat{g}_t(\hat{x}_t)}=1.
\end{equation}
Also for all $0<j\leq k$ and for all $\hat{x}\in B_{\lambda_t}\times Y$ we have
\begin{equation}\label{stupid}
\mu_{j,t}(\Psi_t(\hat{x}))=d^{\hat{g}_t} (\hat{x},\partial(B_{\lambda_t} \times Y))^j |(\nabla^{j, \hat{g}_t}\hat{g}^\bullet_t)(\hat{x})|_{\hat{g}_t(\hat{x}_t)}.
\end{equation}
Using \eqref{indukt} this implies that for $0<j<k$ (this is vacuous for $k = 1$) we have
\begin{equation}\label{blow3}
\sup_{B_{\frac{1}{2}\lambda_t}\times Y}|\nabla^{j,\hat{g}_t} \hat{g}^\bullet_t|_{\hat{g}_t}=\lambda_t^{-j}\sup_{B_{\frac{1}{2}}\times Y}|\nabla^{j,g_t} g^\bullet_t|_{g_t}\leq C\lambda_t^{-j} \to 0.
\end{equation}
Using similar arguments, we can also show the following properties, which are crucial for passing to a pointed limit with basepoint $\hat{x}_t$. First, for all $\hat{x} \in B_{\lambda_t} \times Y$,
\begin{align}\label{e:pretty_useful}
\mu_{k,t}(\Psi_t(\hat{x})) = d^{\hat{g}_t} (\hat{x},\partial(B_{\lambda_t} \times Y))^k |(\nabla^{k,\hat{g}_t}\hat{g}^\bullet_t)(\hat{x})|_{\hat{g}_t(\hat{x})}.
\end{align}
Using \eqref{blow2} and the fact that $\sup \mu_{k,t} =  \mu_{k,t}(x_t) = \mu_t(\Psi_t(\hat{x}_t)) \to \infty$, we obtain that
\begin{align}\label{fuck_the_boundary}
d^{\hat{g}_t} (\hat{x}_t, \partial(B_{\lambda_t} \times Y)) \to \infty.
\end{align}
This tells us that if we pass to a pointed limit with basepoint $\hat{x}_t$, then the boundary moves away to infinity and the limit space will be complete. Moreover, using the fact that the quantity in \eqref{e:pretty_useful} is maximized at $\hat{x} = \hat{x}_t$ (and the triangle inequality), we learn that for all $\hat{x} \in B_{\lambda_t} \times Y$,
\begin{align}\label{saves_our_asses}
 |(\nabla^{k,\hat{g}_t}\hat{g}^\bullet_t)(\hat{x})|_{\hat{g}_t(\hat{x})} \leq  \frac{d^{\hat{g}_t} (\hat{x}_t,\partial(B_{\lambda_t} \times Y))^k}{d^{\hat{g}_t} (\hat{x},\partial(B_{\lambda_t} \times Y))^k}
\leq \left(1-\frac{d^{\hat{g}_t}(\hat{x}_t, \hat{x})}{d^{\hat{g}_t}(\hat{x}_t,\partial(B_{\lambda_t} \times Y) )}\right)^{-k}.
\end{align}
Thus we have a uniform upper bound on $|\nabla^{k,\hat{g}_t} \hat{g}^\bullet_t|_{\hat{g}_t}$ on $\hat{g}_t$-balls of fixed radii centered at $\hat{x}_t$. We can therefore study the possible complete pointed limit spaces $(B_{\lambda_t}\times Y, \hat{g}_t,\hat{x}_t)$ as $t\to\infty$. Up to passing to a subsequence, and modulo translations in the $\mathbb{C}^m$ factor, we may assume that $\hat{x}_t = (0,\hat{y}_t)\in \mathbb{C}^m\times Y$ and that  $\hat{y}_t\to y_\infty\in Y$.
Define $$\delta_t=\lambda_te^{-\frac{t}{2}}.$$
Up to passing to a subsequence once again, we then need to consider three cases according to whether (1) $\delta_t\to\infty$, (2) $\delta_t$ remains uniformly bounded away from zero and infinity (without loss converging to some $\delta > 0$, and again without loss $\delta = 1$), or (3) $\delta_t\to 0$.

\subsection{Case 1: the blowup is $\C^{m+n}$}
Here we assume that $\delta_t\to\infty$.
 We fix a chart on $Y$ centered at $y_\infty$ given by the ball $B_2\subset\mathbb{C}^n$, and in the induced product coordinates on $\mathbb{C}^m\times B_2$ we pull back $\hat{g}^\bullet_t$ by the biholomorphism $(z,y)\mapsto (z,\hat{y}_t+\delta_t^{-1}y)$, defined on $\mathbb{C}^m\times B_{\delta_t}$ with image $\C^m\times B_1(\hat{y}_t)\subset\C^m\times B_2$. After this pullback, the new Ricci-flat metrics are uniformly equivalent to Euclidean thanks to \eqref{caz}, and their $k$-th covariant derivative (with respect to the pullback of $\hat{g}_t$) at the origin has norm $1$ (also measured with respect to the pullback of $\hat{g}_t$). Thanks to Proposition \ref{localest2}, these metrics have uniform $C^\infty$ bounds on compact subsets, and so a subsequence converges locally smoothly to a limit Ricci-flat K\"ahler metric on $\mathbb{C}^{m+n}$ which is uniformly equivalent to Euclidean and is not constant. If $k = 1$, this is impossible because of the Liouville Theorem \ref{lio1}. If $k > 1$, it immediately contradicts \eqref{blow3}.

\subsection{Case 2: the blowup is $\C^m \times Y$} In this case we have that $\delta_t\to 1$, without loss of generality.
Then the metrics $\hat{g}_t$ converge smoothly uniformly on compact sets of $\mathbb{C}^m\times Y$ to the product metric $g_P$.
Thanks to \eqref{caz}, we can apply Proposition \ref{localest2} and obtain local uniform $C^\infty$ bounds for $\hat{g}^\bullet_t$.
By Ascoli-Arzel\`a (again up to passing to a subsequence $t_i\to\infty$) we have that $\hat{g}^\bullet_t$ converges smoothly to a Ricci-flat K\"ahler metric $\hat{g}^\bullet_\infty$ on $\mathbb{C}^m\times Y$, which is uniformly equivalent to $g_P$ and satisfies
\begin{equation}\label{uno}
\nabla^{j,g_P}\hat{g}^\bullet_\infty = 0 \;\,(0 < j < k), \;\, \sup\nolimits_{\mathbb{C}^m\times Y}|\nabla^{k,g_P} \hat{g}^\bullet_\infty|_{g_P}=1.
\end{equation}
If $k = 1$, then the Liouville Theorem \ref{lioHein} tells us that $\hat{g}^\bullet_\infty$ is parallel with respect to $g_P$, contradicting \eqref{uno}. If $k > 1$, then \eqref{uno} is again plainly self-contradictory without any Liouville theorems.

\subsection{Case 3: the blowup is $\C^m$} Here we finally assume that $\delta_t\to 0$. For any fixed radius $R > 0$, we have thanks to \eqref{caz}, \eqref{blow3}, \eqref{saves_our_asses} that for all $t \geq 0$,
\begin{align}\label{whenwillthiseverend}
\|\hat{g}^\bullet_t\|_{C^{k}(B_R \times Y,\hat{g}_t)}\leq C(R).
\end{align}
This trivially implies a uniform $C^k(B_R \times Y,g_P)$ bound. Thus, by Ascoli-Arzel\`a, for all  $\alpha \in (0,1)$ the metrics $\hat{g}^\bullet_t$ converge in $C$\begin{small}$^{k-1,\alpha}_{\rm loc}$\end{small} (modulo subsequences) to a $C$\begin{small}$^{k-1,\alpha}_{\rm loc}$\end{small} tensor $\hat{g}^\bullet_\infty$ on $\mathbb{C}^m\times Y$ which is the pullback of a $C$\begin{small}$^{k-1,\alpha}_{\rm loc}$\end{small} K\"ahler metric $\hat{g}^\bullet_\infty$ on $\mathbb{C}^m$, uniformly equivalent to Euclidean. Note that for $k = 1$ the $C^\alpha_{\rm loc}$ form $\hat\omega_\infty^\bullet$ is only weakly closed, but this suffices for our subsequent discussion.

Pulling back the complex Monge-Amp\`ere equation \eqref{mageneral} we obtain
\begin{equation}\label{maa}
(\hat{\omega}^\bullet_t)^{m+n}=\lambda_t^{2m+2n} \Psi_t^*(e^F(\omega_{\mathbb{C}^m}+e^{-t}\omega_Y)^{m+n})=\delta_t^{2n} e^{F_t}\omega_P^{m+n},
\end{equation}
where $F_t=F\circ\Psi_t$ is a pluriharmonic function which depends only on $z\in\mathbb{C}^m$.
Note that if $x_t \to x_\infty$ as $t\to\infty$ (as it has to, up to passing to a subsequence, after slightly shrinking the original tube $B \times Y$), then
$F_t$ converges to the constant $F(z_\infty)$ smoothly on each bounded cylinder $B_R \times Y$.

A contradiction will be derived in three steps. Using some arguments from \cite{To}, what we have said so far is sufficient to conclude that $(\hat\omega_\infty^\bullet)^m = c \omega_{\C^m}^m$ for some constant $c > 0$. Then standard arguments show that $\hat\omega_\infty^\bullet$ is smooth and hence constant by Theorem \ref{lio1}. All of this will be explained and proved in Claim 3 below (cf. Section \ref{claim3product}). In fact, for $k > 1$ the conclusion that $\hat\omega_\infty^\bullet$ is constant is actually obvious from \eqref{blow3}. The main difficulty is to derive from \eqref{blow2} that $\hat\omega_\infty^\bullet$ is \emph{not} constant.

As usual, one key step towards this goal is to slightly improve the given regularity \eqref{whenwillthiseverend} of $g_t^\bullet$. In fact, by linearizing the Monge-Amp\`ere equation \eqref{maa} at the product model metric $\hat\omega_t$ and bringing in the linear Schauder theory of Section \ref{linear}, we will prove in Claim 1 (Section \ref{claim1product}) that the $C^k(B_R  \times Y, \hat{g}_t)$ norm in \eqref{whenwillthiseverend} can be replaced by the $C^{k,\alpha}(B_R \times Y,\hat{g}_t)$ norm for any $0 < \alpha < 1$. Now typically such an upgrade would be sufficient to pass to a limit in \eqref{blow2} thanks to the Ascoli-Arzel\`a theorem. However, there seems to be no obvious version of Ascoli-Arzel\`a (for tensors, with respect to {collapsing} metrics) that accomplishes this immediately. In Claim 2 (Section \ref{claim2product}) we will exploit the K\"ahler property of $\hat\omega_t^\bullet$ to deduce from Claim 1 that $\hat\omega_\infty^\bullet$ is not constant, by using  Lemma \ref{l:braindead} to show that the equality in \eqref{blow2} is essentially already attained by the ``all base'' component of the tensor $\nabla^{k,\hat{g}_t}\hat{g}_t^\bullet$.

\subsubsection{}\label{claim1product}

{\bf Claim 1.} For all $\alpha \in (0,1)$ there exists an $\epsilon > 0$ and a $C > 0$ such that for all $t \geq 0$,
\begin{equation}\label{extra}
\|\hat{g}^\bullet_t\|_{C^{k,\alpha}(B_\epsilon \times Y,\hat{g}_t)}\leq C.
\end{equation}
\vskip2mm

Having this strengthening of \eqref{whenwillthiseverend} only for one particular value $R = \epsilon$ suffices for our purposes.  In fact, the case of a general $R$ could be proved along similar lines using also a covering argument, but we choose to omit these additional arguments in order not to clutter notation. \medskip\

\noindent \emph{Proof of Claim 1}. For simplicity, let us change notation by viewing $\delta=\lambda_te^{-\frac{t}{2}}\to 0$ as a new parameter replacing $t$. Replacing $t$ with a suitable sequence $t_i\to\infty$, we may assume that $\delta$ is a strictly decreasing function of $t$, so that we can (and will) effectively view $t$ as a function of $\delta$ as well.

Introduce a new stretching map
\begin{equation*}\Phi_\delta:B_{\delta^{-1}}\times Y\to B_1 \times Y, \;\, \Phi_\delta(z,y)=(\delta z,y),\end{equation*}
as well as new scaled and stretched metrics
\begin{align*}\ti{g}^\bullet_\delta &=\delta^{-2}\Phi_\delta^*\hat{g}^\bullet_t,\\
\tilde{g}_\delta &= \delta^{-2}\Phi_\delta^*\hat{g}_t=g_P.\end{align*}
Then \eqref{caz}, \eqref{blow3}, \eqref{saves_our_asses} imply that on $B_{\delta^{-1}}\times Y$,
\begin{align}\label{cn1}
C^{-1}g_P\leq \ti{g}^\bullet_\delta\leq C g_P,\\
\label{cn3}
\forall j \in \{1,\ldots,k-1\}: |\nabla^{j,g_P}\ti{g}^\bullet_\delta|_{g_P}\leq C\delta^{j}\lambda_t^{-j},\\
\label{cn2}
|\nabla^{k,g_P}\ti{g}^\bullet_\delta|_{g_P}\leq C\delta^{k}.
\end{align}
The basic idea is that \eqref{cn1}, \eqref{cn3},  \eqref{cn2} allow us to linearize the Monge-Amp\`ere equation satisfied by $\tilde\omega_\delta^\bullet$ at the product metric $\omega_P$ on the whole cylinder $B_{\delta^{-1}} \times Y$, and to use the linear Schauder theory of Section \ref{linear} to improve the regularity of $\tilde\omega_\delta^\bullet$. However, \eqref{cn1}, \eqref{cn3}, \eqref{cn2} are still compatible with $\tilde\omega_\delta^\bullet$ approaching some K\"ahler form at bounded but nonzero distance to $\omega_P$. As a preliminary step, we show that the true limit of $\tilde\omega_\delta^\bullet$ differs from $\omega_P$ at worst by some harmless automorphism of $\C^m$.

Indeed, thanks to \eqref{cn1} we can apply Proposition \ref{localest2} and obtain uniform $C^\infty$ bounds for the Ricci-flat metrics $\ti{g}^\bullet_\delta$ on compact sets, so that after passing to a subsequence they converge locally smoothly to a limiting K\"ahler metric $\ti{g}^\bullet_\infty$ on $\C^m \times Y$, which thanks to \eqref{cn2} (if $k=1$) and \eqref{cn3} (if $k>1$) is parallel with respect to $g_P$. Recall that $\ti{\omega}^\bullet_\delta$ and $\omega_P$ are $\ddbar$-cohomologous. Thus,  by Proposition \ref{p:closure} and \eqref{cn1}, \eqref{cn3}, \eqref{cn2}, $\ti{\omega}^\bullet_\infty=\omega_P+\eta_\infty$, where $\eta_\infty$ is $\ddbar$-exact, of bounded $g_P$-norm, and $g_P$-parallel. Proposition \ref{check} applied with $k = 0$ then tells us that $\eta_\infty=\ddbar p$ for some quadratic polynomial $p$ on $\C^m$. (Technically Proposition \ref{check} only applies for $k \geq 1$ because in Section 3.3 we have set $k \geq 1$ by definition, but the proof goes through without any changes for $k = 0$.) It follows that $\ti{\omega}^\bullet_\infty$ differs from $\omega_P$ by a linear automorphism of $\C^m$. There is no reason to expect this automorphism to be ${\rm Id}_{\C^m}$, but we can simply pull back our whole setup by the same automorphism and in this way assume without loss that $\tilde{g}_\delta^\bullet \to g_P$. (A much more technical version of this argument will appear at the analogous stage in Section \ref{nonpr}; cf. the construction of a modified reference metric $\tilde\omega_t^\sharp$ after \eqref{caz699}.)

Pulling back \eqref{maa} by $\Phi_\delta$ we obtain
\begin{equation}\label{maa2}
(\ti{\omega}^\bullet_\delta)^{m+n}=e^{\ti{F}_\delta}\omega_P^{m+n},
\end{equation}
where $\ti{F}_\delta$ is the pluriharmonic function on $B_{\delta^{-1}}$ defined by
$$\ti{F}_\delta(z)=F_t(\delta z)=F(z_t + \lambda_t^{-1}\delta z).$$
Note that $\tilde{F}_\delta \to F(z_\infty)$ smoothly on compact subsets of $\mathbb{C}^m$ and that for all $k \geq 1$,
\begin{equation}\label{e:pitchfork}
\de_z^k \ti{F}_\delta=O(\delta^k)
\end{equation}
uniformly on the entire cylinder $B_{\delta^{-1}} \times Y$. Dropping the subscript $\delta$, let us write $\ti{\omega}^\bullet_\delta = \omega_P + \eta$, where $\eta$ is $i\de\db$-exact. Let us also agree that all  metric operations and (semi-)norms in the rest of this proof are the ones associated with $g_P$, and that all balls are $g_P$-geodesic balls centered at $\hat{x}_t = (0, \hat{y}_t)$. By linearizing \eqref{maa2} and bringing in Theorem \ref{moron} we will prove that
\begin{equation}\label{extra4}
[\nabla^{k}\eta]_{C^{\alpha}(B_{\ve \delta^{-1}})}\leq C \delta^{k+\alpha}
\end{equation}
for some uniform constant $\ve \in (0,1)$. This clearly implies \eqref{extra} up to renaming $\epsilon$.

To prove \eqref{extra4}, we first note that it follows from \eqref{maa2} that
\begin{align}
\label{e:ell2} \delta^{g_P} \eta &= \sum_{i=1}^{m+n-1} \underbrace{(\eta  \circledast \cdots \circledast \eta)}_{i \;{\rm factors}} \circledast \nabla \eta+\nabla e^{\ti{F}_\delta}.
\end{align}
Here $\circledast$ denotes a tensorial contraction that may also involve the metric $g_P$. This is easy to derive from the usual identities expressing $\ast\eta$ in terms of $\omega_P$ and $\eta \wedge \omega_P^{m+n-1}$, the latter being controlled thanks to \eqref{maa2}. Indeed, a standard calculation (cf. \cite[Prop 1.2.31]{Huy}) gives
\begin{equation*}\label{hodgestar}
*\eta=\left(\frac{1}{(m+n-1)!}+\frac{1}{(m+n-2)!}\right)\frac{\eta\wedge\omega_P^{m+n-1}}{\omega_P^{m+n}}\omega_P^{m+n-1}-\frac{1}{(m+n-2)!}\eta\wedge\omega_P^{m+n-2},
\end{equation*}
and therefore
\begin{equation*}\label{hodgestar2}
d(\ast\eta)=\left(\frac{1}{(m+n-1)!}+\frac{1}{(m+n-2)!}\right)d\left(\frac{\eta\wedge\omega_P^{m+n-1}}{\omega_P^{m+n}}\right) \wedge \omega_P^{m+n-1},
\end{equation*}
while from the Monge-Amp\`ere equation \eqref{maa2} we have
$$(m+n)\eta\wedge\omega_P^{m+n-1}=(e^{\ti{F}_\delta}-1)\omega_P^{m+n}-\sum_{i=2}^{m+n}\binom{m+n}{i}\eta^i\wedge\omega_P^{m+n-i},$$
proving \eqref{e:ell2}. Differentiating \eqref{e:ell2} $k-1$ times, we get
\begin{align}\label{e:ell4}
\nabla^{k-1}\delta^{g_P}\eta = \sum_{i=1}^{m+n-1} \biggl(\sum \nabla^{k_1}\eta \circledast \cdots \circledast \nabla^{k_{i+1}}\eta\biggr)  + \nabla^k e^{\ti{F}_\delta},
\end{align}
where the inner sum runs over all $(k_1, \ldots, k_{i+1}) \in \N^{i+1}$ such that $k_1 + \cdots + k_{i+1} = k$. Let $\ve\in (0,1)$ to be determined. Then for any $0<\rho< R \leq (2\delta)^{-1},$ using \eqref{e:ell4} and \eqref{e:moron},
\begin{align}\label{e:final2}
[\nabla^k\eta]_{C^{\alpha}(B_{\ve\rho})} \leq C \sum_{i=1}^{m+n-1} \biggl(\sum [\nabla^{k_1} \eta]_{C^{\alpha}(B_{\epsilon R})}  \|\nabla^{k_2}\eta \|_{L^\infty(B_{\epsilon R})}  \cdots  \| \nabla^{k_{i+1}}\eta\|_{L^\infty(B_{\epsilon R})}\biggr)\\
\label{e:final22}+\; C[\nabla^k e^{\ti{F}_\delta}]_{C^{\alpha}(B_{\epsilon R})} + C(\epsilon(R-\rho))^{-k-\alpha}\|\eta\|_{L^\infty(B_{\epsilon R})}.
\end{align}
The inner sum in \eqref{e:final2} again runs over all $(k_1, \ldots, k_{i+1}) \in \N^{i+1}$ with $k_1+ \cdots + k_{i+1} = k$.

To proceed, we begin by estimating
$$[\nabla^{k_1}\eta]_{C^\alpha(B_{\epsilon R})} \leq (2\epsilon R)^{1-\alpha}\|\nabla^{k_1 + 1}\eta\|_{L^\infty(B_{2\epsilon R})}$$
except when $k_1 = k$ and $k_2 = \ldots = k_{i+1} = 0$; in the latter case, we keep the $[\nabla^{k_1}\eta]$ term in \eqref{e:final2} as is. The $\|\nabla^{k_1+1}\eta\|$ terms thus introduced for $k_1 \leq k-1$, as well as the $\|\nabla^{k_j}\eta\|$ terms appearing in \eqref{e:final2} for $2 \leq j \leq i+1$ and $k_j \geq 1$, can be bounded by the appropriate power of $\delta$ using \eqref{cn3}, \eqref{cn2}. This leaves us with those terms of \eqref{e:final2} where $2 \leq j \leq i+1$ and $k_j = 0$. To control these terms, and also the $\|\eta\|$ term in \eqref{e:final22}, we use the following idea: because $\tilde{g}_\delta^\bullet \to g_P$ smoothly on compact sets, we may assume that $|\eta(\hat{x}_t)| \leq \epsilon$; and combining this with \eqref{cn3}, \eqref{cn2} we get that $|\eta| \leq 2\epsilon$ on $B_{\epsilon\delta^{-1}}$. Lastly, the $[\nabla^ke$\begin{small}$^{\ti{F}_\delta}$\end{small}$]$ term in \eqref{e:final22} is easily controlled using \eqref{e:pitchfork}. The upshot of all of this is that
\begin{align*}\label{e:final3}
[\nabla^k\eta]_{C^{\alpha}(B_{\ve\rho})}
\leqslant  C\epsilon [\nabla^k\eta]_{C^{\alpha}(B_{\ve R} )}+ C\Psi(\epsilon,\delta)+ C \delta^{k+\alpha} +C(\epsilon(R-\rho))^{-k-\alpha}\epsilon,\\
\Psi(\epsilon,\delta) = \max_{\substack{1\leq i \leq m+n-1\\k_1 \leq k-1, k_1 + \cdots + k_{i+1} = k}} \epsilon^{1-\alpha}\delta^{k_1+\alpha} \delta^{\sum_{j\geq 2: k_j\geq 1}k_j} \epsilon^{\#\{j\geq 2: k_j = 0\}} = O(\delta^{k+\alpha}).
\end{align*}
Fixing $\ve=(2C)^{-1}$ and applying Lemma \ref{l:iterate}, we get $[\nabla^k \eta]_{C^{\alpha}(B_{\ve\rho} )} \leqslant  C \delta^{k+\alpha} + C(R-\rho)^{-k-\alpha} $. Finally, choosing $R=(2\delta)^{-1}$ and $\rho=R/2$, and renaming $\epsilon$, we get \eqref{extra4}.

This completes the proof of Claim 1.

\begin{rk}\label{discuss}
In the proof of Claim 1 it was important that the decay rate of the term $\nabla e^{\ti{F}_\delta}$ in \eqref{e:ell2} improved by a factor of $\delta$ upon differentiation, arbitrarily many times. In the setting of Corollary \ref{main2}, if the smooth fibers $X_b$ are neither flat nor biholomorphic to each other, one encounters new terms on the right-hand side of the analog of \eqref{e:ell2} that do not improve in this way upon fiber differentiation. (Roughly speaking, these terms are due to the variation of complex structure of the fibers.) In fact, in this case one cannot expect to obtain uniform $C^{k,\alpha}$ estimates for the Ricci-flat metrics $\omega^\bullet_t$ with respect to a shrinking family of product metrics even for $k = 0$; see Remark \ref{totalfuckage}.
\end{rk}

\subsubsection{}\label{claim2product} {\bf Claim 2.} We have
\begin{equation}\label{absd}
|\nabla^{k,\mathbb{C}^m}\hat{g}^\bullet_\infty(0)|_{g_{\mathbb{C}^m}} = 1.
\end{equation}

\noindent \emph{Proof of Claim 2}. We will work in local holomorphic product coordinates. We will denote any complex $(1,0)$ ``base'' $\mathbb{C}^m$ direction by a subscript $\mathbf{b}$ and any complex $(1,0)$ ``fiber'' $Y$ direction by a subscript $\mathbf{f}$. Since $g_P$ is a Riemannian product metric, $g_P$-covariant derivatives in the base directions commute with $g_P$-covariant derivatives in all other directions. Using the product shape of $g_P$ and the fact that $\hat{g}^\bullet_t$ is K\"ahler with respect to the product complex structure, we also have that \begin{align}\nabla^{g_P}_{\mathbf{b}} (\hat{g}^\bullet_t)_{\mathbf{f}\ov{\mathbf{b}}}=\nabla^{g_P}_{\mathbf{f}} (\hat{g}^\bullet_t)_{\mathbf{b}\ov{\mathbf{b}}},\;\,
\nabla^{g_P}_{\mathbf{b}} (\hat{g}^\bullet_t)_{\mathbf{f}\ov{\mathbf{f}}}=\nabla^{g_P}_{\mathbf{f}} (\hat{g}^\bullet_t)_{\mathbf{b}\ov{\mathbf{f}}}.\label{lakahlerite}\end{align}
From now on we will drop the superscript $g_P$ on covariant derivatives for simplicity.

To prove Claim 2, we first of all remark that by definition, the pointwise norm $|\nabla^{k}\hat{g}^\bullet_t|_{\hat{g}_t}$ is uniformly equivalent (with constants independent of $t$) to
\begin{equation}\label{somma}
\sum_{j=0}^{k+2} |\nabla^k \hat{g}^\bullet_t\{j\}|\delta_t^{-j},
\end{equation}
where $\nabla^k \hat{g}^\bullet_t\{j\}$ contains all the components of $\nabla^k\hat{g}^\bullet_t$ with $j$ fiber indices and $k+2-j$ base indices and the absolute value signs indicate the length of this tensor in our fixed coordinate system.  The goal is to show that all the terms with $j>0$ in \eqref{somma} go to zero as $\delta_t\to 0$, so that only the terms with $j=0$ survive and \eqref{absd} follows from the fact that $|\nabla^{k}\hat{g}^\bullet_t(\hat{x}_t)|_{\hat{g}_t(\hat{x}_t)}=1$, together with the $C^{k,\alpha}(B_\epsilon \times Y, g_P)$ convergence trivially implied by Claim 1. The full strength of the $\hat{g}_t$-bound (as opposed to $g_P$-bound) of Claim 1 is precisely the key to proving that the $j > 0$ terms in \eqref{somma} go to zero.

Thus, let us consider any component of $\nabla^k\hat{g}^\bullet_t\{j\}$ with $j > 0$. To clarify, what this notation means is that we allow $k$ covariant derivatives of $\hat{g}_t^\bullet$ with respect to all possible combinations of $\mathbf{b}$, $\ov{\mathbf{b}}$, $\mathbf{f}$, and $\ov{\mathbf{f}}$ indices, where at least one of the derivative indices, or one of the tensor indices of $\hat{g}_t^\bullet$, is an ${\mathbf{f}}$ or $\ov{\mathbf{f}}$ one. If one of the \emph{derivative} indices is an ${\mathbf{f}}$ or $\ov{\mathbf{f}}$ one, we can commute the left-most index of this type past all the $\mathbf{b}$ and $\ov{\mathbf{b}}$ indices that precede it, obtaining a term of the form
\begin{equation}\label{easy}
\nabla_{\mathbf{f}}\nabla^{k-1}\hat{g}^\bullet_t,\;\,\nabla_{\ov{\mathbf{f}}}\nabla^{k-1}\hat{g}^\bullet_t.
\end{equation}
Otherwise all the derivative indices are of type $\mathbf{b}, \ov{\mathbf{b}}$. Since the absolute values in \eqref{somma} are invariant under complex conjugation, and since $\hat{g}_t^\bullet$ is Hermitian, we are left with terms of the form
\begin{align}\label{cockadoodledoo}
\nabla^{k-1}_{\mathbf{b},\ov{\mathbf{b}}}\left(\nabla_{\mathbf{b}}(\hat{g}_t^\bullet)_{\mathbf{f}\ov{\mathbf{b}}}\right), \;\,
\nabla^{k-1}_{\mathbf{b},\ov{\mathbf{b}}}\left(\nabla_{\mathbf{b}}(\hat{g}_t^\bullet)_{\mathbf{f}\ov{\mathbf{f}}}\right), \;\,
\nabla^{k-1}_{\mathbf{b},\ov{\mathbf{b}}}\left(\nabla_{\mathbf{b}}(\hat{g}_t^\bullet)_{\mathbf{b}\ov{\mathbf{f}}}\right) .
\end{align}
Here the notation $\nabla$\begin{small}$^{k-1}_{\mathbf{b},\ov{\mathbf{b}}}$\end{small} means that we allow any combination of $k-1$ derivatives in directions $\mathbf{b}$ or $\ov{\mathbf{b}}$. Using \eqref{lakahlerite}, the first two of these types can be reduced to \eqref{easy} by commuting the $\mathbf{f}$ derivative past the preceding $\mathbf{b}, \ov{\mathbf{b}}$ ones. Thus, in addition to \eqref{easy}, we need to deal with terms of the form
\begin{equation}\label{hard}
\nabla^k_{\mathbf{b},\ov{\mathbf{b}}}(\hat{g}_t^\bullet)_{\mathbf{b}\ov{\mathbf{f}}}.
\end{equation}

For terms of the form \eqref{easy}, we fix any $z \in \C^m$ with $|z| < \epsilon$ and apply Lemma \ref{l:braindead} on $\{z\}\times Y$ with metric $g_Y$. More precisely, for all $\ell\in\{0,1,\ldots, k+1\}$ we consider the section $$\sigma=\nabla^{k-1}\hat{g}^\bullet_t\{\ell\}|_{\{z\}\times Y}$$ of the bundle
$E=(T^*\mathbb{C}^m)^{\otimes (k+1-\ell)}\otimes (T^*Y)^{\otimes \ell}\otimes \C$ over $\{z\}\times Y$, equipped with the product metric $g_P$ induced by $g_{\C^m}$ and $g_Y$. In this way we obtain that
\begin{align*}
\sup\nolimits_{\{z\}\times Y}|\nabla_{\mathbf{f},\ov{\mathbf{f}}} (\nabla^{k-1}\hat{g}^\bullet_t\{\ell\})|_{g_P}&\leq C[\nabla_{\mathbf{f},\ov{\mathbf{f}}}(\nabla^{k-1}\hat{g}^\bullet_t\{\ell\})]_{C^{\alpha}(\{z\}\times Y,g_Y)}\leq C\delta_t^{\ell+1+\alpha}\|\hat{g}^\bullet_t\|_{C^{k,\alpha}(B_\epsilon\times Y,\hat{g}_t)},
\end{align*}
which is $O(\delta_t^{\ell+1+\alpha})$ thanks to \eqref{extra}. This shows that the original term $|\nabla^k\hat{g}_t^\bullet\{j\}|\delta_t^{-j}$ in \eqref{somma} that we had reduced to the form \eqref{easy} in fact decays like $\delta_t^{\alpha}$, and in particular goes to zero as $\delta_t\to 0$.

Terms of the form \eqref{hard} require a different argument. Recall that, by assumption, $$\hat{\omega}^\bullet_t=\hat{\omega}_t+\ddbar \vp$$
for some smooth $t$-dependent function $\vp$ on $B_1\times Y$. Denote by $\underline{\vp}$ the fiberwise $g_Y$-average of $\vp$, which is a smooth $t$-dependent function on $B_1$. Then we have a well-defined function
$$\psi = \nabla^k_{\mathbf{b},\ov{\mathbf{b}}}(\vp - \underline\vp): B_1 \times Y \to \C^{\binom{2m+k-1}{k}},$$
the complexified $k$-th derivative of $\vp - \underline\vp$ in the base directions. Clearly, for all $z \in B_1$,
$$\Delta^{g_Y}(\psi|_{\{z\}\times Y}) = \nabla^k_{\mathbf{b},\ov{\mathbf{b}}}(\tr{g_Y}{(\hat{g}_t^\bullet|_{\{z\}\times Y})}).$$
This is a contraction of a tensor of the same form as the second term in \eqref{cockadoodledoo} (or its conjugate), so as above it follows from Lemma \ref{l:braindead} that
$$\sup\nolimits_{\{z\}\times Y} |\Delta^{g_Y}(\psi|_{\{z\}\times Y})| \leq C\delta_t^{2+\alpha}$$
for some uniform constant $C$ independent of $t$ and $z$. Since the $g_Y$-average of $\psi$ over $\{z\} \times Y$ is zero by construction, standard $L^p$ elliptic estimates and the Sobolev-Morrey embedding give
$$\sup\nolimits_{\{z\}\times Y} |\nabla^{k-1}_{\mathbf{b},\ov{\mathbf{b}}}(\hat{g}^\bullet_t)_{\mathbf{b}\ov{\mathbf{f}}}|_{g_P}\leq C\sup\nolimits_{\{z\}\times Y}|\nabla^{g_Y}(\psi|_{\{z\}\times Y})|_{g_Y} \leq C\delta_t^{2+\alpha}.$$
On the other hand, Claim 1 in particular gives us
$$[\nabla(\nabla^{k-1}_{\mathbf{b},\ov{\mathbf{b}}}(\hat{g}^\bullet_t)_{\mathbf{b}\ov{\mathbf{f}}})]_{C^\alpha(B_\ve\times Y,g_P)}\leq C\delta_t.$$
Thus, finally, bringing in Lemma \ref{l:higher-interpol},
$$\|\nabla(\nabla^{k-1}_{\mathbf{b},\ov{\mathbf{b}}}(\hat{g}^\bullet_t)_{\mathbf{b}\ov{\mathbf{f}}})\|_{L^\infty(B^{g_P}(\hat{x}_t,\frac{\delta_t}{2}))} \leq C\delta_t^{1+\alpha}$$
as long as $\delta_t \leq \epsilon$ (and the estimate is trivial otherwise). This tells us that the term \eqref{hard} decays like $\delta_t^{1+\alpha}$ when evaluated at the basepoint $\hat{x}_t$. Thus, at $\hat{x}_t$, the corresponding term
$$|\nabla^k_{\mathbf{b},\ov{\mathbf{b}}}(\hat{g}_t^\bullet)_{\mathbf{b}\ov{\mathbf{f}}}|\delta_t^{-1}$$
appearing in \eqref{somma} with $j = 1$ decays like $\delta_t^\alpha$, hence in particular goes to zero as $\delta_t\to 0$, as desired.

This completes the proof of Claim 2.

\subsubsection{}\label{claim3product}{\bf Claim 3.} The $C^{k-1,\alpha}$ K\"ahler form $\hat{\omega}^\bullet_\infty$ on $\mathbb{C}^m$ is parallel with respect to the Euclidean metric.\medskip\

This is already obvious from \eqref{blow3} if $k > 1$, so in the proof we will assume that $k = 1$. \medskip\

\noindent \emph{Proof of Claim 3.} Recall that on $B_{\lambda_t}\times Y$ we have
\begin{align}
\hat{\omega}^\bullet_t=\omega_{\mathbb{C}^m}+\delta_t^2\omega_Y+\ddbar\vp_t,\label{maresca-1}\\
\label{maresca}
(\hat{\omega}^\bullet_t)^{m+n}=\delta_t^{2n} e^{F_t}\omega_P^{m+n},
\end{align}
where $\vp_t = \lambda_t^2 \psi_t \circ \Psi_t$ and $F_t=F\circ\Psi_t$. Moreover, as we already said, by \eqref{whenwillthiseverend} and Ascoli-Arzel\`a, $\hat\omega_t^\bullet$ converges subsequentially in $C^{\alpha}_{\rm loc}(\C^m \times Y)$ to the pullback of a $(1,1)$-form $\hat\omega_\infty^\bullet$ from $\C^m$. Then $\hat\omega_\infty^\bullet$ is obviously weakly closed, hence has a global $\ddbar$-potential of regularity $C$\begin{small}$^{2,\alpha}_{\rm loc}$\end{small} on $\C^m$. (If one generalizes the stronger bound \eqref{extra} from $B_\epsilon \times Y$ to $B_R \times Y$ for an arbitrary $R$, then $\hat\omega_\infty$ is a priori seen to be of class $C$\begin{small}$^{1,\alpha}_{\rm loc}$\end{small} and closed in the classical sense, with a $C$\begin{small}$^{3,\alpha}_{\rm loc}$\end{small} potential.) The crucial point of the proof of Claim 3 is to derive from \eqref{maresca-1}, \eqref{maresca} that $\hat{\omega}^\bullet_\infty$ satisfies
\begin{equation}\label{wp}
(\hat{\omega}^\bullet_\infty)^m=e^{F(z_\infty)}\omega_{\mathbb{C}^m}^m.
\end{equation}
A standard bootstrapping argument then shows that $\hat{\omega}^\bullet_\infty$ is smooth, and hence Ricci-flat. Since $\hat\omega^\bullet_\infty$ is moreover uniformly equivalent to $\omega_{\C^m}$ thanks to \eqref{caz}, Claim 3 then follows from Theorem \ref{lio1}.

The derivation of \eqref{wp} is similar to the proof of \cite[Thm 4.1]{To}, multiplying \eqref{maresca} by a test function pulled back from the base and using \eqref{maresca-1} to integrate by parts. Write $\hat{\omega}^\bullet_\infty=\omega_{\mathbb{C}^m}+\ddbar\vp$ for some $\vp \in C$\begin{small}$^{2,\alpha}_{\rm loc}$\end{small}$(\C^m)$, and also write $\vp$ for the pullback of $\vp$ to $\mathbb{C}^m\times Y$, so that in particular $\ddbar\vp_t\to\ddbar\vp$ in $C$\begin{small}$^{\alpha}_{\rm loc}$\end{small}. As in \cite{To}, let $\underline{\vp_t}$ denote the function on $\C^m$ (as well as its pullback to $\C^m \times Y$) obtained as the fiber average of $\vp_t$ with respect to $\omega_Y^n$. Fix a smooth function $\eta$ on $\mathbb{C}^m$ with compact support $K$ and also write $\eta$ for its pullback to $\mathbb{C}^m\times Y$. Fix $t$ large enough so  that $K\subset B_{\lambda_t}$. Then, from \eqref{maresca},\hfill$\qquad$
\begin{equation*}\begin{split}
\int_{\mathbb{C}^m\times Y}\eta e^{F_t}\omega_P^{m+n}&=\frac{1}{\delta_t^{2n}}\int_{\mathbb{C}^m\times Y}\eta (\omega_{\mathbb{C}^m}+\delta_t^2\omega_Y+\ddbar\vp_t)^{m+n}\\
&=
\frac{1}{\delta_t^{2n}}\int_{\mathbb{C}^m\times Y}\eta ((\omega_{\mathbb{C}^m}+\mn\de\db\underline{\vp_t})+(\delta_t^2\omega_Y+\mn\de\db(\varphi_t-\underline{\vp_t})))^{m+n}\\
&=\frac{1}{\delta_t^{2n}}\int_{\mathbb{C}^m\times Y}\eta \sum_{j=0}^{m+n}\binom{m+n}{j}(\omega_{\mathbb{C}^m}+\mn\de\db\underline{\vp_t})^j\wedge(\delta_t^2\omega_Y+\mn\de\db(\varphi_t-\underline{\vp_t}))^{m+n-j}.
\end{split}
\end{equation*}
Observe that $\omega_{\mathbb{C}^m}+\mn\de\db\underline{\vp_t}$ is pulled back from $\mathbb{C}^m$, hence can be wedged with itself at most $m$ times, so all terms in the sum with $j>m$ are zero. Next, we claim that all the terms with $j<m$ go to zero as $t\to \infty$. To see this, start by observing that any such term can be expanded into
$$\frac{1}{\delta_t^{2n}}\binom{m+n}{j}\sum_{i=0}^{m+n-j}\binom{m+n-j}{i}\int_{\mathbb{C}^m\times Y} \eta(\omega_{\mathbb{C}^m}+\mn\de\db\underline{\vp_t})^j\wedge(\delta_t^2\omega_Y)^{m+n-j-i}\wedge(\mn\de\db(\varphi_t-\underline{\vp_t}))^{i}.$$
The term with $i=0$ is easily seen to go to $0$ because the integrand is $O(\delta_t^{2(m+n-j)})$ and $j<m$, while for each term with $i>0$ we can rewrite the integral as
\begin{equation}\label{e:dontfuckup}
\int_{\mathbb{C}^m\times Y} (\vp_t-\underline{\vp_t})\ddbar\eta\wedge(\omega_{\mathbb{C}^m}+\mn\de\db\underline{\vp_t})^j\wedge(\delta_t^2\omega_Y)^{m+n-j-i}\wedge(\mn\de\db(\varphi_t-\underline{\vp_t}))^{i-1}.
\end{equation}
Work in local holomorphic product coordinates on the total space. The form $\ddbar\eta\wedge(\omega_{\mathbb{C}^m}+\mn\de\db\underline{\vp_t})^j$ is pulled back from $\mathbb{C}^m$, so in the coordinate representation of $(\delta_t^2\omega_Y)^{m+n-j-i}\wedge(\mn\de\db(\varphi_t-\underline{\vp_t}))^{i-1}$, each summand is a wedge product of $2(m-j-1)$ basis $1$-forms pulled back from $\mathbb{C}^m$ and $2n$ basis $1$-forms pulled back from $Y$. Multiplied together, the fiber contributions are $O(\delta_t^{2n})$ because the ones coming in pairs from $(\delta_t^2\omega_Y)^{m+n-j-i}$ have an explicit factor of $\delta_t^2,$ while \eqref{caz} implies that for all $z\in K,$
\begin{equation}\label{pazzoide}
|(\ddbar (\vp_t-\underline{\vp_t}))|_{\{z\}\times Y}|=|(\ddbar\vp_t)|_{\{z\}\times Y}|\leq C\delta_t^2,
\end{equation}
which shows not only that the contributions coming in pairs from $(\ddbar (\vp_t-\underline{\vp_t}))|_{\{z\}\times Y}$ are again $O(\delta_t^2)$ but also, thanks to the Cauchy-Schwarz inequality with respect to the Riemannian metric $\hat{g}_t^\bullet$, that the mixed base-fiber components of $\ddbar (\vp_t-\underline{\vp_t})$ are $O(\delta_t)$. Thus, every term
$$i\partial\bar\partial\eta \wedge (\omega_{\mathbb{C}^m}+\mn\de\db\underline{\vp_t})^j\wedge(\delta_t^2\omega_Y)^{m+n-j-i}\wedge(\mn\de\db(\varphi_t-\underline{\vp_t}))^{i-1}$$
in \eqref{e:dontfuckup} is $O(\delta_t^{2n})$. To see that the whole integral in \eqref{e:dontfuckup} is $o(\delta_t^{2n})$, as desired, it then suffices to note that $\sup_{K\times Y}|\vp_t-\underline{\vp_t}|\leq C\delta_t^2$, which follows from \eqref{pazzoide} by inverting the Laplacian on each fiber.

We are then left with only the term with $j=m$, which is
\[\begin{split}
&\frac{1}{\delta_t^{2n}}\int_{\mathbb{C}^m\times Y}\eta\binom{m+n}{m}(\omega_{\mathbb{C}^m}+\mn\de\db\underline{\vp_t})^m\wedge(\delta_t^2\omega_Y+\mn\de\db(\varphi_t-\underline{\vp_t}))^{n}\\
=\;&\frac{1}{\delta_t^{2n}}\int_{\mathbb{C}^m\times Y}\eta\binom{m+n}{m}(\omega_{\mathbb{C}^m}+\mn\de\db\underline{\vp_t})^m\wedge (\delta_t^2\omega_Y)^{n}
+\frac{1}{\delta_t^{2n}}\int_{\mathbb{C}^m\times Y} \mn\de\db\eta\wedge(\omega_{\mathbb{C}^m}+\mn\de\db\underline{\vp_t})^m\wedge\Upsilon.
\end{split}\]
The second term is zero because $\mn\de\db\eta$ is pulled back from $\C^m$. Altogether, we obtain that
\begin{equation}\label{e:to3}
\int_{\mathbb{C}^m\times Y}\eta e^{F_t}\omega_P^{m+n} = \binom{m+n}{m}\int_{\mathbb{C}^m\times Y}\eta(\omega_{\mathbb{C}^m}+\mn\de\db\underline{\vp_t})^m\wedge \omega_Y^{n}.
\end{equation}
Now observe that  $\ddbar\underline{\vp_t}\to\ddbar\vp$, and hence $\omega_{\C^m} + \ddbar\underline{\vp_t} \to \hat\omega_\infty^\bullet$, in $C^{\alpha}_{\rm loc}$ thanks to the identity
$$\ddbar(\underline{\vp_t}-\vp)=({\rm pr}_{\C^m})_*\left(\ddbar(\vp_t-\vp)\wedge\frac{\omega_Y^n}{\int_Y \omega_Y^n}\right)$$
and to the fact that by assumption $\ddbar\vp_t \to \ddbar\vp$ in $C^\alpha_{\rm loc}$. Expanding $\omega_P^{m+n}$ in \eqref{e:to3}, letting $t \to \infty$ and integrating out the $Y$ factor yields the weak form of \eqref{wp}, as desired.

This completes the proof of Claim 3, hence of Case 3 and of Theorem \ref{mainlocal}. \hfill $\Box$

\begin{rk}
It seems plausible that the method of proof of Theorem \ref{mainlocal} can be combined with the methods of \cite{GTZ,HT} to obtain a generalization of the $C^\infty$ estimates of Theorem \ref{mainlocal} to the setting of proper holomorphic submersions $f: X \to B$ with Calabi-Yau fibers over the unit ball $B \subset \C^m$ (as in Theorem \ref{thm51}) such that the simply-connected Beauville-Bogomolov-Calabi factors of the fibers of $f$ are all biholomorphic (while the torus factors may vary). The idea is that even though in this setting the natural collapsing semi-Ricci-flat K\"ahler reference metrics on $X$ are no longer Riemannian products, they still satisfy very good estimates with respect to certain Riemannian product metrics.
\end{rk}

\section{$C^{\alpha}$ estimates in the non-product case}\label{nonpr}

Let $B = B_1(0)$ denote the unit ball in $\C^m$ and let $f: X \to B$ be a proper surjective holomorphic submersion with $n$-dimensional Calabi-Yau fibers. Let $\omega_X$ be a Ricci-flat K\"ahler form on $X$.
For each $z\in B$ we use the Calabi-Yau theorem to find a unique Ricci-flat K\"ahler metric $\omega_{F,z}$ on $X_z$ in the class $[\omega_X|_{X_z}]$. Writing $\omega_{F,z}=\omega_X|_{X_z}+\ddbar \rho_z$, we may choose the functions $\rho_z$ to depend smoothly on $z$ (for example by normalizing them to have $\omega_X$-fiberwise average zero), and so to define a smooth function $\rho$ on $X$. We can then define a closed real $(1,1)$-form $\omega_F$ on $X$ by $\omega_F=\omega_X+\ddbar\rho$, so that the restriction of $\omega_F$ to $X_z$ equals $\omega_{F,z}$. (In the product case that we treated earlier, we could simply take $\omega_F = \omega_Y$. It was recently shown in \cite{CGP} that $\omega_F$ is in general not semipositive definite on $X$, see also \cite{Choi}, but this will be irrelevant for us.)
We further define a family of closed real $(1,1)$-forms on $X$ by
$$\omega_t^\natural =\omega_\infty+e^{-t}\omega_F,$$
where $\omega_\infty = f^*\omega_{\C^m}$.
In general, the forms $\omega_t^\natural$ are not positive for all $t\geq 0$. However, it is easy to see (using Cauchy-Schwarz for the base-fiber terms of $\omega_F$, which come with a good factor of $e^{-t}$, cf. \eqref{caz2b}) that given any compact subset
$K\subset X$ there is a $t_K$ such that $\omega_t^\natural$ is positive on $K$ for all $t\geq t_K$. In particular, by shrinking $B$ slightly, we can assume that $\omega_t^\natural$ is positive on $X$ for all $t \gg 1$, and without loss even for all $t \geq 0$.
We are concerned with the behavior as $t \to \infty$ of Ricci-flat K\"ahler metrics $\omega_t^\bullet$ on $X$ which satisfy
$\omega_t^\bullet = \omega_t^\natural + i\partial\ov{\partial}\psi_t$
together with the complex Monge-Amp\`ere equation
\begin{equation}\label{mafiber}
(\omega_t^\bullet)^{m+n}= c_t e^{-nt} e^{G} \omega_{\infty}^m\wedge \omega_F^n,
\end{equation}
where $c_t$ is a constant that has a positive limit, and $G$ is a smooth function pulled back from $B$.

By Ehresmann's theorem $f$ is a $C^\infty$ fiber bundle, and (up to shrinking $B$ again) we may choose a smooth trivialization $\Phi:B\times Y\to X$, where $Y=f^{-1}(0)$ is viewed as a smooth real $2n$-manifold, such that the restriction $\Phi|_{\{0\}\times Y}:\{0\}\times Y\to f^{-1}(0)=Y$ equals the identity map. We then equip $B\times Y$ with the complex structure $J^\natural$ induced by the one on $X$ via $\Phi$, so that $\Phi$ becomes a biholomorphism and ${\rm pr}_{\C^m}$ is a $J^\natural$-holomorphic submersion. Let $J_{Y,z}$ denote the restriction of $J^\natural$ to the $J^\natural$-holomorphic fiber $\{z\} \times Y$. Note that $\Phi^*\omega_{F,z}$ is a Ricci-flat $J_{Y,z}$-K\"ahler metric on $\{z\}\times Y$. Denote the associated Riemannian metric on $\{z\} \times Y$ by $g_{Y,z}$, extend it trivially to the $C^\infty$ product $\C^m \times Y$, and define the Riemannian product metric $g_{z,t} = g_{\C^m} + e^{-t}g_{Y,z}$, which is K\"ahler with respect to $J_{z} = J_{\C^m} + J_{Y,z}$. By abuse of notation we will identify $\omega_F$ and $\omega_t^\natural, g_t^\natural$ with their pullbacks to $B \times Y$ under $\Phi$.

The following is a restatement of Theorem \ref{thm51}, and is the main result of this section.
\begin{theorem}\label{t:fornow}
For all $C$ and $\alpha \in (0,1)$ there exists a $C'$ independent of $t$ such that if
\begin{equation}\label{arrrgh}
C^{-1} g^\natural_t\leq {g}_t^\bullet \leq Cg^\natural_t\;\,\text{on}\;\,B \times Y,
\end{equation}
then it holds that
\begin{equation}\label{arrrrgh}
\sup_{x=(z,y) \in B_{\frac{1}{4}} \times Y} \sup_{x' \in B^{g_{z,t}}(x,\frac{1}{8})}\frac{|{g}_t^\bullet(x) -  \mathbf{P}^{g_{z,t}}_{x'x}({g}_t^\bullet(x'))|_{g_{z,t}(x)}}{d^{g_{z,t}}(x,x')^\alpha} \leq C'.
\end{equation}
\end{theorem}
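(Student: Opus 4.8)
The plan is to argue by contradiction, following the overall blowup strategy of the proof of Theorem \ref{mainlocal} but with the crucial difference that the fiberwise complex structures $J_{Y,z}$ vary with $z$, so that the reference metrics $g^\natural_t$ are no longer Riemannian products and the clean separation into a linear Schauder step (Theorem \ref{moron}) followed by a nonlinear blowup is no longer available. If \eqref{arrrrgh} fails, there are sequences $t_i \to \infty$, points $x_i = (z_i,y_i)$ and $x'_i$ such that the $C^\alpha$ difference quotient (measured with the product metric $g_{z_i,t_i}$ centered at the fiber through $x_i$) blows up. As in Section \ref{sekt} one sets $\lambda_i$ to the relevant scale, rescales $g_{z_i,t_i} = \lambda_i^{-2}\tilde g_i$ and $g^\bullet_{t_i}$ accordingly, translates in the $\C^m$-factor so that $x_i \to x_\infty$, and passes to one of three cases according to whether $\delta_i = \lambda_i e^{-t_i/2}$ tends to $\infty$, to a positive constant, or to $0$. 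First I would dispose of Case 1 ($\delta_i \to \infty$): after a further fiberwise stretching the complex structures $J_{Y,z_i}$ converge (by Proposition \ref{p:nn}, using that the variation of $J^\natural$ along the base is uniformly controlled on $X$) to a flat structure, the blowup limit is $\C^{m+n}$, Proposition \ref{localest3} gives uniform $C^\infty$ bounds, and the limit is a nonflat Ricci-flat Kähler metric uniformly equivalent to Euclidean — contradicting Theorem \ref{lio1}. Case 2 ($\delta_i \to \delta > 0$) is similar: the pulled-back complex structures converge in $C^\infty_{\rm loc}$ to the product structure $J_P$ on $\C^m \times Y$ (again via Proposition \ref{p:nn} applied fiberwise, since along the fiber $J^\natural$ is already the correct complex structure and along the base the oscillation of $J^\natural$ scales away), so the blowup limit is a nonparallel Ricci-flat Kähler metric on $\C^m \times Y$ uniformly equivalent to $g_{\C^m} + g_Y$, contradicting Theorem \ref{lioHein}.

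The entire difficulty is concentrated in Case 3 ($\delta_i \to 0$), where the Gromov--Hausdorff limit is just $\C^m$ and the collapse is real. Here I would proceed in three steps mirroring Claims 1--3 of Section \ref{sekt}, but carried out in a way that is robust under a varying complex structure. As in the product case, $C^k$ bounds with respect to the collapsing reference metric follow from \eqref{arrrgh}, Proposition \ref{localest3} (applied in fiberwise Newlander--Nirenberg coordinates for $J_{Y,z}$), and the maximality of the blowup point, yielding $C^{k-1,\alpha}_{\rm loc}$ subconvergence of $\tilde g^\bullet_i$ to a $(1,1)$-form pulled back from $\C^m$. The analog of Claim 1 is a Schauder-type improvement: one must linearize the Monge--Ampère equation \eqref{mafiber} at a suitable product reference metric on the long cylinder $B_{\delta_i^{-1}} \times Y$ and upgrade to a $C^{k,\alpha}$ bound. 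But because $J^\natural$ is not a product, there is no scalar Kähler potential to work with globally, so — exactly as advertised in Section \ref{ov2} — one cannot invoke Theorem \ref{moron} or \ref{scalar} as a black box. Instead I would go back to the abstract Theorem \ref{t:schauder} and re-run its proof (Proposition \ref{p:schauder}, with its own internal three-case blowup) \emph{simultaneously with} the nonlinear blowup, checking by hand that the presheaf of ``$i\partial^{J^\natural}\bar\partial^{J^\natural}$-exact forms'' satisfies closure (a variant of Proposition \ref{p:closure}, using that $J_{Y,z_i} \to J_{Y,0}$ and the Leray/Künneth argument on the limit $\C^m \times Y$) and the Liouville property (a variant of Proposition \ref{check}). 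A key technical point is that the true limit of $\tilde g^\bullet_i$ may differ from the product model by a nontrivial linear automorphism of $\C^m$, so one first constructs a \emph{modified reference metric} $\tilde\omega^\sharp_t$ (the analog of the construction after \eqref{caz699} alluded to in the text) by composing with that automorphism before linearizing.

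Given this $C^{k,\alpha}$ improvement measured with respect to the collapsing metric, the analog of Claim 2 uses the Kähler property of $g^\bullet_{t_i}$ together with Lemma \ref{l:braindead} to show that the blowup equality is already realized by the ``all-base'' component of the rescaled curvature/derivative tensor: every component carrying at least one fiber index decays like a positive power of $\delta_i$, exactly as in Section \ref{claim2product}, the only change being that the Kähler identities \eqref{lakahlerite} must be used in the coordinates adapted to $J^\natural$ and the small error terms coming from the $z$-dependence of $J_{Y,z}$ must be absorbed (they are lower order because the base oscillation of $J^\natural$ carries a compensating $\delta_i$). Finally the analog of Claim 3 identifies the limit: testing \eqref{mafiber} against functions pulled back from $B$ and integrating by parts as in \cite[Thm 4.1]{To}, one shows the limiting $(1,1)$-form $\hat\omega^\bullet_\infty$ on $\C^m$ solves a constant-coefficient Monge--Ampère equation, hence is smooth and (being uniformly equivalent to Euclidean) constant by Theorem \ref{lio1} — which, together with Claim 2 showing it is \emph{not} constant, is the desired contradiction. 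The main obstacle, as flagged above, is running the linear Schauder blowup \emph{inside} the nonlinear one: one must be careful that the nested contradiction arguments do not form a logical cycle, which works because at each stage the quantity being blown up in the inner argument is genuinely lower order than the one controlled by the outer argument, and because the only Liouville inputs needed are the already-established Theorems \ref{lio1}, \ref{lioHein} and the elementary polynomial Liouville statements of Proposition \ref{check}.
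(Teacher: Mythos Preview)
Your overall architecture (outer three-case blowup, Liouville on $\C^{m+n}$, $\C^m\times Y$, $\C^m$; a nested Schauder-type blowup inside Case~3) matches the paper. But your Case~3 plan has a genuine gap, and it stems from transplanting the $C^k$ template of Theorem~\ref{mainlocal} too literally.

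First, a framing issue: Theorem~\ref{t:fornow} is only a $C^\alpha$ estimate. There is no induction on $k$, no ``$C^k$ bounds from \eqref{arrrgh} and maximality,'' and no ``rescaled curvature/derivative tensor.'' The blowup quantity is a H\"older difference quotient of $\eta_t=\omega_t^\bullet-\omega_t^\natural$ at two points $x_t,x_t'$. Accordingly, the paper's Claim~1 in Section~\ref{claim1nonproduct} is \emph{not} a $C^{k,\alpha}$ regularity upgrade; it asserts only that $d^{\hat g_t}(\hat x_t,\hat x_t')$ stays bounded below. The paper explicitly remarks that a $C^\beta$-upgrade route ``seems possible but cumbersome'' and instead proves the separation lower bound directly by the nested blowup (Subcases A, B, C), linearizing the Monge--Amp\`ere equation via Subclaim~1.2 and the modified reference $\tilde\omega_t^\sharp$. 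So your description of the output of Claim~1 is off, even though you have correctly identified the mechanism (re-running the proof of Proposition~\ref{p:schauder} inside the nonlinear argument).

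The real gap is your plan for Claim~2. You propose to argue ``exactly as in Section~\ref{claim2product}'' using Lemma~\ref{l:braindead} and the K\"ahler identities~\eqref{lakahlerite}. That argument, however, needs as input a $C^{k,\alpha}$ bound in the collapsing norm (the product-case Claim~1, equation~\eqref{extra}) in order to feed Lemma~\ref{l:braindead} a H\"older seminorm of a \emph{derivative}. At order $k=0$ there is no such derivative bound available, and no $C^{k,\alpha}$ upgrade has been proved, so Lemma~\ref{l:braindead} by itself cannot force the base-fiber and fiber-fiber components $|\eta_t^1|_{g_{z,t}},\,|\eta_t^2|_{g_{z,t}}$ to zero. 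The paper closes this gap with the new Proposition~\ref{bd}: after passing to the fiberwise-parallel limit $\hat\eta_\infty$, the $\eta^2$-part vanishes by pairing against parallel $2$-forms (using $d$-exactness), while the $\eta^1$-part requires the substantial Subclaims~A and~B. Subclaim~A works in Newlander--Nirenberg ``fibered'' charts for $J^\natural$ to show that $(Z_j\,\lrcorner\,\eta_t^1)|_{\{z\}\times Y}$ is $\bar\partial$-exact up to errors $o(\lambda_t^2)$; Subclaim~B then integrates against $g_{Y,z,t}$-harmonic $(0,1)$-forms and uses Hodge theory with parameters to kill $(\hat\eta_\infty)^1$. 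None of this is captured by ``absorbing error terms from the $z$-dependence of $J_{Y,z}$,'' and this same Proposition~\ref{bd} is also what makes Subcase~C of the nested Claim~1 blowup go through. Without it, both Claim~1 (Subcase~C) and Claim~2 are incomplete.
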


We begin with some remarks to clarify the nature of this estimate.

\begin{rk}
Theorem \ref{t:fornow} holds with respect to any $C^\infty$ trivialization $\Phi$. In fact, if \eqref{arrrrgh} holds with respect to one choice of $\Phi$, then one can prove directly that it also holds with respect to any other choice. This is similar to the proof of Lemma \ref{tiammazzo1} but longer (using also Proposition \ref{bd} below).
\end{rk}

\begin{rk}\label{totalfuckage}
Estimate \eqref{arrrrgh} is weaker than a uniform bound on $[g^\bullet_t]_{C^\alpha(B_{1/4}\times Y,g_{z_0,t})}$ for a fixed $z_0\in B$, which is what one would naively expect after our results in Theorem \ref{mainlocal} in the product case. However such a bound is false except in the product or torus-fibered cases. Indeed, assuming it were true, scale all distances by $e^{t/2}$ (which has the effect of multiplying our assumed H\"older bound by $e^{-\alpha t/2}$), stretch the base directions accordingly, and pass to a pointed limit based at some generic point $z \in B \setminus \{z_0\}$. Thanks to \cite[Thm 3.1]{TZ}, the scaled and stretched $g_t^\bullet$ converges locally smoothly to $g_{\C^m} + g_{Y,z}$, while the scaled and stretched $g_{z_0,t}$ obviously converges to $g_{\C^m} + g_{Y,z_0}$. It follows that the former is parallel with respect to the latter, which is false except in the special cases mentioned above. This argument also shows that \eqref{arrrrgh} provides an effective estimate on the rate of convergence in \cite[Thm 3.1]{TZ}.
\end{rk}

\begin{rk}\label{important}
Estimate \eqref{arrrrgh} does imply a uniform $C^\alpha$ bound for $g^\bullet_t$ with respect to a {\em fixed} metric on $B\times Y$. Indeed, notice that $\mathbf{P}^{g_{z,t}}_{x'x} = \mathbf{P}^{g_{z,0}}_{x'x}$ and $d^{g_{z,t}}(x',x) \leq d^{g_{z,0}}(x',x)$ and $|T|_{g_{z,t}(x)} \geq |T|_{g_{z,0}(x)}$ for any contravariant tensor $T$, so \eqref{arrrrgh} trivially implies a uniform bound on the $g_{z,0}$-H\"older quotient of $g_t^\bullet$ at $x,x'$. By Lemma \ref{tiammazzo1}, this implies a uniform bound on the H\"older quotient of $g_t^\bullet$ at $x,x'$ with respect to any fixed metric smoothly and uniformly comparable to $g_{z,0}$. Thus, Theorem \ref{t:fornow} implies the H\"older estimates stated in Corollaries \ref{main2} and \ref{coro} in the case of compact Calabi-Yau manifolds. These results in particular recover, and greatly strengthen, the main result of \cite{TWY}.
\end{rk}

Before we start the proof of Theorem \ref{t:fornow}, we make a simple observation which will be very useful in the proof. Fix a smooth complex coordinate chart $(y^1, \ldots, y^n)$ on $Y$. Then $(z^1, \ldots, z^m, y^1, \ldots, y^n)$ is a smooth complex coordinate chart on $B \times Y$.
Schematically, and ignoring the distinction between these complex coordinates and their complex conjugates, we may then write
\begin{align}\label{Jblock}(J^\natural - J_{z_0})|_{(z,y)} = A(z_0, z, y) \circledast dz \otimes \partial_y + B(z_0, z, y) \circledast dy \otimes \partial_y,\end{align}
where $A,B$ are smooth matrix-valued functions with $B(z_0, z_0, y) = 0$. There are no $dz \otimes \partial_z$ or $dy \otimes \partial_z$ terms in this formula because ${\rm pr}_{\C^m}$ is holomorphic with respect to both $(J^\natural, J_{\C^m})$ and $(J_{z_0}, J_{\C^m})$. The absence of such terms is crucial for us because they behave poorly with respect to diffeomorphisms of the form $(z,y) \mapsto (\lambda z, y)$ with $\lambda \to 0$. Given \eqref{Jblock}, the definition of $\omega_t^\natural$ implies that
\begin{align}\label{gblock}(g_t^\natural - g_{z_0,t})|_{(z,y)} = e^{-t}(C(z_0,z,y) \circledast dz \otimes dz + D(z_0,z,y)\circledast dz \otimes dy + E(z_0,z,y) \circledast dy \otimes dy), \end{align}
where $C,D,E$ are smooth matrix-valued functions with $E(z_0,z_0,y) = 0$.

Formulas \eqref{Jblock} and \eqref{gblock} will usually be applied as follows. We would like to prove that $J^\natural$ becomes asymptotic to $J_{z_0}$, and $g^\natural_t$ becomes asymptotic to $g_{z_0,t}$, after pulling back by a diffeomorphism of the form $(z,y) \mapsto (\lambda z, y)$ with $\lambda \to 0$ and passing to a pointed limit centered at  $(\lambda^{-1}z_0, y_0)$. This follows from \eqref{Jblock}, \eqref{gblock} by observing that $dz$ gains a factor of $\lambda$ under pullback, hence becomes negligible, and that the $dy$ terms also go to zero locally uniformly by Taylor expanding $B,E$ around $z = z_0$.

We will use \eqref{Jblock} and \eqref{gblock} several times below, often to derive estimates which are global in the $Y$ factor. For example, we will be interested in estimates on the difference of $J^\natural - J_{z_0}$ or $g_t^\natural - g_{z_0,t}$ at two arbitrary points $(z,y)$ and $(z',y')$ such that $y$ and $y'$ do not necessarily belong to the same coordinate chart. In such a situation we will cover $Y$ by finitely many coordinate charts as above, apply \eqref{Jblock}, \eqref{gblock} in each of these charts, and use the triangle inequality.

We also note here that there is a constant $C$ such that for all $z\in B$ and $t \in [0,\infty)$ we have
\begin{equation}\label{caz2b}
C^{-1}g_{z,t} \leq g^\natural_t \leq Cg_{z,t}.
\end{equation}
This easily follows from \eqref{gblock} and Cauchy-Schwarz because the $dz \otimes dy$ terms come with a coefficient of $e^{-t}$ rather than $e^{-t/2}$. Note that this observation was already used in the setup of Theorem \ref{t:fornow}.

We are now ready to start the proof of Theorem \ref{t:fornow}.

\begin{proof}[Proof of Theorem \ref{t:fornow}] We begin with a preliminary reduction.
Define a function $\mu_t$ on $B \times Y$ by
\begin{align}
\mu_{t}(x) = \mu_t(z,y) = d^{g_{z,t}} (x,\partial(B \times Y))^\alpha \sup_{x' \in B^{g_{z,t}}(x,\frac{1}{4}d^{g_{z,t}}(x,\partial(B\times Y)))} \frac{|\eta_t(x) -  \mathbf{P}^{g_{z,t}}_{x'x}(\eta_t(x'))|_{g_{z,t}(x)}}{d^{g_{z,t}}(x,x')^\alpha},
\end{align}
where $\eta_t = \omega_t^\bullet - \omega_t^\natural=\ddbar\psi_t$. This quantity is more convenient for us than the analogous one involving $g_t^\bullet$ because all of our key analytic arguments work at the level of $i\partial\bar\partial$-exact $(1,1)$-forms.  \medskip\

\noindent \emph{Claim.} Theorem \ref{t:fornow} can be deduced from the statement that
\begin{align}\label{strongk2}
\max_{B\times Y} \mu_t \leq C.
\end{align}
\noindent \emph{Proof of Claim.} Note that for all tangent vectors $v,w$ we have
$$g^\bullet_t(v,w)=g^\natural_t(v,w)+\eta_t(v,J^\natural w).$$
We denote the last term by $h_t(v,w)$.
From \eqref{arrrgh} and \eqref{caz2b}, and using the fact that $J^\natural$ has fixed length with respect to $g_t^\natural$ because $g^\natural_t$ is $J^\natural$-Hermitian, we deduce that for all $z\in B$,
\begin{equation}\label{tiammazz}
\sup_{B\times Y} (|h_t|_{g_{z,t}} + |J^\natural|_{g_{z,t}} + |\eta_t|_{g_{z,t}})\leq C.
\end{equation}
Then for all $x=(z,y) \in B_{1/4} \times Y$ and $x'=(z',y') \in B^{g_{z,t}}(x,1/8)$ we estimate
\begin{equation}\label{bs0}
|{g}_t^\bullet(x) -  \mathbf{P}^{g_{z,t}}_{x'x}({g}_t^\bullet(x'))|_{g_{z,t}(x)}\leq |{g}_t^\natural(x) -  \mathbf{P}^{g_{z,t}}_{x'x}({g}_t^\natural(x'))|_{g_{z,t}(x)}+|h_t(x) -  \mathbf{P}^{g_{z,t}}_{x'x}(h_t(x'))|_{g_{z,t}(x)}.
\end{equation}
For the first term in \eqref{bs0}, we clearly have
$$|{g}_t^\natural(x) -  \mathbf{P}^{g_{z,t}}_{x'x}({g}_t^\natural(x'))|_{g_{z,t}(x)}=|({g}_t^\natural-g_{z,t})(x) -  \mathbf{P}^{g_{z,t}}_{x'x}(({g}_t^\natural-g_{z,t})(x'))|_{g_{z,t}(x)},$$
which we aim to estimate by $Cd^{g_{z,t}}(x,x')$ using \eqref{gblock} and the covering argument indicated after \eqref{gblock}. This is easy for the $C,D$ terms in \eqref{gblock} because there is a factor of $e^{-t}$ in front while the $g_{z,t}$-lengths of $dz$ and $dy$ are $O(1)$ and $O(e^{t/2})$ respectively. For the $E$ term, using the fact that $E(z,z,y)=0$ for all $y$, we see that its contribution is $O(|z-z'|) = O(d^{g_{z,t}}(x,x'))$. Thus, as desired,
\begin{equation}\label{bs1}
|{g}_t^\natural(x) -  \mathbf{P}^{g_{z,t}}_{x'x}({g}_t^\natural(x'))|_{g_{z,t}(x)}\leq Cd^{g_{z,t}}(x,x').
\end{equation}
For the second term in \eqref{bs0}, we bound
\begin{align}
&|h_t(x) -  \mathbf{P}^{g_{z,t}}_{x'x}(h_t(x'))|_{g_{z,t}(x)}\\
&\leq (\sup|\eta_t|_{g_{z,t}})|J^\natural(x) -  \mathbf{P}^{g_{z,t}}_{x'x}(J^\natural(x'))|_{g_{z,t}(x)}+C(\sup |J^\natural|_{g_{z,t}})(\max\mu_t)d^{g_{z,t}}(x,x')^\alpha,\label{bs2}
\end{align}
where the sup and max are taken over $B \times Y$.
We have
$$|J^\natural(x) -  \mathbf{P}^{g_{z,t}}_{x'x}(J^\natural(x'))|_{g_{z,t}(x)}=|(J^\natural-J_{z})(x) -  \mathbf{P}^{g_{z,t}}_{x'x}((J^\natural-J_z)(x'))|_{g_{z,t}(x)},$$
which we again aim to estimate by $Cd^{g_{z,t}}(x,x')$ using \eqref{Jblock}. This is again easy for the $A$ term in \eqref{Jblock} because the $g_{z,t}$-length of $\de_y$ is $O(e^{-t/2})$, and for the $B$ term, using the fact that $B(z,z,y)=0$, we see that its contribution is $O(|z-z'|) = O(d^{g_{z,t}}(x,x'))$. This gives
\begin{equation}\label{bs3}
|J^\natural(x) -  \mathbf{P}^{g_{z,t}}_{x'x}(J^\natural(x'))|_{g_{z,t}(x)}\leq Cd^{g_{z,t}}(x,x').
\end{equation}
Plugging this together with \eqref{strongk2}, \eqref{tiammazz}, \eqref{bs3} into \eqref{bs2} gives
\begin{equation}\label{bs4}
|h_t(x) -  \mathbf{P}^{g_{z,t}}_{x'x}(h_t(x'))|_{g_{z,t}(x)}\leq Cd^{g_{z,t}}(x,x')^\alpha.
\end{equation}
Taken together, \eqref{bs1} and \eqref{bs4} allow us to appropriately bound the right-hand side of \eqref{bs0}, proving \eqref{arrrrgh}. Thus, Theorem \ref{t:fornow} follows from \eqref{strongk2}, as claimed.\hfill$\Box$\medskip\

Thanks to the above claim, it suffices to prove \eqref{strongk2}, which we do by contradiction.

If \eqref{strongk2} is false, then
$\limsup_{t\to\infty} \max_{B\times Y} \mu_{t}=\infty.$
For simplicity of notation we will assume that $\lim_{t\to\infty}\max_{B\times Y} \mu_{t}=\infty$ (usually this will be true only along some sequence $t_i\to \infty$). Choose $x_t = (z_t, y_t) \in B\times Y$ such that the maximum of $\mu_{t}$ is achieved at $x_t$, and define $\lambda_t$ by
$$\lambda_t^\alpha=\sup_{x' \in B^{g_{z_t,t}}(x_t,\frac{1}{4}d^{g_{z_t,t}}(x_t,\partial(B\times Y)))}\frac{|\eta_t(x_t) -  \mathbf{P}^{g_{z_t,t}}_{x'x_t}(\eta_t(x'))|_{g_{z_t,t}(x_t)}}{d^{g_{z_t,t}}(x_t,x')^\alpha}.$$
Let us note for later purposes that after passing to a subsequence,
\begin{align}\label{whoops666}
z_t \to z_\infty \in \overline{B}, \;\, y_t \to y_\infty \in Y.
\end{align}
Now $\lambda_t \to \infty$ since otherwise $\max_{B\times Y} \mu_{t}$ would be uniformly bounded because the $g_{z,t}$-diameter of $B\times Y$ is uniformly bounded independent of $z$ and $t$. Let us also choose any point
$$x_t' = (z_t', y_t')  \in \overline{B^{g_{z_t,t}}(x_t,\frac{1}{4}d^{g_{z_t,t}}(x_t,\partial(B\times Y)))}$$ realizing the sup in the definition of $\lambda_t$.
Consider the diffeomorphisms
$$\Psi_t:B_{\lambda_t}\times Y\to B\times Y, \;\, (z,y) = \Psi_t(\hat{z},\hat{y})=(\lambda_t^{-1}\hat{z},\hat{y}),$$
and define
$$\hat{J}_t=\Psi_t^*J_{z_t}, \;\, \hat{g}_t = \lambda_t^{2}\Psi_t^*g_{z_t,t},\;\, \hat{J}^\natural_{t} = \Psi_t^*J^\natural,\;\, \hat\omega_t^\natural = \lambda_t^{2}\Psi_t^*\omega_t^\natural, \;\, \hat\eta_t=\lambda_t^{2}\Psi_t^*\eta_t, \;\, \hat{x}_t=\Psi_t^{-1}(x_t), \;\, \hat{x}'_t=\Psi_t^{-1}(x'_t).$$
Then $\hat{g}_t$ is a Ricci-flat $\hat{J}_t$-K\"ahler product metric, $\hat{\omega}_t^\natural$ is a semi-Ricci-flat $\hat{J}_t^\natural$-K\"ahler form, $\hat\omega_t^\bullet = \hat\omega_t^\natural + \hat\eta_t$ is a Ricci-flat $\hat{J}^\natural_t$-K\"ahler form, and we have the following basic properties:
\begin{align}
\label{caz01}\hat{g}_t = g_{\C^m} +  \lambda_t^{2}e^{-t}g_{Y,z_t}, \;\,
\hat{\omega}_t^\natural = \omega_{\C^m} +  \lambda_t^{2}e^{-t}\Psi_t^*\Phi^*\omega_F,\\
\label{caz2}
C^{-1}\hat{g}_t \leq \hat{g}^\bullet_t \leq C\hat{g}_t.
\end{align}
Here \eqref{caz01} is obvious and \eqref{caz2} follows from  \eqref{arrrgh} and \eqref{caz2b}.

The following properties are crucial for our proof. First note that
\begin{align*}
\lambda_t^\alpha=\frac{|\eta_t(x_t) -  \mathbf{P}^{g_{z_t,t}}_{x'_tx_t}(\eta_t(x'_t))|_{g_{z_t,t}(x_t)}}{d^{g_{z_t,t}}(x_t,x'_t)^\alpha}
&=\frac{|\hat\eta_t(\hat{x}_t) -  \mathbf{P}^{\Psi_t^*g_{z_t,t}}_{\hat{x}'_t\hat{x}_t}(\hat\eta_t(\hat{x}'_t))|_{\Psi_t^*g_{z_t,t}(\hat{x}_t)}}{d^{\Psi_t^*g_{z_t,t}}(\hat{x}_t,\hat{x}'_t)^\alpha} \lambda_t^{-2} \\
&=\frac{|\hat\eta_t(\hat{x}_t) -  \mathbf{P}^{\hat{g}_t}_{\hat{x}'_t\hat{x}_t}(\hat\eta_t(\hat{x}'_t))|_{\hat{g}_t(\hat{x}_t)}}{d^{\hat{g}_t}(\hat{x}_t,\hat{x}'_t)^\alpha}\lambda_t^\alpha,
\end{align*}
which implies that
\begin{equation}\label{blow22}
\frac{|\hat\eta_t(\hat{x}_t) -  \mathbf{P}^{\hat{g}_t}_{\hat{x}'_t\hat{x}_t}(\hat\eta_t(\hat{x}'_t))|_{\hat{g}_t(\hat{x}_t)}}{d^{\hat{g}_t}(\hat{x}_t,\hat{x}'_t)^\alpha}
=1.
\end{equation}
Since the numerator is uniformly bounded thanks to \eqref{caz2}, it follows in particular that
\begin{align}\label{blow223}
d^{\hat{g}_t}(\hat{x}_t, \hat{x}_t') \leq C.
\end{align}
Now recall that $\hat{x}_t'$ was chosen to maximize the difference quotient of \eqref{blow22} among all points $$\hat{x}' \in \ov{B^{\hat{g}_t}(\hat{x}_t,\frac{1}{4}d^{\hat{g}_t}(\hat{x}_t,\partial(B_{\lambda_t}\times Y)))}.$$
Moreover, if we define
\begin{align}\label{fuckup!!}
\hat{g}_{\hat{z},t} =  \lambda_t^{2}\Psi_t^*g_{z,t} = g_{\C^m} + \lambda_t^{2}e^{-t}g_{Y,z}\;\,(z = \lambda_t^{-1}\hat{z})
\end{align}
for all $\hat{z} \in B_{\lambda_t}$ (so that $\hat{g}_{\hat{z}_t,t}=\hat{g}_t$ by definition), then the point $\hat{x}_t$ itself maximizes the quantity
\begin{align*}
\mu_{t}(\Psi_t(\hat{x}))= d^{\hat{g}_{\hat{z},t}} (\hat{x},\partial(B_{\lambda_t} \times Y))^\alpha \sup_{\hat{x}' \in B^{\hat{g}_{\hat{z},t}}(\hat{x},\frac{1}{4}d^{\hat{g}_{\hat{z},t}}(\hat{x},\partial(B_{\lambda_t}\times Y)))} \frac{|\hat\eta_t(\hat{x}) -  \mathbf{P}^{\hat{g}_{\hat{z},t}}_{\hat{x}'\hat{x}}(\hat\eta_t(\hat{x}'))|_{\hat{g}_{\hat{z},t}(\hat{x})}}{d^{\hat{g}_{\hat{z},t}}(\hat{x},\hat{x}')^\alpha}
\end{align*}
among all $\hat{x} = (\hat{z},\hat{y}) \in B_{\lambda_t} \times Y$. Together with the fact that $\max \mu_t \to \infty$, these properties yield
\begin{align}\label{fuck_the_boundary2}
d^{\hat{g}_t} (\hat{x}_t, \partial(B_{\lambda_t} \times Y)) \to \infty.
\end{align}
This tells us that if we pass to a pointed limit with basepoint $\hat{x}_t$, the boundary moves away to infinity and the limit space will be complete. We also learn that for all $\hat{x} = (\hat{z}, \hat{y}) \in B_{\lambda_t} \times Y$,
\begin{align*}
\sup_{\hat{x}' \in B^{\hat{g}_{\hat{z},t}}(\hat{x},\frac{1}{4}d^{\hat{g}_{\hat{z},t}}(\hat{x},\partial(B_{\lambda_t}\times Y)))} \frac{|\hat\eta_t(\hat{x}) -  \mathbf{P}^{\hat{g}_{\hat{z},t}}_{\hat{x}'\hat{x}}(\hat\eta_t(\hat{x}'))|_{\hat{g}_{\hat{z},t}(\hat{x})}}{d^{\hat{g}_{\hat{z},t}}(\hat{x},\hat{x}')^\alpha}
&\leq  \frac{d^{\hat{g}_t} (\hat{x}_t,\partial(B_{\lambda_t} \times Y))^\alpha}{d^{\hat{g}_{\hat{z},t}} (\hat{x},\partial(B_{\lambda_t} \times Y))^\alpha}.
\end{align*}
Using the fact that $C^{-1}\hat{g}_t \leq \hat{g}_{\hat{z},t} \leq C\hat{g}_t$ by \eqref{fuckup!!}, the triangle inequality for $d^{\hat{g}_t}$, and \eqref{fuck_the_boundary2}, we deduce that there exists a $C$ such that for all $R > 0$ there exists a $t_R$ such that for all $t \geq t_R$,
\begin{align}
\label{saves_our_asses2}
\sup_{\hat{x} = (\hat{z},\hat{y}) \in B^{\hat{g}_t}(\hat{x}_t,R)} \Biggl(\sup_{\hat{x}' \in B^{\hat{g}_{t}}(\hat{x}_t,R)} \frac{|\hat\eta_t(\hat{x}) -  \mathbf{P}^{\hat{g}_{\hat{z},t}}_{\hat{x}'\hat{x}}(\hat\eta_t(\hat{x}'))|_{\hat{g}_{t}(\hat{x})}}{d^{\hat{g}_{t}}(\hat{x},\hat{x}')^\alpha}\Biggr)
\leq C.
\end{align}
The quantity on the left-hand side of \eqref{saves_our_asses2} is subtly weaker than the $C^{\alpha}(B^{\hat{g}_t}(\hat{x}_t,R))$ seminorm of $\hat{\eta}_t$ because the parallel transport from $\hat{x}'$ to $\hat{x}$ is performed with respect to $\hat{g}_{\hat{z},t}$, where $\hat{x} = (\hat{z},\hat{y})$. In fact, the $C^{\alpha}(B^{\hat{g}_t}(\hat{x}_t,R))$ seminorm of $\hat{\eta}_t$ may well be unbounded. However, \eqref{saves_our_asses2} is sufficient for us.

We are now in a position to study the possible complete pointed limit spaces of
$$(B_{\lambda_t}\times Y, \hat{g}_t,\hat{x}_t)$$
as $t\to\infty$. Modulo translations in the $\mathbb{C}^m$ factor, we may assume that $\hat{x}_t = (0,\hat{y}_t)\in \mathbb{C}^m\times Y$. Recall here that $\hat{y}_t\to \hat{y}_\infty\in Y$ by \eqref{whoops666}. Abbreviating
$$\delta_t=\lambda_te^{-\frac{t}{2}},$$
we read from \eqref{caz01} that again up to passing to a subsequence, three cases need to be considered: (1) $\delta_t\to\infty$; (2) $\delta_t \to \delta \in (0,\infty)$, and without loss of generality $\delta = 1$; and (3) $\delta_t\to 0$. Observe that thanks to \eqref{whoops666}, \eqref{Jblock}, it holds in all three cases that
\begin{align}\label{caz66}
\hat{J}_t, \hat{J}_t^\natural \to J_{\C^m} + J_{Y,z_\infty}\;\,{\rm in}\;\,C^\infty_{\rm loc}(\C^m \times Y).
\end{align}

\subsection{Case 1: the blowup is $\C^{m+n}$} In this case we assume that $\delta_t\to \infty$. We fix a $J_{Y,z_\infty}$-holomorphic chart $(\hat{y}^1, \ldots, \hat{y}^n)$ on $Y$ centered at $y_\infty$, with range the ball $B_2 \subset \C^n$. We may assume without loss that $\hat{y}_t \in B_1$ for all $t$, and $\hat{y}_t \to 0$ as $t \to \infty$. Then the map $(\tilde{z},\tilde{y}) \mapsto (\tilde{z}, \hat{y}_t +\delta_t^{-1}\tilde{y})$ with domain $\mathbb{C}^m \times B_{\delta_t}$ is a diffeomorphism onto its image $\C^m \times B_1(\hat{y}_t) \subset \C^m \times B_2$. Let us pull back $\hat{J}_t, \hat{g}_t, \hat{J}_t^\natural, \hat{\omega}_t^\natural, \hat{\eta}_t, \hat{x}_t, \hat{x}_t'$ under this map, obtaining new objects
$\tilde{J}_t, \tilde{g}_t, \tilde{J}_t^\natural, \tilde{\omega}_t^\natural, \tilde{\eta}_t, \tilde{x}_t, \tilde{x}_t'$. Then $\tilde{x}_t = 0$ and by \eqref{gblock}, \eqref{caz01}, \eqref{caz66},
\begin{align}
\label{caz69}
\tilde{J}_t, \tilde{J}_t^\natural  \to J_{\C^{m+n}}, \;\, \tilde{g}_t, \tilde{g}_t^\natural \to g_{\C^{m+n}}\;\,{\rm in}\;\,C^\infty_{\rm loc}(\C^{m+n}).
\end{align}
We write $\tilde\omega_t^\bullet = \tilde\omega_t^\natural + \tilde\eta_t$ for the pullback of our Ricci-flat $\hat{J}_t^\natural$-K\"ahler form, which is of course $\tilde{J}_t^\natural$-K\"ahler.
Thanks to \eqref{caz69}, \eqref{caz2}, \eqref{blow22}, \eqref{blow223} we have that
\begin{align}
C^{-1}g_{\C^{m+n}} \leq \tilde g_t^\bullet \leq Cg_{\C^{m+n}},\label{caz44}\\
\frac{|\tilde\omega_t^\bullet(0) -  \mathbf{P}^{\tilde{g}_t}_{\tilde{x}'_t 0}(\tilde\omega^\bullet_t(\tilde{x}'_t))|_{\tilde{g}_t(0)}}{d^{\tilde{g}_t}(0,\tilde{x}'_t)^\alpha}
=1 + o(1),\label{caz45}\\
d^{\tilde{g}_t}(0,\tilde{x}_t') \leq C.\label{noreasontolabelthis}
\end{align}
(Note that the $o(1)$ term in \eqref{caz45} comes from the contribution of $\tilde\omega_t^\natural$ to $\tilde\omega_t^\bullet$. This is indeed $o(1)$ thanks to \eqref{caz69} combined with Remark \ref{tiammazzo2}.) Using \eqref{caz44}, Proposition \ref{localest3} now yields uniform $C$\begin{small}$^\infty_{\rm loc}$\end{small}$(\C^{m+n})$ bounds for $\tilde\omega_t^\bullet$. Even the $C$\begin{small}$^1_{\rm loc}$\end{small}$(\C^{m+n})$ or $C$\begin{small}$^\beta_{\rm loc}$\end{small}$(\C^{m+n})$ estimate for any $\beta > \alpha$ applied to the numerator of \eqref{caz45} (combined with Lemma \ref{tiammazzo1} to compare H\"older norms) then tells us that
$$d^{\tilde{g}_t}(0,\tilde{x}_t') \geq C^{-1}.$$
Thus, without loss, $\tilde{x}_t'$ converges to $\tilde{x}_\infty' \neq 0$, and $\tilde\omega_t^\bullet$ converges in $C^\infty_{\rm loc}(\C^{m+n})$ to some Ricci-flat
K\"ahler form $\tilde\omega^\bullet_\infty$ on $\C^{m+n}$ which is uniformly equivalent to $\omega_{\C^{m+n}}$ by \eqref{caz44} but satisfies $\tilde\omega_\infty^\bullet(0) \neq \tilde\omega_\infty^\bullet(\tilde{x}_\infty')$ by \eqref{caz45} (using Remark \ref{tiammazzo2}). This is impossible because of the Liouville Theorem \ref{lio1}.

\subsection{Case 2: the blowup is $\C^m \times Y$} In this case we have that $\delta_t\to 1$, without loss of generality.
The argument here is closely analogous to Case 1 but slightly easier, so we will sketch it more briefly. The main simplification is that there is now no need to apply an additional diffeomorphism and pass from $\hat{J}_t$ to $\tilde{J}_t$, etc. Indeed, we now have that
$$ \hat{J}_t, \hat{J}_t^\natural \to J_{\C^m} + J_{Y,z_\infty}, \;\, \hat{g}_t, \hat{g}_t^\natural \to g_{\C^{m}} + g_{Y,z_\infty}\;\,{\rm in}\;\,C^\infty_{\rm loc}(\C^m \times Y).$$
Thanks to \eqref{caz2}, we can apply Proposition \ref{localest3} on small balls to obtain $C^\infty_{\rm loc}(\C^m \times Y)$ bounds for $\hat\omega_t^\bullet$, hence (after passing to a further subsequence) $C^\infty_{\rm loc}(\C^m \times Y)$ convergence to a Ricci-flat K\"ahler metric $\hat\omega_\infty^\bullet$ on $\C^m \times Y$ uniformly equivalent to $g_P = g_{\C^m} + g_{Y,z_\infty}$. Because of these $C^\infty_{\rm loc}(\C^m \times Y)$ bounds (and using Lemma \ref{tiammazzo1} to compare H\"older norms), the two points $\hat{x}_t = (0,\hat{y}_t)$ and $\hat{x}_t'$ have $\hat{g}_t$-distance uniformly bounded away from zero. Thus,
\begin{align*}
\frac{|\hat\omega_\infty^\bullet(\hat{x}_\infty) -  \mathbf{P}^{g_P}_{\hat{\gamma}_\infty}(\hat\omega_\infty^\bullet(\hat{x}'_\infty))|_{g_P(\hat{x}_\infty)}}{d^{g_P}(\hat{x}_\infty,\hat{x}')^\alpha} = 1,
\end{align*}
after passing to a subsequence so that $\hat{x}_t' \to \hat{x}_\infty' \neq \hat{x}_\infty$. (Here we again use Remark \ref{tiammazzo2}. Note that $\hat\gamma_\infty$ is \emph{some} minimal geodesic connecting $\hat{x}_\infty$ and $\hat{x}_\infty'$; there may very well be others but for the subsequent contradiction this is irrelevant.) This contradicts the Liouville Theorem \ref{lioHein}.

\subsection{Case 3: the blowup is $\C^m$} We finally assume that $\delta_t\to 0$. This is the hardest case. We begin by explaining the strategy. Fix any $R > 0$ such that $R > d^{\hat{g}_t}(\hat{x}_t, \hat{x}_t')$ for all $t$. Recall that $\hat{x}_t = (0, \hat{y}_t)$ and $\hat{y}_t \to \hat{y}_\infty \in Y$. Then \eqref{caz2}, \eqref{saves_our_asses2} imply that $\hat\eta_t$, and hence $\hat\omega_t^\bullet$, have uniformly bounded $C^\alpha$ norm with respect to any \emph{fixed} (i.e., non-collapsing) reference metric on $B^{\hat{g}_t}(\hat{x}_t, R)$, using Remark \ref{important}.
Thus, by passing to a diagonal sequence, we can assume that $\hat\omega_t^\bullet \to \hat\omega_\infty^\bullet$ in the $C^\beta_{\rm loc}(\C^m \times Y)$ topology for all $\beta \in (0,\alpha)$, where $\hat\omega_\infty^\bullet \in C^\alpha_{\rm loc}(\C^m \times Y)$ satisfies the following properties:
\begin{itemize}
\item[(1)] $\hat\omega_\infty^\bullet$ is a section of  ${\rm pr}_{\C^m}^*(\Lambda^{1,1}\C^m)$, uniformly equivalent to ${\rm pr}_{\C^m}^*(\omega_{\C^m})$.
\item[(2)] $\hat\omega_\infty^\bullet$ is $g_{Y,z_\infty}$-parallel in the fiber directions.
\item[(3)] $\hat\omega_\infty^\bullet$ is weakly closed.
\end{itemize}
Here (1), (2) follow by passing to the limit in \eqref{caz2}, \eqref{saves_our_asses2}, respectively (using Remark \ref{tiammazzo2}). For (2), notice in particular that if we fix any $\hat{z} \in B_R \subset \C^m$ and recall that $z=z_t+\lambda_t^{-1}\hat{z}$, then $g_{Y,z} \to g_{Y,z_\infty}$ as $t \to \infty$ because $z_t \to z_\infty$ and $|z - z_t|$ $<$ \begin{small}$\lambda_t^{-1}$\end{small}$R$.
(3) is clear since $\hat\omega_\infty^\bullet$ is a uniform limit of closed forms. Together, (1), (2), (3) imply that $\hat\omega_\infty^\bullet$ is the pullback under ${\rm pr}_{\C^m}$ of a weakly closed $(1,1)$-form of class $C^{\alpha}_{\rm loc}$ on $\C^m$, uniformly equivalent to $\omega_{\C^m}$. Abusing notation, we denote this form, which is in particular a K\"ahler current with a global potential of class $C^{2,\alpha}_{\rm loc}(\C^m)$, by $\hat\omega_\infty^\bullet$ as well.

As in Section \ref{claim3product}, it is not hard to see by adapting an  argument of \cite{To} that the complex Monge-Amp\`ere equation \eqref{mafiber} satisfied by $\omega_t^\bullet$ implies that $\hat\omega_\infty^\bullet$ has volume form $c \omega_{\C^m}^m$. Given this, it follows from a standard elliptic bootstrap that $\hat\omega_\infty^\bullet$ is smooth, and is therefore constant by Theorem \ref{lio1}. All of this will be proved in Claim 3 below (Section \ref{claim3nonproduct}). The main difficulty of this section consists in deducing from \eqref{mafiber}, \eqref{caz2}, \eqref{blow22}, \eqref{saves_our_asses2} that $\hat\omega_\infty^\bullet$ is {not} constant on $\C^m$, contradicting Claim 3.

For this we first need to rule out that $d^{\hat{g}_t}(\hat{x}_t, \hat{x}_t') \to 0$. This will be done in Claim 1 (Section \ref{claim1nonproduct}). One way to prove this would be to improve the $C^\alpha$ type bound \eqref{saves_our_asses2} for $\hat\eta_t$ to a $C^\beta$ type bound for some $\beta > \alpha$, by linearizing the Monge-Amp\`ere equation as in Section \ref{claim1product} and using a Schauder estimate. It seems possible but cumbersome to prove a version of Theorem \ref{moron} that accomplishes this. Instead we will argue by contradiction: if  $d^{\hat{g}_t}(\hat{x}_t, \hat{x}_t') \to 0$, then by mimicking the proof of Theorem \ref{t:schauder} we can produce a harmonic $(1,1)$-form contradicting Liouville's theorem on $\C^{m+n}$, $\C^m \times Y$, or $\C^m$.

Second, we need to show that the nontrivial difference quotient \eqref{blow22} passes to the limit. Currently we have $\hat\omega_t^\bullet \to \hat\omega_\infty^\bullet$ in \begin{small}$C^\beta_{\rm loc}$\end{small}$(\C^m \times Y)$ for all $\beta < \alpha$, but this convergence is too weak because \eqref{blow22} may be due to base-fiber or fiber-fiber components, which go to zero in \begin{small}$C^{\beta}_{\rm loc}$\end{small}$(\C^m \times Y)$. In Claim 2 (Section \ref{claim2nonproduct}) we will prove using the $i\partial\ov{\partial}$-exactness of $\hat\eta_t$ that \eqref{blow22} is entirely due to base-base components. In fact, the same argument is already needed to treat the $\C^m$ subcase of the blowup proof of Claim 1, but we defer this argument to Claim 2 because it is long and involved.

Claims 1, 2, 3 imply a contradiction, which completes Case 3, hence the proof of Theorem \ref{t:fornow}.

\subsubsection{}\label{claim1nonproduct} {\bf Claim 1.} There exists an $\epsilon>0$ such that for all $t$ it holds that
$$d^{\hat{g}_t}(\hat{x}_t, \hat{x}_t') \geq \epsilon.$$

\noindent \emph{Proof of Claim 1}. If this was false, then, since $\hat{x}_t \neq \hat{x}_t'$ for all $t$, there would exist a sequence $t_i \to \infty$ such that $d_{t_i} = d^{\hat{g}_{t_i}}(\hat{x}_{t_i}, \hat{x}_{t_i}') \to 0$. As usual, we will pretend that $d_t = d^{\hat{g}_t}(\hat{x}_t, \hat{x}_t') \to 0$.

Consider the diffeomorphisms
$$\Theta_t:  B_{d_t^{-1}\lambda_t} \times Y \to B_{\lambda_t} \times Y, \;\, (\hat{z},\hat{y}) = \Theta_t(\tilde{z},\tilde{y}) = (d_t \tilde{z}, \tilde{y}).$$
Pull back $\hat{J}_t, \hat{g}_t, \hat{g}_{\hat{z},t}, \hat{J}_t^\natural, \hat{\omega}_t^\natural, \hat{\eta}_t, \hat{\omega}_t^\bullet, \hat{x}_t, \hat{x}_t'$ under $\Theta_t$, multiply the metrics and $2$-forms by $d_t^{-2}$, and denote the resulting objects by the same letters with each hat replaced by a tilde. Then first of all
\begin{align}\tilde{g}_{\tilde{z},t} = g_{\C^m} +  \epsilon_t^2 g_{Y,z}\;\,(\epsilon_t = d_t^{-1}\delta_t,  z = d_t \lambda_t^{-1}\tilde{z}),\;\,
\tilde{\omega}_t^\natural = \omega_{\C^m} + \epsilon_t^2 \Theta_t^*\Psi_t^*\omega_F.
\end{align}
Secondly, thanks to \eqref{mafiber},
\begin{align}
(\tilde{\omega}_t^\natural + \tilde{\eta}_t)^{m+n} = c_t e^{\tilde{G}_t + \tilde{H}_t}(\tilde{\omega}_t^\natural)^{m+n},\label{zumteufel151}
\end{align}
where the constants $c_t$ have a positive limit and
\begin{align}\label{calmund}
\tilde{G}_t = \Theta_t^*\Psi_t^*G, \;\,\tilde{H}_t =  \log \frac{\omega_{\C^m}^m \wedge (\epsilon_t^2\Theta_t^*\Psi_t^*\omega_F)^n}{(\omega_{\C^m} + \epsilon_t^2 \Theta_t^*\Psi_t^*\omega_F)^{m+n}}.
\end{align}
Next, from \eqref{caz2}, \eqref{blow22}, \eqref{saves_our_asses2},
\begin{align}
|\tilde\eta_t|_{\tilde{g}_t} \leq C, \label{whoknows111} \\
\sup_{\tilde{x} = (\tilde{z},\tilde{y}) \in B^{\tilde{g}_t}(\tilde{x}_t,d_t^{-1})} \Biggl(\sup_{\tilde{x}' \in B^{\tilde{g}_{t}}(\tilde{x}_t,d_t^{-1})} \frac{|\tilde\eta_t(\tilde{x}) -  \mathbf{P}^{\tilde{g}_{\tilde{z},t}}_{\tilde{x}'\tilde{x}}(\tilde\eta_t(\tilde{x}'))|_{\tilde{g}_{t}(\tilde{x})}}{d^{\tilde{g}_{t}}(\tilde{x},\tilde{x}')^\alpha}\Biggr)
\leq Cd_t^\alpha,\label{whoknows112}\\
\frac{|\tilde\eta_t(\tilde{x}_t) -  \mathbf{P}^{\tilde{g}_t}_{\tilde{x}'_t\tilde{x}_t}(\tilde\eta_t(\tilde{x}'_t))|_{\tilde{g}_t(\tilde{x}_t)}}{d^{\tilde{g}_t}(\tilde{x}_t,\tilde{x}'_t)^\alpha}
=d_t^\alpha,\label{zumteufel161}\\
d^{\tilde{g}_t}(\tilde{x}_t,\tilde{x}_t') = 1.\label{zumteufel162}
\end{align}
Roughly speaking, \eqref{whoknows111}, \eqref{whoknows112} say that $\tilde{\eta}_t$ is asymptotic to a parallel form as $t \to \infty$. The remainder of this section is concerned with proving that after subtracting this parallel form and dividing by $d_t^\alpha$, we can extract a limiting $(1,1)$-form on $\C^{m+n}$, $\C^m \times Y$, or $\C^m$ that is harmonic thanks to \eqref{zumteufel151} and contradicts Liouville's theorem thanks to \eqref{zumteufel161}, \eqref{zumteufel162}.

The first step is to subtract the parallel part of $\tilde\eta_t$ in the $\C^m$-directions (this is already enough when the blowup is $\C^m \times Y$ or $\C^m$, but a further subtraction will be required when the blowup is $\C^{m+n}$). More precisely, decompose $\tilde{\eta}_t = \tilde{\eta}_t^\sharp + \tilde{\eta}_t'$,
where $\tilde{\eta}_t^\sharp$ is the unique $\tilde{g}_t$-parallel $(1,1)$-form pulled back from $\C^m$ such that $\tilde\eta_t^\sharp(\tilde{x}_t)$ is the $\tilde{g}_t(\tilde{x}_t)$-orthogonal projection of $\tilde\eta_t(\tilde{x}_t)$ onto ${\rm pr}_{\C^m}^*(\Lambda^{1,1}\C^m)|_{\tilde{x}_t}$. Notice that $\tilde{\eta}_t^\sharp$ is trivially $\ddbar$-exact (on $\C^m$, hence on $\C^m \times Y$), and is parallel with respect to all the metrics $\ti{g}_{\ti{z},t}$. \medskip\

\noindent \emph{Subclaim 1.1}. There exists a $C$ such that for all $t$,
\begin{align}|d_t^{-\alpha}\tilde{\eta}_t'(\tilde{x}_t)|_{\tilde{g}_t(\tilde{x}_t)} \leq C\epsilon_t^\alpha.\label{kotz15}\end{align}

\noindent \emph{Proof of Subclaim 1.1}. Assume the statement is false, i.e., without loss of generality, that $$\sigma_t = \epsilon_t^{-\alpha}|d_t^{-\alpha}\tilde{\eta}_t'(\tilde{x}_t)|_{\tilde{g}_t(\tilde{x}_t)} \to \infty.$$
Consider the diffeomorphism
$$\Pi_t: B_{\delta_t^{-1}\lambda_t} \times Y \to B_{d_t^{-1}\lambda_t} \times Y, \;\,(\tilde{z},\tilde{y}) = \Pi_t(\check{z},\check{y}) = (\epsilon_t\check{z}, \check{y}).$$
Define new reference product metrics
$$\check{g}_{\check{z},t} = \epsilon_t^{-2}\Pi_t^*\tilde{g}_{\tilde{z},t} = g_{\C^m} + g_{Y,z} \;\,(\tilde{z} = \epsilon_t \check{z}), \;\, \check{g}_t = \epsilon_t^{-2}\Pi_t^*\tilde{g}_{t} = g_{\C^m} + g_{Y,z_t},$$
as well as a new $2$-form
$$\check{\eta}_t' =  \sigma_t^{-1}\epsilon_t^{-2-\alpha}\Pi_t^*(d_t^{-\alpha}\tilde{\eta}_t').$$
Then by definition and by \eqref{whoknows112}, using the fact that $\tilde{\eta}_t^\sharp$ is $\tilde{g}_{\tilde{z},t}$-parallel for all $\tilde{z}$,
\begin{align}
|\check{\eta}_t'(\check{x}_t)|_{\check{g}_t(\check{x}_t)} = 1,\label{kotz1}\\
\sup_{\check{x} = (\check{z},\check{y}) \in B^{\check{g}_t}(\check{x}_t,\delta_t^{-1})} \Biggl(\sup_{\check{x}' \in B^{\check{g}_{t}}(\check{x}_t,\delta_t^{-1})} \frac{|\check\eta_t'(\check{x}) -  \mathbf{P}^{\check{g}_{\check{z},t}}_{\check{x}'\check{x}}(\check\eta_t'(\check{x}'))|_{\check{g}_{t}(\check{x})}}{d^{\check{g}_{t}}(\check{x},\check{x}')^\alpha}\Biggr)
\leq C\sigma_t^{-1},\label{kotz2}
\end{align}
where $\check{x}_t = \Pi_t^{-1}(\tilde{x}_t) = (0, \check{y}_t)$.  Now \eqref{kotz1}, \eqref{kotz2} first of all imply that
\begin{align}
\label{kotz3}
|\check\eta'_t(\check{x}')|_{\check{g}_t(\check{x}')} \leq 1 + C\sigma_t^{-1}d^{\check{g}_t}(\check{x}_t, \check{x}')^\alpha
\end{align}
for all $\check{x}' \in B^{\check{g}_t}(\check{x}_t, \delta_t^{-1})$, i.e., $\check{\eta}'_t$ is locally uniformly bounded on $\C^m \times Y$. Moreover, since the metrics $\check{g}_{\check{z},t}$ are non-collapsing and hence uniformly smoothly equivalent to each other, it follows from \eqref{kotz2}, \eqref{kotz3} that $\check{\eta}'_t$ is locally uniformly bounded in $C^\alpha$ on $\C^m \times Y$ (using also Lemma \ref{tiammazzo1}). Thus, by Ascoli-Arzel\`a, up to passing to a subsequence, $\check\eta'_t \to \check\eta'_\infty$ in $C^\beta_{\rm loc}(\C^m \times Y)$ for all $\beta < \alpha$. By \eqref{kotz1}, $\check\eta'_\infty \neq 0$. By \eqref{kotz2} and Remark \ref{tiammazzo2}, $\check\eta'_\infty$ is parallel with respect to $\check{g}_\infty = g_{\C^m} + g_{Y,z_\infty}$, hence in particular smooth. Moreover, $\check\eta_\infty'$ is $i\partial\ov{\partial}$-exact with respect to $J_{\C^m} + J_{Y,z_\infty}$ by the same arguments as in Proposition \ref{p:closure} (some of the $\ddbar$-operators must now be understood with respect to \begin{small}$\check{J}^\natural_t$\end{small} $=$ $\Pi_t^*\Theta_t^*\Psi_t^*J^\natural$, but this does not affect the arguments because \begin{small}$\check{J}_t^\natural$\end{small} $\to$ $J_{\C^m} + J_{Y,z_\infty}$ locally smoothly). In particular, $\check\eta_\infty'|_{\{\check{z}\}\times Y}$ is harmonic and exact for all $\check{z} \in \C^m$, hence zero, so that any global $i\partial\ov{\partial}$-potential of $\check\eta_\infty'$ must be pulled back from the base. Thus, $\check\eta_\infty'$ is a nonzero parallel $(1,1)$-form pulled back from $\C^m$, contradicting the fact that $(\check\eta_\infty')(\check{x}_\infty)$ is orthogonal to the values of all such forms at $\check{x}_\infty$ by construction.
\hfill $\Box$\medskip\

The role of \eqref{kotz15} together with \eqref{whoknows112}, \eqref{zumteufel161}, \eqref{zumteufel162} is to pass $d_t^{-\alpha}\tilde\eta_t'$ to a limiting $i\partial\ov{\partial}$-exact $(1,1)$-form on $\C^{m+n}$, $\C^m \times Y$, or $\C^m$ that is $O(r^\alpha)$ at infinity and not parallel. (In the $\C^{m+n}$ case, \eqref{kotz15} is not sufficient for this and further subtractions are required.) On the other hand, we will deduce from \eqref{zumteufel151} that this limit form has constant trace, contradicting Liouville's theorem.

Before entering into the three cases of the blowup argument, we first prove some important partial estimates that will help us establish the constant trace property in all cases.

Fix any $R > 0$. It follows from \eqref{kotz15} and \eqref{whoknows112} that
\begin{align}\label{caz699}
|\tilde\eta_t'|_{\tilde{g}_t} \leq C \delta_t^\alpha + C d_t^\alpha R^\alpha\;\,{\rm on}\;\,B^{\tilde{g}_t}(\tilde{x}_t,R).
\end{align}
Note that, crucially, the right-hand side goes to zero as $t\to\infty$ for $R$ fixed.
Define $\tilde\omega_t^\sharp = \tilde\omega_t^\natural + \tilde\eta_t^\sharp$. Since $\tilde\omega_t^\sharp = \tilde\omega_t^\bullet - \tilde\eta_t'$, we can see using  \eqref{caz2b}, \eqref{caz699}, and the fact that
$$|\tilde{J}_t^\natural|_{\tilde{g}_t} \leq C|\tilde{J}_t^\natural|_{\tilde{g}_t^\natural} \leq C,$$
 that there exists a $t_R$ such that $\tilde\omega_t^\sharp$ is a K\"ahler form on $B^{\tilde{g}_t}(\tilde{x}_t, R)$ for all $t \geq t_R$, with associated metric uniformly equivalent to $\tilde{g}_t$. This allows us to expand the Monge-Amp\`ere equation \eqref{zumteufel151} as
\begin{align}\label{satanas}
\tr{\tilde\omega_t^\sharp}{\tilde\eta_t'} + \sum_{i=2}^{m+n}\binom{m+n}{i}\frac{(\tilde\eta_t')^i \wedge (\tilde\omega_t^\sharp)^{m+n-i}}{(\tilde\omega_t^\sharp)^{m+n}}  = c_t e^{\tilde{G}_t + \tilde{H}_t} \frac{(\tilde\omega_t^\natural)^{m+n}}{(\tilde\omega_t^\sharp)^{m+n}} - 1.
\end{align}
Write $c_t e^{\tilde{K}_t} - 1$ for the right-hand side of \eqref{satanas}. \medskip\

\noindent \emph{Subclaim 1.2}. There is a $C$, and for all $R$ there is a $t_R$, such that for all $t \geq t_R$ and $\tilde{x}' \in B^{\tilde{g}_t}(\tilde{x}_t,R)$,
\begin{align}
\forall i \geq 2: d_t^{-\alpha}|(\tilde\eta_t')^i(\tilde{x}_t) - \mathbf{P}^{\tilde{g}_t}_{\tilde{x}'\tilde{x}_t}((\tilde\eta_t')^i(\tilde{x}'))|_{\tilde{g}_t(\tilde{x}_t)} \leq C(\delta_t^\alpha+ d_t^\alpha R^\alpha)R^\alpha,\label{caca1}\\
d_t^{-\alpha}|\tilde\omega_t^\sharp(\tilde{x}_t) - \mathbf{P}^{\tilde{g}_t}_{\tilde{x}'\tilde{x}_t}(\tilde\omega_t^\sharp(\tilde{x}'))|_{\tilde{g}_t(\tilde{x}_t)} \leq C d_t^{1-\alpha}\lambda_t^{-1}R, \label{caca2}\\
d_t^{-\alpha}|e^{\tilde{K}_t(\tilde{x}_t)} -e^{\tilde{K}_t(\tilde{x}')}| \leq C d_t^{1-\alpha}\lambda_t^{-1}R. \label{caca3}
\end{align}

\noindent \emph{Proof of Subclaim 1.2}. To prove \eqref{caca1} we take out a factor of $(\tilde\eta_t')(\tilde{x}_t) - \mathbf{P}^{\tilde{g}_t}_{\tilde{x}'\tilde{x}_t}((\tilde\eta_t')(\tilde{x}'))$ and estimate it using \eqref{whoknows112}. For the remaining factor we only need to estimate the length of $\tilde\eta_t'$ using \eqref{caz699}. (As a side remark, note that \eqref{caca1}, which effectively allows us to drop the nonlinearities of the Monge-Amp\`ere equation, is the only place in the proof of Claim 1 where we use our assumption that $d_t \to 0$.)

To prove \eqref{caca2} we pull $\tilde{g}_t, \tilde{\omega}_t^\natural, \tilde{\omega}_t^\sharp, \tilde{J}_t, \tilde{J}_t^\natural, \tilde{x}_t, \tilde{x}'$ back by the diffeomorphism $(\tilde{z},\tilde{y}) = (\epsilon_t\check{z}, \check{y})$ as in the proof of Subclaim 1.1 above. In addition we multiply all metrics and $2$-forms by $\epsilon_t^{-2}$. If we denote the resulting objects by a check instead of a tilde, the advantage of this construction is that
$$|\tilde\omega_t^\sharp(\tilde{x}_t) - \mathbf{P}^{\tilde{g}_t}_{\tilde{x}'\tilde{x}_t}(\tilde\omega_t^\sharp(\tilde{x}'))|_{\tilde{g}_t(\tilde{x}_t)} = |\check\omega_t^\sharp(\check{x}_t) - \mathbf{P}^{\check{g}_t}_{\check{x}'\check{x}_t}(\check\omega_t^\sharp(\check{x}'))|_{\check{g}_t(\check{x}_t)}=|\check\omega_t^\natural(\check{x}_t) - \mathbf{P}^{\check{g}_t}_{\check{x}'\check{x}_t}(\check\omega_t^\natural(\check{x}'))|_{\check{g}_t(\check{x}_t)},$$
since $\check{\eta}^\sharp_t$ is $\check{g}_t$-parallel,
but that $\check{g}_t = g_{\C^m} + g_{Y,z_t}$ is now essentially fixed. From the definition of $\check\omega_t^\natural$, and using \eqref{Jblock}, \eqref{gblock} and the covering argument indicated after \eqref{gblock}, we can rewrite this as
\begin{align}
|[{U}_t(0,\check{y}_t) - {U}_t(\epsilon_t d_t \lambda_t^{-1}\check{z}',\check{y}')] &\circledast (\epsilon_t d_t \lambda_t^{-1})d\check{z} \circledast (\epsilon_t d_t \lambda_t^{-1})d\check{z}\label{fucker}\\
+\, [V_t(0,\check{y}_t) - V_t'(\epsilon_t d_t \lambda_t^{-1}\check{z}',\check{y}')] &\circledast (\epsilon_t d_t \lambda_t^{-1})d\check{z} \circledast d\check{y}\label{fucker2}\\
-\, W_t'(\epsilon_t d_t \lambda_t^{-1}\check{z}',\check{y}') &\circledast d\check{y} \circledast d\check{y}|_{\check{g}_t(\check{x}_t)}.\label{fucker3}
\end{align}
Here ${U}_t, {V}_t, V_t', W_t'$ are smooth matrix-valued functions depending precompactly on $t$ in all $C^k$ norms (the primes indicate the effect of $g_{Y,z_t}$-parallel transport in the fiber directions), with $W_t'(0,\check{y}) = 0$ for all $\check{y}\in Y$.  Note that there is no $W_t$ term in \eqref{fucker3} because we used \eqref{gblock} where $E(z,z,y)=0$.

Recall that $(\tilde{z}',\tilde{y}') = (\epsilon_t\check{z}',\check{y}') \in B^{\tilde{g}_t}(\tilde{x}_t, R)$, where $R$ is fixed, so $|\check{z}'|\leq C\ve_t^{-1}R$. To estimate the terms in \eqref{fucker}--\eqref{fucker3}, first note that $\ve_td_t\lambda_t^{-1}=\delta_t\lambda_t^{-1}=e^{-t/2}$. Taylor expansion in the $z$-variables allows us to bound \eqref{fucker3} by $C\ve_t^{-1}R e^{-t/2}=Cd_t\lambda_t^{-1}R$. For \eqref{fucker2}, notice that $V_t(0,\check{y}_t)=V'_t(0,\check{y}_t)$, so (since we are working in a fixed small chart in $Y$ where $g_{Y,z_t}$-parallel transport enjoys smooth dependence) we can again use Taylor expansion, as follows. In the $z$-variables the contribution to \eqref{fucker2} (including the form part) is bounded by $Cd_t\lambda_t^{-1}Re^{-t/2}$, while in the $y$-variables it is bounded by $|\check{y}_t-\check{y}'|e^{-t/2}$. The latter expression is in turn bounded by $Ce^{-t/2}$ because $\check{y}_t, \check{y}'$ lie in a fixed chart, and by $C\ve_t^{-1}Re^{-t/2}$ because $\check{y}'=\tilde{y}'$ lies in a metric ball of radius $R$ centered at $\check{y}_t=\ti{y}_t$ with respect to $\tilde{g}_t = g_{\C^m} + \ve_t^2g_{Y,z_t}$.
The line \eqref{fucker} can be estimated in exactly the same way and even has an additional helpful factor of $e^{-t/2}$.
Thus, we may estimate \eqref{fucker}--\eqref{fucker3} by
\begin{align*}
C(d_t\lambda_t^{-1}R + \min\{1,\epsilon_t^{-1}R\} )e^{-\frac{t}{2}} + Cd_t \lambda_t^{-1}R.
\end{align*}
The estimate \eqref{caca2} now follows by observing that $e^{-t/2} \leq d_t \lambda_t^{-1}R$ if $\epsilon_t^{-1}R \geq 1$.

Finally, there are three contributions to \eqref{caca3}: from $\tilde{G}_t$, from $\tilde{H}_t$, and from $(\tilde\omega_t^\natural)^{m+n}/(\tilde\omega_t^\sharp)^{m+n}$. The latter is easily controlled using \eqref{caca2}, the fact that $\tilde{g}_t^\sharp$ is uniformly comparable to $\tilde{g}_t$, and the fact that $\tilde\eta_t^\sharp$ is $\tilde{g}_t$-parallel and uniformly bounded thanks to \eqref{whoknows111}, \eqref{caz699}. Next, notice that
$$|\tilde{G}_t(\tilde{x}_t) - \tilde{G}_t(\tilde{x}')| = |G(0) - G(d_t\lambda_t^{-1}\tilde{z}')| \leq C d_t \lambda_t^{-1}R$$ by \eqref{calmund} and because $G$ is pulled back from the base. The $\tilde{H}_t$ contribution is more complicated but can be treated using the same method as in the proof of \eqref{caca2} above. Indeed, writing $(\ti{z},\ti{y})=\Pi_t(\check{z},\check{y}) = (\ve_t\check{z},\check{y})$, multiplying all $2$-forms by $\ve_t^{-2}$, and denoting the new objects by a check, we have
$$e^{\check{H}_t}=\frac{\omega_{\C^m}^m\wedge(\Pi_t^*\Theta_t^*\Psi_t^*\omega_F)^n}{(\omega_{\C^m}+\Pi_t^*\Theta_t^*\Psi_t^*\omega_F)^{m+n}}=
\biggl(\binom{m+n}{m}+\sum_{j=0}^{m-1}\binom{m+n}{j}\frac{\omega_{\C^m}^j\wedge(\Pi_t^*\Theta_t^*\Psi_t^*\omega_F)^{m+n-j}}{\omega_{\C^m}^m\wedge(\Pi_t^*\Theta_t^*\Psi_t^*\omega_F)^n}\biggr)^{-1}.$$
Writing $\omega_F$  as a block matrix $(\begin{smallmatrix} A & B \\ C & D \end{smallmatrix})$ with smooth entries in the original coordinates $(z,y)$, we see that $\omega_{\C^m}^m\wedge(\Pi_t^*\Theta_t^*\Psi_t^*\omega_F)^n$ is equal to the pullback of $\det D$, while all the terms $\omega_{\C^m}^j\wedge(\Pi_t^*\Theta_t^*\Psi_t^*\omega_F)^{m+n-j}$ are multiplied by a factor of $(\epsilon_t d_t \lambda_t^{-1})^2 =e^{-t}$ thanks to our pullback maps. Thus,
$$e^{\check{H}_t}=\frac{m!n!}{(m+n)!}+e^{-t}\Pi_t^*\Theta_t^*\Psi_t^*U_t$$ for some smooth function $U_t$ depending precompactly on $t$ in all $C^k$ norms, and hence
$$|e^{\check{H}_t(\check{x}_t)}-e^{\check{H}_t(\check{x}')}| = |U_t(0,\check{y}_t)-{U}_t(\epsilon_t d_t \lambda_t^{-1}\check{z}',\check{y}')|e^{-t}.$$
The latter term can be bounded in analogy with our estimate of \eqref{fucker} above.
 \hfill $\Box$\medskip\

As a direct consequence of Subclaim 1.2 together with \eqref{satanas}, we see that there exists a $C$, and for all $R$ there exists a $t_R$, such that for all $t \geq t_R$ and $\tilde{x}' \in B^{\tilde{g}_t}(\tilde{x}_t,R)$,
\begin{equation}\label{killmenow}
d_t^{-\alpha}(\tr{\tilde\omega_t^\sharp}{\tilde\eta_t'}(\tilde{x}_t) - \tr{\tilde\omega_t^\sharp}{\tilde\eta_t'}(\tilde{x}')) \leq C((\delta_t^\alpha + d_t^\alpha R^\alpha)R^\alpha + d_t^{1-\alpha}\lambda_t^{-1}R).
\end{equation}
We are now in position to derive a contradiction on each of the three possible blowup spaces. Subcase B shows the key idea without any technical complications, while A and C are more involved.\medskip\

\noindent {\bf Subcase A: $\epsilon_t \to \infty$. The limit of $(B_{d_t^{-1}\lambda_t} \times Y,\tilde{g}_t,\tilde{x}_t)$ is $\C^{m+n}$.} \medskip\

\noindent \emph{Deriving a contradiction in Subcase A}. The complication of this case compared to Subcase B is that \eqref{kotz15} does not provide a uniform bound on $|d_t^{-\alpha}\tilde\eta_t'(\tilde{x}_t)|_{\tilde{g}_t(\tilde{x}_t)}$.
To fix this, recall that $\tilde{g}_t = g_{\C^m} + \epsilon_t^2 g_{Y,z_t}$, where $g_{Y,z_t} \to g_{Y,z_\infty}$ smoothly.
Let $\mathbf{x}^{2m+1}, \ldots, \mathbf{x}^{2m+2n}$ be normal coordinates for $g_{Y,z_t}$ centered at $y_t$. Viewed as a map from $Y$ to $\R^{2n}$ these depend on $t$, but we prefer to instead pull back our setup to $\mathbb{R}^{2n}$ under the inverse map. In this sense we may then assume without loss that
\begin{align*}
\left|\frac{\partial^j}{\partial\mathbf{x}^j}(g_{Y,z_t}(\mathbf{x})_{ab}-\delta_{ab})\right| \leq \frac{1}{100}|\mathbf{x}|^{2-j}\,\;{\rm for}\;\, |\mathbf{x}| \leq 2\;\,{\rm and}\;\,j=0,1.
\end{align*}
This is possible thanks to the compactness of $Y$ and compact dependence of $g_{Y,z_t}$ on $t$, after rescaling all metrics $g_{Y,z_t}$ by the same large constant if necessary. Define $\tilde{\mathbf{x}}^j = \epsilon_t \mathbf{x}^j$, so that $\tilde{\mathbf{x}}^{2m+1}, \ldots, \tilde{\mathbf{x}}^{2m+2n}$ are normal coordinates for $\epsilon_t^2 g_{Y,z_t}$ centered at $y_t$.
Formally also write $\tilde{\mathbf{x}}^1, \ldots, \tilde{\mathbf{x}}^{2m}$ for the standard real coordinates on $\C^m$. Then $\tilde{\mathbf{x}}^1, \ldots, \tilde{\mathbf{x}}^{2m+2n}$ are normal coordinates for $\tilde{g}_{t}$ centered at $\tilde{x}_t$ with
\begin{align}\label{e:normal_coords23}
\left|\frac{\partial^j}{\partial\tilde{\mathbf{x}}^j}(\tilde{g}_{t}(\tilde{\mathbf{x}})_{ab}-\delta_{ab})\right| \leq \frac{\epsilon_t^{-2}}{100}|\tilde{\mathbf{x}}|^{2-j}\,\;{\rm for}\;\, |\tilde{\mathbf{x}}| \leq 2\epsilon_t\;\,{\rm and}\;\,j=0,1.
\end{align}
Then let $(\tilde{\eta}_t')^{\sharp} \in \mathcal{A}^2(B^{\tilde{g}_t}(\tilde{x}_t,\epsilon_t))$ denote the $0$-th order Taylor polynomial of $\tilde{\eta}'_t$ at $\tilde{x}_t$ with respect to the coordinate system $\tilde{\mathbf{x}}^1, \ldots, \tilde{\mathbf{x}}^{2m+2n}$, and define $\tilde{\eta}_t'' = \tilde{\eta}_t' - (\tilde{\eta}_t')^{\sharp} \in \mathcal{A}^2(B^{\tilde{g}_t}(\tilde{x}_t,\epsilon_t))$.\medskip\

\noindent \emph{Subclaim 1.3}. There is a $C$, and for all $R$ there is a $t_R$, such that for all $t \geq t_R$,
\begin{align}
d_t^{-\alpha}\tilde\eta_t''(\tilde{x}_t) = 0,\label{???}\\
\sup_{\tilde{x} = (\tilde{z},\tilde{y}) \in B^{\tilde{g}_t}(\tilde{x}_t,R)} \Biggl(\sup_{\tilde{x}' \in B^{\tilde{g}_{t}}(\tilde{x}_t,R)} \frac{d_t^{-\alpha}|\tilde\eta_t''(\tilde{x}) -  \mathbf{P}^{\tilde{g}_{\tilde{z},t}}_{\tilde{x}'\tilde{x}}(\tilde\eta_t''(\tilde{x}'))|_{\tilde{g}_{t}(\tilde{x})}}{d^{\tilde{g}_{t}}(\tilde{x},\tilde{x}')^\alpha}\Biggr)
\leq C,\label{e:fucking_bounds_1a_prime_prime23}\\
|d_t^{-\alpha}|\tilde{\eta}_t''(\tilde{x}_t) - \mathbf{P}_{\tilde{x}_t'\tilde{x}_t}^{\tilde{g}_t}[\tilde{\eta}_t''(\tilde{x}_t')] |_{\tilde{g}_t(\tilde{x}_t)} - 1 | \leq C\epsilon_t^{\alpha-1},\label{e:fucking_bounds_2_prime_prime23}\\
d^{\tilde{g}_t}(\tilde{x}_t,\tilde{x}_t') = 1,\label{zumteufel162a}\\
d_t^{-\alpha}(\tr{\tilde{\omega}^\sharp_t}{\tilde{\eta}_t''}(\tilde{x}_t) - \tr{\tilde{\omega}^\sharp_t}{\tilde{\eta}_t''}(\tilde{x}') ) \leq C((\delta_t^\alpha + d_t^\alpha R^\alpha)R^\alpha + d_t^{1-\alpha}\lambda_t^{-1}R +
\epsilon_t^{\alpha-1}R),\label{e:fucking_bounds_1b_prime_prime23}
\end{align}
where in the last equation $\tilde{x}' \in B^{\tilde{g}_t}(\tilde{x}_t,R)$ is arbitrary.\medskip\

\noindent \emph{Proof of Subclaim 1.3}. \eqref{???} is clear by definition and \eqref{zumteufel162a} is copied from \eqref{zumteufel162}. We will now derive \eqref{e:fucking_bounds_1a_prime_prime23}, \eqref{e:fucking_bounds_2_prime_prime23}, \eqref{e:fucking_bounds_1b_prime_prime23} from \eqref{whoknows112}, \eqref{zumteufel161}, \eqref{killmenow} by using \eqref{kotz15}, \eqref{caca2}, \eqref{e:normal_coords23}.

Equations \eqref{whoknows112} and \eqref{killmenow} give control over $B^{\tilde{g}_t}(\tilde{x}_t,R)$ provided that $t \geq t_R$ is large enough. Thus, it makes sense to apply \eqref{whoknows112}, \eqref{zumteufel161}, \eqref{killmenow} to prove \eqref{e:fucking_bounds_1a_prime_prime23}, \eqref{e:fucking_bounds_2_prime_prime23}, \eqref{e:fucking_bounds_1b_prime_prime23}. Since we are also going to use \eqref{e:normal_coords23}, we moreover need to choose $t_R$ so large that $t \geq t_R$ implies $\epsilon_t \geq \max\{2,2R\}$.

The following is the key point: if $t \geq t_R$, then for all $\tilde{x},\tilde{x}' \in B^{\tilde{g}_t}(\tilde{x}_t,R)$, $\tilde{x} = (\tilde{z},\tilde{y})$, it holds that
\begin{align}\label{satanas42}
d_t^{-\alpha}|(\tilde\eta_t')^\sharp(\tilde{x}) -  \mathbf{P}^{\tilde{g}_{\tilde{z},t}}_{\tilde{x}'\tilde{x}}((\tilde\eta'_t)^\sharp(\tilde{x}'))|_{\tilde{g}_{t}(\tilde{x})} \leq C\epsilon_t^{\alpha-1}d^{\tilde{g}_t}(\tilde{x}, \tilde{x}').
\end{align}
To prove this, note that $g_{Y,z}$ will be at bounded distance to $g_{Y,z_t}$ in $C^1(Y)$ if $t \geq t_R$. Thus,
\begin{align}\label{!!!}
|\nabla^{\tilde{g}_t}\tilde{g}_{\tilde{z},t}|_{\tilde{g}_t} \leq C\epsilon_t^{-1} \;\,{\rm on}\;\,B^{\tilde{g}_t}(\tilde{x}_t,2R).
\end{align}
Moreover, once $t \geq t_R$ is sufficiently large, there will be a unique $\tilde{g}_{\tilde{z},t}$-minimal geodesic joining $\tilde{x}$ to $\tilde{x}'$, and this geodesic will be contained in $B^{\tilde{g}_t}(\tilde{x}_t,2R)$. Bound the left-hand side of \eqref{satanas42} by integrating the $\tilde{g}_{\tilde{z},t}$-covariant derivative of $d_t^{-\alpha}(\tilde\eta_t')^\sharp$ along this geodesic. We have $\partial_{\tilde{\mathbf{x}}}(\tilde\eta_t')^\sharp = 0$, the Christoffel
symbols satisfy $|\Gamma^{\tilde{g}_{\tilde{z},t}}|\leq C\epsilon_t^{-1}$ by \eqref{e:normal_coords23}, \eqref{!!!}, and $|d_t^{-\alpha}\tilde{\eta}_t'(\tilde{x}_t)|_{\tilde{g}_t(\tilde{x}_t)}  \leq C\epsilon_t^\alpha$ by \eqref{kotz15}. This implies \eqref{satanas42}.

\eqref{e:fucking_bounds_1a_prime_prime23} is clear from \eqref{whoknows112} and \eqref{satanas42}, and
\eqref{e:fucking_bounds_2_prime_prime23} is clear from  \eqref{zumteufel161} and \eqref{satanas42}. To prove \eqref{e:fucking_bounds_1b_prime_prime23}, we combine \eqref{killmenow} and \eqref{satanas42}. Specifically, thanks to \eqref{killmenow}, to prove \eqref{e:fucking_bounds_1b_prime_prime23} it suffices to prove that
\begin{equation}\label{teguida}
d_t^{-\alpha}(\tr{\tilde{\omega}^\sharp_t}{(\tilde\eta_t')^\sharp}(\tilde{x}_t) - \tr{\tilde{\omega}^\sharp_t}{(\tilde\eta_t')^\sharp}(\tilde{x}') ) \leq C(d_t^{1-\alpha}\lambda_t^{-1}R+\epsilon_t^{\alpha-1}R).
\end{equation}
Using the fact that $\ti{g}^\sharp_t$ is uniformly equivalent to $\ti{g}_t$, we can bound the left-hand side of \eqref{teguida} by
$$d_t^{-\alpha}|(\tilde\eta_t')^\sharp(\tilde{x}_t) -  \mathbf{P}^{\tilde{g}_t}_{\tilde{x}'\tilde{x}_t}((\tilde\eta'_t)^\sharp(\tilde{x}'))|_{\tilde{g}_{t}(\tilde{x_t})}+|(\tilde\eta'_t)^\sharp(\ti{x}_t)|_{\ti{g}_t(\ti{x}_t)}d_t^{-\alpha}|\tilde\omega_t^\sharp(\tilde{x}_t) - \mathbf{P}^{\tilde{g}_t}_{\tilde{x}'\tilde{x}_t}(\tilde\omega_t^\sharp(\tilde{x}'))|_{\tilde{g}_t(\tilde{x}_t)}.$$
Using \eqref{satanas42}, we can further bound the first term by $C\ve_t^{\alpha-1}R$. On the other hand, by \eqref{kotz15},
$$|(\tilde\eta'_t)^\sharp(\ti{x}_t)|_{\ti{g}_t(\ti{x}_t)}=|\tilde\eta'_t(\ti{x}_t)|_{\ti{g}_t(\ti{x}_t)}\leq C\delta_t^\alpha\leq C,$$
and this and \eqref{caca2} allow us to control the second term by $Cd_t^{1-\alpha}\lambda_t^{-1}R$, as desired.\hfill $\Box$\medskip\

Thanks to Subclaim 1.3 we are able to say that $d_t^{-\alpha}\tilde\eta_t''$ converges to some $2$-form $\tilde\eta_\infty'' \in C^\alpha_{\rm loc}(\C^{m+n})$ in the topology of $C$\begin{small}$^\beta_{\rm loc}$\end{small}$(\C^{m+n})$ for every $\beta<\alpha$ such that
$\tilde\eta_\infty''$ is $O(r^\alpha)$ at infinity and not parallel with respect to $g_{\C^{m+n}}$ (using Remark \ref{tiammazzo2}, but also Lemma \ref{tiammazzo1} to compare the $C^\alpha_{\rm loc}$ topologies with respect to a fixed and a mildly varying metric). Also, $\tilde\eta_\infty''$ is clearly $(1,1)$ with respect to $J_{\C^{m+n}}$.

On the other hand, we may assume that $\tilde{g}_t^\sharp \to g_{\C^{m+n}}^\sharp$ locally smoothly, where $g_{\C^{m+n}}^\sharp$ is a constant K\"ahler metric on $\C^{m+n}$ (possibly different from $g_{\C^{m+n}}$ but this is not relevant). Then it follows from \eqref{e:fucking_bounds_1b_prime_prime23} that $\tilde\eta_\infty''$ has constant trace with respect to $\omega_{\C^{m+n}}^\sharp$.

Notice that $\tilde\eta_\infty''$ is weakly closed (as a locally uniform limit of smooth closed forms). Thus, $\tilde\eta_\infty''$ is a closed $(1,1)$-current of class $C^\alpha_{\rm loc}(\C^{m+n})$, hence has a global potential of class $C^{2,\alpha}_{\rm loc}(\C^{m+n})$. This and the constant trace property imply that $\tilde\eta_\infty'' = i\partial\ov{\partial}\vp$ for some smooth function $\vp$ on $\C^{m+n}$ with
$$\Delta^{g_{\C^{m+n}}^\sharp}\vp = const.$$
Thus, $\vp=\ell+h$, where $\ell$ is a real polynomial of degree $\leq 2$ on $\C^{m+n}$ and $h$ is harmonic on $\C^{m+n}$ with $|\ddbar h|=O(r^\alpha)$. Liouville's theorem now tells us that the coefficient functions of $i\partial\ov{\partial} h$ are constant, which contradicts the fact that $\tilde\eta_\infty''$ is not parallel.\hfill $\Box$\medskip\

\noindent {\bf Subcase B:  $\epsilon_{t} \to 1$ (without loss). The limit of $(B_{d_t^{-1}\lambda_t} \times Y,\tilde{g}_t,\tilde{x}_t)$ is $\C^m \times Y$.}\medskip\

\noindent \emph{Deriving a contradiction in Subcase B}. Thanks to \eqref{caz699} together with \eqref{whoknows112}, \eqref{zumteufel161}, \eqref{zumteufel162} and Remark \ref{tiammazzo2}  (using also Lemma \ref{tiammazzo1} in order to compare the mildly varying $C^\alpha_{\rm loc}$ topologies) we are able to say that $d_t^{-\alpha}\tilde\eta_t'$ converges to some $2$-form $\tilde\eta_\infty' \in C^\alpha_{\rm loc}(\C^m \times Y)$ in the topology of $C$\begin{small}$^\beta_{\rm loc}$\end{small}$(\C^m \times Y)$ for all $\beta < \alpha$ such that $\tilde\eta_\infty'$ is $O(r^\alpha)$ at infinity and not parallel with respect to $g_{\C^m} + g_{Y,z_\infty}$. It is also clear at this point that $\tilde\eta_\infty'$ has type $(1,1)$ with respect to $J_{\C^m} + J_{Y,z_\infty}$.

On the other hand, we may clearly assume that $\tilde{g}_t^\sharp \to g_{\C^m}^\sharp + g_{Y,z_\infty}$ locally smoothly, where $g_{\C^m}^\sharp$ is a constant K\"ahler metric on $\C^m$ (possibly different from $g_{\C^m}$ but this is irrelevant). Then it follows from \eqref{killmenow} that $\tilde\eta_\infty'$ has constant trace with respect to $\omega_{\C^m}^\sharp + \omega_{Y,z_\infty}$.

Finally, notice that $\tilde\eta_\infty'$ is weakly closed (as a locally uniform limit of smooth closed forms). Thus, $\tilde\eta_\infty'$ is a closed $(1,1)$-current of class $C^\alpha_{\rm loc}(\C^m \times Y)$, hence has local potentials of class $C^{2,\alpha}$. Together with the constant trace property, this implies that $\tilde\eta_\infty'$ is actually smooth. Now the same arguments as in Proposition \ref{p:closure} (see also the proof of Subclaim 1.1 above) give that $\tilde\eta_\infty'$ is globally $i\partial\ov{\partial}$-exact. Thus, $\tilde\eta_\infty'=\ddbar \vp$ for some smooth function $\vp$ on $\C^m\times Y$ with
$$\Delta^{g_{\C^m}^\sharp + g_{Y,z_\infty}}\vp = const.$$
Thus, $\vp=\ell+h$, where $\ell$ is a real polynomial of degree $\leq 2$ on $\C^m$ and $h$ is harmonic on $\C^m\times Y$ with $|\ddbar h|=O(r^\alpha)$. Now the end of the proof of Proposition \ref{check} can be used to prove that $\tilde\eta_\infty'=\ddbar p$ for some real polynomial of degree $\leq 2$ on $\mathbb{C}^m$, contradicting the fact that $\tilde\eta_\infty'$ is not parallel.\hfill $\Box$\medskip\

\noindent {\bf Subcase C: $\epsilon_t \to 0$. The limit of $(B_{d_t^{-1}\lambda_t} \times Y,\tilde{g}_t,\tilde{x}_t)$ is $\C^m$.}\medskip\

\noindent \emph{Deriving a contradiction in Subcase C}. It follows from \eqref{caz699} and \eqref{whoknows112} that $d_t^{-\alpha}\tilde\eta_t'$ converges to some $2$-form $\tilde\eta_\infty' \in C^\alpha_{\rm loc}(\C^m \times Y)$ in the $C^\beta_{\rm loc}(\C^m \times Y)$ sense for all $\beta < \alpha$, using Remark \ref{important}.
Moreover:
\begin{itemize}
\item[(1)] $\tilde\eta_\infty'$ is a section of  ${\rm pr}_{\C^m}^*(\Lambda^{1,1}\C^m)$.
\item[(2)] $\tilde\eta_\infty'$ is $g_{Y,z_\infty}$-parallel in the fiber directions.
\item[(3)] $\tilde\eta_\infty'$ is weakly closed.
\item[(4)] $\tilde\eta_\infty'$ is $O(r^\alpha)$ with respect to $g_{\C^m}$ at infinity.
\end{itemize}
Here (1) and (2), (4) follow by passing to the limit in \eqref{caz699} and \eqref{whoknows112}, respectively (cf. Remark \ref{tiammazzo2}). For (2), note that if we fix $\tilde{z}$ $\in$ \begin{small}$B_{d_t^{-1}}$\end{small}$\subset$ $\C^m$, then $g_{Y,z} \to g_{Y,z_\infty}$ as $t \to \infty$ because $z_t \to z_\infty$ and $|z - z_t|$ $<$ \begin{small}$\lambda_t^{-1}$\end{small}.
(3) is clear. Together, (1), (2), (3) imply that $\tilde\eta_\infty'$ is the pullback under ${\rm pr}_{\C^m}$ of a weakly closed $(1,1)$-form of class $C^{\alpha}_{\rm loc}$ on $\C^m$, growing at worst like $O(r^\alpha)$ by (4). Abusing notation, we denote this form, which is a closed $(1,1)$-current with a global potential of class $C^{2,\alpha}_{\rm loc}(\C^m)$, by $\tilde\eta_\infty'$ as well.

Unfortunately $C^{\beta}_{\rm loc}(\C^m \times Y)$ convergence is too weak to conclude from \eqref{zumteufel161}, \eqref{zumteufel162} that $\tilde\eta_\infty'$ is not constant, or to conclude from \eqref{killmenow} that $\tilde\eta_\infty'$ has constant trace. The issue with both of these points is that \eqref{zumteufel161} or \eqref{killmenow} may be due to base-fiber or fiber-fiber components of $d_t^{-\alpha}\tilde\eta_t'$, which go to zero in $C^\beta_{\rm loc}(\C^m \times Y)$. However, it turns out that the $i\partial\ov{\partial}$-exactness of $d_t^{-\alpha}\tilde\eta_t'$ can be used to enhance \eqref{caz699}, \eqref{whoknows112}, proving that the base-fiber and fiber-fiber components of $d_t^{-\alpha}\tilde\eta_t'$ actually go to zero with respect to the \emph{collapsing} reference metric $\tilde{g}_t$. A precise statement is given in Proposition \ref{bd}, which we defer to the next section because this is also the key to Claim 2 and its proof is long and involved. It is then clear from Proposition \ref{bd} and \eqref{zumteufel161}, \eqref{zumteufel162}, \eqref{killmenow} that $\tilde\eta_\infty'$ is nonconstant, of constant trace.

A contradiction to Liouville's theorem can now easily be derived as at the end of Subcase A.
\hfill $\Box$\medskip\

Claim 1 has been proved by deriving a contradiction in all three subcases A, B, and C, modulo the use of Proposition \ref{bd} below in Subcase C.

\subsubsection{}\label{claim2nonproduct}{\bf Claim 2.} There exist two points $\hat{z}, \hat{z}' \in \C^m$ such that $\hat\omega^\bullet_\infty(\hat{z}) \neq \hat\omega^\bullet_\infty(\hat{z}')$.\medskip\

\noindent \emph{Proof of Claim 2.} Recall that $\hat\omega_\infty^\bullet$ is the \begin{small}$C^\beta_{\rm loc}$\end{small}$(\C^m \times Y)$ limit of the $\hat{J}_t^\natural$-K\"ahler forms $\hat\omega_t^\bullet = \hat\omega_t^\natural + \hat\eta_t$, and is the pullback of a K\"ahler form with \begin{small}$C^{2,\alpha}_{\rm loc}$\end{small} potentials on $\C^m$. Moreover, $\hat\omega_t^\natural$ converges in \begin{small}$C^\infty_{\rm loc}$\end{small}$(\C^m \times Y)$ to the degenerate K\"ahler form $\omega_{\C^m}$ pulled back from $\C^m$. Thus, in order to prove Claim 2, thanks to \eqref{blow223}, Claim 1, and Remark \ref{tiammazzo2}, it suffices to prove that there exists an $\epsilon > 0$ such that for all $t$,
\begin{align} |\hat\eta_t^0(\hat{x}_t) - \mathbf{P}^{\hat{g}_t}_{\hat{x}_t'\hat{x}_t}(\hat{\eta}_t^0(\hat{x}_t'))|_{\hat{g}_{t}(\hat{x}_t)} \geq \epsilon.\label{whatweneed}\end{align}
Here, as in Section \ref{s:abstract} (proof of Proposition \ref{p:schauder}, Case 3), if $\eta$ is any $2$-form on the smooth manifold $\C^m\times Y$, we write $\eta = \eta^0 + \eta^1 + \eta^2$ according to $\Lambda^2(\C^m \times Y) = \Lambda^2\C^m \oplus (\Lambda^1\C^m \otimes \Lambda^1 Y) \oplus \Lambda^2Y$.  Now \eqref{blow22} and Claim 1 tell us that \eqref{whatweneed} is indeed true if we replace $\hat\eta_t^0$ by $\hat\eta_t$. Unfortunately we only know that $\hat\eta_t^1, \hat\eta^2_t$ go to zero with respect to a \emph{noncollapsing} reference metric on $\C^m \times Y$, which is too weak to conclude that their contribution to \eqref{blow22} goes to zero (which would prove \eqref{whatweneed}).

The following proposition exploits the $i\partial\ov{\partial}$-exactness of $\hat\eta_t$ with respect to $\hat{J}_t^\natural$ to resolve this issue, as well as an analogous issue at the end of the proof of Claim 1, Subcase C above. Thus, by proving this proposition we will not only prove  \eqref{whatweneed}, hence Claim 2, but also complete the proof of Claim 1.

\begin{proposition}\label{bd}
Let $B = B_1(0) \subset \C^m$ be the unit ball. Let $(Y, g_Y, J_Y)$ be a compact K\"ahler manifold without boundary. Fix $\alpha \in (0,1)$. Let $J_i$ be a sequence of complex structures on $B \times Y$ such that

{\rm (1)} the projection ${\rm pr}_{\C^m}$ is holomorphic with respect to $J_i$ and $J_{\C^m}$ for all $i$, and

{\rm (2)} $J_i \to J_{\C^m} + J_Y$ in $C^{1,\alpha}(B \times Y)$ as $i \to \infty$.

\noindent Let $\{g_{Y,z,i}\}_{z\in B}$ be a sequence of fiberwise $J_i$-K\"ahler metrics on $B \times Y$ such that

{\rm (3)} $\{g_{Y,z,i}\}_{z\in B} \to \{g_{Y}\}_{z\in B}$ in $C^{1,\alpha}(B \times Y)$ as $i \to \infty$, with uniformly bounded $C^2$ norm.

\noindent For a sequence of positive real numbers $\lambda_i \to 0$ define Riemannian product metrics ${g}_{z,i} = g_{\C^m} + \lambda_i^2 g_{Y,z,i}$ on $\C^m \times Y$. Let $\eta_i$ be a sequence of real $2$-forms on $B\times Y$ such that

{\rm (4)} $\eta_i$ is $i\partial\ov{\partial}$-exact with respect to $J_i$ for all $i$, and

{\rm (5)} there exists a $C$ such that for all $i$,
\begin{align}
\sup_{x \in B \times Y}|\eta_i(x)|_{g_{0,i}(x)} + \sup_{x=(z,y) \in B \times Y} \Biggl( \sup_{x' \in B \times Y} \frac{|\eta_i(x) - \mathbf{P}^{g_{z,i}}_{x'x}(\eta_i(x'))|_{g_{0,i}(x)}}{d^{g_{0,i}}(x,x')^\alpha} \Biggr) \leq C.\end{align}

\noindent Write $\eta_i=\sum_{t=0}^2(\eta_i)^t$ according to the decomposition $\Lambda^2(\C^m \times Y) = \bigoplus_{s+t=2} \Lambda^s\C^m \otimes \Lambda^t Y$. Then
\begin{equation}\label{e:sadness}
|(\eta_i)^1|_{{g}_{0,i}} \to 0\;\,{\text{and}}\;\;|(\eta_i)^2|_{{g}_{0,i}}\to 0
\end{equation}
in $C^{\beta}(B \times Y)$ as $i\to\infty$ for any $\beta < \alpha$.
\end{proposition}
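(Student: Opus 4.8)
\textbf{Proof proposal for Proposition \ref{bd}.}

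The plan is to argue by contradiction and blowup, closely imitating the structure of the proof of Theorem \ref{t:schauder} (Proposition \ref{p:schauder}), with the key new input being that $i\partial\bar\partial$-exactness is preserved under the relevant limits (Proposition \ref{p:closure}) and that a fiber-differentiated form that is exact on each fiber must vanish on each fiber because $\partial Y = \emptyset$ (the basic mechanism behind Lemma \ref{l:braindead} and the end of Proposition \ref{check}). Suppose \eqref{e:sadness} fails for the degree-one component (the degree-two case is entirely analogous, or can be reduced to it). Then, after passing to a subsequence, there exist points $x_i = (z_i, y_i)$ where $|(\eta_i)^1|_{g_{z_i,i}}$, or the $C^\alpha$ difference quotient of $|(\eta_i)^1|_{g_{z_i,i}}$ measured against the collapsing metric, stays bounded below by a fixed $\epsilon > 0$. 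I would rescale: set $\mu_i$ equal to the size of this bad quantity (it is bounded above by hypothesis (5), so $\mu_i$ lies in a compact subinterval of $(0,\infty)$, and we may assume $\mu_i \to \mu_\infty > 0$); translate the $\C^m$-factor so that $z_i = 0$; and — since $\lambda_i \to 0$ forces collapse — pull everything back by the stretching diffeomorphism $(z,y)\mapsto(\lambda_i^{-1}z, y)$, exactly as in Case 3 of the proof of Proposition \ref{p:schauder}. Under this pullback $g_{z,i}$ becomes $g_{\C^m} + \lambda_i^2 \Psi_i^*g_{Y,z,i}$, which converges to $g_{\C^m}$ as a tensor, and the complex structures converge to $J_{\C^m} + J_Y$ in $C^{1,\beta}_{\rm loc}$ by hypotheses (2), (3) together with Taylor expansion in $z$ (using that ${\rm pr}_{\C^m}$ is holomorphic so there are no $dz\otimes\partial_z$ or $dy\otimes\partial_z$ terms in $J_i - (J_{\C^m}+J_Y)$, cf. \eqref{Jblock}).

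The heart of the argument is to renormalize each graded piece by the appropriate power of $\lambda_i$: write $\eta_i = \sum_t (\eta_i)^t$ and set $(\hat\eta_i)^t = \lambda_i^{-t}(\eta_i)^t$, which makes all the lengths and H\"older seminorms of hypothesis (5) translate into bounds for $(\hat\eta_i)^t$ measured with respect to the \emph{fixed} product metric $g_P = g_{\C^m} + g_Y$ — this is the same bookkeeping as in \eqref{e:wtf10^7}, \eqref{e:wtf10^8}. Since the $(\hat\eta_i)^t$ are then bounded in $C^\alpha_{\rm loc}(\C^m\times Y)$ with respect to a fixed metric (using Lemma \ref{tiammazzo1} to pass between the mildly varying metrics and $g_P$), Ascoli--Arzel\`a gives a subsequential limit $(\hat\eta_\infty)^t$ in $C^\beta_{\rm loc}(\C^m\times Y)$ for every $\beta < \alpha$. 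From the $i\partial\bar\partial$-exactness (hypothesis (4)) and the argument of Proposition \ref{p:closure} — with the $\bar\partial$-operators taken with respect to $J_i$, which does not affect anything since $J_i \to J_{\C^m}+J_Y$ locally smoothly enough — the total limit $\hat\eta_\infty$ is $i\partial\bar\partial$-exact; and by Claim 3 / Claim 3-type reasoning its individual components scale so that $(\hat\eta_\infty)^1$ and $(\hat\eta_\infty)^2$ are forced to be $g_Y$-parallel in the fiber directions (this is the limiting content of the collapsing H\"older bound, exactly as in \eqref{e:wtf10^7}). Now for a $\bar\partial$-exact $(1,1)$-form on $\C^m\times Y$ whose mixed and fiber-fiber components are fiberwise parallel, the restriction to each fiber $\{z\}\times Y$ is a $d$-exact harmonic $(1,1)$-form, hence zero; so $(\hat\eta_\infty)^1 = 0$ and $(\hat\eta_\infty)^2 = 0$, and any global $i\partial\bar\partial$-potential is pulled back from $\C^m$. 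But $(\hat\eta_i)^1$ was normalized so that at the basepoint it has length $\mu_\infty > 0$ in the limit (with convergence of the relevant difference quotient guaranteed by Remark \ref{tiammazzo2}), contradicting $(\hat\eta_\infty)^1 = 0$.

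The main obstacle I anticipate is the careful scaling bookkeeping at the two stages where the collapsing metric interacts with the graded decomposition. First, one must verify that hypothesis (5) — an estimate about $\eta_i$ as a whole, with parallel transport taken with respect to $g_{z,i}$ and norms with respect to $g_{0,i}$ (note the slight asymmetry in the indices, which requires \eqref{caz2b}-type comparisons $C^{-1}g_{z,i}\le g_{0,i}\le Cg_{z,i}$) — actually yields, after multiplying the degree-$t$ part by $\lambda_i^{-t}$, a \emph{uniform} $C^\alpha$ bound for each $(\hat\eta_i)^t$ with respect to the fixed metric $g_P$; here the fact that $\nabla_{\mathbf b}$ and $\nabla_{\mathbf f}$ scale differently between $g_{z,i}$ and $g_P$ is helpful since we only need upper bounds, as in \eqref{e:wtf10^7}. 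Second, one must check that the $i\partial\bar\partial$-exactness survives the stretching-and-rescaling limit even though the complex structures $J_i$ vary; this is where hypotheses (1) and (2) and the $dz/dy$ block structure \eqref{Jblock} are essential, and where one invokes the proof technique of Proposition \ref{p:closure} verbatim. Everything else — the Ascoli--Arzel\`a extraction, the passage to the limit in the difference quotients via Remark \ref{tiammazzo2}, the comparison of H\"older norms via Lemma \ref{tiammazzo1}, and the final fiberwise-vanishing argument — is routine given the tools already developed in Sections \ref{linear} and \ref{nonpr}.
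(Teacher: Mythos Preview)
Your high-level plan is sound and your treatment of $(\hat\eta_\infty)^2$ is essentially the paper's. The gap is in your argument for $(\hat\eta_\infty)^1=0$. You define $(\hat\eta_i)^t=\lambda_i^{-t}(\eta_i)^t$ and then claim that Proposition~\ref{p:closure} gives $i\partial\bar\partial$-exactness of $\hat\eta_\infty$. But $\hat\eta_i=\sum_t\lambda_i^{-t}(\eta_i)^t$ is an \emph{anisotropic} componentwise rescaling of $\eta_i$; it is not a pullback of $\eta_i$ under any diffeomorphism, nor a scalar multiple, so there is no reason for $\hat\eta_i$ itself to be $i\partial\bar\partial$-exact with respect to any complex structure. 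Proposition~\ref{p:closure} therefore does not apply to the sequence $\hat\eta_i$. Your stretching paragraph does not repair this: the map $(z,y)\mapsto(\lambda_i^{-1}z,y)$ you describe goes in the wrong direction (it shrinks the domain since $\lambda_i\to 0$), and in any case the reference metric $g_{z,i}=g_{\C^m}+\lambda_i^2g_{Y,z,i}$ is \emph{already} of the collapsed form that in Case~3 of Proposition~\ref{p:schauder} was the \emph{output} of the stretching, not its input; so the sentence ``under this pullback $g_{z,i}$ becomes $g_{\C^m}+\lambda_i^2\Psi_i^*g_{Y,z,i}$'' does not make sense.

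The paper's proof does \emph{not} claim $\hat\eta_\infty$ is $i\partial\bar\partial$-exact at this stage. For $(\hat\eta_\infty)^2$ it uses only $d$-exactness of $\eta_i|_{\{z\}\times Y}$, which \emph{does} survive because the fiber restriction sees only the degree-$2$ component. For $(\hat\eta_\infty)^1$ the paper carries out real work (Subclaims~A and~B): a local computation in fibered Newlander--Nirenberg charts for $J_i$ showing that $(Z_j\,\lrcorner\,(\eta_i)^1)|_{\{z\}\times Y}=\bar\partial(Z_j(\varphi_i)|_{\{z\}\times Y})+o(\lambda_i^2)$, and then integration against $g_{Y,z,i}$-harmonic $(0,1)$-forms (using Hodge theory with parameters). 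The paper explicitly notes that this is exactly the step needed when $b^1(Y)\neq 0$; when $b^1(Y)=0$ your ``fiberwise parallel $\Rightarrow$ zero'' intuition is indeed sufficient. Your approach can be salvaged by doing a genuine isotropic blowup at a fixed $z_0\in B$ via $\Psi_{z_0,i}(z',y)=(z_0+\lambda_iz',y)$ and working with $\lambda_i^{-2}\Psi_{z_0,i}^*\eta_i$, which \emph{is} $i\partial\bar\partial$-exact, has H\"older seminorm $O(\lambda_i^\alpha)\to 0$, and to which the \emph{adapted} Proposition~\ref{p:closure} (cf.\ the proof of Subclaim~1.1) applies on the exhausting domain $B_{\lambda_i^{-1}(1-|z_0|)}\times Y$; but this is not what you wrote, and it requires the varying-$J_i$ adaptation of Proposition~\ref{p:closure} that you do not address.
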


\begin{proof}
For $0\leq t\leq 2$ define $(\hat{\eta}_i)^t=\lambda_i^{-t}(\eta_i)^t$ and $\hat{\eta}_i=\sum_{t=0}^2(\hat{\eta}_i)^t.$ Also abbreviate $\hat{g}_{z,i}=g_{\C^m}+g_{Y,z,i}$,
$g_{i} = g_{\C^m} + \lambda_i^2 g_{Y}$, and $\hat{g}=g_{\C^m}+g_Y$. Then Assumptions (3) and (5) tell us that
\begin{align}
\sup_{x \in B \times Y} |\hat{\eta}_i(x)|_{\hat{g}(x)} + \sup_{x=(z,y) \in B \times Y} \Biggl( \sup_{x' \in B \times Y} \frac{|\hat\eta_i(x)-\mathbf{P}^{\hat{g}_{z,i}}_{x'x}(\hat\eta_i(x'))|_{\hat{g}(x)}}{d^{{g}_{i}}(x,x')^\alpha}\Biggr) \leq C.\label{e:sadness2}
\end{align}
Now assume \eqref{e:sadness} was false. Then there would exist a sequence of indices going to infinity such that at least one of the two sequences of \eqref{e:sadness} is uniformly bounded away from zero in $C^{\beta}(B \times Y)$ along this sequence of indices. Then thanks to \eqref{e:sadness2} we can pass to a further subsequence of the $\hat\eta_i$ converging to some limit $\hat{\eta}_\infty\in C^{\alpha}(B \times Y)$ in the topology of $C^{\beta}(B \times Y)$. (We will pretend that this subsequence is actually the entire sequence.) Also, if $x,x'$ lie on the fiber over $z \in B$, then
$$|\hat\eta_i(x)-\mathbf{P}^{\hat{g}_{z,i}}_{x'x}(\hat\eta_i(x'))|_{\hat{g}(x)}\leq Cd^{{g}_i}(x,x')^\alpha\leq C\lambda_i^\alpha \to 0.$$
Since $g_{Y,z,i}\to g_Y$ in $C^{1,\alpha}(Y)$ by Assumption (3), $\mathbf{P}^{\hat{g}_{z,i}}_{x'x}$ converges to $\mathbf{P}^{\hat{g}}_{x'x}$ by Remark \ref{tiammazzo2}. Thus, in the limit we get that $\hat\eta_\infty$ is $g_Y$-parallel in the $Y$-directions, i.e.,
\begin{equation}\label{e:sadness4}
\nabla^{\hat{g}}_{\mathbf{f}}\hat{\eta}_\infty = 0.
\end{equation}
The key point of the following arguments is that \eqref{e:sadness4} together with Assumption (4) contradicts our standing assumption that for some $\epsilon > 0$ and for all $i$,
\begin{equation}\label{e:sadness5}
\||(\eta_i)^1|_{{g}_{0,i}}\|_{C^{\beta}(B\times Y)} + \||(\eta_i)^2|_{{g}_{0,i}}\|_{C^{\beta}(B \times Y)} \geq \epsilon.
\end{equation}

We will first prove that the $(\eta_i)^2$ term in \eqref{e:sadness5} goes to zero. Assumption (4) implies in particular that $\eta_i$ is $d$-exact. This implies for all $z \in B$ that $(\eta_i)^2|_{\{z\}\times Y}$ integrates to zero against all $g_Y$-parallel $2$-forms on $\{z\}\times Y$. (Note that this is a nontrivial constraint because $(Y,g_Y)$ is K\"ahler.) The same is true for $(\hat\eta_i)^2 = \lambda_i^{-2}(\eta_i)^2$ restricted to $\{z\}\times Y$, and since $(\hat\eta_i)^2$ converges to $(\hat\eta_\infty)^2$ in $C^{\beta}(B \times Y)$, it is true for $(\hat\eta_\infty)^2$ as well. But $(\hat\eta_\infty)^2|_{\{z\}\times Y}$ is itself $g_Y$-parallel by \eqref{e:sadness4}, so $(\hat\eta_\infty)^2|_{\{z\}\times Y} = 0$. Since this holds for all $z \in B$, it follows that $(\hat\eta_\infty)^2 = 0$ on $B \times Y$. Thus,
$$\|(\hat\eta_i)^2\|_{C^{\beta}(B \times Y)} \to 0.$$
The claim now follows by using  Lemma \ref{tiammazzo1} (which is possible because $\hat{g}_{0,i} \to \hat{g}$ in $C^{1,\alpha}(B\times Y)$, with bounded $C^2$ norm) and the fact that $|(\hat\eta_i)^2|_{\hat{g}_{0,i}} = |(\eta_i)^2|_{g_{0,i}}$.

Next, we treat the $(\eta_i)^1$ term in \eqref{e:sadness5}. As in the previous step, our goal is to prove that $(\hat\eta_\infty)^1 = 0$ on $B \times Y$. Because of \eqref{e:sadness4}, this is actually trivial when $b^1(Y) = 0$ because a nonzero parallel $1$-form on $Y$ represents a nonzero class in $H^1(Y)$. Thus, the following steps, which rely on the full strength of Assumption (4), are needed only to treat the case $b^1(Y) \neq 0$. This is reminiscent of \cite[Section 4.2]{HHN}, where a divisor $D$ with holomorphically trivial normal bundle did not necessarily move in a pencil if $b^1(D) \neq 0$, leading to technical difficulties of a similar nature as the ones below.

For $j \in \{1,\ldots, m\}$ let $Z_j$ denote the obvious extension to $\C^m \times Y$ of the $j$-th complex coordinate vector field on $\C^m$. The key step towards proving that $(\hat\eta_\infty)^1 = 0$ on $B\times Y$ is the following Subclaim A. Based on this, Subclaim B below will then prove that $(\hat\eta_\infty)^1 = 0$ on $B\times Y$, as desired. \medskip\

\noindent \emph{Subclaim A}. For all $j \in \{1,\ldots,m\}$ and $z \in B$ we have that
\begin{align}\label{e:kohelet}
(Z_j \,\lrcorner\, (\eta_i)^1)|_{\{z\} \times Y} = \ov{\partial}(Z_j(\vp_i)|_{\{z\}\times Y}) + \epsilon_{z,i}.
\end{align}
Here the $\ov{\partial}$-operator is the one associated with the complex structure induced by $J_i$ on $\{z\} \times Y$ (which makes sense by Assumption (1)), $\vp_i$ is an arbitrary $i\partial\ov{\partial}$-potential for $\eta_i $ with respect to $J_i$ on $B \times Y$, and $\epsilon_{z,i}$ is uniformly $o(\lambda_i^2)$ on $\{z\} \times Y$ (independent of the choice of $\vp_i$). \medskip\

Observe that any two choices of $\vp_i$ differ at most by the pullback of a pluriharmonic function on $B$ under ${\rm pr}_{\C^m}$, so the function $Z_j(\vp_i)|_{\{z\}\times Y}$ in \eqref{e:kohelet} is actually well-defined up to a constant.\medskip\

\noindent \emph{Proof of Subclaim A}. Fix $z \in B$ and $j \in \{1,\ldots,m\}$. Notice that $$(Z_j \,\lrcorner\,(\eta_i)^1)|_{\{z\} \times Y} = (Z_j\,\lrcorner\,\eta_i)|_{\{z\}\times Y}.$$
Thanks to Assumption (4) we have $\eta_i=\ddbar\vp_i$ for some function $\vp_i$ on $B \times Y$, where here and below, all operators $\de,\db$ are with respect to $J_i$. Let $(z^1, \ldots, z^m)$ denote the complex coordinates on $\C^m$. Fix any $y \in Y$ and a $J_Y$-holomorphic chart $(y^1, \ldots, y^n)$ on $Y$ near $y$. Assumptions (1)--(2) and Proposition \ref{p:nn} allow us to find a $J_i$-holomorphic chart of the form $(z^1, \ldots, z^m, \hat{y}^1_{i}, \ldots, \hat{y}^n_{i})$ in a definite neighborhood of $(z,y)$ in $B \times Y$ that converges to $(z^1, \ldots, z^m, y^1, \ldots, y^n)$ in $C^{2,\alpha}$ as $i \to \infty$. The key property of a ``fibered'' chart of this type is that if $y^p_i$ denotes the restriction of $\hat{y}^p_i$ to $\{z\} \times Y$, then
\begin{align}\label{e:qoelet}
\frac{\partial \vp}{\partial\hat{y}^p_i}\biggr|_{\{z\}\times Y} = \frac{\partial (\vp|_{\{z\}\times Y})}{\partial y^p_i}
\end{align}
for all local functions $\vp$.  Thus, expanding $\eta_i = i\partial\ov{\partial}\vp_i$ in terms of this chart,
\begin{align}
(Z_j \,\lrcorner\, \eta_i )|_{\{z\}\times Y}&= i \sum_{p=1}^n \frac{\partial}{\partial \ov{y}^p_{i}}\biggl(\frac{\partial\vp_i}{\partial z^j}\biggr|_{\{z\}\times Y}\biggr) d\ov{y}^p_i \label{e:kohelet2} \\&+i \sum_{p,q=1}^n \biggl(\biggl[\frac{\partial^2(\vp_i|_{\{z\}\times Y})}{\partial y^q_{i} \partial \ov{y}^p_{i} } Z_j(\hat{y}^q_{i}) \biggr]d\ov{y}^p_{i} - \biggl[\frac{\partial^2(\vp_i|_{\{z\}\times Y})}{\partial y_i^p \partial\ov{y}_i^q} Z_j(\ov{\hat{y}}^q_i)\biggr] dy^{p}_i\biggr). \label{e:kohelet3}
\end{align}
This is straightforward to check. Observe that in the product case ($J_i = J_{\C^m} + J_Y$) we may choose $\hat{y}^p_i$ equal to the trivial extension of $y^p$ from $Y$ to $B \times Y$; then the terms in  \eqref{e:kohelet3} vanish and in addition $\partial/\partial z^j = Z_j$, so that the right-hand side of \eqref{e:kohelet2} is globally $\ov{\partial}$-exact, proving \eqref{e:kohelet} with $\epsilon_{z,i} = 0$. Also note that in general there is no reason for $Z_j$ to be of type $(1,0)$ (let alone holomorphic) with respect to $J_i$, which is directly related to the presence of the $dy^p_i$ terms in \eqref{e:kohelet3}.

In order to control the errors of \eqref{e:kohelet3}, observe that $\hat{y}^q_{i} \to y^q$ in $C^{2,\alpha}$ on some definite neighborhood of $(z,y)$, so that
$Z_j(\hat{y}^q_{i}), Z_j(\ov{\hat{y}}^q_i) \to 0$ in $C^{1,\alpha}$ on this neighborhood. Moreover,
$$\biggl|\frac{\partial^2(\vp_i|_{\{z\}\times Y})}{\partial y^q_{i} \partial \ov{y}^p_{i}}\biggr| = \biggl|(\eta_i)^2\biggl(\frac{\partial}{\partial y^q_i},\frac{\partial}{\partial \ov{y}^p_i}\biggr)\biggr| \leq C \lambda_i^2$$
by \eqref{e:sadness2} and because the coordinate vector fields attached to the chart $(y^1_i, \ldots, y^n_i)$ converge in $C^{1,\alpha}$. This proves that the errors of \eqref{e:kohelet3} are uniformly $o(\lambda_i^2)$, as desired.

Making the right-hand side of \eqref{e:kohelet2} globally $\ov{\partial}$-exact up to small errors is slightly more subtle. By Assumption (1), the $(0,1)$-part of $Z_j$ with respect to $J_i$ is a section of $T^*Y \otimes \C$, and \eqref{e:qoelet} says that $T^*Y \otimes \C$ is generated by the vector fields $\partial/\partial \hat{y}^p$ and their complex conjugates. Thus, if we expand $Z_j$ in terms of our chart, there will be no $\partial/\partial\ov{z}^k$ components. Since $dz^k(Z_j) = \delta^k_j$, it follows that
\begin{align}\label{e:kohelet5}
Z_j = \frac{\partial}{\partial z^j} - \sum_{p=1}^n \biggl(a_{j,i}^p \frac{\partial}{\partial \hat{y}^p_{i}} + b_{j,i}^p\frac{\partial}{\partial\ov{\hat{y}}^p_i}\biggr)
\end{align}
for some smooth local functions $a_{j,i}^p(z,\hat{y}_i), b_{j,i}^p(z,\hat{y}_i)$ that go to zero in $C^{1,\alpha}$. Thus,
\begin{align}\label{e:kohelet6}
\frac{\partial}{\partial \ov{y}^p_{i}}\biggl(\frac{\partial\vp_i}{\partial z^j}\biggr|_{\{z\}\times Y}\biggr) - \frac{\partial}{\partial \ov{y}^p_{i}}(Z_j(\vp_i)|_{\{z\}\times Y})
= \sum_{q=1}^n \biggl(\frac{\partial a^q_{j,i}}{\partial \ov{\hat{y}}^p_{i}} \frac{\partial(\vp_i|_{\{z\}\times Y})}{\partial y^q_{i}}
& + \frac{\partial b^q_{j,i}}{\partial \ov{\hat{y}}^p_{i}} \frac{\partial(\vp_i|_{\{z\}\times Y})}{\partial \ov{y}^q_{i}} \\
\label{e:kohelet7}
+\,a^q_{j,i}\frac{\partial^2(\vp_i|_{\{z\}\times Y})}{\partial \ov{y}^p_{i}\partial y^q_{i}}
&+ b^q_{j,i}\frac{\partial^2(\vp_i|_{\{z\}\times Y})}{\partial \ov{y}^p_{i}\partial \ov{y}^q_{i}}\biggr).
\end{align}
The coefficient functions involving $a_{j,i}^q, b_{j,i}^q$ on the right-hand side of \eqref{e:kohelet6}--\eqref{e:kohelet7} all go to zero in $C^{\alpha}$.
Moreover, by standard elliptic estimates applied to the differential inequality
$$\|i\partial\ov{\partial}(\vp_i|_{\{z\}\times Y})\|_{C^{\alpha}} \leq C\lambda_i^2,$$
which follows from \eqref{e:sadness2} and Lemma \ref{tiammazzo1}, the derivatives of $\vp_i|_{\{z\}\times Y}$ featuring on the right-hand side of \eqref{e:kohelet6}--\eqref{e:kohelet7} are all uniformly $O(\lambda_i^2)$. (Note that for this it is crucial that \eqref{e:kohelet5} contains no $\partial/\partial\ov{z}^k$ components.) In conclusion, \eqref{e:kohelet} has been proved with $\epsilon_{z,i}$ uniformly $o(\lambda_i^2)$ on $\{z\} \times Y$. \hfill $\Box$\medskip\

As discussed before Subclaim A, the following suffices to complete the proof of Proposition \ref{bd}.\medskip\

\noindent \emph{Subclaim B.} We have $(\hat\eta_\infty)^1 = 0$ on $B \times Y$. \medskip\

\noindent \emph{Proof of Subclaim B.} Fix $z \in B$ and $j \in \{1,\ldots, m\}$. Integrate \eqref{e:kohelet} against any harmonic $(0,1)$-form $\zeta_i$ of $L^2$-norm $1$ with respect to the $J_i$-K\"ahler metric $g_{Y,z,i}$ on $\{z\} \times Y$. The globally $\ov{\partial}$-exact part goes away and the remainder is $o(\lambda_i^2)$. Multiply by $\lambda_i^{-1}$ in order to pass from $(\eta_i)^1$ to $(\hat\eta_i)^1$. It follows that the $L^2$-inner product between $(\hat\eta_i)^1$ and $\zeta_i$ goes to zero. Since $g_{Y,z,i} \to g_{Y}$ in $C^{1,\alpha}(Y)$, and since the Hodge number $h^{0,1}(Y,J_i|_{Y})$ is equal to $h^{0,1}(Y,J_Y)$ for all $i$ sufficiently large (thanks to our assumption that these manifolds are K\"ahler), it is a standard fact from Hodge theory with parameters (cf. the proof of \cite[Thm 5]{KS}) that every $g_Y$-harmonic $J_Y$-$(0,1)$-form $\zeta$ of $L^2$-norm $1$ can be written as the $C^{2,\alpha}(Y)$ limit of a sequence $\zeta_i$ of $g_{Y,z,i}$-harmonic $J_i$-$(0,1)$-forms $\zeta_i$ as above. Thus, passing to a limit, we learn that $(Z_j \,\lrcorner\, (\hat\eta_\infty)^1)|_{\{z\} \times Y}$, which is $g_Y$-parallel thanks to \eqref{e:sadness4}, hence $g_Y$-harmonic, is $L^2$-orthogonal to $\zeta$. This leaves the possibility that $(Z_j \,\lrcorner\, (\hat\eta_\infty)^1)|_{\{z\} \times Y}$ is nonzero of $J_Y$-type $(1,0)$; but this possibility is easily ruled out by integrating the $J_i$-$(1,0)$-part of $(Z_j \,\lrcorner\, (\hat\eta_\infty)^1)|_{\{z\} \times Y}$ against \eqref{e:kohelet} with respect to $g_{Y,z,i}$, multiplying by $\lambda_i^{-1}$, and passing to the limit.

In conclusion, $(Z_j \,\lrcorner\,(\hat{\eta}_\infty)^1)|_{\{z\}\times Y} = 0$ for all $z \in B$ and $j \in \{1,\ldots, m\}$. Since $(\hat\eta_\infty)^1$ is a real $2$-form, the complex $1$-forms $Z_j \,\lrcorner\,(\hat\eta_\infty)^1$ and $\ov{Z}_j \,\lrcorner\,(\hat\eta_\infty)^1$ are complex conjugates of each other. Moreover, these $1$-forms are sections of $T^*Y \otimes \C$, hence are determined by their restrictions to $\{z\} \times Y$ for all $z \in B$. It follows that $Z_j \,\lrcorner\,(\hat\eta_\infty)^1 = \ov{Z}_j \,\lrcorner\,(\hat\eta_\infty)^1 = 0$ on $B \times Y$, and hence that $(\hat\eta_\infty)^1 = 0$.\hfill $\Box$\medskip\

Proposition \ref{bd} has been proved.
\end{proof}

This completes the proof of Claim 2.

\subsubsection{}\label{claim3nonproduct} {\bf Claim 3.} The $C^\alpha$ K\"ahler current $\hat\omega_\infty^\bullet$ on $\mathbb{C}^m$ is parallel with respect to the Euclidean metric.\medskip\

\noindent \emph{Proof of Claim 3}. It suffices to prove that $(\hat\omega_\infty^\bullet)^m = c\omega_{\C^m}^m$ for some constant $c > 0$. Indeed, since $\hat\omega_\infty^\bullet$ has potentials of class $C^{2,\alpha}_{\rm loc}$, it follows from this by a standard elliptic bootstrap that $\hat\omega_\infty^\bullet$ is smooth. Since $\hat\omega_\infty^\bullet$ is uniformly comparable to $\omega_{\C^m}$, Claim 3 then follows from Theorem \ref{lio1}.

The proof of the fact that $(\hat\omega_\infty^\bullet)^m$ is standard is very similar to the proof of the corresponding fact in Section \ref{claim3product}. Recall that on $B_{\lambda_t}\times Y$ we have
\begin{align}
\hat\omega_{t}^\bullet =\omega_{\mathbb{C}^m}+\delta_t^2 \Psi_t^*\omega_F+\ddbar\vp_t,\label{marushka}\\
\label{maresca2}
(\hat{\omega}_t^\bullet)^{m+n}=c_t\delta_t^{2n} e^{\hat{G}_t}[\lambda_t^{2m}\Psi_t^*(\omega_{\infty}^m\wedge \omega_F^n)],
\end{align}
where $\hat{G}_t=G\circ\Psi_t$, and where here and in the rest of this claim the operators $\de,\db$ are understood to be with respect to $\hat{J}^\natural_t$. Also recall that as $t\to\infty$ we have that
\begin{align*}
\hat{J}_t^\natural \to J_{\C^m} + J_{Y,z_\infty},\\
\Psi_t^*\omega_F\to  \omega_{Y,z_\infty},\\
[\lambda_t^{2m}\Psi_t^*(\omega_{\infty}^m\wedge \omega_F^n)]\to \omega_{\C^m}^m \wedge \omega_{Y,z_\infty}^n,
\end{align*} all of these locally smoothly on $\C^m \times Y$.
The goal is to show that with $c_\infty=\lim_{t\to\infty}c_t > 0$,
\begin{equation}\label{wp2}
{m+n\choose m}(\hat\omega_\infty^\bullet)^m=c_\infty e^{G(z_\infty)}\omega_{\mathbb{C}^m}^m.
\end{equation}

First of all, we can write $\hat\omega_\infty^\bullet =\omega_{\mathbb{C}^m}+i\partial\ov{\partial}\vp$ for some $\vp \in C$\begin{small}$^{2,\alpha}_{\rm loc}$\end{small}$(\C^m)$. We also write $\vp$ for the pullback of $\vp$ to $\mathbb{C}^m\times Y$, so that in particular $\ddbar\vp_t\to\ddbar\vp$ in $C$\begin{small}$^{\alpha}_{\rm loc}$\end{small}. As in \cite{To}, let $\underline{\vp_t}$ denote the function on $\C^m$ (as well as its pullback to $\C^m \times Y$) obtained as the fiber average of $\vp_t$ with respect to $\Psi_t^*\omega_F^n$. Fix a smooth function $\eta$ on $\mathbb{C}^m$ with compact support $K$ and also write $\eta$ for its pullback to $\mathbb{C}^m\times Y$. Fix $t$ large enough so  that $K\subset B_{\lambda_t}$. From the Monge-Amp\`ere equation \eqref{marushka}--\eqref{maresca2} we have
\begin{equation*}\begin{split}
c_t\int_{\mathbb{C}^m\times Y}\eta e^{\hat{G}_t}&[\lambda_t^{2m}\Psi_t^*(\omega_{\infty}^m\wedge \omega_F^n)]=\frac{1}{\delta_t^{2n}}\int_{\mathbb{C}^m\times Y}\eta (\omega_{\mathbb{C}^m}+\delta_t^2\Psi_t^*\omega_F+\ddbar\vp_t)^{m+n}\\
&=
\frac{1}{\delta_t^{2n}}\int_{\mathbb{C}^m\times Y}\eta ((\omega_{\mathbb{C}^m}+\mn\de\db\underline{\vp_t})+(\delta_t^2\Psi_t^*\omega_F+\mn\de\db(\varphi_t-\underline{\vp_t})))^{m+n}\\
&=\frac{1}{\delta_t^{2n}}\int_{\mathbb{C}^m\times Y}\eta \sum_{j=0}^{m+n}\binom{m+n}{j}(\omega_{\mathbb{C}^m}+\mn\de\db\underline{\vp_t})^j\wedge(\delta_t^2\Psi_t^*\omega_F+\mn\de\db(\varphi_t-\underline{\vp_t}))^{m+n-j}.
\end{split}
\end{equation*}
Observe that $\omega_{\mathbb{C}^m}+\mn\de\db\underline{\vp_t}$ is pulled back from $\mathbb{C}^m$, hence can be wedged with itself at most $m$ times, so all terms in the sum with $j>m$ are zero. Next, we claim that all the terms with $j<m$ go to zero as $t\to \infty$. To see this, start by observing that any such term can be expanded into
$$\frac{1}{\delta_t^{2n}}\binom{m+n}{j}\sum_{i=0}^{m+n-j}\binom{m+n-j}{i}\int_{\mathbb{C}^m\times Y} \eta(\omega_{\mathbb{C}^m}+\mn\de\db\underline{\vp_t})^j\wedge(\delta_t^2\Psi_t^*\omega_F)^{m+n-j-i}\wedge(\mn\de\db(\varphi_t-\underline{\vp_t}))^{i}.$$
The term with $i=0$ is easily seen to go to $0$ because the integrand is $O(\delta_t^{2(m+n-j)})$ and $j<m$, while for each term with $i>0$ we can rewrite the integral as
\begin{equation}\label{e:dontfuckup2}
\int_{\mathbb{C}^m\times Y} (\vp_t-\underline{\vp_t})\ddbar\eta\wedge(\omega_{\mathbb{C}^m}+\mn\de\db\underline{\vp_t})^j\wedge(\delta_t^2\Psi_t^*\omega_F)^{m+n-j-i}\wedge(\mn\de\db(\varphi_t-\underline{\vp_t}))^{i-1}.
\end{equation}
Work in local $\hat{J}_t^\natural$-holomorphic product coordinates on the total space. The form $\ddbar\eta\wedge(\omega_{\mathbb{C}^m}+\mn\de\db\vp)^j$ is pulled back from $\mathbb{C}^m$, so in the coordinate representation of $(\delta_t^2\Psi_t^*\omega_F)^{m+n-j-i}\wedge(\mn\de\db(\varphi_t-\underline{\vp_t}))^{i-1}$, each summand is a wedge product of $2(m-j-1)$ basis $1$-forms pulled back from $\mathbb{C}^m$ and $2n$ basis $1$-forms pulled back from $Y$. Multiplied together, the fiber contributions are $O(\delta_t^{2n})$. Indeed, the fiber- fiber components of $\delta_t^2\Psi_t^*\omega_F$ have an explicit factor of $\delta_t^2$, and its base-fiber components have a factor of $\delta_t^2$ as well where $\delta_t$ would be enough. Moreover, \eqref{caz2} implies that for all $z \in K$,
\begin{equation}\label{pazzoide2}
|(\mn\de\db(\varphi_t-\underline{\vp_t}))|_{\{z\}\times Y}| = |(\mn\de\db\varphi_t)|_{\{z\}\times Y}|\leq C\delta_t^2,
\end{equation}
so the fiber-fiber components of $\ddbar (\vp_t-\underline{\vp_t})$ are again $O(\delta_t^2)$, and its base-fiber components are $O(\delta_t)$ thanks to the Cauchy-Schwarz inequality with respect to $\hat{g}_t^\bullet$. Thus, every term
$$(\omega_{\mathbb{C}^m}+\mn\de\db\underline{\vp_t})^j\wedge(\delta_t^2\Psi_t^*\omega_F)^{m+n-j-i}\wedge(\mn\de\db(\varphi_t-\underline{\vp_t}))^{i-1}$$
in \eqref{e:dontfuckup2} is $O(\delta_t^{2n})$. To see that the whole integral in \eqref{e:dontfuckup2} is $o(\delta_t^{2n})$, as desired, it then suffices to note that $\sup_{K\times Y}|\vp_t-\underline{\vp_t}|\leq C\delta_t^2$, which follows from \eqref{pazzoide2} by inverting the Laplacian on each fiber.

We are then left with only the term with $j=m$, which is
\begin{align*}&\frac{1}{\delta_t^{2n}}\binom{m+n}{m}\int_{\mathbb{C}^m\times Y}\eta(\omega_{\mathbb{C}^m}+\mn\de\db\underline{\vp_t})^m\wedge(\delta_t^2\Psi_t^*\omega_F+\mn\de\db(\varphi_t-\underline{\vp_t}))^{n}\\
=\,&\frac{1}{\delta_t^{2n}}\binom{m+n}{m}\int_{\mathbb{C}^m\times Y}\eta(\omega_{\mathbb{C}^m}+\mn\de\db\underline{\vp_t})^m\wedge (\delta_t^2\Psi_t^*\omega_F)^{n} +\frac{1}{\delta_t^{2n}}\int_{\mathbb{C}^m\times Y} \mn\de\db\eta\wedge(\omega_{\mathbb{C}^m}+\mn\de\db\underline{\vp_t})^m\wedge \Upsilon.
\end{align*}
The second term is zero because $\mn\de\db\eta$ is pulled back from $\C^m$. Altogether, we obtain that
\begin{equation}\label{e:to6}
c_t\int_{\mathbb{C}^m\times Y}\eta e^{\hat{G}_t}[\lambda_t^{2m}\Psi_t^*(\omega_{\infty}^m\wedge \omega_F^n)]= \binom{m+n}{m}\int_{\mathbb{C}^m\times Y}\eta(\omega_{\mathbb{C}^m}+\mn\de\db\underline{\vp_t})^m\wedge (\Psi_t^*\omega_F)^{n}.
\end{equation}
Now observe that because $\Psi_t^*\omega_F^n$ is closed and of $\hat{J}_t^\natural$-type $(n,n)$, we have the identity
$$\ddbar(\underline{\vp_t}-\vp)=({\rm pr}_{\C^m})_*\left(\ddbar(\vp_t-\vp) \wedge \frac{\Psi_t^*\omega_F^n}{\int_Y \Psi_t^*\omega_F^n}\right).$$
Because $\ddbar\vp_t \to \ddbar\vp$ in $C^\alpha_{\rm loc}$, it follows that
$\ddbar\underline{\vp_t}\to\ddbar\vp$, and hence $\omega_{\C^m} + \ddbar\underline{\vp_t} \to \hat\omega_\infty^\bullet$, in $C^{\alpha}_{\rm loc}$.
Letting $t \to \infty$ in \eqref{e:to6} and integrating out the $Y$ factor yields the weak form of \eqref{wp2}, as desired.

This completes the proof of Claim 3, hence of Case 3 and of Theorem \ref{t:fornow}.
\end{proof}

\section{The case of compact Calabi-Yau manifolds}\label{compac}

Let us first derive Corollary \ref{main} from Theorem \ref{mainlocal}.

\begin{proof}[Proof of Corollary \ref{main}]
Let $f:X\to B$ be as in Corollary \ref{main}. By \cite{FG}, $f$ is a holomorphic fiber bundle over $B\setminus f(S)$. Fix any small coordinate ball in $B$ over which this holomorphic fiber bundle is trivial. Replacing $B$ with this ball and $f$ with the product map, we may assume that $B$ is a ball in $\mathbb{C}^m$ and $f:B\times Y\to B$ is the projection, with $Y=X_b$ a compact Calabi-Yau manifold. In order to be able to apply Theorem \ref{mainlocal} we first need to apply a gauge transformation. By \cite[Prop 3.1]{He2} (cf. \cite[Prop 3.1]{GTZ}, \cite[Lemma 4.1]{GW}, \cite[Claim 1, p.382]{He},  \cite[p.2936--2937]{TZ}, and the proof of Proposition \ref{p:closure}), we can find a biholomorphism $T$ of $B\times Y$ (over $B$) such that $T^*\omega_X=\omega_Y+\ddbar u_1$ for some smooth real function $u_1$. Note that \cite[Prop 3.1]{He2} is stated with $B=\mathbb{C}^m$, but the proof applies also if $B$ is a ball in $\mathbb{C}^m$. Let us also note for later purposes that $T$ takes the form $T(z,y) =  (z, y + \sigma(z))$, where $\sigma$ is a holomorphic function from $B$ to the space of $g_Y$-parallel $(1,0)$-vector fields on $Y$, and where the addition $y+\sigma(z)$ has the same meaning as in \cite[(1.1)]{He2}. Fix a smooth real function $u_2$ on $B$ such that $\omega_\infty=\omega_{\mathbb{C}^m}+\ddbar u_2$. Then setting
$$\ti{\psi}_t=\psi_t\circ T+e^{-t}u_1+u_2,$$
we have that
$$\omega_{\mathbb{C}^m}+e^{-t}\omega_Y+\ddbar\ti{\psi}_t=\omega_\infty+e^{-t}T^*\omega_X+T^*\ddbar\psi_t=T^*\omega^\bullet_t,$$
and
$$(\omega_{\mathbb{C}^m}+e^{-t}\omega_Y+\ddbar\ti{\psi}_t)^{m+n}=c_t e^{-nt}T^*\omega_X^{m+n}=c_t e^{-nt+F}\omega_{\mathbb{C}^m}^m\wedge\omega_Y^n,$$
where we define
$$e^F=\frac{T^*\omega_X^{m+n}}{\omega_{\mathbb{C}^m}^m\wedge\omega_Y^n},$$
so that $F$ is the pullback of a pluriharmonic function on $B$. The constants $c_t$ approach a positive limit as $t\to \infty$, so up to a global rescaling we may assume that the metrics $T^*\omega^\bullet_t$ precisely satisfy \eqref{mageneral}.
In \cite{To} (cf. \cite[Lemma 4.1]{GTZ}) it is proved that there is a constant $C$ such that on $B\times Y$ we have
\begin{equation}\label{itsthefinalfuckup0}
C^{-1}(\omega_\infty+e^{-t}\omega_Y)\leq \omega^\bullet_t\leq C(\omega_\infty+e^{-t}\omega_Y).
\end{equation}
This clearly implies the bound \eqref{unifequiv} for $T^*\omega^\bullet_t$, so applying Theorem \ref{mainlocal} we deduce that
\begin{equation}\label{itsthefinalfuckup}
\|T^*\omega_t^\bullet\|_{C^k(K \times Y, \omega_t)} \leq C_{K,k}
\end{equation}
for all compact sets $K \subset B$, with $\omega_t = \omega_{\C^m} + e^{-t}\omega_Y$. We will now show that the $\omega_t$-norm in \eqref{itsthefinalfuckup} can be replaced by the $T^*\omega_t$-norm, which then clearly implies \eqref{estimate2} and \eqref{estimate3}, proving Corollary \ref{main}.

For $k = 1$, since $S_t = \nabla^{T^*g_t} - \nabla^{g_t}$ is a tensor and thanks to \eqref{itsthefinalfuckup0}, it suffices to prove that $|S_t|_{g_t} \leq C$ on $B \times Y$ for some constant $C$ which is independent of $t$. By multiplying all metrics by $e^t$ and pulling back by the diffeomorphism $(z,y) \mapsto (e^{-t/2}z,y)$, this is seen to be equivalent to proving that
$$|\nabla^{T_t^*g_P} - \nabla^{g_P}|_{g_P} \leq Ce^{-\frac{t}{2}}\;\,{\rm on}\;\,B_{e^{\frac{t}{2}}} \times Y,$$
where $g_P = g_{\C^m} + g_Y$ and $T_t(z,y) = (z, y + \sigma(e^{-t/2}z))$. But in fact we have an even stronger estimate (with $e^{-t}$ rather than $e^{-t/2}$ on the right) because $|\nabla^{g_P}(T_t^*g_P)|_{g_P} \leq C e^{-t}$ on $B_{e^{t/2}}\times Y$. The case $k > 1$ can easily be treated by induction. The essential point is that
$T_t^*g_P$ improves by a factor of $e^{-t/2}$ upon $g_P$-covariant differentiation because $\sigma$ takes values in the $g_Y$-parallel vector fields on $Y$.
\end{proof}

Lastly, we derive Corollary \ref{main2} from Theorem \ref{thm51}.

\begin{proof}[Proof of Corollary \ref{main2}]
The proof is analogous to the one of Corollary \ref{main}. Recall that we have fixed a K\"ahler metric $\omega_B$ on $B$ (in the sense of analytic spaces), to solve \eqref{mageneral2}, and on $X\setminus f^{-1}(f(S))$ we have constructed in Section \ref{nonpr} a smooth function $\rho$ such that the $(1,1)$-form $\omega_F=\omega_X+\ddbar\rho$ restricts to a Ricci-flat metric on all regular fibers $X_z$, and such that $\omega_F^n\wedge\omega_{\infty}^m$ is a strictly positive volume form on $X\setminus f^{-1}(f(S))$. Fix any coordinate ball in $B$ which is compactly supported in $B\setminus f(S)$, and replace $B$ with this ball, so now $f:X\to B$ is as in the setting of Theorem \ref{thm51}. We can write $\omega_B=\omega_{\C^m}+\ddbar u$ for some smooth function $u$ on $B$. Thus, if we define
\begin{align*}
\ti{\psi}_t=\psi_t-e^{-t}\rho+u,\\
\omega_\infty=f^*\omega_{\C^m},\\
e^G=\frac{\omega_X^{m+n}}{\omega_\infty^m\wedge\omega_F^n}
\end{align*}
(so that $G$ is in fact the pullback of a function from the base, cf.~\cite[p.445]{To}), then
\begin{align*}
\omega^\bullet_t=\omega_\infty+e^{-t}\omega_F+\ddbar\ti{\psi}_t,\\
(\omega^\bullet_t)^{m+n}=c_te^{-nt}\omega_X^{m+n}=c_t e^{-nt+G}\omega_\infty^m\wedge\omega_F^n.
\end{align*}
As recalled above, it is proved in \cite{To} (cf.~\cite[Lemma 4.1]{GTZ}) that
$$
C^{-1}(\omega_\infty+e^{-t}\omega_X)\leq \omega^\bullet_t\leq C(\omega_\infty+e^{-t}\omega_X).
$$
Up to increasing the uniform constant $C$, this implies
$$
C^{-1}(\omega_\infty+e^{-t}\omega_F)\leq \omega^\bullet_t\leq C(\omega_\infty+e^{-t}\omega_F),
$$
which is \eqref{unifequivx}.
We are now in the setting of Theorem \ref{thm51}, so we obtain \eqref{estimatex}, which as explained in Remark \ref{important} implies that \eqref{estimate4} holds on $f^{-1}(B)$. This completes the proof of Corollary \ref{main2}.
\end{proof}

\begin{rk}
If one is only interested in the setting of Corollaries \ref{main} and \ref{main2} of a global fiber space with total space a compact Calabi-Yau manifold (as opposed to the local settings of Theorems \ref{mainlocal} and \ref{thm51}), then the proofs above can be modified to avoid using the more advanced Liouville Theorem \ref{lioHein} of \cite{He2,LLZ}, replacing it instead with the easier Liouville Theorem \ref{lio} together with the main result of \cite{TWY}. Indeed, the result of \cite{He2,LLZ} was only used in Sections \ref{sekt} and \ref{nonpr} in Case 2. If we are in the setting of Corollary \ref{main} (say in Case 2 of Section \ref{sekt}), we can then appeal to \cite[(1.10)]{TWY} to see that the restrictions $\hat{\omega}^\bullet_\infty|_{\{z\}\times Y}$ ($z\in\mathbb{C}^m$) are in fact all equal to $\omega_Y$, while the argument of \cite[p.2936--2937]{TZ} (cf. the proof of Proposition \ref{p:closure}) shows that $\hat{\omega}^\bullet_\infty$ is $\ddbar$-cohomologous to $\omega_{\mathbb{C}^m}+\omega_Y$ on $\mathbb{C}^m\times Y$. The easy Liouville Theorem \ref{lio} then shows that $\hat{\omega}^\bullet_\infty$ is the product of $\omega_Y$ and a constant metric on $\mathbb{C}^m$, which contradicts \eqref{uno}. The same argument works in Case 2 in Section \ref{nonpr} for Corollary \ref{main2}.
\end{rk}

\end{document}